\newtheorem{thm}[equation]{Theorem}
\newtheorem{lem}[equation]{Lemma}
\newtheorem{fac}[equation]{Fact}
\newtheorem{cor}[equation]{Corollary}
\newtheorem{pro}[equation]{Proposition}
\theoremstyle{definition}
\newtheorem{dfi}[equation]{Definition}
\newtheorem{rem}[equation]{Remark}
\numberwithin{equation}{section}
\DeclareRobustCommand*\cal{\@fontswitch\relax\mathcal}
\providecommand{\eg}{\mbox{e.\,g.}\xspace}
\providecommand{\ie}{\mbox{i.\,e.}\xspace}
\providecommand{\ugm}{\mbox{u.\,g.\,m.}\xspace}
\providecommand{\inj}{\hookrightarrow}
\providecommand{\surj}{\twoheadrightarrow}
\NewDocumentCommand\ealga{g}{\ensuremath{\mathcal{H}^{\mathrm{eff}}_{\textsc{a}\IfNoValueTF{#1}{}{,#1}}}}
\NewDocumentCommand\alga{g}{\ensuremath{\mathcal{H}_{\textsc{a}\IfNoValueTF{#1}{}{,#1}}}}
\newcommand{\Haors}{\ensuremath{\mathbf{H}_{\textsc{a}}^{\textnormal{\tiny
      (eff)}}}}
\NewDocumentCommand\Has{g}{\ensuremath{\mathbf{H}_{\textsc{a}\IfNoValueTF{#1}{}{,#1}}}}
\NewDocumentCommand\Ha{g}{\ensuremath{\IfNoValueTF{#1}{\mathbf{H}_{\textsc{a}}^{\textnormal{\tiny eff}}}{\mathbf{H}_{\textsc{a},#1}^{\textnormal{\tiny eff}}}}}
\NewDocumentCommand\Hn{g}{\ensuremath{\IfNoValueTF{#1}{\mathbf{H}_{\textsc{n}}^{\textnormal{\tiny eff}}}{\mathbf{H}_{\textsc{n},#1}^{\textnormal{\tiny eff}}}}}
\NewDocumentCommand\Hns{g}{\ensuremath{\mathbf{H}_{\textsc{n}\IfNoValueTF{#1}{}{,#1}}}}
\NewDocumentCommand\grpn{g}{\ensuremath{\mathcal{G}_{\textsc{n}\IfNoValueTF{#1}{}{,#1}}}}
\NewDocumentCommand\grpa{g}{\ensuremath{\mathcal{G}_{\textsc{a}\IfNoValueTF{#1}{}{,#1}}}}
\providecommand{\cu}{\ensuremath{\mathrm{cu}}}
\providecommand{\cm}{\ensuremath{\mathrm{cm}}}
\providecommand{\coa}{\ensuremath{\mathrm{ca}}}
\providecommand{\sa}{\ensuremath{s_{\textsc{\tiny a}}}}
\providecommand{\sn}{\ensuremath{s_{\textsc{\tiny n}}}}
\newcommand{\aff}{\ensuremath{\mathrm{AffVar}}}
\NewDocumentCommand\sm{g}{\ensuremath{\mathrm{Sm}\IfNoValueTF{#1}{}{/#1}}}
\NewDocumentCommand\open{g}{\ensuremath{\mathrm{Open}\IfNoValueTF{#1}{}{/#1}}}
\NewDocumentCommand\smaff{g}{\ensuremath{\mathrm{SmAff}\IfNoValueTF{#1}{}{/#1}}}
\NewDocumentCommand\smaffcor{g}{\ensuremath{\mathrm{SmAffCor}\IfNoValueTF{#1}{}{/#1}}}
\NewDocumentCommand\ansm{g}{\ensuremath{\IfNoValueTF{#1}{\mathrm{Man}_{\C}}{\mathrm{AnSm}/{#1}}}}
\newcommand{\anmfd}{\ensuremath{\mathrm{Man}_{\R}^{\omega}}}
\newcommand{\pt}{\ensuremath{\mathrm{pt}}}
\newcommand{\sch}{\ensuremath{\mathrm{Var}/k}}
\newcommand{\mm}{\ensuremath{\mathcal{MM}}}
\newcommand{\ehm}{\ensuremath{\mathbf{HM}^{\textnormal{\tiny eff}}}}
\newcommand{\hm}{\ensuremath{\mathbf{HM}}}
\newcommand{\iehm}{\ensuremath{\ind\ehm}}
\newcommand{\ihm}{\ensuremath{\ind\hm}}
\newcommand{\daenis}[1][\Lambda]{\ensuremath{\mathbf{DA}^{\textnormal{\tiny eff},\mathrm{Nis}}}}
\NewDocumentCommand\da{g}{\ensuremath{\IfNoValueTF{#1}{\mathbf{DA}}{\mathbf{DA}_{#1}}}}
\NewDocumentCommand\dm{g}{\ensuremath{\IfNoValueTF{#1}{\mathbf{DM}}{\mathbf{DM}_{#1}}}}
\NewDocumentCommand\dae{g}{\ensuremath{\IfNoValueTF{#1}{\mathbf{DA}^{\textnormal{\tiny
          eff}}}{\mathbf{DA}^{\textnormal{\tiny eff}}_{#1}}}}
\NewDocumentCommand\daeaff{g}{\ensuremath{\IfNoValueTF{#1}{\mathbf{DA}^{\textnormal{\tiny
          eff}}_{\textnormal{\tiny
          aff}}}{\mathbf{DA}^{\textnormal{\tiny
          eff}}_{\textnormal{\tiny aff,}#1}}}}
\NewDocumentCommand\dme{g}{\ensuremath{\IfNoValueTF{#1}{\mathbf{DM}^{\textnormal{\tiny eff}}}{\mathbf{DM}^{\textnormal{\tiny eff}}_{#1}}}}
\NewDocumentCommand\andae{g}{\ensuremath{\IfNoValueTF{#1}{\mathbf{AnDA}^{\textnormal{\tiny eff}}}{\mathbf{AnDA}^{\textnormal{\tiny eff}}_{#1}}}}
\NewDocumentCommand\anda{g}{\ensuremath{\IfNoValueTF{#1}{\mathbf{AnDA}}{\mathbf{AnDA}_{#1}}}}
\newcommand{\danis}[1][\Lambda]{\ensuremath{\mathbf{DA}^{\mathrm{Nis}}}}
\newcommand{\dmeaff}{\ensuremath{\mathbf{DM}_{\textnormal{\tiny
        aff}}^{\textnormal{\tiny eff}}}}
\newcommand{\dmaff}{\ensuremath{\mathbf{DM}_{\textnormal{\tiny aff}}}}
\newcommand{\daaff}{\ensuremath{\mathbf{DA}_{\textnormal{\tiny aff}}}}
\providecommand{\diaeff}{\ensuremath{\mathscr{D}_{\textsc{\tiny n}}}}
\providecommand{\diagoodeff}{\ensuremath{\mathscr{D}^{\textnormal{\tiny
      g}}_{\textsc{\tiny
        n}}}}
\newcommand{\ffbti}{\ensuremath{\ff_{\mathrm{Bti}}}}
\providecommand{\ff}{\ensuremath{o}}
\newcommand{\An}{\ensuremath{\mathrm{an}}}
\newcommand{\Anal}{\ensuremath{\mathrm{An}}}
\NewDocumentCommand\sus{g}{\ensuremath{\mathrm{Sus}^{\IfNoValueTF{#1}{0}{#1}}}}
\NewDocumentCommand\ev{g}{\ensuremath{\mathrm{Ev}_{\IfNoValueTF{#1}{0}{#1}}}}
\providecommand{\na}{\ensuremath{\varphi_{\textsc{\tiny a}}}}
\providecommand{\an}{\ensuremath{\varphi_{\textsc{\tiny n}}}}
\providecommand{\nae}{\ensuremath{\varphi_{\textsc{\tiny a}}'}}
\providecommand{\ane}{\ensuremath{\varphi_{\textsc{\tiny n}}'}}
\providecommand{\rep}{\ensuremath{\mathcal{R}_{\textsc{a}}}}
\providecommand{\rea}{\ensuremath{\mathcal{R}_{\textsc{n}}}}
\providecommand{\reas}{\ensuremath{\mathcal{R}_{\textsc{n},s}}}
\providecommand{\reatrs}{\ensuremath{\mathcal{R}_{\textsc{n},\mathrm{tr},s}}}
\providecommand{\reaTRs}{\ensuremath{\mathcal{R}_{\textsc{n},(\mathrm{tr}),
      s}}}
\providecommand{\reah}{\ensuremath{\mathcal{R}_{\textsc{h}}}}
\providecommand{\reahtr}{\ensuremath{\mathcal{R}_{\textsc{h}, \mathrm{tr}}}}
\NewDocumentCommand\rbti{g}{\ensuremath{\IfNoValueTF{#1}{\mathrm{Bti}_{*}}{\mathrm{Bti}_{#1,*}}}}
\NewDocumentCommand\bti{g}{\ensuremath{\IfNoValueTF{#1}{\mathrm{Bti}^{*}}{\mathrm{Bti}^{*}_{#1}}}}
\newcommand{\rbtie}{\ensuremath{\mathrm{Bti}^{\textnormal{\tiny eff}}_{*}}}
\newcommand{\rbtinis}{\ensuremath{\mathrm{Bti}^{\mathrm{Nis}}_{*}}}
\newcommand{\tbti}{\ensuremath{\widetilde{\mathrm{Bti}}^{*}}}
\newcommand{\tbtinis}{\ensuremath{\widetilde{\mathrm{Bti}}^{\mathrm{Nis},*}}}
\newcommand{\btinis}{\ensuremath{\mathrm{Bti}^{\mathrm{Nis},*}}}
\newcommand{\btie}{\ensuremath{\mathrm{Bti}^{\textnormal{\tiny eff},*}}}
\newcommand{\tbtie}{\ensuremath{\widetilde{\mathrm{Bti}}^{\textnormal{\tiny eff},*}}}
\providecommand{\sget}{\ensuremath{\underline{\mathrm{Sg}}_{\mathrm{\acute{e}t}}^{\mathbb{D}}}}
\providecommand{\sgetn}{\ensuremath{\underline{\mathrm{Sg}}_{\mathrm{\acute{e}t}}^{\mathbb{D}^{\leq
    n}}}}
\providecommand{\nsget}{\ensuremath{{}^{\mathrm{n}}\sget}}
\providecommand{\nsgetn}{\ensuremath{{}^{\mathrm{n}}\sgetn}}
\providecommand{\N}{\ensuremath{\mathbb{N}}}
\providecommand{\Q}{\ensuremath{\mathbb{Q}}}
\providecommand{\C}{\ensuremath{\mathbb{C}}}
\providecommand{\R}{\ensuremath{\mathbb{R}}}
\providecommand{\Z}{\ensuremath{\mathbb{Z}}}
\providecommand{\A}{\ensuremath{\mathbb{A}}}
\providecommand{\G}{\ensuremath{\mathbb{G}}}
\providecommand{\Pe}{\ensuremath{\mathbb{P}}}
\providecommand{\Done}{\ensuremath{\mathbb{D}^{1}}}
\providecommand{\Disk}{\ensuremath{\mathbb{D}}}
\providecommand{\Dbar}{\ensuremath{\overline{\mathbb{D}}}}
\providecommand{\sgd}{\ensuremath{\mathrm{Sg}^{\mathbb{D}}}}
\providecommand{\sgi}{\ensuremath{\mathrm{Sg}^{\mathbb{I}}}}
\providecommand{\sgs}{\ensuremath{\mathrm{Sg}}}
\providecommand{\sgsd}{\ensuremath{\mathrm{Sg}^{\vee}}}
\providecommand{\sgsdbb}{\ensuremath{\mathbf{Sg}^{\vee}}}
\providecommand{\coMod}[1]{\ensuremath{\mathbf{coMod}(#1)}}
\providecommand{\grcoMod}[1]{\ensuremath{\mathbf{coMod}^{\Z}(#1)}}
\providecommand{\fcoMod}[1]{\ensuremath{\mathbf{coMod}^{\mathrm{f}}(#1)}}
\providecommand{\Mod}[1]{\ensuremath{\mathbf{Mod}(#1)}}
\providecommand{\fMod}[1]{\ensuremath{\mathbf{Mod}^{\mathrm{f}}(#1)}}
\providecommand{\Der}[1]{\ensuremath{\mathbf{D}(#1)}}
\providecommand{\Derb}[1]{\ensuremath{\mathbf{D}^{\mathrm{b}}(#1)}}
\providecommand{\totp}{\ensuremath{\mathrm{Tot}^{\scriptscriptstyle\prod}}}
\providecommand{\tots}{\ensuremath{\mathrm{Tot}^{\oplus}}}
\providecommand{\spt}{\ensuremath{\mathbf{Spt}^{\Sigma}}}
\providecommand{\hot}{\ensuremath{\mathbf{Hot}}}
\providecommand{\et}{\ensuremath{\mathrm{\acute{e}t}}}
\newcommand{\one}{\ensuremath{\mathbb{1}}}
\newcommand{\id}{\ensuremath{\mathrm{id}}}
\newcommand{\cupproduct}{\mathbin{\smile}}
\providecommand{\dR}{\ensuremath{\mathrm{R}}}
\providecommand{\dL}{\ensuremath{\mathrm{L}}}
\providecommand{\h}{\ensuremath{\mathrm{H}}}
\providecommand{\U}{\ensuremath{\mathbf{U}}}
\DeclareMathOperator{\spec}{Spec}
\DeclareMathOperator{\uhm}{\underline{Hom}}
\DeclareMathOperator{\uhom}{\underline{hom}}
\DeclareMathOperator{\End}{End}
\DeclareMathOperator{\ch}{\mathbf{Cpl}}
\DeclareMathOperator{\sh}{\mathbf{Sh}}
\DeclareMathOperator{\psh}{\mathbf{Psh}}
\DeclareMathOperator{\ind}{Ind}
\title{An isomorphism of motivic Galois groups}
\author{Utsav Choudhury$^{1}$, Martin Gallauer Alves de Souza$^{2}$}
\email{prabrishik@gmail.com}
\email{gallauer@math.ucla.edu}
\thanks{The first author was supported by the Alexander von Humboldt Foundation, the second author was partially supported by the Swiss National Science Foundation (grant number 144372)}
\thanks{Present addresses: $^{1}$ Department of Mathematics, School of Mathematical Sciences, Ramakrishan Mission Vivekananda University, Belur Math, Howrah 711202, India, $^{2}$ Department of Mathematics, UCLA, 520 Portola Plaza,
MS 6363, Los Angeles, CA 90095, USA}
\subjclass[2010]{14F42, 
  14F25, 
  18G55, 
  14C15, 
  19E15
} \keywords{mixed motives, motivic Galois group, Betti cohomology, Tannaka duality, Nori motives, Voevodsky motives}
\date{}
\begin{document}
\begin{abstract}
  In characteristic 0 there are essentially two approaches to the
  conjectural theory of mixed motives, one due to Nori and the other
  one due to, independently, Hanamura, Levine, and Voevodsky. Although
  these approaches are apriori quite different it is expected that
  ultimately they can be reduced to one another. In this article we
  provide some evidence for this belief by proving that their
  associated motivic Galois groups are canonically isomorphic.
\end{abstract}
\maketitle{}
\tableofcontents

\section{Introduction}
\label{sec:introduction}
\subsection*{Motives and motivic Galois groups} Let $k$ be a field of
characteristic 0. Following Beilinson, Deligne, Grothendieck among
others, there should be a $\Q$-linear abelian monoidal category
$\mm(k)$ of \emph{(mixed) motives} over $k$ together with a monoidal
functor $M:(\sch)^{\mathrm{op}}\to\mm(k)$, associating to each variety
$X/k$ its motive $M(X)$, the universal cohomological invariant of
$X$. Every cohomology theory $h:(\sch)^{\mathrm{op}}\to \mathcal{A}$
for varieties over $k$ should factor through a realization functor
$R_{h}:\mm(k)\to\mathcal{A}$, \ie{} $h(X)=R_{h}(M(X))$. For some
cohomology theories one would expect this realization functor to
present $\mm(k)$ as a neutral Tannakian category with Tannakian dual
$\mathcal{G}(k)$, a pro-algebraic group called the \emph{motivic
  Galois group} of $k$. One of the main practical advantages of the
Tannakian description of motives is that it would allow the translation
of arithmetic and geometric questions about $k$-varieties into
questions about (pro-)algebraic groups and their
representations. Moreover, the maximal pro-reductive quotient of this
group is supposed to coincide with what was classically known as the
motivic Galois group, namely the group associated to the Tannakian
subcategory of \emph{pure} motives over $k$ (\ie{} the universal
cohomology theory for \emph{smooth projective} varieties;
see~\cite{serre:motivic-galois-group} for the philosophy underlying
this smaller group).

Although this picture is still conjectural, there are candidates for
these objects and related constructions. Assume there is an embedding
$\sigma:k\inj\C$. In this situation there are essentially two existing
approaches to motives, one due to Nori and another due to several
mathematicians, including Voevodsky. Nori constructed a diagram of
pairs of varieties together with the Betti representation into finite
dimensional $\Q$-vector spaces, and applied to it his theory of
Tannaka duality for diagrams. It yields a universal factorization for
the Betti representation through a $\Q$-linear abelian category
$\hm(k)$ with a faithful exact $\Q$-linear functor
$\ffbti:\hm(k)\to\fMod{\Q}$ to finite dimensional $\Q$-vector
spaces. $\hm(k)$ is a neutral Tannakian category with fiber functor
$\ffbti$, whose Tannakian dual $\grpn(k)$ is Nori's motivic Galois
group.

On the other hand, there is the better known construction of $\dm(k)$,
the triangulated category of Voevodsky motives. This is a candidate
not for the category of motives but its derived category, and from
which the former should be obtained as the heart of a
t-structure.\footnote{More precisely, its full subcategory of
  geometric motives (\ie{} compact objects) is a candidate for the
  bounded derived category of $\mm(k)$.} Ayoub constructed a Betti
realization functor $\bti:\dm(k)\to\Der{\Q}$ to the category of graded
$\Q$-vector spaces. He also proved that this functor together with its
right adjoint $\rbti$ satisfies the assumptions of his weak Tannakian
formalism which in the case at hand endows $\bti\rbti\Q$ with the
structure of a Hopf algebra. And finally, he established that its
homology is concentrated in non-negative degrees, hence the Hopf
algebra structure passes to its homology in degree 0. Ayoub's motivic
Galois group $\grpa(k)$ is the spectrum of this Hopf algebra.

\subsection*{Main result}

Our main goal is to prove that the two motivic Galois groups just
described are isomorphic, thus answering a question of Ayoub
in~\cite{ayoub:galois1}.  \newtheorem*{thm:main}{Theorem (instance of
  \ref{thm:main})}
\begin{thm:main}
  There is an isomorphism of affine pro-algebraic groups over
  $\spec(\Q)$:
\begin{equation*}
  \grpa(k)\cong\grpn(k).
\end{equation*}
\end{thm:main}

Let us try to put this result into perspective. As explained
in~\cite{ayoub:icm}, the difference between the two approaches to
motives is extreme. Nori's construction relies on transcendental data
in an essential way whereas $\dm(k)$ is defined purely in terms of
algebro-geometric data. This has the effect that while morphisms in
$\dm(k)$ can be related to previously known algebro-geometric
invariants of varieties, morphisms and extensions in $\hm(k)$ are
intractable. On the other hand, the universal property defining
$\hm(k)$ would be enough to characterize the true category of motives
(if the latter exists), whereas in Voevodsky's approach it is
difficult even to extract an abelian category. One of the ultimate
goals in the theory of motives therefore is to create a bridge
connecting these two approaches. This goal is considered to be far out
of reach at the moment, but the result above can be seen as providing
a weak link while sidestepping the more difficult and deep issues.
Moreover, it suggests breaking up the goal into two subgoals:
understanding the relation between (compact) Voevodsky motives and
comodules over $\bti\rbti\Q$ on the one hand, and proving that this
Hopf algebra is homologically concentrated in degree 0 on the other
hand; see~\cite{ayoub:icm} and \cite[§2.4]{ayoub:galois1} for further
discussion.

Even if the link we provide here is a weak one, it can still be seen
as evidence for the ``correctness'' of the two approaches to
motives. Moreover, although both constructions of motivic Galois
groups are based on some form of Tannaka duality, the precise form is
quite different in the two cases
(cf.~\cite[Introduction]{ayoub:galois1}); therefore the isomorphism in
the theorem can be seen as a surprising phenomenon. Finally, the
identification of the two groups allows for transfer of techniques and
results, not easily available on both sides without the
identification. We plan to use this fact in the future to give a more
elementary description of the Kontsevich-Zagier period algebra with
fewer generators and relations.  More precisely, we intend to show
that the algebra considered in~\cite[§2.2]{ayoub-periods14} is
canonically isomorphic to the Kontsevich-Zagier period algebra, as was
claimed in \textit{loc.\,cit.}

We would like to remark that a conditional proof of our main result
has been given independently by Jon
Pridham. In~\cite[Exa.~3.20]{pridham-dg-tannaka} he sketches how the
existence of a motivic $t$-structure (which renders the Betti
realization $t$-exact) would imply the isomorphism of motivic Galois
groups. The argument uses the theory of Tannaka duality for
dg categories developed in \textit{loc.\,cit.}

\subsection*{About the proof}

As one would expect from the relation between motives and their
associated Galois groups, proving our main result involves
``comparing'' Nori motives with Voevodsky motives. As we remarked
above, this is a non-trivial task and we can hope to relate these two
categories only indirectly:
\begin{itemize}
\item We construct a realization of Nori motives in the category of
  linear representations of Ayoub's Galois group:
  \begin{equation*}
    \mathbf{Rep}(\grpn(k))\to\mathbf{Rep}(\grpa(k)).
  \end{equation*}
  The main ingredients used in this construction are the six functors
  formalism for motives without transfers due to Ayoub and Voevodsky,
  and its compatibility with the Betti realization, proved by Ayoub.
\item We construct a realization of motives without transfers in the
  category of graded ${\cal O}(\grpn(k))$-comodules:
  \begin{equation*}
    \da(k)\to\grcoMod{{\cal O}(\grpn(k))}.
  \end{equation*}
  In this construction the main tool used is the Basic Lemma due to
  Nori and, independently, to Beilinson.
\end{itemize}
In fact, we work throughout with arbitrary principal ideal domains as
coefficients, not only $\Q$. Since we also use extensively the six
functors formalism, we are forced to work with $\da(k)$, motives
without transfers, instead of $\dm(k)$. In any case, the motivic
Galois group of Ayoub does not see the difference between these two
categories.

The two realizations will induce morphisms between the two Galois
groups, and the hard part is to prove that these are inverses to each
other. In one direction, we rely heavily on one of the main results of
Ayoub's approach to motivic Galois groups, namely a specific model he
has given for the object in $\da(k)$ representing Betti
cohomology. Analysing this model closely we can show that the
coordinate ring of $\grpa(k)$ as a $\grpa(k)$-representation is
generated by $\grpn(k)$-representations
$\h^i_{\mathrm{Betti}}(X,Z;\Q(j))$. This will allow us to prove the
morphism $\grpa(k)\to\grpn(k)$ a closed immersion. For the other
direction we will prove that $\grpn(k)\to\grpa(k)$ is a section to
$\grpa(k)\to\grpn(k)$, and here the idea is to reduce all
verifications to a class of pairs of varieties whose relative motive
in $\da(k)$ (and its effective version) are easier to handle. We have
found that the pairs $(X,Z)$ where $X$ is smooth and $Z$ a simple
normal crossings divisor work well for our purposes, and we study
their motives without transfers in detail.

\subsection*{Outline of the article}
\label{sec:outline}

We now give a more detailed account of the article. In~§\ref{sec:N} we
recall the construction and basic properties of Nori motives and the
associated Galois group. We also state a monoidal version of the
universal property of his category of motives the proof of which is
given in appendix~\ref{sec:N-ugm}. In~§\ref{sec:betti} we recall the
construction and basic properties of Morel-Voevodsky motives (or
motives without transfer) and the Betti realization. We also explain
in detail in which sense the functor $\bti$ \emph{is} a Betti
realization. We briefly recall the construction of Ayoub's Galois
group for Morel-Voevodsky motives in~§\ref{sec:A}.

In~§\ref{sec:NA} we construct motives $\rep(X,Z,n)$ in $\da(k)$ for
$X$ a variety, $Z \subset X$ a closed subvariety and $n$ a
non-negative integer. These motives are defined in terms of the six
functors, and have the property that $\h_0(\bti \rep(X,Z,n)) \cong
\h_n(X(\C), Z(\C))$ naturally. Here is where our decision to use the
six functors formalism pays off as its compatibility with the Betti
realization immediately reduces us to prove the existence of a natural
isomorphism between sheaf cohomology and singular cohomology of pairs
of (locally compact) topological spaces. We were not able to find the
required proofs for this last comparison in the literature, and we
therefore decided to provide them in a separate appendix (to wit,
appendix~\ref{sec:coh-pair}). We end this section by showing how this
construction yields a morphism of Hopf algebras
$\na:\mathcal{O}(\grpn(k))\to\mathcal{O}(\grpa(k))$.

The following two sections~\ref{sec:basic-lemma} and~\ref{sec:AN} are
devoted to defining a morphism in the other direction, at least on the
``effective'' bialgebras (the Hopf algebras are obtained from these
effective bialgebras by inverting a certain element). For this,
in~§\ref{sec:basic-lemma}, we recall Nori's version of the Basic
Lemma, and explain how it leads to algebraic cellular decompositions
of the singular homology of affine varieties. As an application we
obtain a functor from smooth affine schemes to the derived category of
effective Nori motives. In~§\ref{sec:AN.construction} we show how Kan extensions in the context of dg categories allow us to extend it to a functor $\dL C^{*}$ defined on the category of
effective Morel-Voevodsky motives.
$\dL C^{*}$ is then shown to give rise to the sought after morphism
of bialgebras (§\ref{sec:AN.an}).

We also collect additional results on realizing (Morel-)Voevodsky
motives in Nori motives which are not strictly necessary for our main
theorem but, we believe, of independent interest. In~§\ref{sec:v2n},
we extend our constructions to take into account correspondences thus
obtaining a variant of $\dL C^{*}$ for effective Voevodsky motives
(\ie{} effective motives with transfers). Then,
in~§\ref{sec:a2n-stable}, we also prove that these realizations pass
to the stable categories of motives (with and without transfers). From
this we finally deduce mixed Hodge realizations on motives with and
without transfers.

The next section is all about explicit computations involving
Morel-Voevodsky motives associated to pairs of schemes. The recurrent
theme is that these computations are feasible if one restricts to the
pairs $(X,Z)$ where $X$ is smooth and $Z$ is a simple normal crossings
divisor. We call these almost smooth pairs, and resolution of
singularities implies that there are enough of them. This allows us to
reduce computations for general pairs to these more manageable
ones. In~§\ref{sec:asp.eff} we give models for the latter on the
effective level, and determine their image under the functor $\dL
C^{*}$ explicitly. This allows us to compare their comodule structure
(with respect to Nori's effective bialgebra mentioned above) to the
one of the Betti homology of the pair. As a corollary, we see that the
morphism of bialgebras passes to the Hopf algebras
$\an:\mathcal{O}(\grpa(k))\to\mathcal{O}(\grpn(k))$. In~§\ref{sec:asp.stb}
we give good models for $\rep(X,Z,n)$ and their duals on the stable
level, when $(X,Z)$ is almost smooth, and we describe their Betti
realization.

§\ref{sec:main} is the heart of the article. Ayoub has given a
``singular'' model for the object in $\da(k)$ representing Betti
cohomology. Using our description of $\rep(X,Z,n)$ and performing a
close analysis of Ayoub's model we establish that the Hopf algebra
$\mathcal{O}(\grpa(k))$ as a comodule over itself is a filtered
colimit of Nori motives $\h^i(X(\C),Z(\C); \Q(j))$, where $(X,Z)$ is
almost smooth and $i,j\in \Z$. This will be seen to imply surjectivity
of $\na$, while on the other hand we also prove that $\an\na$ is the
identity by proving that it is so on motives of almost smooth pairs.

\subsection*{Acknowledgments}
First and foremost we would like to thank Joseph Ayoub for suggesting
that we work together on this problem, as well as for his support
during the gestation of this article. The crucial idea of using his
model for the motive representing Betti cohomology in the proof of
Theorem~\ref{thm:N-gen-A} is due to him. We also profited from
discussions with Marc Levine, Simon Pepin Lehalleur, Oliver
R\"{o}ndigs, Markus Spitzweck, Vaibhav Vaish, Alberto Vezzani. Special
thanks go to Simon for a very detailed reading of the first version of
our paper. The first author would like to thank Marc Levine and Oliver
R\"{o}ndigs for their support and encouragement. Part of the research
was conducted during a stay of the second author at the University of
Osnabr\"{u}ck, and he would like to thank the research group Topology
and Geometry for the hospitality.

\subsection*{Notation and conventions}

We fix a field $k$ of characteristic 0 together with an embedding
$\sigma:k\inj\C$. By a scheme we mean a quasi-projective scheme over
$k$. A variety is a reduced scheme. Rings are always assumed
commutative and unital. Monoidal categories (resp.\ functors,
transformations) are assumed symmetric and unitary if not stated
otherwise. Algebras and coalgebras in monoidal categories (also known
as monoids and comonoids, respectively) are assumed unitary resp.\
counitary.

$\Lambda$ throughout denotes a fixed ring, assumed noetherian if not
stated otherwise. The symbol $\ch(\Lambda)$ denotes the category of
(unbounded) complexes of $\Lambda$-modules. Our conventions are
homological, \ie{} the differentials decrease the indices, and the
shift operator satisfies $(A[p])_{n}=A_{p+n}$. For an abelian category
$\mathcal{A}$, $\Der{\mathcal{A}}$ denotes its derived category, and
$\Der{\Lambda}:=\Der{\ch(\Lambda)}$. Also, $\ind{\cal A}$ denotes
the category of ind objects in ${\cal A}$.

\section{Nori's Galois group}
\label{sec:N}
We begin by recalling the construction of Nori motives and the
associated motivic Galois group
(cf.~\cite{nori-lectures,huber-mueller:nori,huber-mueller:periods-nori}). We
also describe the universal property of the category of Nori motives
in a monoidal setting (Theorem~\ref{pro:N-universal}).

\subsection{Effective Nori motives}
A theory of motives, at the very least, should assign to any pair of
varieties $(X,Z)$ objects $\h_{n}(X,Z)$ ($n\in\N$), the $n$th relative
(homological) motive of the pair. Moreover, this assignment should be
functorial in the pair, and come with a ``boundary'' morphism
$\h_{n}(X,Z)\to \h_{n-1}(Z)$. We can formalize this using the
diagram\footnote{By a diagram we mean a directed graph.} $\diaeff$:
\begin{itemize}
\item its vertices are triples $(X,Z,n)$ where $X$ is a variety, $Z$
  is a closed subvariety of $X$, and $n$ is an integer;
\item there are two types of edges: a single edge from $(X,Z,n)$ to
  $(Z,W,n-1)$ for any triple $X\supset Z\supset W$, and edges
  $(X,Z,n)\to (X',Z',n)$ indexed by morphisms $f:X\to X'$ which
  restrict to $f:Z\to Z'$.
\end{itemize}
A representation of such a diagram is simply a morphism of directed
graphs $T:\diaeff\to {\cal C}$ into a category ${\cal C}$.

Instead of specifying explicitly what axioms such a representation
should satisfy, Nori's idea was to impose \emph{all} axioms satisfied
by a fixed homology theory. This is made precise by his Tannakian
theory for diagrams which asserts that associated to a representation
$T:\mathscr{D}\to\fMod{\Lambda}$ of any diagram $\mathscr{D}$ in the
category of finitely generated $\Lambda$-modules, there is a
$\Lambda$-linear abelian category ${\cal C}(T)$ together with a
factorization
\begin{equation*}
  \mathscr{D}\xrightarrow{\tilde{T}} {\cal C}(T)\xrightarrow{\ff}\fMod{\Lambda}
\end{equation*}
where $\ff$ is a faithful exact $\Lambda$-linear functor. Moreover,
this category is universal for such a factorization. This is applied
to the singular homology representation
$\h_{\bullet}:\diaeff\to\fMod{\Lambda}$ which takes a vertex $(X,Z,n)$
to the relative singular homology $\Lambda$-module
$\h_{n}(X^{\An},Z^{\An};\Lambda)$ of the associated topological spaces
on the $\C$-points (this uses $\sigma:k\inj\C$). To the single edge
$(X,Z,n)\to (Z,W,n-1)$ it associates the boundary map of the long
exact sequence of a triple
$\h_{n}(X^{\An},Z^{\An})\to \h_{n-1}(Z^{\An},W^{\An})$, and to an edge
$(X,Z,n)\to (X',Z',n)$ corresponding to $f:X\to X'$ it associates the
morphism in homology induced by
$f^{\An}:X^{\An}\to X^{\prime\An}$.\footnote{Here, and in the
  sequel we refrain from writing the coefficients in the homology
  when these can be guessed from the context. Also, we sometimes write
  $\h_{\bullet}(X,Z)$ instead of $\h_{\bullet}(X^{\An},Z^{\An})$.}
\begin{dfi}
  The category $\mathcal{C}(\h_{\bullet})$ is denoted by $\ehm$. It is
  the category of \emph{effective (homological) Nori motives}.
\end{dfi}

\begin{rem}\label{rem:hm-description}
  The construction of ${\cal C}(T)$ is easy to describe. A finite
  (full) subdiagram $\mathscr{F}\subset\mathscr{D}$ gives rise to a
  $\Lambda$-algebra
  $\End(T|_{\mathscr{F}})\subset \prod_{v}\End_{\Lambda}(T(v))$ of
  families of compatible (with respect to the edges) endomorphisms
  indexed over the vertices of $\mathscr{F}$. We then set
  \begin{equation*}
    \mathcal{C}(T)=\varinjlim_{\mathscr{F}\subset\mathscr{D}}\fMod{\End(T|_{\mathscr{F}})},
  \end{equation*}
  this filtered 2-colimit being indexed by the finite subdiagrams
  ordered by inclusion.

  In case $\Lambda$ is a principal ideal domain and $T$ takes values
  in finitely generated free $\Lambda$-modules, the dual
  $\mathcal{A}(T|_{\mathscr{F}})=\End(T|_{\mathscr{F}})^{\vee}$
  carries a canonical coalgebra structure for any finite subdiagram
  $\mathscr{F}\subset\mathscr{D}$. Moreover, ${\cal C}(T)$ can then be
  described as the category $\fcoMod{\mathcal{A}(T)}$ of
  $\mathcal{A}(T)$-comodules in $\fMod{\Lambda}$,\footnote{For our
    conventions regarding comodules see appendix~\ref{sec:comod}.}
  where
  \begin{equation*}
    \mathcal{A}(T)=\varinjlim_{\mathscr{F}\subset\mathscr{D}}\mathcal{A}(T|_{\mathscr{F}}).
  \end{equation*}
\end{rem}

\subsection{Stabilization}

In order to talk about Tate twists and to stabilize the category of
effective Nori motives we will need a monoidal version of Nori's
Tannakian theory for diagrams. The problem one faces is that the
tensor product of relative motives $\h_{n}(X,Z)\otimes \h_{n'}(X',Z')$
is typically not equal to $\h_{n+n'}((X,Z)\times (X',Z'))$ but only
related to the latter via the K\"{u}nneth spectral sequence. It is
therefore natural to restrict to pairs whose homology is concentrated
in a single degree.

Consider the full subdiagram $\diagoodeff$ of $\diaeff$ consisting of
\emph{good pairs}, \ie{} vertices $(X,Z,n)$ with $X\backslash Z$
smooth and $\h_{\bullet}(X,Z;\Z)$ a free abelian group concentrated in
degree $n$. It follows essentially from the Basic Lemma (recalled
in~§\ref{sec:basic-lemma}) that
${\cal C}(\h_{\bullet}|_{\diagoodeff})$ is canonically equivalent to
$\ehm$ (see~\cite[Pro.~3.2]{nori-lectures}
or~\cite[Cor.~1.7]{huber-mueller:nori} for a proof). Moreover, on
$\diagoodeff$ there is a ``commutative product structure with unit''
in the sense of~\cite{huber-mueller:nori} induced by the cartesian
product of varieties, and $\h_{\bullet}|_{\diagoodeff}$ is canonically
a \ugm representation (see appendix~\ref{sec:N-ugm} for a recollection
on these notions). This endows $\ehm$ with a monoidal structure such
that the functor $o$ mapping to $\fMod{\Lambda}$ is monoidal
(\cite[Thm.~4.1]{nori-lectures},
\cite[Pro.~B.16]{huber-mueller:nori}). As in the non-monoidal case it
has a universal property which we state in the following instance of
Nori's Tannaka duality theorem in the monoidal
setting~\ref{thm:N-ugm-universal} (cf.\
also~\cite{bruguieres-tannaka-nori,huber-mueller:nori,huber-mueller:periods-nori}).
\begin{thm}\label{pro:N-universal} 
  Let $\Lambda$ be a principal ideal domain. Suppose we are given a
  right exact monoidal abelian $\Lambda$-linear category
  $\mathcal{A}$\footnote{Hence $(A,\otimes)$ is a monoidal abelian
    $\Lambda$-linear category such that $\otimes$ is right exact
    $\Lambda$-linear in each variable.} together with a monoidal
  faithful exact $\Lambda$-linear functor
  $o:\mathcal{A}\to\fMod{\Lambda}$ and a \ugm representation
  $T:\diagoodeff\to\mathcal{A}$ such that the following diagram of
  solid arrows commutes.
  \begin{equation*}
    \xymatrix{\diagoodeff\ar[d]_{\tilde{\h}_{\bullet}}\ar[r]^-{T}&\mathcal{A}\ar[d]^{o}\\
      \ehm\ar[r]_-{\ff}\ar@{.>}[ur]&\fMod{\Lambda}}
  \end{equation*}
  Then there exists a monoidal functor $\ehm\to\mathcal{A}$ (unique up
  to unique monoidal isomorphism), represented by the dotted arrow in
  the diagram rendering the two triangles commutative (up to monoidal
  isomorphism).

  Moreover, this functor is faithful exact $\Lambda$-linear.
\end{thm}

Assume now that $\Lambda$ is a principal ideal domain. By the
discussion preceding the Theorem and Remark~\ref{rem:hm-description},
$\ehm$ can be identified with the category of comodules over the
coalgebra $\mathcal{A}(\h_{\bullet}|_{\diagoodeff})$. The monoidal
structure on $\ehm$ endows this coalgebra with the structure of a
commutative algebra turning it into a (commutative) bialgebra
(\cite[§4.2]{nori-lectures}, \cite[Pro.~B.16]{huber-mueller:nori}).
\begin{dfi}
  \begin{enumerate}
  \item We denote by
    $\Hn:=\Hn{\Lambda}:=\mathcal{A}(\h_{\bullet}|_{\diagoodeff})$
    \emph{Nori's effective motivic bialgebra}.
  \item \emph{Nori's motivic Hopf algebra} $\Hns:=\Hns{\Lambda}$ is the
    commutative Hopf algebra obtained from $\Hn$ by localizing (as an
    algebra) with respect to an element $\sn\in\Hn$ described below
    (see also~\cite[p.~13]{nori-lectures}).
  \item \emph{Nori's motivic Galois group} $\grpn:=\grpn{\Lambda}$ is
    the spectrum of $\Hns$. Thus it is a pro-group scheme over
    $\spec(\Lambda)$.
  \item The category $\hm:=\hm_{\Lambda}:=\fcoMod{\Hns{\Lambda}}$ is
    the category of \emph{(homological) Nori motives}.
  \end{enumerate}
\end{dfi}
It remains to describe the element $\sn\in\Hn$ corresponding to the
Tate twist. Choose an isomorphism
\begin{equation}\label{eq:SN}
  \h_{1}(\mathbb{G}_{m},\{1\})\xrightarrow{\sim}\Lambda.
\end{equation}
Then $\sn\in\Hn$ is the image of $1\in\Lambda$ under the composition
\begin{equation*}
  \Lambda\xleftarrow[\sim]{\eqref{eq:SN}}\h_{1}(\mathbb{G}_{m},\{1\})\xrightarrow{\coa}\Hn\otimes\h_{1}(\mathbb{G}_{m},\{1\})\xrightarrow[\sim]{\eqref{eq:SN}}\Hn\otimes\Lambda\cong\Hn,
\end{equation*}
where $\coa$ denotes the coaction of $\Hn$ on
$\h_{1}(\mathbb{G}_{m},\{1\})$. Clearly, $\sn$ does not depend on the
choice of~\eqref{eq:SN}.

\section{Betti realization for Morel-Voevodsky motives}
\label{sec:betti}

As explained in the introduction, we will work with motives without
transfers in order to use the six functors formalism. Based on
Voevodsky's original construction of the triangulated category of
motives with transfers $\dm(k)$, and following an insight by Morel,
this formalism has been worked out by Ayoub
in~\cite{ayoub07-thesis}. In this section we briefly recall the
construction of this category of motives, and the associated Betti
realization from~\cite{ayoub:betti} and~\cite{ayoub:galois1}. We also
prove a few results not stated there explicitly. In particular we
give a dg model for the Betti realization.

\subsection{Effective Morel-Voevodsky motives}

For the category of effective motives without transfers, and just as
with Nori motives, we will start with a certain category of
varieties. Instead of imposing axioms satisfied by the Betti
realization however, Morel and Voevodsky give the axioms explicitly
(Mayer-Vietoris and homotopy invariance). The precise construction may
profitably be viewed from the perspective of \emph{universal model dg
  categories} in the sense of~\cite{choudhury-gallauer:dg-hty}.

We start with a small category $\mathcal{C}$ with finite products,
endowed with a Grothendieck topology $\tau$ and $I\in\mathcal{C}$ an
``object parametrizing homotopies''; also fix any ring $\Lambda$. The
category $\U\mathcal{C}=\psh(\mathcal{C},\ch(\Lambda))$ of presheaves
on $\mathcal{C}$ with values in complexes of $\Lambda$-modules can be
endowed with three model structures (among others):
\begin{itemize}
\item The \emph{projective model structure} whose fibrations
  (resp.~weak equivalences) are objectwise epimorphisms
  (resp.~quasi-isomorphisms). This defines the model category underlying the universal model dg category associated to $\mathcal{C}$. Its homotopy category is just the
  derived category $\Der{\U\mathcal{C}}$.
\item The \emph{projective $\tau$-local model structure} arises from
  the projective model structure by Bousfield localization with
  respect to $\tau$-hypercovers. This defines the model category underlying the universal $\tau$-local model dg category associated to $\mathcal{C}$. Its homotopy category is equivalent
  to the derived category of $\tau$-sheaves on $\mathcal{C}$.
\item The \emph{projective $(I,\tau)$-local model structure} arises as
  a further Bousfield localization with respect to arrows
  $\Lambda(I\times Y)[i]\to\Lambda(Y)[i]$, where
  $\Lambda:\mathcal{C}\to \U\mathcal{C}$ denotes the ``Yoneda
  embedding'', and $Y\in\mathcal{C}$ and $i\in\Z$ are arbitrary. This defines the model category underlying the universal $(\tau,I)$-local model dg category associated to $\mathcal{C}$. Its
  homotopy category is a $\Lambda$-linear unstable (or ``effective'')
  $I$-homotopy theory of $(\mathcal{C},\tau)$.
\end{itemize}
In each case, the model category is stable and monoidal (for the
objectwise tensor product) hence the homotopy categories are
triangulated monoidal. The following examples will be of interest to
us (notation is explained subsequently):
\begin{center}
\setlength{\tabcolsep}{1em}
\renewcommand{\arraystretch}{1.2}
    \begin{tabular}{ l l  l  l }
      $\mathcal{C}$ & $\tau$ & $I$ & $\Lambda$-linear unstable $I$-homotopy theory \\ \hline
      $\sm{X}$ & $\mathrm{Nis}$ or $\et$ & $\A^{1}_{X}$ & $\dae(X)=\dae{\Lambda}(X)$\\
      $\smaff{X}$ & $\mathrm{Nis}$ or $\et$ & $\A^{1}_{X}$ & $\daeaff(X)=\daeaff{\Lambda}(X)$\\
      $\ansm{X}$ & $\mathrm{usu}$ & $\Done_{X}$ & $\andae(X)=\andae{\Lambda}(X)$ \\ 
      $\open{X}$ & $\mathrm{usu}$ & $X$ & $\Der{X,\Lambda}$
    \end{tabular}
\end{center}
Here, in the first two examples, $X$ is a scheme, and $\sm{X}$ (resp.\
$\smaff{X}$) denotes the category of smooth schemes over $X$ (resp.\
which are affine in the absolute sense) endowed with the Nisnevich or
the étale topology. In case $X=\spec(k)$, we denote this category by
$\sm$ (resp.\ $\smaff$).
\begin{dfi}
  $\dae(X)$ is the category of \emph{effective Morel-Voevodsky
    $X$-motives}.
\end{dfi}
In the few instances where the topology chosen plays any role, we will
make this explicit. Also, if $X=\spec(k)$ then we simply write
$\dae$. Since every scheme is covered by affine open subschemes, we
obtain the following easy fact
(see~\cite[Cor.~5.16]{choudhury-gallauer:dg-hty} for a proof).
\begin{lem}\label{lem:affine-da}
  The canonical inclusion $\smaff{X}\to\sm{X}$ induces a triangulated
  monoidal equivalence $\daeaff(X)\xrightarrow{\sim}\dae(X)$.
\end{lem}

In the third example above, $X$ is a complex analytic space, \ie{} a
``complex space'' in the sense of~\cite{grauert-remmert-1984} which is
supposed to be denumerable at infinity, and $\ansm{X}$ denotes the
category of complex analytic spaces smooth over $X$ with the topology
$\mathrm{usu}$ given by open covers. If $X$ is the terminal object
$\pt$, then $\ansm{X}$ is denoted simply by $\ansm$. $\Done$ denotes
the open unit disk considered as a complex analytic space. As above,
the $\Done$-homotopy theory is denoted by $\andae$ in case
$X=\pt$.

Finally, in the fourth example $X$ denotes a topological space,
$\open{X}$ the category associated to the preorder of open subsets of
$X$. It is endowed with the topology $\mathrm{usu}$ given by open
covers. The $(X,\mathrm{usu})$-local and the $\mathrm{usu}$-local
model structures evidently agree, and their homotopy category
$\Der{X}=\Der{X,\Lambda}$ is (canonically identified with) the derived
category of sheaves on $X$.

\subsection{Effective Betti realization}

The Betti realization due to Ayoub in~\cite{ayoub:betti} will now link
the categories just introduced, as follows. For a complex analytic
space $X$ there is an obvious inclusion
$\iota_{X}:\open{X}\to\ansm{X}$ which defines a morphism of sites and
induces a Quillen equivalence (\cite[Thm.~1.8]{ayoub:betti})
\begin{equation*}
  (\iota_{X}^{*},\iota_{X*}):\U(\open{X})/\mathrm{usu}\to\U(\ansm{X})/(\Done_{X},\mathrm{usu}).
\end{equation*}
If $X=\spec(k)$, the left adjoint takes a complex to the associated
constant presheaf and is denoted by $(\bullet)_{\mathrm{cst}}$, while
the right adjoint is the global sections functor and accordingly
denoted by $\Gamma$.

Any scheme $Y$ gives rise to a complex analytic space $Y^{\An}$,
namely the topological space $(Y\times_{k,\sigma}\C)(\C)$ with the
natural complex analytic structure. We obtain an analytification
functor $\Anal_{X}:\sm{X}\to\ansm{X^{\An}}$ which induces Quillen
adjunctions
\begin{align*}
  (\Anal_{X}^{*},\Anal_{X*}):&\U(\sm{X})\to
  \U(\ansm{X^{\An}})
\end{align*}
for the corresponding model structures considered above. The left
adjoint $\Anal_{X}^{*}$ in fact preserves $(I,\tau)$-local weak
equivalences (see~\cite[Rem.~2.57]{ayoub:galois1}).

\begin{dfi}\label{dfi:btie}
  The \emph{effective Betti realization} is the composition
  \begin{equation*}
    \btie:\dae(X)\xrightarrow{\Anal_{X}^{*}}\andae(X)\xrightarrow[\sim]{\dR\iota_{X*}}\Der{X}.
  \end{equation*}
\end{dfi}
By construction, this is a triangulated monoidal functor.

\subsection{Stabilization}
\label{sec:betti.st}

Motives will be obtained from effective motives by a stabilization
process which we again describe in the abstract setting first. Let
${\cal M}$ be a cellular left-proper monoidal model category and
$T\in{\cal M}$ a cofibrant object. The category $\spt_{T}\mathcal{M}$
of symmetric $T$-spectra in $\mathcal{M}$ admits the following two
model structures (among others):
\begin{itemize}
\item The \emph{projective unstable model structure} whose fibrations
  (resp.\ weak equivalences) are levelwise fibrations (resp.\ weak
  equivalences).
\item The \emph{projective stable model structure} arises from the
  unstable one by Bousfield localization with respect to morphisms
  $\sus{n+1}_{T}(T\otimes K)\to \sus{n}_{T}(K)$ for cofibrant objects
  $K\in {\cal M}$.
\end{itemize}
Here, $(\sus{i}_{T},\ev{i}):\U\mathcal{C}\to\spt_{T}\U\mathcal{C}$
denotes the canonical adjunction, $\ev{i}$ being evaluation at level
$i$. For the details (also concerning the existence of the model
structures) we refer to~\cite{hovey:spectra}. Again, the model
categories are both monoidal, and if ${\cal M}$ was stable then so is
$\spt_{T}{\cal M}$. If not mentioned explicitly otherwise, when we
refer to \emph{the} model structure on $\spt_{T}{\cal M}$ we mean the
stable one. In $\spt_{T}{\cal M}$, tensoring with $T$ becomes a
Quillen equivalence, and it should be thought of as the universal such
model category although this is not quite true in the obvious sense
(cf.~\cite[§9]{hovey:spectra}; see also \cite{robalo:kthy-motives} for the $(\infty,1)$-categorical version of the picture, and where such a universal property \emph{can} be proven). Still, we call it the
$T$-stabilization of ${\cal M}$.

In the algebraic geometric examples above we choose $T_{X}$ to be a
cofibrant replacement of $\Lambda(\A^{1}_{X})/\Lambda(\G_{m,X})$.
\begin{dfi}
  The resulting $T_{X}$-stable $\A^{1}$-homotopy theory of
  $(\sm{X},\tau)$, denoted by $\da(X)=\da{\Lambda}(X)$, is the
  category of \emph{Morel-Voevodsky $X$-motives}.
\end{dfi}
As before (Lemma~\ref{lem:affine-da}), the affine version
$\daaff(X)$ is canonically equivalent. Again, we leave the topology
implicit most of the time, and in case $X=k$ we also write
$\da$. There is the notion of a compact motive, namely an object in
$\da$ which is compact in the sense of triangulated categories, and
the full subcategory of compact motives forms a thick triangulated
subcategory.

In the analytic setting, stabilization is performed with respect to a
cofibrant replacement $T_{X}$ of the quotient presheaf
$\Lambda(\A^{1,\An}_{X})/\Lambda(\G^{\An}_{m,X})$. The resulting
homotopy category is denoted by $\anda(X)=\anda{\Lambda}(X)$ (and
again simply $\anda$ in case $X=\pt$). \cite[Lem.~1.10]{ayoub:betti}
together with \cite[Thm.~9.1]{hovey:spectra} show that the adjunction
\begin{equation*}
  (\sus_{T_{X}},\ev):\U(\ansm{X})/(\Done_{X},\mathrm{usu})\to \spt_{T_{X}}\U(\ansm{X})/(\Done_{X},\mathrm{usu})
\end{equation*}
defines a Quillen equivalence. Moreover, the analytification functor
passes to the level of symmetric spectra and preserves stable
$(I,\tau)$-local equivalences.
\begin{dfi}\label{dfi:bti}
  The \emph{Betti realization} is the composition
  \begin{equation*}
    \bti:\da(X)\xrightarrow{\Anal_{X}^{*}}\anda(X)\xrightarrow[\sim]{\dR\iota_{X*}\dR\ev}\Der{X}.
  \end{equation*}
\end{dfi}
Again, it is a triangulated monoidal functor.

\subsection{Six functor formalism}

We recall that the six functors constitute a formalism on the
categories $\da(X)$ for schemes $X$ which associates to any morphism
of schemes $f:X\to Y$ adjunctions
\begin{equation}\label{eq:4ff-adjunctions}
  (\dL f^{*},\dR f_{*}):\da(Y)\to\da(X),\qquad (\dL f_{!},f^{!}):\da(X)\to\da(Y),
\end{equation}
and which endows $\da(X)$ with a closed monoidal structure
\begin{equation*}
  (\otimes^{\dL}, \dR\uhm).\footnote{In the literature the symbols
    $\dL$ and $\dR$ indicating that some left or right derivation takes
    place are often dropped from the notation. For us however, the
    distinction between the derived and underived functors will be
    important which is why we stick to the clumsier notation.}
\end{equation*}
All these functors are triangulated. The formalism governs the
relation between them, \eg{} under what conditions two of these
functors can be identified or when they commute. Some of these
relations are given explicitly
in~\cite[Sch.~1.4.2]{ayoub07-thesis}. We will also heavily use
the part concerning duality. Recall that on compact motives there is a
contravariant autoequivalence $(\bullet)^{\vee}$ which exchanges the
two adjunctions in~\eqref{eq:4ff-adjunctions} so that for example
$(\dR f_{*}M)^{\vee}\cong \dL f_{!}M^{\vee}$ for any compact motive
$M$ (see~\cite[Thm.~2.3.75]{ayoub07-thesis}). 

The same formalism is available in the analytic
(see~\cite{ayoub:betti}) and in the topological setting (at least if
the topological space is locally compact, see
\eg{}~\cite{kashiwara-schapira-90}). The main result of Ayoub
in~\cite{ayoub:betti} is that the Betti realization is compatible with
these, at least if one restricts to compact motives.

\subsection{dg enhancement}
\label{sec:betti.betti}

In the remainder of the section we will exhibit the (effective) Betti
realization as the derived functor of a left Quillen dg functor
(Proposition~\ref{pro:sgs-bti}). This will be used in §\ref{sec:AN} to
construct a motivic realization $\dae\to\Der{\iehm}$. Our argument here relies on our discussion of left dg Kan extensions in~\cite{choudhury-gallauer:dg-hty}, and we will frequently refer to that paper.

Let $X$ be a complex analytic space. Denote by $\sgs(X)$ the complex
of singular chains in $X$ (with $\Lambda$-coefficients). This extends
to a lax monoidal functor $\sgs$ on topological spaces in virtue of
the Eilenberg-Zilber map (cf.~\cite[VI,~12]{dold:lectures-at}). Its
``left dg Kan extension'' (rather, the functor underlying
the left dg Kan extension of~\cite[Fact~2.1]{choudhury-gallauer:dg-hty})
is denoted by
\begin{equation*}
  \sgs^{*}:\U(\ansm)\to\ch(\Lambda).
\end{equation*}
It possesses an induced lax-monoidal structure, by~\cite[Lemma~2.2]{choudhury-gallauer:dg-hty}. 
Moreover, for each complex manifold $X$, $\sgs(X)$ is projective
cofibrant hence $\bullet\otimes \sgs(X)$ is a left Quillen functor. It
follows from~\cite[Lemma~2.5]{choudhury-gallauer:dg-hty}
that $\sgs^{*}$ is also a left Quillen functor with respect to the
projective model structures.
\begin{pro}\label{pro:sgs-bti}
  $\dL\sgs^{*}$ takes $(\Done,\mathrm{usu})$-local equivalences to
  quasi-isomorphisms and the induced functor
  \begin{equation*}
    \dae\xrightarrow{\Anal^{*}}\andae\xrightarrow{\dL\sgs^{*}}\Der{\Lambda}
  \end{equation*}
  is isomorphic to $\btie$ as monoidal triangulated functor. In
  particular, the following triangle commutes up to a monoidal
  isomorphism:
  \begin{equation*}
    \xymatrix{\sm\ar[r]^-{\Lambda(\bullet)}\ar[dr]_{\sgs\circ\Anal}&\dae\ar[d]^{\btie}\\
      &\Der{\Lambda}}
  \end{equation*}
\end{pro}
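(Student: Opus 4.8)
The plan is to identify $\dL\sgs^{*}$ with Ayoub's $\dR\iota_{*}$ up to natural isomorphism, and then invoke the definition of $\btie$ as $\dR\iota_{*}\circ\Anal^{*}$. The starting observation is that on the category $\open{X}$ of open subsets of a topological space $X$, the singular chain functor already computes sheaf cohomology: for a presheaf $F$ on $\open{X}$ the complex $\sgs^{*}(F)$ agrees with $\dR\Gamma(X,-)$ applied to the associated sheaf, because $U\mapsto\sgs(U)$ is a (homotopically) flasque resolution-type object and the left Kan extension formula unwinds to the usual singular-cochain computation of sheaf cohomology on a locally contractible space. More conceptually: $\sgs^{*}$ restricted along $\iota_{X}\colon\open{X}\to\ansm{X}$ is, by the universal property of left dg Kan extensions in~\cite[Fact~2.1, Lemma~2.5]{choudhury-gallauer:dg-hty}, the left derived functor of the composite $\U(\open{X})\xrightarrow{\iota_{X*}^{?}}\cdots$; one checks directly that the left adjoint whose derived functor is $\dL\sgs^{*}|_{\open{X}}$ agrees with $\iota_{X}^{*}$ followed by global sections, hence its derived functor is $\dR\iota_{X*}$ up to the identification $\Der{X}\simeq\hot(\U(\open{X})/\mathrm{usu})$.

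First I would prove the $(\Done,\mathrm{usu})$-locality: since $\sgs^{*}$ is left Quillen for the projective model structures (already noted), it suffices to show $\dL\sgs^{*}$ sends the generating local equivalences to quasi-isomorphisms, i.e.\ $\sgs(\Done\times Y)\to\sgs(Y)$ and the Čech hypercover maps to quasi-isomorphisms. The first is the classical homotopy invariance of singular homology ($\Done$ is contractible); the second is the fact that singular chains satisfy descent for open covers (Mayer–Vietoris / the small-simplices theorem), which gives that $\sgs$ of a hypercover is a quasi-isomorphism after realization. This formally yields a factorization of $\dL\sgs^{*}$ through $\andae$. Next I would compare $\dL\sgs^{*}\circ\Anal^{*}$ with $\btie=\dR\iota_{*}\circ\Anal^{*}$: by the preceding paragraph both $\dL\sgs^{*}$ and $\dR\iota_{*}$ on $\andae$ are induced from the same data on $\U(\open{-})$ via the Quillen equivalence $(\iota^{*},\iota_{*})$ of~\cite[Thm.~1.8]{ayoub:betti}, so they are canonically isomorphic as triangulated functors. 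The monoidal refinement comes from the induced lax-monoidal structure on $\sgs^{*}$ (\cite[Lemma~2.2]{choudhury-gallauer:dg-hty}), which on cofibrant objects is an iso by Eilenberg–Zilber, matching the monoidal structure on $\dR\iota_{*}$. Finally the commuting triangle with $\sm$ is immediate: $\Lambda(\bullet)$ followed by $\btie$ is by definition $\dR\iota_{*}\Anal^{*}\Lambda(\bullet)\cong\dL\sgs^{*}(\Lambda(\Anal(-)))=\sgs(\Anal(-))$, since $\sgs^{*}$ applied to a representable presheaf returns $\sgs$ of that object.

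The main obstacle I anticipate is the careful bookkeeping in the identification of $\dL\sgs^{*}|_{\open{X}}$ with $\dR\iota_{X*}$ — that is, checking that the left dg Kan extension formula for $\sgs^{*}$, when restricted to presheaves on $\open{X}$ and composed with the Quillen equivalence $\iota_X^*$, literally computes the derived pushforward rather than merely something abstractly isomorphic to it. Making this precise requires being careful about which model structures are in play (projective vs.\ $\mathrm{usu}$-local on $\U(\open X)$, which agree only after the locality step) and about cofibrancy when applying $\dL$; once that compatibility is nailed down the rest is formal from results already available in~\cite{ayoub:betti} and~\cite{choudhury-gallauer:dg-hty}. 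A secondary point needing care is verifying the monoidal isomorphism is coherent, not just objectwise, but this follows from uniqueness in the universal property of the left dg Kan extension.
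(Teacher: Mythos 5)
Your proposal takes a genuinely different route from the paper, so let me first describe the contrast. The paper never attempts to compare $\dL\sgs^{*}$ with $\dR\iota_{*}$ directly. Instead it leans entirely on Ayoub's singular analytic complex $\sgd$: Ayoub had already proven that $\sgd$ kills $(\Done,\mathrm{usu})$-local equivalences and that $\btie\cong\sgd\circ\Anal^{*}$ as a monoidal triangulated functor (\cite[Thm.~2.23, Cor.~2.26, 2.27, Pro.~2.83]{ayoub:galois1}). The actual content of the paper's proof is to produce a \emph{monoidal} zig-zag of quasi-isomorphisms $\sgd\simeq\sgs\circ\Anal$, by interpolating with a real-analytic cubical complex $\sgi$, appealing to the monoidal acyclic models theorem \cite{GNPR-monoidal-acyclic-models}, and reducing the sectionwise comparison to contractible manifolds via a hypercover. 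The locality statement for $\dL\sgs^{*}$ is then \emph{deduced} from this identification rather than proved independently. Your route — show directly that $\dL\sgs^{*}$ descends to $\andae$, then identify it with $\dR\iota_{*}$ via the Quillen equivalence $(\iota^{*},\iota_{*})$ by checking $\dL\sgs^{*}\circ\dL\iota^{*}\cong\mathrm{id}$ on $\Der{\Lambda}$ — is conceptually cleaner and bypasses $\sgd$ and $\sgi$ entirely. The trade-off is that you must independently establish the $(\Done,\mathrm{usu})$-locality of $\sgs$ and carry the monoidal coherence by hand, whereas the paper inherits both from Ayoub's work.

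Two places in your sketch need more than you give. First, ``Mayer--Vietoris / the small-simplices theorem'' gives descent for \emph{finite open covers}, not for arbitrary $\mathrm{usu}$-hypercovers; the passage from Čech descent to hyperdescent is not formal, and this is exactly the nontrivial input you would need to replace Ayoub's \cite[Thm.~2.23]{ayoub:galois1}. It is true — one can cite Dugger--Isaksen on topological hypercovers, or argue as the paper does by refining to a hypercover with contractible terms and using that a projective-cofibrant $\mathrm{usu}$-locally acyclic complex is objectwise acyclic — but as stated your justification does not suffice. Second, the identification $\dL\sgs^{*}\cong\dR\iota_{*}$ needs the concrete check $\dL\sgs^{*}\circ\dL\iota^{*}\cong\mathrm{id}$: the coend formula gives $\sgs^{*}(K_{\mathrm{cst}})\cong K\otimes\varinjlim_{X\in\ansm}\sgs(X)\cong K\otimes\sgs(\pt)\cong K$ because $\pt$ is terminal in $\ansm$, and $K_{\mathrm{cst}}=K\otimes\Lambda(\pt)$ is cofibrant since $\Lambda(\pt)$ is representable; spelling this out is what makes the identification honest rather than ``both are induced from the same data.'' The monoidal refinement also deserves a line: one should check that the lax monoidal structure on $\sgs^{*}$ coming from Eilenberg--Zilber is strong after deriving and that it agrees with the lax monoidal structure on $\dR\iota_{*}$ that comes from $\iota_{*}$ being right adjoint to a monoidal left adjoint; the paper avoids this check by invoking the \emph{monoidal} version of acyclic models.
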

\begin{proof}
  In fact, we will deduce the first statement from the second.

  For this let us recall the ``singular analytic complexes''
  constructed in~\cite[§2.2.1]{ayoub:galois1}. We denote by $\Done(r)$
  the open disk of radius $r$ centered at the origin (thus
  $\Done=\Done(1)$) and by $\Disk^{n}(r)$ the $n$-fold cartesian
  product ($n\geq 0$). Letting $r>1$ vary we obtain pro-complex
  manifolds $\overline{\Disk}^{n}=(\Disk^{n}(r))_{r>1}$. There is an
  obvious way to endow the family $(\overline{\Disk}^{n})_{n\geq 0}$
  with the structure of a cocubical object in the category of
  pro-complex manifolds (see~\cite[Déf.~2.19]{ayoub:galois1}). For any
  complex manifold $X$ one then deduces a cubical $\Lambda$-module
  $\uhom(\overline{\Disk}^{\bullet},X)$, where the latter in degree
  $n$ is given by $\varinjlim_{r>1}\Lambda\ansm(\Disk^{n}(r),X)$. The
  associated simple complex (see~\cite[Déf.~A.4]{ayoub:galois1}) is
  called the singular analytic complex associated to $X$, and is
  denoted by $\sgd(X)$. It clearly extends to a functor
  \begin{equation*}
    \sgd:\ansm\to \ch(\Lambda),
  \end{equation*}
  and admits a natural lax monoidal structure induced by the
  association
  \begin{equation*}
    (a:\Disk^{m}(r)\to X, b:\Disk^{n}(r)\to Y)\longmapsto (a\times
    b:\Disk^{m+n}(r)\to X\times Y).
  \end{equation*}
  We would now like to prove that $\sgd$ and $\sgs$ are monoidally
  quasi-isomorphic, and for this we need a third, intermediate
  singular complex.

  For any real number $r>1$, denote by $\mathbb{I}^{1}(r)$ the open
  interval $(-r,r)$. Set $\mathbb{I}^{1}=\mathbb{I}^{1}(1)$. There is
  an obvious analytic embedding $\mathbb{I}^{1}(r)\to \Done(r)$ of
  real analytic manifolds. Denote by $\mathbb{I}^{n}(r)$ the $n$-fold
  cartesian product of $\mathbb{I}^{1}(r)$. Letting $r>1$ vary we
  obtain a pro-real analytic manifold
  $\overline{\mathbb{I}}^{n}$. There is an obvious embedding of
  pro-real analytic manifolds
  $\overline{\mathbb{I}}^{n}\to\overline{\mathbb{D}}^{n}$ for each
  $n$, and by restriction this induces the structure of a cocubical
  object in pro-real analytic manifolds on
  $\overline{\mathbb{I}}^{\bullet}$.

  Denoting by $i$ the inclusion $\ansm\inj\anmfd$ we
  obtain a monoidal natural transformation
  \begin{equation}
    \sgd\to\sgi\circ i,\label{eq:sg-D-I}
  \end{equation}
  and it suffices to prove that this is sectionwise a
  quasi-isomorphism. Indeed, a similar argument as in~\cite[App.~A,
  §2, Thm.~2.1]{massey-basic-alg-top-1991} shows that the right hand
  side is monoidally quasi-isomorphic to the analogous functor of
  cubical complexes of \emph{continuous} functions. And the latter is
  in turn monoidally quasi-isomorphic to $\sgs$,
  by~\cite[Thm.~5.1]{GNPR-monoidal-acyclic-models}.

  Following~\cite{ayoub:galois1}, we denote the ``left dg Kan
  extension'' of $\sgd$ again by the same symbol
  $\sgd:\U\ansm\to\ch(\Lambda)$. (That this indeed coincides with the
  functor in~\cite{ayoub:galois1} follows from~\cite[Lemma~3.21]{choudhury-gallauer:dg-hty}. 
  Cocontinuity is a consequence of~\cite[Lem.~A.3]{ayoub:galois1}.)
  \cite[Thm.~2.23]{ayoub:galois1} together with~\cite[Cor.~2.26,
  2.27]{ayoub:galois1} show that $\sgd$ takes
  $(\Done,\mathrm{usu})$-local equivalences to
  quasi-iso\-mor\-phisms. The same argument also shows that
  $\mathrm{Sg}^{\mathbb{I}}$ takes $(\mathbb{I},\mathrm{usu})$-local
  equivalences to quasi-isomorphisms. Now, let's start with a complex
  manifold $X$. We want to prove that~\eqref{eq:sg-D-I} applied to $X$
  is a quasi-isomorphism. For this, choose a $\mathrm{usu}$-hypercover
  $X_{\bullet}\to X$ of complex manifolds such that each representable
  in each degree is contractible. This can also be considered as a
  $\mathrm{usu}$-hypercover of real analytic manifolds, and by what we
  just discussed, the two horizontal arrows in the following
  commutative square are quasi-isomorphisms:
  \begin{equation*}
    \xymatrix{\sgd(X_{\bullet})\ar[r]\ar[d]&\sgd(X)\ar[d]\\
      \sgi(i(X_{\bullet}))\ar[r]&\sgi(i(X))}
  \end{equation*}
  By~\cite[Lemma~4.2]{choudhury-gallauer:dg-hty}, 
  we reduce to show that for any contractible complex manifold $Y$,
  $\sgd(Y)\to\sgi(i(Y))$ is a quasi-isomorphism, which is
  easy. 

  By~\cite[Cor.~2.26, Pro.~2.83]{ayoub:galois1}, $\btie$ is isomorphic
  to $\sgd\circ\Anal^{*}$ as triangulated monoidal functor hence the
  discussion above implies the second statement of the
  proposition. The first statement can now be deduced as follows. From
  the monoidal quasi-isomorphism $\sgd\sim \sgs$ we obtain
  triangulated monoidal isomorphisms
  (cf.~\cite[Lemma~2.2]{choudhury-gallauer:dg-hty})
  \begin{equation*}
    \sgd\cong\dL\sgd\cong \dL(\sgs)^{*}:\Der{\U\ansm}\to\Der{\Lambda}.
  \end{equation*}
  Indeed, the second isomorphism can be checked on representables
  (these are compact generators of $\Der{\U\ansm}$ by~\cite[Lemma~3.20]{choudhury-gallauer:dg-hty}) and these objects are
  cofibrant.
\end{proof}

\begin{rem}\label{rem:bounded-above-usu-fibrant}
  Using the topological singular complex we can construct an explicit
  fibrant model for the unit spectrum in $\anda$ as follows. Denote by
  $\sgsd$ the presheaf of complexes on $\ansm$ which takes a complex
  manifold $X$ to $\sgs(X)^{\vee}$. Let $U=(\Pe^{1}\times
  \Pe^{1})\backslash\Delta(\Pe^{1})$, and let $u$ be a rational point
  of $U$ over $\Pe^{1}\times\{\infty\}$. As
  in~\cite[§2.3.1]{ayoub:galois1}, we can use
  $T^{\An}=\Lambda(U^{\An})/\Lambda(u^{\An})$ to form symmetric
  spectra (hence $T^{\An}$ is a cofibrant replacement of
  $\Lambda(\A^{1,\An})/ \Lambda(\G_{m}^{\An})$). Fix
  $\hat{\beta}\in\sgsd_{-2}(U^{\An},u^{\An};\Lambda)$ whose class in
  $\h^{2}(U^{\An},u^{\An};\Lambda)\cong\Lambda$ is a generator. Define
  a symmetric $T^{\An}$-spectrum $\sgsdbb$ which in level $n$ is
  $\sgsd[-2n]$ with the trivial $\Sigma_{n}$-action, and whose bonding
  maps are given by the adjoints of the quasi-isomorphism
  \begin{align*}
    \hat{\beta}\times\bullet:\sgsd(X)[-2n]\to \sgsd((U,u)\times
    X)[-2(n+1)]
  \end{align*}
  for any complex manifold $X$.

  The canonical morphism $\Lambda_{\mathrm{cst}}\to\sgsd$ induces by
  adjunction a morphism of symmetric spectra
  $\sus_{T^{\An}}\Lambda_{\mathrm{cst}}\to\sgsdbb$ which in level $n$
  is given by the composition
  \begin{align*}
    (T^{\An})^{\otimes n}\otimes\Lambda_{\mathrm{cst}}&\to(T^{\An})^{\otimes
      n}\otimes\sgsd\\
    &\xrightarrow{\id\otimes(\hat{\beta}\times\bullet)^{n}}(T^{\An})^{\otimes
      n}\otimes\uhom((T^{\An})^{\otimes
      n},\sgsd[-2n])\\
    &\xrightarrow{\mathrm{ev}}\sgsd[-2n].
  \end{align*}
  The first arrow is a $\mathrm{usu}$-local equivalence, the second
  arrow is a quasi-isomorphism, and the third is a
  $(\Done,\mathrm{usu})$-local equivalence since $T^{\An}$ is
  invertible in $\andae$, by~\cite[Lem.~1.10]{ayoub:betti}. It follows
  that $\sus_{T^{\An}}\Lambda_{\mathrm{cst}}\to\sgsdbb$ is a levelwise
  $(\Done,\mathrm{usu})$-local equivalence. Since the source is an
  $\Omega$-spectrum so is $\sgsdbb$. Also, since
  $\Lambda_{\mathrm{cst}}$ is $\Done$-local so is $\sgsdbb$
  levelwise. Finally, for any $\mathrm{usu}$-hypercover
  $X_{\bullet}\to X$ of a complex manifold $X$,
  $\sgsd(X)\to\sgsd(X_{\bullet})$ is a quasi-isomorphism which proves
  that $\sgsdbb$ is levelwise $\mathrm{usu}$-fibrant.

  Summing up, we have proved that $\sgsdbb$ is a projective stable
  $(\Done,\mathrm{usu})$-fibrant replacement of
  $\sus_{T^{\An}}\Lambda_{\mathrm{cst}}$.
\end{rem}

\section{Ayoub's Galois group}
\label{sec:A}
We recall here the construction of Ayoub's motivic Galois group
in~\cite{ayoub:galois1}. In section~1 of that paper he develops a weak
Tannaka duality theory which allows to factor certain monoidal
functors $f:{\cal M}\to {\cal E}$ between monoidal categories
universally as
\begin{equation}\label{eq:A-factorization}
  {\cal M}\xrightarrow{\tilde{f}} \coMod{{\cal H}(f)}\xrightarrow{\ff}{\cal E}
\end{equation}
for a commutative bialgebra ${\cal H}(f)\in{\cal E}$, where $o$ is the
forgetful functor, and where both functors in the factorization are
monoidal. This was applied in~\cite{ayoub:galois1} to the monoidal
(effective) Betti realization functor
\begin{equation*}
  \bti:\da\to\Der{\Lambda}\qquad  (\text{resp. }\btie:\dae\to\Der{\Lambda}),
\end{equation*}
see~Definitions~\ref{dfi:btie} and~\ref{dfi:bti}.
\begin{dfi}
  \begin{enumerate}
  \item \emph{Ayoub's effective motivic bialgebra} is
    $\ealga:=\ealga{\Lambda}:={\cal H}(\btie)\in\Der{\Lambda}$.
  \item \emph{Ayoub's motivic Hopf algebra} is
    $\alga:=\alga{\Lambda}:={\cal H}(\bti)\in \Der{\Lambda}$. It is
    indeed a (commutative) Hopf algebra, as shown
    in~\cite{ayoub:galois1}.
  \end{enumerate}
\end{dfi}
The bialgebras do not depend (up to canonical isomorphism) on the
topology chosen.  Explicitly, as objects in $\Der{\Lambda}$ they are
given by $\alga=\bti\rbti\Lambda$ and $\ealga=\btie\rbtie\Lambda$.

We said above that these bialgebras enjoy a universal property; let us
recall the precise statement for the effective case (an analogous
statement holds in the stable situation but we will not use this).
\begin{fac}[{\cite[Pro.~1.55]{ayoub:galois1}}]\label{pro:A-universal}
  Suppose we are given a commutative bialgebra $K$ in $\Der{\Lambda}$
  and a commutative diagram in the category of monoidal
  categories
  \begin{equation*}
    \xymatrix{\dae\ar[dr]_{\btie}\ar[r]^-{f}&\coMod{K}\ar[d]^{\ff}\\
      &\Der{\Lambda}}
  \end{equation*}
  where $o$ is the forgetful functor, such that $f(A_{\mathrm{cst}})$
  is the trivial $K$-comodule associated to $A$, for any
  $A\in\Der{\Lambda}$. Then there exists a unique morphism of bialgebras
  $\ealga\to K$ making the following diagram commutative:
  \begin{equation*}
    \xymatrix{\dae\ar[d]_{\tbtie}\ar[r]^-{f}&\coMod{K}\ar[d]^{\ff}\\
      \coMod{\ealga}\ar[r]_-{\ff}\ar[ru]&\Der{\Lambda}}
  \end{equation*}
\end{fac}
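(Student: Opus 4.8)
Fact~\ref{pro:A-universal} is the universal property built into Ayoub's weak Tannakian formalism, so the plan is to reconstruct its proof from the structure recalled above. Recall that $\ealga=\btie\rbtie\Lambda$, with its algebra structure coming from the lax-monoidal structure on the right adjoint $\rbtie$ of the strong monoidal functor $\btie$, and its coalgebra structure from the comonad $\btie\rbtie$; recall also that the factorization $\tbtie$ and the bialgebra structure are built by means of the projection isomorphism $\btie(M)\otimes N\cong\btie(M\otimes\rbtie N)$, valid for $M\in\dae$ and $N\in\Der{\Lambda}$ dualizable (such $N$ include the $\btie$-images of motives of smooth schemes). Given $f$ and $K$ as in the statement, the plan has three parts: construct a candidate bialgebra morphism $\phi\colon\ealga\to K$; show $\phi$ induces $f$ and is a bialgebra morphism; and show $\phi$ is the only such.

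\emph{Construction.} The object $\rbtie\Lambda$ is a commutative algebra in $\dae$, so the monoidal functor $f$ sends it to a commutative algebra in $\coMod{K}$ whose underlying object is $\ff f(\rbtie\Lambda)=\btie\rbtie\Lambda=\ealga$. Unwinding the definition of an algebra in $\coMod{K}$, this datum is exactly a coaction $\rho\colon\ealga\to\ealga\otimes K$ that is a morphism of algebras in $\Der{\Lambda}$. Define $\phi:=(\epsilon\otimes\id_{K})\circ\rho\colon\ealga\to\Lambda\otimes K=K$, where $\epsilon\colon\ealga\to\Lambda$ is the counit of the bialgebra $\ealga$. Since $\rho$ is an algebra map and a bialgebra counit is always an algebra map, $\phi$ is a morphism of algebras. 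As a sanity check: for $K=\ealga$ and $f=\tbtie$ one has $\rho=\Delta$, and the counit axiom gives $\phi=\id_{\ealga}$; for $K=\Lambda$ and $f$ the trivial-comodule functor one gets $\phi=\epsilon$.

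\emph{$\phi$ induces $f$ and respects comultiplication.} The core claim is that for every $M\in\dae$ the $K$-comodule $f(M)$ agrees with the corestriction along $\phi$ of the $\ealga$-comodule $\tbtie(M)$; equivalently, the $K$-coaction on $\ff f(M)=\btie M$ equals $(\id\otimes\phi)\circ\alpha_{M}$, where $\alpha_{M}$ denotes the $\ealga$-coaction. I would prove this first on a generating family — the motives $\Lambda(X)$ of smooth schemes $X$, which generate $\dae$ under colimits and shifts and which $\btie$ sends to perfect, hence dualizable, complexes — where both coactions are governed by the \emph{same} projection isomorphisms, so that the identity $\ff f=\btie$ (monoidally) together with the hypothesis that $f$ carries constant complexes to trivial $K$-comodules forces them to coincide. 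Since both $f$ and $\phi_{*}\circ\tbtie$ are monoidal and preserve colimits and triangles, the identity then propagates by naturality to all $M\in\dae$; this is precisely the statement $\phi_{*}\circ\tbtie=f$. Specializing the core claim to $M=\rbtie\Lambda$ with its regular structure (so $\alpha_{M}=\Delta$) yields $\rho=(\id\otimes\phi)\circ\Delta$, and feeding this into coassociativity and counitality of $\Delta$ shows that $\phi$ commutes with comultiplications and counits; hence $\phi$ is a morphism of bialgebras.

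\emph{Uniqueness and the main difficulty.} If $\psi\colon\ealga\to K$ also satisfies $\psi_{*}\circ\tbtie=f$, then evaluating on $\rbtie\Lambda$ — whose image under $\tbtie$ is the regular comodule — gives $\rho=(\id\otimes\psi)\circ\Delta$; composing with $\epsilon\otimes\id_{K}$ and using $(\epsilon\otimes\id)\circ\Delta=\id$ yields $\psi=\phi$. The step I expect to be genuinely hard is the core claim of the previous paragraph: converting the hypothesis that $f$ is a \emph{monoidal} lift of $\btie$ into the equality of the two coactions requires unwinding the projection isomorphisms that underlie the very definitions of $\ealga$, $\Delta$ and the $\alpha_{(-)}$, and it is exactly there that the normalization hypothesis on constant complexes is used; the remaining steps are formal diagram-chases in symmetric monoidal categories. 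In the paper this is \cite[Pro.~1.55]{ayoub:galois1}, which we invoke rather than reprove.
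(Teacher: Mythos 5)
This is labelled a \emph{Fact} in the paper and cited to Ayoub's Proposition~1.55 with no internal argument, so there is nothing here to compare your sketch against; the authors, like you, ultimately defer to the reference. Your reconstruction of the overall shape is sensible: extract the candidate $\phi$ by applying $f$ to the algebra $\rbtie\Lambda$, reading off the $K$-coaction $\rho$ on the underlying object $\ff f(\rbtie\Lambda)=\ealga$, and post-composing with the counit; the uniqueness argument from evaluating on the regular comodule is the standard one.

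The genuine soft spot is the propagation step. You assert that once $f=\phi_*\circ\tbtie$ has been checked on the generators $\Lambda(X)$, the identity ``propagates by naturality'' because both functors ``preserve colimits and triangles.'' In Ayoub's \emph{weak} Tannakian framework this is not available: $\coMod{K}$ for a bialgebra $K$ in the triangulated category $\Der{\Lambda}$ is not itself triangulated (cones of comodule maps carry no canonical coaction), nor abelian, nor cocomplete --- that is precisely the weakness the formalism is built to tolerate --- so ``$f$ preserves triangles'' is not even a meaningful hypothesis. Even after applying $\ff$ and viewing the two coactions as natural transformations $\btie(-)\to\btie(-)\otimes K$ between exact coproduct-preserving functors $\dae\to\Der{\Lambda}$, equality on compact generators does not pass through cones: the full subcategory of objects on which two natural transformations agree is closed under coproducts and shifts but is not a triangulated subcategory. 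Ayoub's actual argument therefore has to verify the compatibility of coactions uniformly in $M$, by unwinding the projection-isomorphism description of the $\ealga$-coaction against the monoidality of $f$ and the normalization hypothesis on constant complexes. This is exactly the ``core claim'' you correctly flag as the hard part and leave to the citation, which is consistent with the paper's own treatment of the statement.
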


Now, consider the functor $\h_{0}:\Der{\Lambda}\to \Mod{\Lambda}$
which associates to a complex its 0th
homology. By~\cite[Cor.~2.105]{ayoub:galois1}, the homology of
$\ealga$ and $\alga$ is concentrated in non-negative degrees and it
follows that the bialgebra (resp. Hopf algebra) structure descends to
the 0th homology of $\ealga$ (resp. $\alga$).
\begin{dfi}
  \begin{enumerate}
  \item We set $\Ha:=\h_{0}(\ealga)\in\Mod{\Lambda}$. This is thus a
    bialgebra over $\Lambda$.
  \item We set $\Has:=\h_{0}(\alga)\in\Mod{\Lambda}$. This is thus a
    Hopf algebra over $\Lambda$.
  \item \emph{Ayoub's motivic Galois group} $\grpa:=\grpa{\Lambda}$ is
    the spectrum of $\Has$. Thus it is a pro-group scheme over
    $\spec(\Lambda)$.
  \end{enumerate}
\end{dfi}

\begin{rem}
  By~\cite[Thm.~2.14]{ayoub:galois1}, $\alga$ (resp.\ $\Has$) is
  obtained by localization from $\ealga$ (resp.\ $\Ha$), as
  follows. Choose an isomorphism
  \begin{equation}\label{eq:SA}
    \btie(T[2])\xrightarrow{\sim}\Lambda.
  \end{equation}
  We then let $\sa\in\ealga$ be the image of $1\in\Lambda$ under the
  composition
  \begin{equation*}
    \Lambda\xleftarrow[\sim]{\eqref{eq:SA}}\btie(T[2])\xrightarrow{\coa}\ealga\otimes\btie(T[2])\xrightarrow[\sim]{\eqref{eq:SA}}\ealga\otimes\Lambda\cong \ealga,
  \end{equation*}
  where $\coa$ denotes the coaction of $\ealga$ on
  $\btie(T[2])$. Clearly, $\sa$ does not depend on the choice of the
  isomorphism~\eqref{eq:SA}. By~\cite[Thm.~2.14]{ayoub:galois1},
  $\alga$ is the sequential homotopy colimit of the diagram
  \begin{equation*}
    \ealga\xrightarrow{\sa\times \bullet}  \ealga\xrightarrow{\sa\times \bullet}\cdots.
  \end{equation*}
  Applying $\h_{0}$ we see that $\Has=\Ha[\sa^{-1}]$ as an algebra.
\end{rem}

In order to apply the results on the category of $\Haors$-comodules in
appendix~\ref{sec:comod} we will need the following result.
\begin{lem}\label{lem:A-flat}
  Let $\Lambda$ be a principal ideal domain. Then $\Ha$ and $\Has$ are
  flat $\Lambda$-modules.
\end{lem}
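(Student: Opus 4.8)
The plan is to exploit the structural result just recalled, namely that $\Has=\Ha[\sa^{-1}]$ as an algebra, so that it suffices to prove $\Ha$ is a flat (equivalently, torsion-free) $\Lambda$-module; localization preserves flatness, so flatness of $\Has$ follows from that of $\Ha$. Since $\Lambda$ is a principal ideal domain, a module is flat if and only if it is torsion-free, so the goal reduces to: $\h_0\ealga$ has no $\Lambda$-torsion.

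To access this, recall that $\ealga=\btie\rbtie\Lambda$, and that $\btie$ is computed by $\dL\sgs^{*}\circ\Anal^{*}$ up to monoidal isomorphism (Proposition \ref{pro:sgs-bti}); in particular $\ealga$ is a complex of $\Lambda$-modules arising from singular chains. First I would unwind Ayoub's weak Tannakian construction of $\rbtie\Lambda$: it is (a derived functor of) the right adjoint to $\btie$ applied to the unit, so $\btie\rbtie\Lambda$ is a homotopy colimit (indexed over an appropriate diagram of compact motives, or of pairs of smooth schemes) of complexes of the form $\bti(M)^{\vee}\otimes\bti(M)$ or, more concretely, of singular (co)chain complexes $\sgs(X^{\An})$ of smooth $k$-varieties $X$ together with their duals. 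The key point is that each such building block is a complex of \emph{free} (or at least torsion-free) $\Lambda$-modules when $\Lambda$ is a PID: singular chains on a topological space are free, and the relevant Hom-duals of bounded-above complexes of finitely generated free modules are again complexes of finitely generated free modules. I would then invoke the fact that a filtered colimit of torsion-free $\Lambda$-modules is torsion-free, and that homotopy colimits in $\Der{\Lambda}$ are computed by (shifted) mapping telescopes, so $\h_0$ of such a homotopy colimit is a filtered colimit of the $\h_0$'s of the terms — hence torsion-free provided each term's $\h_0$ is. Alternatively, and perhaps more cleanly, one argues that $\ealga$ is represented by a complex of flat $\Lambda$-modules and its homology is concentrated in non-negative degrees (by \cite[Cor.~2.105]{ayoub:galois1}), so $\h_0\ealga=\coker(\partial_1)$ of such a complex sitting at the bottom; torsion-freeness of $\h_0$ then follows because any torsion class in the cokernel would, after multiplication by a nonzero $\lambda\in\Lambda$, lift to a boundary, and a short diagram chase using flatness of the terms in degree $\leq 0$ shows the class was already zero.

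The main obstacle I anticipate is bookkeeping the homotopy-colimit presentation of $\rbtie\Lambda$ carefully enough to be sure that, after passing to $\h_0$, one really does get a \emph{filtered} colimit of torsion-free modules rather than something involving higher derived terms or non-filtered (e.g.\ cubical or simplicial) colimits that could introduce torsion via a spectral sequence. One must also check that the duals $\bti(M)^{\vee}$ appearing in Ayoub's construction are modeled by honest complexes of free $\Lambda$-modules — this uses that $M$ is compact, hence $\bti(M)$ is a perfect complex, and over a PID a perfect complex is quasi-isomorphic to a bounded complex of finitely generated free modules, whose termwise $\Lambda$-dual is again of that form. Once these two points are secured, torsion-freeness — and hence flatness over the PID $\Lambda$ — is immediate for $\Ha$, and the localization argument disposes of $\Has$.
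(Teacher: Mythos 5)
Your proposed argument does not work, and the gap is not a matter of bookkeeping but a genuine structural obstacle; the paper's proof uses an arithmetic input that your approach never touches.

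First, your ``building blocks are free'' strategy fails at the start. Every object of $\Der{\Lambda}$ is represented by a complex of free modules, so freeness of the terms gives no information about torsion in homology. More to the point, the pieces that actually occur in Ayoub's construction do \emph{not} have torsion-free homology: already for a smooth projective $k$-variety $X$, the group $\h_{*}(X^{\An};\Z)$ can have torsion (e.g.\ Enriques surfaces), and for a compact motive $M$ the homology of $\bti(M)\otimes^{\dL}\bti(M)^{\vee}$ can acquire further torsion through Tor-terms. So even if you had a clean filtered-colimit presentation of $\ealga$, the terms would not be torsion-free, and the argument would not close. Your ``alternative, cleaner'' cokernel argument is simply false: a complex of free $\Lambda$-modules concentrated in non-negative degrees with homology in non-negative degrees can perfectly well have $\Lambda$-torsion in $\h_{0}$ — the two-term complex $\Lambda\xrightarrow{2}\Lambda$ (for $\Lambda=\Z$) already does. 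The ``short diagram chase'' you gesture at has no content: from $\lambda x=\partial_{1}y$ one cannot conclude $x\in\mathrm{im}\,\partial_{1}$.

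What you are missing is that torsion-freeness of $\Ha$ is not a formal consequence of the model; it is an arithmetic fact. The paper proves it by invoking the distinguished triangle
$C'\to\ealga\to C\to C'[-1]$
from~\cite[Cor.~1.27]{ayoub:galois2}, where $C'=C^{0}(\mathrm{Gal}(\overline{k},k),\Lambda)$ is the (torsion-free, degree-0) module of locally constant $\Lambda$-valued functions on the absolute Galois group, and $C$ is a complex of $\Lambda\otimes_{\Z}\Q$-modules, hence with torsion-free homology. The existence of this triangle — in particular that the torsion information in $\ealga$ is entirely captured by the Galois piece $C'$ — rests on the Suslin--Voevodsky rigidity theorem. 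Running the long exact homology sequence of this triangle, using that $C'$ lives in degree~$0$ and that $\h_{*}(C)$ is rational, yields torsion-freeness of all $\h_{n}(\ealga)$, hence flatness of $\Ha$ over a PID; the statement for $\Has=\Ha[\sa^{-1}]$ then follows by localization, as you correctly observe at the outset. So the reduction to $\Ha$ and the ``flat iff torsion-free over a PID'' observation are fine, but the core of the argument requires the rigidity-theoretic decomposition, which your proposal does not supply.
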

\begin{proof}
  The proof is the same in both cases; we do it for
  $\Ha$. By~\cite[Cor.~1.27]{ayoub:galois2}, $\ealga$ sits in a
  distinguished triangle
  \begin{equation*}
    C'\to\ealga\to C\to C'[-1],
  \end{equation*}
  where $C$ is a complex in $\Der{\Lambda\otimes_{\Z}\Q}$. (Explicitly,
  $C'=C^{0}(\mathrm{Gal}(\overline{k},k),\Lambda)$, the
  $\Lambda$-module of locally constant functions on the absolute
  Galois group of $k$ with values in $\Lambda$, which maps canonically
  to $\ealga$. Essentially due to the Rigidity Theorem of
  Suslin-Voevodsky, this map is a quasi-isomorphism for torsion
  coefficients.) Looking at the associated long exact sequence in
  homology one sees that all homologies of $\alga$ must be
  torsion-free thus flat.
\end{proof}

\begin{rem}
  The Betti realization can be constructed in a similar way also for
  Voevodsky motives (see~\cite[§1.1.2]{ayoub:galois2}), and the same
  weak Tannakian formalism applies to give two bialgebras in
  $\Der{\Lambda}$. It is proved in~\cite[Thm.~1.13]{ayoub:galois2} that
  they are canonically isomorphic to $\ealga$ and $\alga$,
  respectively. In the case of the Hopf algebras (and this is the case
  we are chiefly interested in), this follows from the fact that the
  canonical functor
  \begin{equation*}
    \da^{\mathrm{\acute{e}t}}(k)\to\dm^{\mathrm{\acute{e}t}}(k)
  \end{equation*}
  is an equivalence.
\end{rem}

\section{Motivic representation}
\label{sec:NA}

The goal of this section is to factor the homology representation
$\h_{\bullet}:\diaeff\to\fMod{\Lambda}$ through the Betti realization
$\h_{0}\circ\bti:\da\to\Mod{\Lambda}$
(Propositions~\ref{pro:NA-eff}, \ref{pro:NA-eff-ugm}) in order
to obtain a morphism of bialgebras $\na:\Hns\to\Has$. Let us see how
to derive a solution $\rep:\diaeff\to\da$ to this task.

We saw in the previous section that for \emph{smooth} schemes $X$,
$\btie \Lambda(X)$ computes the Betti homology of $X$. A first guess
might be that for \emph{any} scheme $X$, $\btie\Lambda\hom(\bullet,X)$
also does. We don't know whether this is true. Instead we notice
that, for $X$ with smooth structure morphism $\pi:X\to k$, there are
canonical isomorphisms
\begin{equation*}
  \dL\sus_{T}\Lambda(X)\cong \dL\pi_{\#}\pi^{*}\Lambda\cong\dL\pi_{!}\pi^{!}\Lambda
\end{equation*}
in $\da$. The last expression makes sense for \emph{any} scheme $X$,
and we will prove below that the Betti realization of this object
indeed computes the Betti homology of $X$. We should remark that there
is nothing original about this idea. The object
$\dL\pi_{!}\pi^{!}\Lambda$ (and not the presheaf
$\Lambda\hom(\bullet,X)$) is commonly considered to be the ``correct''
representation of $X$ in $\da$, and is therefore also called the
\emph{(homological) motive of $X$}. The six functors formalism
also allows to naturally define a relative motive associated to a pair
of schemes, and this will yield the representation $\rep$ we were looking
for.

\subsection{Construction}

Let $(X,Z,n)$ be a vertex in Nori's diagram of pairs. Fix the
following notation:
\begin{equation*}
  Z\xrightarrow{i}X\xleftarrow{j}U,\qquad \pi:X\to k,
\end{equation*}
where $U=X\backslash Z$ is the open complement. Set
\begin{equation*}
  \rep(X,Z,n)=\dL\pi_{!}\dR j_{*}j^{*}\pi^{!}\Lambda[n]\in\da.
\end{equation*}
This extends to a representation $\rep:\diaeff\to\da$ as follows:
\begin{itemize}
\item The first type of edge in $\diaeff$ is $(X,Z,n)\to
  (Z,W,n-1)$. We have the following distinguished (``localization'')
  triangle (of endofunctors) in $\da(X)$ (and similarly for the pair
  $(Z,W)$):
  \begin{equation}\label{eq:loc-triangle-hom}
    i_{!}i^{!}\xrightarrow{\mathrm{adj}}
    \id\xrightarrow{\mathrm{adj}} \dR
    j_{*}j^{*}\xrightarrow{\partial} i_{!}i^{!}[-1].
  \end{equation}
  (Here, as in the sequel, $\mathrm{adj}$ denotes the unit or counit
  of an adjunction.) Applying $\dL\pi_{!}$ and evaluating at
  $\pi^{!}\Lambda[n]$, we therefore obtain a morphism
  \begin{align*}
    \rep(\partial):\rep(X,Z,n)&\xrightarrow{\partial}\rep(Z,\emptyset,n-1)\\
    &\xrightarrow{\mathrm{adj}}\rep(Z,W,n-1).
  \end{align*}
\item The second type of edge $(X,Z,n)\to (X',Z',n)$ is induced from a
  morphism of varieties $f:X\to X'$ with $f(Z)\subset Z'$. We have the
  following commutative diagram of solid arrows in (endofunctors of)
  $\da(X')$:
  \begin{equation*}
    \xymatrix{\dL f_{!}i_{!}i^{!}f^{!}\ar[r]\ar[d]^{\mathrm{adj}}&\dL
      f_{!}f^{!}\ar[r]\ar[d]^{\mathrm{adj}}&\dL f_{!}\dR
      j_{*}j^{*}f^{!}\ar[r]\ar@{.>}[d]&\dL f_{!}i_{!}i^{!}f^{!}[-1]\ar[d]^{\mathrm{adj}}\\
      i'_{!}i'^{!}\ar[r]&\mathrm{id}\ar[r]&\dR j'_{*}j^{'*}\ar[r]&i'_{!}i'^{!}[-1]}
  \end{equation*}
  where the rows are distinguished (``localization'') triangles, and
  where the dotted arrow is the unique morphism making the vertical
  arrows into a morphism of triangles. (Uniqueness follows from the
  isomorphism $\dL f_{!}i_{!}\cong i'_{!}\dL (f|_{Z})_{!}$ and the fact that
  there are no non-zero morphisms from $i'_{!}$ to $\dR j'_{*}$.)
  After applying $\dL\pi'_{!}$, shifting by $n$, and evaluating at
  $\pi^{\prime!}\Lambda$ this dotted arrow gives the morphism
  $\rep(f):\rep(X,Z,n)\to \rep(X',Z',n)$ associated to $f$.
\end{itemize}
We will prove in a moment that this representation has the expected
properties. Before doing so we would like to recall the following
classical result.
\begin{fac}[{\cite{hironaka:triangulation}}]\label{lem:varieties-topology}
  Let $(X,Z)$ be a pair of varieties. Then its analytification
  $(X^{\An},Z^{\An})$ is a locally finite CW-pair. In particular,
  $X^{\An}$ and $Z^{\An}$ are paracompact, locally contractible, and
  locally compact.
\end{fac}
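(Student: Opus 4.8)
The statement is a combination of Hironaka's triangulation theorem with standard point-set topology, so the plan is to recall the former and read off the latter.

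\textbf{Reduction to triangulation.} Since a variety is by convention quasi-projective over $k$ and we have fixed $\sigma:k\inj\C$, the analytification $X^{\An}$ is realized as a locally closed complex analytic subset of some $\Pe^{N}(\C)$; in particular it is a second countable (equivalently $\sigma$-compact), locally compact Hausdorff space, and $Z^{\An}\subset X^{\An}$ is a closed analytic subset. I would then invoke Hironaka's triangulation theorem for semianalytic sets: there is an (abstract) simplicial complex $K$ together with a homeomorphism $|K|\xrightarrow{\sim}X^{\An}$ under which $Z^{\An}$ corresponds to the realization $|L|$ of a subcomplex $L\subseteq K$. Because $X^{\An}$ is second countable and locally compact, $K$ may be chosen locally finite. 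Thus $(X^{\An},Z^{\An})$ acquires the structure of a locally finite simplicial pair, hence of a locally finite CW-pair with $Z^{\An}$ a subcomplex, which is the first assertion.

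\textbf{Deducing the topological properties.} The ``in particular'' clause now follows from general facts about locally finite simplicial complexes applied to both $|K|=X^{\An}$ and $|L|=Z^{\An}$: local finiteness of the complex makes the realization locally compact, since the closed star of any vertex is a compact neighbourhood; the realization of a locally finite simplicial complex is metrizable, hence paracompact (alternatively one may cite that every CW-complex is paracompact); and it is locally contractible because the open stars of simplices form a basis of the topology and each such star is contractible. This gives all three properties for $X^{\An}$ and $Z^{\An}$.

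\textbf{Main point.} There is essentially nothing to prove beyond citing the correct form of Hironaka's result: the only subtlety is that the triangulation must be chosen simultaneously locally finite \emph{and} compatible with the closed subvariety $Z$ (and, implicitly, with any finite collection of subvarieties one might wish to add later, \eg{} in the construction of $\rep$). Both requirements are covered by Hironaka's treatment of triangulations adapted to locally finite families of semianalytic sets, so no additional argument is needed.
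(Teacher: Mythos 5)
The paper states this as a Fact and gives no proof beyond citing Hironaka's triangulation paper, so your write-up is simply an unpacking of that citation; it matches the intended source and is correct. The only caveat worth noting is that local finiteness of the triangulation is not automatic from Hironaka's statement for a general semianalytic set but does hold here because $X^{\An}$ is second countable and locally compact (as you observe), so the argument goes through as you describe.
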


\begin{pro}\label{pro:NA-eff}
  Suppose that $\Lambda$ is a principal ideal domain. Then there is an
  isomorphism of representations
  \begin{equation*}
    \xymatrix@C=5em{\diaeff\ar[r]^-{\h_{0}\tbti
        \rep}\ar[dr]_{\h_{\bullet}}&\fcoMod{\Has}\ar[d]^{\ff}\\
      &\fMod{\Lambda}}
  \end{equation*}
\end{pro}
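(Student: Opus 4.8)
The plan is to verify two things: first, that $\h_0\tbti\rep$ computes relative Betti homology on vertices, and second, that it respects the two types of edges in $\diaeff$. For the first point, I would start from the isomorphism $\rep(X,Z,n)=\dL\pi_!\dR j_*j^*\pi^!\Lambda[n]$ and apply the effective Betti realization $\btie$ (or rather its stable version $\bti$, which agrees on the relevant objects). By Ayoub's compatibility of $\bti$ with the six functors on compact motives, $\bti\rep(X,Z,n)$ is computed by the analogous expression $\dL\pi^{\An}_!\dR j^{\An}_*j^{\An,*}\pi^{\An,!}\Lambda[n]$ in $\Der{\mathrm{pt}}=\Der{\Lambda}$, where now $\pi^{\An}:X^{\An}\to\mathrm{pt}$, etc. Using Fact~\ref{lem:varieties-topology}, the spaces $X^{\An}$, $Z^{\An}$ are locally compact, so the topological six functors formalism applies; the object $\dR j_*j^*\pi^!\Lambda$ is (a shift of) the sheaf-theoretic relative cochain complex, and $\dL\pi_!$ followed by $\h_0$ recovers relative singular homology $\h_n(X^{\An},Z^{\An};\Lambda)$. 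The identification of sheaf cohomology of the pair with singular cohomology of the pair is exactly the comparison the authors say they defer to appendix~\ref{sec:coh-pair}; I would invoke it here as a black box.

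Next I would check the edges. For an edge $(X,Z,n)\to(X',Z',n)$ coming from $f:X\to X'$, the morphism $\rep(f)$ was defined via the functoriality of the localization triangle \eqref{eq:loc-triangle-hom}; applying $\h_0\bti$ and unwinding through the six-functor compatibilities, this becomes the map $\h_n(X^{\An},Z^{\An})\to\h_n(X^{\prime\An},Z^{\prime\An})$ induced by $f^{\An}$ — the naturality statements for $\dL f_!$, $\dR j_*$ under the Betti realization reduce this to the evident functoriality of relative singular homology. For the boundary edge $(X,Z,n)\to(Z,W,n-1)$, the morphism $\rep(\partial)$ was built from the connecting map $\partial$ in \eqref{eq:loc-triangle-hom} composed with an adjunction unit; after realization, $\partial$ realizes to the connecting homomorphism of the long exact sequence of the pair, and the composite recovers precisely the boundary map $\h_n(X^{\An},Z^{\An})\to\h_{n-1}(Z^{\An},W^{\An})$ of a triple used to define $\h_\bullet$ on $\diaeff$. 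So the two representations agree on vertices and edges, hence there is an isomorphism of representations $\h_0\tbti\rep\cong\h_\bullet$ into $\fMod{\Lambda}$, and since $\Lambda$ is a PID one checks that $\h_0\tbti\rep$ lands in finitely generated modules (relative homology of a pair of varieties is finitely generated), so it factors through $\fcoMod{\Has}$ by $\ff$ as claimed; the flatness from Lemma~\ref{lem:A-flat} ensures the comodule category behaves well.

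The main obstacle, I expect, is \emph{not} the naturality bookkeeping but rather making precise the passage from Ayoub's six-functor formalism on $\da$ to the topological six-functor formalism on locally compact spaces, \emph{together with} the identification of the resulting sheaf-theoretic object with relative singular (co)homology. Ayoub's compatibility result is stated for compact motives and for the four functors $\dL f^*,\dR f_*,\dL f_!,f^!$, so one must first ensure $\rep(X,Z,n)$ is compact (it is, being built from $\pi^!\Lambda$ with $\pi$ of finite type and the other functors preserving compactness in this setting) and then carefully match the analytic six functors with the topological ones on the pair $(X^{\An},Z^{\An})$. The comparison of sheaf cohomology of a pair with singular cohomology of a pair — for arbitrary locally compact, locally contractible, paracompact spaces, with coefficients in a PID — is genuinely the delicate input here, which is why the authors isolate it in a dedicated appendix; in the body of the proof I would simply cite that appendix and the compatibility theorems of Ayoub, and spend my effort on verifying that the \emph{morphisms} $\rep(\partial)$ and $\rep(f)$ are carried to the expected maps, since those are the structure maps defining the diagram representation $\h_\bullet$.
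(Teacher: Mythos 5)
Your outline is close to the paper's actual argument, and you correctly isolate the three ingredients (Ayoub's six-functor compatibility for $\bti$ on compact motives, the appendix comparison between sheaf and singular cohomology of a pair, and the edge verifications). You are also right to flag that $\rep(X,Z,n)$ must be compact for Ayoub's comparison to apply. But the central computational step is not quite right as you state it, and this is precisely where the proof has to work. You write that ``$\dR j_*j^*\pi^!\Lambda$ is (a shift of) the sheaf-theoretic relative cochain complex, and $\dL\pi_!$ followed by $\h_0$ recovers relative singular homology.'' There is no such direct identification: $\pi^!\Lambda$ is the dualizing complex, and $\dL\pi_!\dR j_*j^*\pi^!\Lambda$ has no simplicial description in the appendix or elsewhere in the paper. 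What the appendix actually provides (Fact~\ref{lem:coh-pair-model}) is a model for the \emph{cohomological} object $\dR\pi^{\An}_*j^{\An}_!\Lambda$, namely the relative singular \emph{cochain} complex $\sgs(X^{\An},Z^{\An})^{\vee}$. So one cannot invoke the appendix directly on the homological expression you have; one must first pass through duality: $\dL\pi_!\dR j_*j^*\pi^!\Lambda\cong(\dR\pi_*j_!\Lambda)^{\vee}$, apply Ayoub's comparison and the appendix on the $\dR\pi_*j_!\Lambda$ side, and then come back to singular homology via $\sgs(X^{\An},Z^{\An})^{\vee\vee}\cong\sgs(X^{\An},Z^{\An})$.

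That last isomorphism is the second gap: it requires $\sgs(X^{\An},Z^{\An})$ to be a \emph{strongly dualizable} object of $\Der{\Lambda}$, which you do not address. Finite generation of the homology groups alone is not the same thing; the paper reduces to $\sgs(X^{\An})$ via the localization triangle and then exhibits a quasi-isomorphic bounded complex of finitely generated free $\Lambda$-modules coming from the Basic Lemma. So I would say your plan is the right shape, but the proof of the vertex isomorphism has to be routed through the dual (cohomological motive $\dR\pi_*j_!\Lambda$, not the homological one) together with a strong-dualizability argument to close the double-dual gap. The edge verifications you sketch are in the spirit of what is done, but note that they too are carried out on the dual side (with the maps $\rep^{\vee}(f)$ and $\delta^{\vee}$), for the same reason.
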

\begin{proof}
  By Fact~\ref{lem:varieties-topology} and \ref{lem:coh-pair-model}
  the complex of relative singular cochains
  $\sgs(X^{\An},Z^{\An};\Lambda)^{\vee}$ provides a model for
  $\dR\pi^{\An}_{*}j^{\An}_{!}\Lambda$ in $\Der{\Lambda}$. We claim
  that the complex $\sgs(X^{\An},Z^{\An})$ defines a strongly
  dualizable object in $\Der{\Lambda}$. Indeed, using the
  distinguished triangle
\begin{equation*}
  \sgs(Z^{\An})\to \sgs(X^{\An})\to \sgs(X^{\An},Z^{\An})\to\sgs(Z^{\An})[-1],
\end{equation*}
we reduce to prove it for $\sgs(X^{\An})$. As we will see in
§\ref{sec:basic-lemma}, this complex is quasi-isomorphic to a bounded
complex of finitely generated free $\Lambda$-modules, thus it is a
strongly dualizable object.\footnote{In the notation of
  section~\ref{sec:asp.eff}, this quasi-isomorphic complex is
  $C^{\mathcal{Y}}(X,\emptyset)$ for a finite affine open cover
  $\mathcal{Y}$ of $X$.}

Therefore the canonical map from $\sgs(X^{\An},Z^{\An})$ to its double
dual is a quasi-isomorphism, and we obtain the following sequence of
isomorphisms in $\Mod{\Lambda}$, for every $n\in\Z$:
  \begin{align*}
    \h_{n}(X^{\An},Z^{\An})&=\h_{n}\sgs(X^{\An},Z^{\An})\\
    &\cong \h_{n}(\sgs(X^{\An},Z^{\An})^{\vee\vee})\\
    &\cong \h_{n}((\dR\pi^{\An}_{*}\dR
    j^{\An}_{!}\Lambda)^{\vee})&&\text{see
    appendix~\ref{sec:coh-pair}}\\\notag
  &\cong
    \h_{n}\bti((\dR\pi_{*} j_{!}\Lambda)^{\vee})&&\text{by the main results
    of~\cite{ayoub:betti}}\\\notag 
    &\cong\h_{n}\bti\dL\pi_{!}\dR
    j_{*}j^{*}\pi^{!}\Lambda&&\text{by duality}\\
    &\cong \h_{0}\bti \rep(X,Z,n)&&\text{since }\bti\text{ is triangulated.}
  \end{align*}
  This defines the isomorphism in the proposition. We have to check
  that it is compatible with the two types of edges in $\diaeff$.

  Let $f:(X,Z,n)\to (X',Z',n)$ be an edge in $\diaeff$. Compatibility
  with respect to $f$ will follow from the commutativity of the outer
  rectangle in the following diagram (namely, after applying $\h_{n}$
  and noticing that $\bti$ commutes with shifts):
  \begin{equation*}
    \xymatrix@C=5em{\sgs(X^{\An},Z^{\An})\ar[r]^{\sgs(f)}\ar@{-}[d]_{\sim}&\sgs(X^{\prime\An},Z^{\prime\An})\ar@{-}[d]^{\sim}\\
      \sgs(X^{\An},Z^{\An})^{\vee\vee}\ar[r]^{\sgs(f)^{\vee\vee}}\ar@{-}[d]_{\sim}&\sgs(X^{\prime\An},Z^{\prime\An})^{\vee\vee}\ar@{-}[d]^{\sim}\\
      (\dR\pi^{\An}_{*}j^{\An}_{!}\Lambda)^{\vee}\ar[r]^{\rep^{\An,\vee}(f)^{\vee}}\ar@{-}[d]_{\sim}&(\dR\pi^{\prime\An}_{*}j^{\prime\An}_{!}\Lambda)^{\vee}\ar@{-}[d]^{\sim}\\
      \bti(\dR\pi_{*}j_{!}\Lambda)^{\vee}\ar[r]^{\rep^{\vee}(f)^{\vee}}\ar@{-}[d]_{\sim}&\bti(\dR\pi^{\prime}_{*}j^{\prime}_{!}\Lambda)^{\vee}\ar@{-}[d]^{\sim}\\
      \bti\dL\pi_{!}\dR
      j_{*}j^{*}\pi^{!}\Lambda\ar[r]^{\rep(f)}&\bti\dL\pi'_{!}\dR j'_{*}j^{\prime*}\pi^{\prime!}\Lambda}
  \end{equation*}
  We will describe the arrows as we go along proving each square
  commutative. Starting at the bottom,
  $\rep^{\vee}(f):j^{\prime}_{!}\Lambda\to \dR f_{*}j_{!}\Lambda$ is
  defined dually to $\rep(f)$. It is then clear that the bottom square
  commutes. The same definition (using the six functors formalism,
  that is) in the analytic setting gives rise to the arrow
  $\rep^{\An,\vee}(f)$ in the third row. Again, by the main results
  of~\cite{ayoub:betti}, the third square commutes as
  well. Commutativity of the first square is clear, while
  commutativity of the second square follows from
  Lemma~\ref{lem:coh-pair-functoriality}.

  Let $(X,Z,n)$ be a vertex in $\diaeff$ and $W\subset Z$ a closed
  subvariety, giving rise to the edge $(X,Z,n)\to
  (Z,W,n-1)$. Compatibility with respect to this edge will follow from
  commutativity of the diagram
  \begin{equation*}
    \xymatrix{\h_{n}(X,Z)\ar@{-}[d]_{\sim}\ar[r]^{\partial}&\h_{n-1}(Z)\ar@{-}[d]_{\sim}\ar[r]&\h_{n-1}(Z,W)\ar@{-}[d]^{\sim}\\
      \h_{0}\bti \rep(X,Z,n)\ar[r]_-{\partial}&\h_{0}\bti
      \rep(Z,\emptyset,n-1)\ar[r]_-{\mathrm{adj}}&\h_{0}\bti \rep(Z,W,n-1)}
  \end{equation*}
  where the vertical arrows are the isomorphisms constructed above,
  and where the horizontal arrows on the right are induced by
  $(Z,\emptyset,n-1)\to (Z,W,n-1)$. In particular, the right square
  commutes by what we have shown above, and we reduce to prove
  commutativity of the left square. It can be decomposed as follows
  (before applying $\h_{n}$, using again that $\bti$ commutes with
  shifts; the horizontal arrows will be made explicit below):
  \begin{equation}\label{eq:coh-pair-boundary}
    \xymatrix@C=5em{\sgs(X^{\An},Z^{\An})\ar[r]^{\partial}\ar@{-}[d]_{\sim}&\sgs(Z^{\An})[-1]\ar@{-}[d]^{\sim}\\
      \sgs(X^{\An},Z^{\An})^{\vee\vee}\ar[r]^{\delta^{\vee}}\ar@{-}[d]_{\sim}&(\sgs(Z^{\An})[-1])^{\vee\vee}\ar@{-}[d]^{\sim}\\
      (\dR\pi^{\An}_{*}j^{\An}_{!}\Lambda)^{\vee}\ar[r]^{\delta^{\vee}}\ar@{-}[d]_{\sim}&(\dR\pi^{\An}_{*}i^{\An}_{*}\Lambda[1])^{\vee}\ar@{-}[d]^{\sim}\\
    \bti
    (\dR\pi_{*}j_{!}\Lambda)^{\vee}\ar[r]^{\delta^{\vee}}\ar@{-}[d]_{\sim}&\bti(\dR\pi_{*}i_{*}\Lambda[1])^{\vee}\ar@{-}[d]^{\sim}\\
  \bti\dL\pi_{!}\dR j_{*}j^{*}\pi^{!}\Lambda\ar[r]^{\rep(\partial)}&\bti\dL\pi_{!}i_{!}i^{!}\pi^{!}\Lambda[-1]}
  \end{equation}
  Starting at the bottom, the morphism $\delta$ arises from the
  distinguished triangle of motives over $X$:
  \begin{equation}\label{eq:coh-pair-def-delta}
    i_{*}\Lambda[1]\xrightarrow{\delta} j_{!}\Lambda\to \Lambda\to i_{*}\Lambda.
  \end{equation}
  Taking the dual we obtain the other localization
  triangle~\eqref{eq:loc-triangle-hom}. 
  Thus commutativity of the bottom square follows.

  We can consider the exact same distinguished triangle
  as~\eqref{eq:coh-pair-def-delta} in the analytic setting. This gives
  rise to the arrow $\delta^{\vee}$ in the third row
  of~\eqref{eq:coh-pair-boundary}. Thus commutativity of the third
  square in~\eqref{eq:coh-pair-boundary} follows from the fact that
  the compatibility of the Betti realization with the six functors
  formalism is also compatible with the triangulations.

  By Lemma~\ref{lem:coh-pair-triangles} in the appendix, the second
  square in~\eqref{eq:coh-pair-boundary} commutes if we take
  $\delta^{\vee}$ in the second row to be induced by the short exact
  sequence of singular cochain complexes. We leave it as an exercise
  to prove that this renders the top square
  in~\eqref{eq:coh-pair-boundary} commutative after applying
  $\h_{n}$.
\end{proof}

\subsection{Monoidality}

Our next goal is to prove that the isomorphism of the proposition
preserves the \ugm{} structures of the two representations (restricted
to $\diagoodeff$; cf.~appendix~\ref{sec:N-ugm}), in other words that
it is compatible with the K\"{u}nneth isomorphism in Betti
homology. But first we must define the \ugm{} structure on the
representation $\h_{0}\tbti \rep$.

Let $(X_{1},Z_{1})$ and $(X_{2},Z_{2})$ be two pairs of varieties, and
set $\overline{X}=X_{1}\times X_{2}$, $\overline{Z}_{1}=Z_{1}\times
X_{2}$, $\overline{Z}_{2}=X_{1}\times Z_{2}$,
$\overline{Z}=\overline{Z}_{1}\cup \overline{Z}_{2}$. There is a
canonical morphism (a motivic ``cup product'')
\begin{equation}\label{eq:cupproduct-motives}
  \dR\overline{\pi}_{*}\overline{j}_{1!}\Lambda\otimes^{\dL}\dR\overline{\pi}_{*}\overline{j}_{2!}\Lambda\to\dR\overline{\pi}_{*}(\overline{j}_{1!}\Lambda\otimes^{\dL}\overline{j}_{2!}\Lambda)\cong \dR\overline{\pi}_{*}\overline{j}_{!}\Lambda
\end{equation}
and we obtain (for $\overline{n}=n_{1}+n_{2}$)
\begin{multline*}
  \tilde{\tau}:\rep(\overline{X},\overline{Z},\overline{n})\xrightarrow{\eqref{eq:cupproduct-motives}^{\vee}}(\rep(\overline{X},\overline{Z}_{1},0)\otimes^{\dL}
  \rep(\overline{X},\overline{Z}_{2},0))[\overline{n}]\xrightarrow[\sim]{\gamma}\\ \rep(\overline{X},\overline{Z}_{1},n_{1})\otimes^{\dL}
  \rep(\overline{X},\overline{Z}_{2},n_{2})\xrightarrow{\rep(p_{1})\otimes^{\dL}
    \rep(p_{2})}\rep(X_{1},Z_{1},n_{1})\otimes^{\dL} \rep(X_{2},Z_{2},n_{2})
\end{multline*}
where $p_{i}:\overline{X}\to X_{i}$ denotes the projection onto the
$i$th factor. One word about the isomorphism $\gamma$: In the
category of complexes there are two natural choices for $\gamma$, by
following one of the two paths in the following square:
\begin{equation*}
  \xymatrix{(\bullet_{1}\otimes
    \bullet_{2})[\overline{n}]\ar[r]\ar[d]&(\bullet_{1}\otimes
    \bullet_{2}[n_{2}])[n_{1}]\ar[d]\\
    (\bullet_{1}[n_{1}]\otimes
    \bullet_{2})[n_{2}]\ar[r]&\bullet_{1}[n_{1}]\otimes \bullet_{2}[n_{2}]}
\end{equation*}
This square commutes up to the sign $(-1)^{n_{1}\cdot n_{2}}$. We
choose the $\gamma$ which is the identity in degree 0. (Which of the
two paths we choose thus depends on the sign conventions for the
tensor product and shift in the category of chain complexes.)

We can now define the \ugm structure on $\h_{0}\tbti \rep$ as the following
composition (for any $v_{1},v_{2}\in\diaeff$):
\begin{multline*}
  \tau_{(v_{1},v_{2})}:\h_{0}\tbti  \rep(v_{1}\times
  v_{2})\xrightarrow{\tilde{\tau}}\h_{0}\tbti(\rep(v_{1})\otimes^{\dL}\rep(v_{2}))\\
  \xrightarrow{\sim}  \h_{0}(\tbti \rep(v_{1})\otimes^{\dL}\tbti \rep(v_{2}))\to\h_{0}\tbti \rep(v_{1})\otimes\h_{0}\tbti \rep(v_{2}).
\end{multline*}

\begin{pro}\label{pro:NA-eff-ugm}Assume that $\Lambda$ is a principal
  ideal domain. Then:
  \begin{enumerate}
  \item The morphisms $\tau_{(v_{1},v_{2})}$ define a \ugm structure on
    the representation $\h_{0}\tbti \rep:\diagoodeff\to\fcoMod{\Has}$.
  \item The isomorphism of the previous proposition is compatible with
    the \ugm structures, \ie{} it induces an isomorphism of \ugm representations
    \begin{equation*}
      \xymatrix@C=5em{\diagoodeff\ar[r]^-{\h_{0}\tbti
          \rep}\ar[dr]_{\h_{\bullet}}&\fcoMod{\Has}\ar[d]^{\ff}\\
        &\fMod{\Lambda}}
    \end{equation*}
  \end{enumerate}
\end{pro}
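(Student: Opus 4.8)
We first treat~(1), which is quick. The morphism $\tilde{\tau}$ of the preceding paragraph is natural in $v_{1},v_{2}$ and is assembled entirely from structure internal to $\da$: the lax monoidal constraint of $\dR\overline{\pi}_{*}$, the canonical isomorphism $\overline{j}_{1!}\Lambda\otimes^{\dL}\overline{j}_{2!}\Lambda\cong\overline{j}_{!}\Lambda$, the duality autoequivalence on compact motives, the reindexing $\gamma$, and the morphisms $\rep(p_{i})$. Consequently $\tilde{\tau}$ obeys the associativity, graded-commutativity (with the usual Koszul sign) and unitality constraints demanded of a \ugm structure, and it is compatible with both kinds of edges of $\diagoodeff$, all by coherence of the monoidal and six-functor formalism on $\da$ together with naturality. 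The monoidal functor $\tbti$ preserves this structure. Finally, since $\h_{\bullet}(X,Z;\Z)$ is free for a good pair no Tor terms appear, so the natural transformation $\h_{0}(\bullet)\otimes\h_{0}(\bullet)\to\h_{0}(\bullet\otimes^{\dL}\bullet)$ is an isomorphism on the complexes occurring here; it is moreover symmetric monoidal and, by Lemma~\ref{lem:A-flat} and the discussion of $\Has$-comodules in appendix~\ref{sec:comod}, a morphism of $\Has$-comodules. Composing, the $\tau_{(v_{1},v_{2})}$ inherit the \ugm axioms, which gives~(1).

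For~(2) we must show that the isomorphism $\Phi$ of Proposition~\ref{pro:NA-eff}, from $\h_{\bullet}|_{\diagoodeff}$ to $\h_{0}\tbti\rep$, carries the topological cross product (the \ugm structure on $\h_{\bullet}|_{\diagoodeff}$) to $\tau$. Recall that for a good pair $(X,Z,n)$ the isomorphism $\Phi$ is the composite of: the biduality map $\sgs(X^{\An},Z^{\An})\to\sgs(X^{\An},Z^{\An})^{\vee\vee}$; the double dual of the comparison $\sgs(X^{\An},Z^{\An})^{\vee}\simeq\dR\pi^{\An}_{*}j^{\An}_{!}\Lambda$ of appendix~\ref{sec:coh-pair}; the comparison $\dR\pi^{\An}_{*}j^{\An}_{!}\Lambda\simeq\bti\,\dR\pi_{*}j_{!}\Lambda$ furnished by the compatibility of the Betti realization with the six functors; the motivic duality isomorphism $(\dR\pi_{*}j_{!}\Lambda)^{\vee}\cong\dL\pi_{!}\dR j_{*}j^{*}\pi^{!}\Lambda$; and finally $\h_{n}$ together with the shift identification that lands in $\h_{0}$. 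The plan is to traverse this chain and verify at each link that the relevant cross product is transported to the next one.

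The individual checks are as follows. (i) Biduality is a monoidal natural transformation on the full subcategory of strongly dualizable objects of $\Der{\Lambda}$, which contains $\sgs(X^{\An},Z^{\An})$ for a good (indeed, any) pair of varieties; hence the topological cross product passes to its double dual. (ii) The comparison of appendix~\ref{sec:coh-pair} is \emph{multiplicative}, i.e.\ it identifies the Eilenberg--Zilber/Alexander--Whitney cup product on relative singular cochains with the ``six-functor cup product'': the lax monoidal constraint $\dR\pi^{\An}_{*}(\overline{j}_{1!}\Lambda)\otimes^{\dL}\dR\pi^{\An}_{*}(\overline{j}_{2!}\Lambda)\to\dR\pi^{\An}_{*}(\overline{j}_{1!}\Lambda\otimes^{\dL}\overline{j}_{2!}\Lambda)$ followed by the excision isomorphism and the projections. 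We establish this in appendix~\ref{sec:coh-pair}, essentially as a statement about the sheaf of singular cochains on the paracompact, locally contractible spaces $X^{\An}$ and $Z^{\An}$ (Fact~\ref{lem:varieties-topology}). (iii) Ayoub's comparison of $\bti$ with the six functors is monoidal and compatible with $j_{!}$, $\dR\pi_{*}$, and the duality autoequivalence (\cite{ayoub:betti}), so it carries the analytic six-functor cup product of~(ii) to the motivic one,~\eqref{eq:cupproduct-motives}. (iv) The motivic duality autoequivalence is antimonoidal, so it turns~\eqref{eq:cupproduct-motives} into its dual, which is precisely the first arrow entering $\tilde{\tau}$; the remaining arrows of $\tilde{\tau}$, namely $\gamma$ and $\rep(p_{1})\otimes^{\dL}\rep(p_{2})$, match on the Nori side the degree reindexing in the Künneth isomorphism and functoriality in the projections, the latter already covered by Proposition~\ref{pro:NA-eff}. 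For unitality, the unit vertex $(\spec k,\emptyset,0)$ has $\rep(\spec k,\emptyset,0)=\dL\id_{!}\id^{!}\Lambda=\Lambda$, the monoidal unit, where $\tilde{\tau}$ degenerates to the canonical isomorphism, matching $\h_{0}(\pt)=\Lambda$. Combining (i)--(iv), and checking that the Koszul sign in the topological Künneth isomorphism agrees with the one absorbed into $\gamma$---which is exactly why $\gamma$ was normalized to be the identity in degree $0$---yields the commutativity of the two diagrams, hence~(2).

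The main obstacle is step~(ii): the multiplicativity of the comparison between sheaf cohomology, expressed through $\dR\pi_{*}j_{!}$, and the relative singular cohomology of $(X^{\An},Z^{\An})$. We could not locate this in the literature in the form we need, and it is proved---together with the comparison isomorphism itself---in appendix~\ref{sec:coh-pair}. A secondary, purely clerical difficulty is the Koszul sign bookkeeping forced by the shifts and by the duality and Künneth isomorphisms in (i) and (iv); it is handled by fixing conventions, such as the normalization of $\gamma$, that make the signs cancel.
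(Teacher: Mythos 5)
Your proof of part~(2) is essentially the paper's: you decompose the comparison isomorphism of Proposition~\ref{pro:NA-eff} into its successive stages (biduality, the sheaf/singular comparison of appendix~\ref{sec:coh-pair}, Ayoub's compatibility of $\bti$ with the six functors, motivic duality, and the shift/degree bookkeeping via $\gamma$), and check at each link that cross products are carried along, with Lemma~\ref{lem:coh-pair-monoidal} doing the real work. The paper performs the same checks, laid out as a nested sequence of commutative squares; the content is the same.

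For part~(1), however, you take a genuinely different route, and there is a gap in it. The \ugm axioms have two components: certain diagrams must commute, \emph{and} the structure morphisms $\tau_{(v_1,v_2)}$ must be invertible. Your coherence argument only addresses the first. Invertibility is not formal: $\tilde\tau$ is assembled from the lax monoidal constraint of $\dR\overline\pi_*$ and the morphisms $\rep(p_i)$, neither of which is an isomorphism in $\da$. Even after applying $\h_0\tbti$, invertibility of $\tau$ is precisely the statement that the induced map is the K\"unneth isomorphism --- which is what part~(2) establishes, so your part~(1) secretly depends on part~(2). The paper avoids this circularity by observing up front that both invertibility and diagram commutativity in $\fcoMod{\Has}$ can be tested after applying the faithful exact functor $\ff$, where they hold because they hold for $\h_\bullet$; this reduces the entire proposition to the single claim that $\Phi$ carries $\ff\circ\tau$ to the topological K\"unneth isomorphism, which is then verified once. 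I would recommend restructuring along these lines: prove the K\"unneth identification first, and deduce~(1) from faithfulness of $\ff$, rather than attempting a direct coherence verification on the motivic side.
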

\begin{proof}
  For the first part we need to check that in $\fcoMod{\Has}$, some
  morphisms are invertible and some diagrams commute. Both these
  properties can be checked after applying
  $\ff:\fcoMod{\Has}\to\fMod{\Lambda}$. Since the corresponding
  properties are true for the representation $\h_{\bullet}$, we see
  that to prove the proposition, it suffices to show that the
  isomorphism of the previous proposition takes $\ff\circ\tau$ to the
  Künneth isomorphism.

  Write $v_{1},v_{2},\overline{v}$ for the motives $\rep(X_{1},Z_{1},0),
  \rep(X_{2},Z_{2},0)$ and $\rep(\overline{X},\overline{Z},0)$,
  respectively. Consider then the following diagram:\footnote{Here, as
    in the sequel, we often write $\sgs(X,Z)$ for
    $\sgs(X^{\An},Z^{\An})$.}
  \begin{equation*}
    \xymatrix{\ar[r]_-{\gamma^{-1}\circ\tilde{\tau}}\h_{\overline{n}}\bti
      \overline{v}\ar@{-}[d]_{\sim}&\h_{\overline{n}}(\bti
      v_{1}\otimes^{\dL}\bti v_{2})\ar@{-}[d]_{\sim}\ar[r]_{\gamma}^{\sim}&\h_{n_{1}}\bti v_{1}\otimes\h_{n_{2}}\bti
      v_{2}\ar@{-}[d]^{\sim}\\
      \h_{\overline{n}}(\overline{X},\overline{Z})\ar[r]^-{\sim}_-{\text{"AW"}}&
      \h_{\overline{n}}(\sgs(X_{1},Z_{1})\otimes\sgs(X_{2},Z_{2}))\ar[r]_-{\gamma}^-{\sim}&\h_{n_{1}}(X_{1},Z_{1})\otimes\h_{n_{2}}(X_{2},Z_{2})}
  \end{equation*}
  where we have written $\overline{n}$ for the sum $n_{1}+n_{2}$. The
  right square clearly commutes. The bottom horizontal arrow on the
  left is induced by the Alexander-Whitney map (it is really a zig-zag
  on the level of complexes) and $\gamma$ in the bottom right induces
  the canonical isomorphism of the (algebraic) Künneth formula, hence
  it follows that the composition of the arrows in the bottom row is
  nothing but the (topological) Künneth isomorphism. On the other
  hand, the composition of the arrows in the top row is $\tau$. Hence
  we are reduced to prove commutativity of the left square in the
  diagram above, and it suffices to do so before applying
  $\h_{\overline{n}}$.

  We now write $\overline{v}_{i}$ for the motive
  $\rep(\overline{X},\overline{Z}_{i},0)$. Decompose $\tilde{\tau}$ according
  to its definition, and use the fact that the Alexander-Whitney map
  admits a similar decomposition in $\Der{\Lambda}$:
  \begin{equation*}
    \xymatrix@C=5em{\bti\overline{v}\ar[r]^-{\eqref{eq:cupproduct-motives}^{\vee}}\ar@{-}[dd]^{\sim}&\bti(\overline{v}_{1}\otimes^{\dL}\overline{v}_{2})\ar[r]^{\rep(p_{1})\otimes^{\dL}\rep(p_{2})}\ar@{-}[d]_{\sim}&\bti(v_{1}\otimes^{\dL}v_{2})\ar@{-}[d]_{\sim}\\
      &\bti\overline{v}_{1}\otimes^{\dL}\bti\overline{v}_{2}\ar[r]^{\rep(p_{1})\otimes^{\dL}\rep(p_{2})}\ar@{-}[d]_{\sim}&
      \bti v_{1}\otimes^{\dL}\bti v_{2}\ar@{-}[d]_{\sim}\\
      \sgs(\overline{X},\overline{Z})\ar[r]^-{\text{"AW-diag"}}&\sgs(\overline{X},\overline{Z}_{1})\otimes\sgs(\overline{X},\overline{Z}_{2})\ar[r]^{\sgs(p_{1})\otimes
        \sgs(p_{2})}&\sgs(X_{1},Z_{1})\otimes\sgs(X_{2},Z_{2})}
  \end{equation*}
  We wrote ``AW-diag'' for the Alexander-Whitney diagonal
  approximation which is a zig-zag of morphisms of complexes (see
  below). It is clear that the upper right square commutes, as does
  the lower right square by the proof of the previous proposition. For
  the square on the left, notice that~\eqref{eq:cupproduct-motives}
  equally defines a morphism in the category $\anda$. Thus we now
  denote by $\overline{v}$, $\overline{v}_{i}$ the same expressions in
  terms of the four functors in $\anda$ instead of $\da$. Then the
  proof of the proposition will be complete if we can prove
  commutative the following diagram (in which all vertical arrows are
  the canonical invertible ones):
  \begin{equation*}
    \xymatrix@C=3em{\overline{v}\ar@{-}[d]\ar[rr]^{\eqref{eq:cupproduct-motives}^{\vee}}&&\overline{v}_{1}\otimes^{\dL}\overline{v}_{2}\ar@{-}[d]\\
      (\overline{\pi}_{*}\overline{j}_{!}\Lambda)^{\vee}\ar[rr]^{\eqref{eq:cupproduct-motives}^{\vee}}\ar@{-}[d]&&      (\overline{\pi}_{*}\overline{j}_{1!}\Lambda\otimes^{\dL}\overline{\pi}_{*}\overline{j}_{2!}\Lambda)^{\vee}\ar@{-}[d]\\
      \sgs(\overline{X},\overline{Z})^{\vee\vee}\ar@{-}[d]&\sgs(\overline{X},\overline{Z}_{1}+\overline{Z}_{2})^{\vee\vee}\ar@{-}[d]\ar[l]_-{\sim}\ar[r]^-{\cupproduct^{\vee}}&(\sgs(\overline{X},\overline{Z}_{1})^{\vee}\otimes\sgs(\overline{X},\overline{Z}_{2})^{\vee})^{\vee}\ar@{-}[d]\\
      \sgs(\overline{X},\overline{Z})&\sgs(\overline{X},\overline{Z}_{1}+\overline{Z}_{2})\ar[l]_-{\sim}\ar[r]^-{\text{AW-diag}}&\sgs(\overline{X},\overline{Z}_{1})\otimes\sgs(\overline{X},\overline{Z}_{2})}
  \end{equation*}
  Here, $\sgs(\overline{X},\overline{Z}_{1}+\overline{Z}_{2})$ denotes
  the free $\Lambda$-module on simplices in $\overline{X}$ which are
  neither contained in $\overline{Z}_{1}$ nor in
  $\overline{Z}_{2}$. The first rectangle clearly commutes, the second
  does so by Lemma~\ref{lem:coh-pair-monoidal} (which may be applied
  because of Fact~\ref{lem:varieties-topology}), the bottom right
  square is well-known to commute (see
  \eg{}~\cite[VII,~8]{dold:lectures-at}), and the bottom left one
  obviously commutes as well.
\end{proof}

\subsection{Bialgebra morphism}

Using the proposition we obtain the following commutative (up to a \ugm
isomorphism) rectangle of \ugm representations:
\begin{equation}
  \label{eq:NA-eff}
  \xymatrix@C=5em{\diagoodeff\ar[r]^-{\h_{0}\tbti \rep}\ar[d]_{\tilde{\h}_{\bullet}}&\fcoMod{\Has}\ar[d]^{\ff}\\
    \fcoMod{\Hn}\ar[r]_-{\ff}\ar@{.>}[ur]&\fMod{\Lambda}}
\end{equation}
Still assuming that $\Lambda$ is a principal ideal domain we know, by
Lemma~\ref{lem:A-flat} together with Fact~\ref{thm:comod-basic}, that
$\fcoMod{\Has}$ is an abelian category. Hence
Theorem~\ref{pro:N-universal} yields a monoidal functor
$\overline{\nae}$ represented by the dotted arrow in the
diagram~\eqref{eq:NA-eff}, rendering it commutative (up to monoidal
isomorphism). It then follows from~\cite[II,
3.3.1]{saavedra-cat.tann.1972} that $\overline{\nae}$ necessarily
arises from a map of bialgebras
$\nae:\Hn\to\Has$. 

Recall (from~§§\ref{sec:N}, \ref{sec:A}) that the Hopf algebras $\Hns$
and $\Has$ are obtained from $\Hn$ and $\Ha$ by localization with
respect to elements $\sn$ and $\sa$ respectively. Using the
commutativity of~\eqref{eq:NA-eff} one easily checks that
$\nae(\sn)=\sa\in\Has$ hence $\nae$ factors through $\Hns$:
\begin{equation}\label{eq:NA}
  \xymatrix{\Hn\ar[r]^{\nae}\ar[d]_{\iota}&\Has\\
    \Hns\ar[ru]_{\na}}
\end{equation}

\section{Basic Lemma, and applications}
\label{sec:basic-lemma}
Now we would like to construct a morphism $\an$ in the other
direction. The construction relies on Nori's functor which associates
to an affine variety a complex in $\ehm$ computing its homology, and
which in turn relies on the ``Basic Lemma''. We recall them both in
this section (basically following~\cite{gartz-liealgebra}), while we
prove the existence of $\an$ in the next section.

We first recall the Basic Lemma in the form Nori formulated it
\cite[Thm.~2.1]{nori-lectures}; see
also~\cite[Lem.~4.4]{ivorra-hodge-motivic-real}. It was independently
proven by Beilinson in a more general context
\cite[Lem.~3.3]{beilinson87}.

\begin{fac}[Basic Lemma]\label{lem:basic-lemma}
  Let $X$ be an affine variety of dimension $n$, and $W\subset X$ a
  closed subvariety of dimension $\leq n-1$. Then there exists a
  closed subvariety $W\subset Z\subset X$ of dimension $\leq n-1$ such
  that $\h_{\bullet}(X^{\An},Z^{\An};\Z)$ is a free abelian group
  concentrated in degree $n$.
\end{fac}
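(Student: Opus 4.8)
The plan is to proceed by induction on $n=\dim X$. For $n=0$ there is nothing to prove (take $Z=\emptyset=W$, noting $\dim W \le -1$), so assume $n\ge 1$. First I would reduce to a convenient geometric setup: replacing $X$ by a suitable affine compactification and using Noether normalization, one arranges a finite surjective morphism $\pi\colon X\to \A^n$. The key step is then a \emph{generic projection} argument. After a generic linear change of coordinates on $\A^n$, one considers the composite $p\colon X\to\A^{n-1}$ with the first $n-1$ coordinates; the goal is to choose $p$ so that the fibres are affine curves over which one has control of the topology. Concretely, one wants a closed subvariety $Z\subsetneq X$ containing $W$, finite over $\A^{n-1}$ via $p$, such that $p|_{X\setminus Z}$ is a topologically locally trivial (or at least ``good'') fibration of open affine curves, and such that the local systems $R^q (p|_{X\setminus Z})_* \Z$ vanish except for $q$ equal to the curve-dimension. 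The classical way to achieve this is to take $Z$ to be (the closure of) the ramification/critical locus of $p$ together with $W$ and a few additional hyperplane sections needed to make the curve fibres into ``bouquets of circles'' with the correct (co)homology; this is where one invokes the theory of Lefschetz pencils / the fact that a smooth affine curve has homology concentrated in degrees $0$ and $1$, and that after removing enough points the $H_0$ piece is killed relative to $Z$.

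Next I would feed this fibration into a Leray spectral sequence (for the pair) argument. For the relative homology $\h_\bullet(X^{\An},Z^{\An};\Z)$ one has the Leray spectral sequence $E_2^{pq}=\h_p(\A^{n-1,\An}; R_q)\Rightarrow \h_{p+q}(X^{\An},Z^{\An};\Z)$ where $R_q$ is the relevant sheaf on $\A^{n-1}$ built from the relative homology of the curve fibres of $(X,Z)$. By the choice of $Z$, the fibrewise relative homology is concentrated in degree $1$ (curve dimension) and is a \emph{free} $\Z$-module; moreover the monodromy representation makes $R_1$ a local system of free modules on $\A^{n-1}$ — but $\A^{n-1,\An}\simeq \C^{n-1}$ is simply connected, so $R_1$ is constant. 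Hence $E_2^{pq}$ is $\h_p(\C^{n-1};R_1)$ concentrated in $q=1$: it contributes only for $(p,q)=(0,1)$ since $\C^{n-1}$ is contractible. The spectral sequence degenerates and yields $\h_m(X^{\An},Z^{\An};\Z)=0$ for $m\ne n$ and $\h_n = R_1(\text{generic fibre})$, a free abelian group. (A dual cohomological formulation works equally well and may be cleaner to cite.)

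The main obstacle, and the part requiring genuine care, is the \emph{construction of $Z$}: one must simultaneously (i) contain the given $W$, (ii) keep $\dim Z\le n-1$, (iii) arrange that $p|_Z$ is finite onto $\A^{n-1}$ so the fibres $(X_t, Z_t)$ are honestly relative curves, and (iv) ensure the fibrewise relative homology is free and concentrated in the top curve degree \emph{uniformly} in $t$, including over the discriminant, so that the sheaves $R_q$ are actually local systems (no vanishing-cycle corrections). Achieving (iv) is precisely where removing further hyperplane sections — i.e.\ enlarging $Z$ — becomes necessary, and one has to check that this enlargement does not spoil (i)--(iii). I would handle this by first proving the statement for $X\setminus Z$ smooth of dimension $n$ over $\A^{n-1}$ with the projection having only the mildest singularities, using a Bertini-type argument to put the ramification locus in sufficiently general position, and then a limiting/localization argument to pass to the general case. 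The inductive hypothesis is used to handle the $(n-1)$-dimensional base pieces that appear when one needs to analyze $Z$ itself or the behaviour over the discriminant. Once $Z$ is in hand, the spectral sequence computation above is essentially formal.
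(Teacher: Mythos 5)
This statement is recorded in the paper as a Fact with a citation to Nori's TIFR lectures and to Beilinson, \emph{not} proved there, so there is no internal argument for you to match against; I judge the proposal on its own.

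There is a genuine gap in the central spectral-sequence step, and it is already visible as a degree mismatch. You assert that $E_2^{pq}$ is concentrated in bidegree $(p,q)=(0,1)$ (because the base $\C^{n-1}$ is contractible, hence any local system there is constant with no higher cohomology), and then conclude that $\h_m(X^{\An},Z^{\An};\Z)$ vanishes for $m\neq n$. But a spectral sequence concentrated at $(0,1)$ can only abut to something concentrated in total degree $1$, not $n$. The source of the error is the claim that $R_1$ is a local system on all of $\A^{n-1,\An}$. It is not: the projection $p$ unavoidably has a non-empty discriminant $\Delta\subset\A^{n-1}$, and $R_q$ (however you package it, e.g.\ as $R^qp_*j_!\Lambda$ for $j\colon X\setminus Z\inj X$) is only a local system on $\A^{n-1}\setminus\Delta$, with different (possibly vanishing) stalks over $\Delta$; enlarging $Z$ inside $X$ does not remove $\Delta$ from the base, and $\pi_1(\A^{n-1,\An}\setminus\Delta^{\An})$ is in general large. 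A constructible sheaf on a contractible space can perfectly well have higher cohomology -- already for $n-1=1$, the extension by zero of $\Z$ from the complement of $m$ points in $\C$ has $\h^1\cong\Z^{m-1}$. These nonzero groups $\h_p(\A^{n-1,\An};R_1)$ for $p>0$ are exactly what the lemma has to control, and to control them you would need the inductive hypothesis not for the constant sheaf but for the constructible sheaf $R_1$ (equivalently, a Basic Lemma with constructible coefficients), together with the freedom to further enlarge the discriminant. That is in essence why the known proofs (Nori's, and especially Beilinson's \cite{beilinson87}) are formulated for constructible sheaves and hinge on Artin's vanishing theorem for affine schemes, rather than on a clean local-system Leray argument over a contractible affine base.
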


We call a pair $(X,Z,n)$ \emph{very good}
(cf.~\cite[Def.~D.1]{huber-mueller:nori}) if either $X$ is
affine of dimension $n$, $Z$ is of dimension $\leq n-1$, $X\backslash
Z$ is smooth, and $\h_{\bullet}(X^{\An},Z^{\An};\Z)$ is a free abelian group
concentrated in degree $n$, or if $X=Z$ is affine of dimension less
than $n$. Thus the Basic Lemma implies that any pair $(X,W,n)$ with
$\dim(X)=n>\dim(W)$ can be embedded into a very good pair $(X,Z,n)$.

Nori applied this result to construct ``cellular decompositions'' of
affine varieties as follows. Let $X$ be an affine variety. A
\emph{filtration of $X$} is an increasing sequence
$F_{\bullet}=(F_{i}X)_{i\in\Z}$ of closed subvarieties of $X$ such that
\begin{itemize}
\item $\dim(F_{i}X)\leq i$ for all $i$ (in particular,
  $F_{-1}X=\emptyset$),
\item $F_{n}X=X$ for some $n\in\Z$.
\end{itemize}
The minimal $n\in\Z$ such that $F_{n}X=X$ is called the \emph{length}
of $F_{\bullet}$ (by convention, for $X=\emptyset$ this length is
defined to be $-\infty$). Clearly the filtrations of $X$ form a
directed set. A filtration $F_{\bullet}$ is called \emph{very good} if
$(F_{i}X,F_{i-1}X,i)$ is a very good pair for each $i$. The following
result says that very good filtrations form a cofinal set. It is also
used in~\cite{nori-lectures}.

\begin{cor}\label{cor:cellular-decomposition}
  Let $X$ be an affine variety, and $F_{\bullet}$ a filtration of
  $X$. Then there exists a very good filtration $G_{\bullet}\supset
  F_{\bullet}$ of $X$. In particular, every affine variety of
  dimension $n$ admits a very good filtration of length $n$.
\end{cor}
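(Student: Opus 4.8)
The strategy is to build $G_\bullet$ from $F_\bullet$ by induction on the length of $F_\bullet$, invoking the Basic Lemma (Fact~\ref{lem:basic-lemma}) at each step to fatten the successive skeleta into very good pairs while remaining above $F_\bullet$. The key observation making this work is the remark immediately following the Basic Lemma: any pair $(X',W',n')$ with $\dim(X')=n'>\dim(W')$ embeds into a very good pair $(X',Z',n')$; and if $X'=W'$ has dimension $<n'$ there is nothing to do. So at each level we simply apply this to the pair $(F_iX, W_i, i)$ for a suitable closed subvariety $W_i$ containing everything we have already committed to.

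\begin{proof}[Proof sketch]
We argue by induction on the length $n$ of $F_\bullet$; the case $n=-\infty$ (i.e.\ $X=\emptyset$) being trivial. Let $F_\bullet$ have length $n\geq 0$, so $F_nX=X$ and $\dim(F_{n-1}X)\leq n-1$. Apply the Basic Lemma to the affine variety $X$ and the closed subvariety $W:=F_{n-1}X\cup (X\setminus X_{\mathrm{sm}})$ of dimension $\leq n-1$ (where $X_{\mathrm{sm}}$ is the smooth locus of $X$; note $\dim W\leq n-1$ since the singular locus is a proper closed subset as $\dim X=n$ and $X$ is a variety, hence generically smooth in characteristic $0$). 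This yields a closed subvariety $W\subset Z_n\subset X$ of dimension $\leq n-1$ with $\h_\bullet(X^{\An},Z_n^{\An};\Z)$ free, concentrated in degree $n$; since $Z_n\supset X\setminus X_{\mathrm{sm}}$, the complement $X\setminus Z_n$ is smooth, so $(X,Z_n,n)$ is a very good pair. Now $Z_n$ together with $F_{n-1}X$ generates a filtration $F'_\bullet$ of $Z_n$ of length $\leq n-1$: put $F'_iX:=F_iX\cup(Z_n\cap(\text{something of dimension}\leq i))$ — more precisely, we may take $F'_iZ_n:=F_iX$ for $i\leq n-1$, which is a filtration of $Z_n$ since $F_{n-1}X\subset Z_n$ and $\dim F_iX\leq i$. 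By the inductive hypothesis there is a very good filtration $G'_\bullet\supset F'_\bullet$ of $Z_n$, necessarily of length $\leq n-1$. Define $G_\bullet$ by $G_iX:=G'_iZ_n$ for $i\leq n-1$ and $G_nX:=X$. Then $G_iX\supset F_iX$ for all $i$ (for $i\leq n-1$ because $G'_\bullet\supset F'_\bullet$ and $F'_iZ_n=F_iX$; for $i=n$ trivially), so $G_\bullet\supset F_\bullet$; and $G_\bullet$ is very good since $(G_iX,G_{i-1}X,i)$ is very good for $i\leq n-1$ by construction, and $(G_nX,G_{n-1}X,n)=(X,Z_n,n)$ is very good by the choice of $Z_n$. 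Finally, starting from the trivial filtration $F_iX:=X$ for $i\geq n$ (and a chain of closed subvarieties of strictly decreasing dimension below, e.g.\ $F_iX:=\emptyset$ is not allowed unless $\dim X\leq i$; instead take any filtration, e.g.\ obtained by intersecting with general hyperplanes) of an affine variety of dimension $n$, the construction produces a very good filtration of length exactly $n$, since the top pair $(X,Z_n,n)$ has $\dim X=n$ and each lower pair has the prescribed dimension drop.
\end{proof}

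\textbf{Main obstacle.} The only genuinely delicate point is bookkeeping: one must make sure that at each inductive step the subvariety $Z_i$ produced by the Basic Lemma still \emph{contains} the previously fixed skeleton $F_{i-1}X$ (so that the induction hypothesis applies to a filtration refining the old one) \emph{and} contains the non-smooth locus (so that the resulting pair is very good, not merely ``good''), while simultaneously having dimension $\leq i-1$. This is exactly what the ``relative'' form of the Basic Lemma guarantees — one feeds in $W=F_{i-1}X\cup(\text{sing.\ locus})$ as the subvariety to be enlarged — so there is no real difficulty, only care in phrasing. A secondary point worth stating explicitly is why the singular locus has dimension $\leq i-1$: this uses $\operatorname{char}k=0$ (generic smoothness), which is in force throughout the paper.
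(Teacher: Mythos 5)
Your inductive argument is the same as the paper's: apply the Basic Lemma at the top of the filtration to enlarge $F_{n-1}X$ into a $Z$ making $(X,Z,n)$ a very good pair, then recurse on $Z$ with a truncated filtration. Your explicit inclusion of the singular locus in $W$ before invoking the Basic Lemma is a welcome refinement: the Basic Lemma as stated only controls homology, and smoothness of $X\setminus Z$ (required for \emph{very good} rather than merely good) does need this enlargement, a point the paper's proof leaves implicit. Two bookkeeping slips, however. First, the length $n$ of $F_\bullet$ need not equal $\dim X$ (a filtration may be padded at the top), and if $\dim X < n$ the Basic Lemma does not apply as you invoke it; the paper covers this case separately by simply setting $G_{n-1}X := X$, using the second alternative in the definition of very good pair, and then recursing. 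Second, $F'_iZ_n := F_iX$ for $i\le n-1$ is not a filtration of $Z_n$, since its top term $F_{n-1}X$ may be a proper closed subvariety of $Z_n$; one must instead take $F'_{n-1}Z_n := Z_n$ and $F'_iZ_n := F_iX$ for $i\le n-2$ (exactly the filtration the paper recurses on), after which the induction is legitimate.
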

\begin{proof}
  We do induction on the length $n$ of $F_{\bullet}$. Every filtration
  of length $n=-\infty$ or $n=0$ is very good. Assume now $n>0$. Set
  $G_{i}X=X$ for all $i\geq n$. If $\dim(X)<n$, let $G_{n-1}X=X$. If
  $\dim(X)=n$ then, applying the Basic Lemma to the pair
  $(X,F_{n-1}X)$, we obtain a closed subvariety $F_{n-1}X\subset
  Z\subset X$ such that $(X,Z,n)$ is very good, and we set
  $G_{n-1}X=Z$ in this case. Now apply the induction hypothesis to the
  filtration $\emptyset\subset F_{0}X\subset\cdots\subset
  F_{n-2}X\subset G_{n-1}X$.
\end{proof}

To any filtration $F_{\bullet}$ of $X$ we associate the complex
$\h_{\bullet}(X,F_{\bullet})=\h_{\bullet}(X,F_{\bullet};\Lambda)$,
\begin{equation*}
 \h_{n}(X^{\An},F_{n-1}X^{\An})\to\h_{n-1}(F_{n-1}X^{\An},F_{n-2}X^{\An})\to\cdots\to \h_{0}(F_{0}X^{\An},\emptyset),
\end{equation*}
concentrated in the range of degrees $[0,n]$, where the differentials
are the boundary maps from the homology sequence of a triple. It
follows that $\h_{\bullet}(X,F_{\bullet})$ can be considered as an
object of $\ch(\ehm)$. For a very good filtration, this complex
computes the singular homology of $X^{\An}$ for the same reason that
cellular homology and singular homology agree. (For a more precise
statement see Fact~\ref{lem:N-sing} below.) It then follows from the
corollary that also
\begin{equation*}
  C(X):=\varinjlim_{F_{\bullet}}\h_{\bullet}(X,F_{\bullet})\in \ch(\iehm)
\end{equation*}
computes singular homology of $X^{\An}$.

Given a morphism of affine varieties $f:X\to Y$, and a filtration
$F_{\bullet}$ on $X$, we obtain a filtration
\begin{equation*}
  \overline{f(X)}\supset\overline{f(F_{n-1}X)}\supset\cdots\supset\overline{f(F_{0}X)}\supset\emptyset
\end{equation*}
of $\overline{f(X)}$. Let $m=\dim(Y)$, and define a filtration
$G_{\bullet}$ on $Y$ by
\begin{equation*}
  G_{i}Y=
    \begin{cases}
      \overline{f(F_{i}X)}&:i<m\\
      Y&:i\geq m.
    \end{cases}
\end{equation*}
This induces a morphism $\h_{\bullet}(X,F_{\bullet})\to
\h_{\bullet}(Y,G_{\bullet})$ in $\ch(\ehm)$. It follows that $C$
defines a functor $\aff\to\ch(\iehm)$ on affine varieties.

Now given filtrations $F_{\bullet}$ and $G_{\bullet}$ on affine
varieties $X$ and $Y$, respectively, we form the filtration $(F\times
G)_{\bullet}$ on $X\times Y$, setting $(F\times G)_{i}(X\times Y)$ to
be $\cup_{p+q=i}F_{p}X\times G_{q}Y$. There is a canonical morphism
$\h_{\bullet}(X,F_{\bullet})\otimes\h_{\bullet}(Y,G_{\bullet})\to\h_{\bullet}(X\times
Y,(F\times G)_{\bullet})$ which induces a morphism $C(X)\otimes
C(Y)\to C(X\times Y)$. One can check that this endows $C$ with a lax
monoidal structure.

To go further we have to make precise the relation between the
functors $C$ and $\sgs\circ\Anal$. For this,
following~\cite{gartz-liealgebra}, we consider the subcomplex $P(X)$
of $\sgs(X^{\An})$ which in degree $p$ consists of singular $p$-chains
in $X^{\An}$ whose image is contained in $Z^{\An}$ for some closed
subvariety $Z\subset X$ of dimension $\leq p$, and whose boundary lies
in $W^{\An}$ for some closed subvariety $W\subset X$ of dimension
$\leq p-1$. Such a singular chain defines a homology class in
$\h_{p}(Z^{\An},W^{\An};\Lambda)$ hence there is a canonical map
$P(X)\surj \ff C(X)$ (here, as usual, $\ff$ forgets the comodule
structure). The following result follows from the Basic Lemma and some
linear algebra.

\begin{fac}[{\cite[Lem.~4.14]{gartz-liealgebra}}]\label{lem:N-sing}
  Let $X$ be an affine variety. Both maps of chain complexes of
  $\Lambda$-modules
  \begin{equation*}
    \ff C(X)\leftarrow P(X)\to \sgs(X^{\An})
  \end{equation*}
  are quasi-isomorphisms.
\end{fac}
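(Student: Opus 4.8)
The plan is to prove this in two independent steps, one for each of the two maps, and in each case to reduce to the case where $X$ is replaced by the terms of a very good filtration, where the comparison is essentially cellular homology versus singular homology.

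\emph{Step 1: $P(X)\to\sgs(X^{\An})$ is a quasi-isomorphism.} The idea is to filter both sides by (the preimages under the analytification of) a cofinal family of very good filtrations $F_{\bullet}$ of $X$ (which exist by Corollary~\ref{cor:cellular-decomposition}), and pass to the colimit. For a fixed $F_{\bullet}$, let $P(X)_{\leq i}\subset P(X)$ be the subcomplex of chains supported on $F_{i}X$ in the appropriate range; these filter $P(X)$ and the associated graded in degree $i$ is controlled by the relative singular chains $\sgs(F_iX^{\An},F_{i-1}X^{\An})$. The key input is that for a very good pair $(F_iX,F_{i-1}X,i)$, the relative homology $\h_\bullet(F_iX^{\An},F_{i-1}X^{\An};\Lambda)$ is free and concentrated in degree $i$; since $F_{i-1}X^{\An}\subset F_iX^{\An}$ is a CW-pair (Fact~\ref{lem:varieties-topology}), a comparison-of-spectral-sequences argument (the ``cellular = singular'' argument) shows that $P(X)$ and $\sgs(X^{\An})$ have the same homology, compatibly. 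Concretely, one shows the map induces an isomorphism on the $E_1$-pages of the two filtration spectral sequences. The care needed here is that $P(X)$ is not literally the cellular complex of a single CW-structure but a colimit over all very good filtrations, so one must check that the filtration-by-$F_iX$ is exhaustive and that the colimit over $F_\bullet$ is filtered (which it is, since very good filtrations are cofinal in the directed set of all filtrations).

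\emph{Step 2: $P(X)\to\ff C(X)$ is a quasi-isomorphism.} Here one uses the explicit description $\ff C(X)=\varinjlim_{F_\bullet}\h_\bullet(X,F_\bullet)$, the colimit of the cellular-type complexes. There is a tautological map: a chain in $P(X)$ of degree $p$ supported (with its boundary) appropriately defines a class in $\h_p(F_pX^{\An},F_{p-1}X^{\An};\Lambda)$ for a suitable very good filtration, and this is compatible with differentials. To see it is a quasi-isomorphism one again filters by a cofinal very good $F_\bullet$; on the level of a single very good filtration, the map $P(X)_{\leq\bullet}\to\h_\bullet(X,F_\bullet)$ is, in degree $i$, the surjection from degree-$i$ chains supported on $(F_iX,F_{i-1}X)$ onto the homology $\h_i(F_iX^{\An},F_{i-1}X^{\An};\Lambda)$, which by very-goodness is an isomorphism on homology since the higher relative homology vanishes (this is where the Basic Lemma is really used, via the definition of ``very good''). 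Passing to the filtered colimit over $F_\bullet$ and using that homology commutes with filtered colimits finishes it.

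\emph{Main obstacle.} The technical heart is Step~1: matching the two ``column'' filtrations and verifying that the comparison map is an $E_1$-isomorphism. One has to be careful that $P(X)$ genuinely computes what the cellular picture predicts even though its defining condition quantifies over \emph{all} subvarieties of a given dimension rather than the strata of one fixed filtration; the resolution is exactly that very good filtrations are cofinal, so after passing to the colimit the ``all subvarieties'' description and the ``strata of a very good filtration'' description agree. Since this is precisely the content of \cite[Lem.~4.14]{gartz-liealgebra}, we can be brief and cite \textit{loc.\ cit.}: the statement follows from the Basic Lemma (Fact~\ref{lem:basic-lemma}), Corollary~\ref{cor:cellular-decomposition}, and the standard agreement of cellular and singular homology for the CW-pairs furnished by Fact~\ref{lem:varieties-topology}.
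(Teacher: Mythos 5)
The paper presents this as a \emph{Fact} cited from \cite[Lem.~4.14]{gartz-liealgebra} and gives no proof of its own — the preceding remark only records that it ``follows from the Basic Lemma and some linear algebra'' — so there is no in-text argument to compare your proposal against; it is a reconstruction. The overall shape is the expected one: reduce to a fixed very good filtration via cofinality (Corollary~\ref{cor:cellular-decomposition}) and a filtered colimit, and for a fixed very good $F_{\bullet}$ compare filtration spectral sequences, using that very-goodness (hence Fact~\ref{lem:basic-lemma}) concentrates the $E_{1}$-page in a single row and Fact~\ref{lem:varieties-topology} supplies the CW structure.

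Two details need tightening. First, the object to filter and compare with $\sgs(X^{\An})$ is the subcomplex $P(X,F_{\bullet})\subset P(X)$ of chains whose support and boundary are adapted to the strata of the fixed $F_{\bullet}$, not $P(X)$ itself: a chain in $P(X)$ may be supported on an \emph{arbitrary} subvariety of the correct dimension, so filtering all of $P(X)$ by $F_{i}X$ for a single $F_{\bullet}$ does not have the associated graded you describe. The identity $P(X)=\varinjlim_{F_{\bullet}}P(X,F_{\bullet})$ is exactly where cofinality enters, and one checks instead that the $F_{\bullet}$-filtration spectral sequences of $P(X,F_{\bullet})$ and of $\sgs(X^{\An})$ have identical $E_{1}$-pages (in particular $E^{1}_{p,0}=\h_{p}(F_{p}X^{\An},F_{p-1}X^{\An})$ on both sides) before passing to the colimit. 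Second, in Step~2 the degree-$i$ map $P(X,F_{\bullet})_{i}\twoheadrightarrow\h_{i}(F_{i}X^{\An},F_{i-1}X^{\An};\Lambda)$ is a surjection with a large kernel, not an isomorphism; what very-goodness actually yields is that the $E_{1}$-page is concentrated in the $q=0$ row, so the map of \emph{complexes} $P(X,F_{\bullet})\to\h_{\bullet}(X,F_{\bullet})$ (the edge map onto $E^{1}_{\bullet,0}$) is a quasi-isomorphism. With these adjustments your outline is a correct summary of the argument of \emph{loc.\ cit.}
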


It is clear that $P$ defines a functor $\aff\to\ch(\Lambda)$ and that
the two maps above are natural in $X$. Moreover $P$ comes with a
canonical lax monoidal structure induced from the one on $\sgs$ (the
Eilenberg-Zilber transformation), and which is compatible with the one
on $C$ defined before.
\begin{cor}\label{cor:N-sing-monoidal}
  The maps of the previous lemma define monoidal transformations
  between lax monoidal functors
  \begin{equation*}
    \ff C\leftarrow P\to\sgs\circ\Anal
  \end{equation*}
  from $\aff$ to $\ch(\Lambda)$. If $\Lambda$ is a principal ideal
  domain then after composing with the canonical (lax monoidal)
  functor $\ch(\Lambda)\to\Der{\Lambda}$ these become monoidal
  transformations of monoidal functors.
\end{cor}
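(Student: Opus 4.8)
The plan is to combine Fact~\ref{lem:N-sing} with a bookkeeping argument that the three maps in question are each compatible with the lax monoidal structures already in place, and then invoke the standard fact that a natural transformation of lax monoidal functors which is a pointwise quasi-isomorphism becomes a monoidal transformation of monoidal functors after passing to the derived category (this last step uses that $\ch(\Lambda)\to\Der{\Lambda}$ is lax monoidal, and that over a principal ideal domain the tensor products involved compute the derived tensor product up to quasi-isomorphism, since the complexes $\ff C(X)$ are termwise finitely generated free — by the Basic Lemma — and $\sgs(X^{\An})$ is termwise free). Concretely, I would proceed in the following steps.

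\emph{Step 1: the monoidal structure on $P$.} Recall that $\sgs$ carries the Eilenberg--Zilber transformation $\sgs(A)\otimes\sgs(B)\to\sgs(A\times B)$ as its lax monoidal structure. I would first check that for affine varieties $X,Y$ the subcomplex $P(X)\otimes P(Y)\subset\sgs(X^{\An})\otimes\sgs(Y^{\An})$ is carried \emph{into} $P(X\times Y)\subset\sgs(X^{\An}\times Y^{\An})$ by the Eilenberg--Zilber map. This is a dimension count on the Alexander--Whitney/shuffle formula: a product of a $p$-chain supported on a subvariety of dimension $\le p$ with a $q$-chain supported on a subvariety of dimension $\le q$ is a sum of $(p+q)$-chains supported on the product subvariety, which has dimension $\le p+q$, and the analogous estimate for boundaries follows from the Leibniz rule. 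Thus $P$ inherits a lax monoidal structure making $P\to\sgs\circ\Anal$ a monoidal transformation essentially by construction.

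\emph{Step 2: compatibility with the monoidal structure on $C$.} Next I would verify that the surjection $P(X)\surj\ff C(X)$ intertwines the lax structure on $P$ just defined with the one on $C$ coming from the maps $\h_\bullet(X,F_\bullet)\otimes\h_\bullet(Y,G_\bullet)\to\h_\bullet(X\times Y,(F\times G)_\bullet)$. Unwinding both sides, a simplex contributing to $P(X)$ in degree $p$ supported on a subvariety of dimension $\le p$ defines, for any very good filtration refining that subvariety, a class in $\h_p$ of a graded piece; the Eilenberg--Zilber map on such simplices is compatible, via the product filtration $(F\times G)_\bullet$, with the product map on the associated cellular complexes, because the Künneth/Alexander--Whitney comparison used to define the monoidal structure on $C$ is induced by the very same Eilenberg--Zilber map. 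This is the diagram chase that makes the left-hand triangle of lax monoidal functors commute; I expect it to be routine but slightly tedious, since one must pass to a cofinal system of compatible filtrations on $X$, $Y$ and $X\times Y$.

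\emph{Step 3: passage to $\Der{\Lambda}$.} Finally, assuming $\Lambda$ is a principal ideal domain, Fact~\ref{lem:N-sing} tells us both maps are quasi-isomorphisms, so in $\Der{\Lambda}$ they become isomorphisms. Since the complexes $\ff C(X)$ are bounded complexes of finitely generated free $\Lambda$-modules (footnote to Proposition~\ref{pro:NA-eff}) and $P(X)$, $\sgs(X^{\An})$ are complexes of free $\Lambda$-modules, the underived tensor products appearing in the lax structures agree with $\otimes^{\dL}$; hence the lax monoidal functors $\ff C$, $P$, $\sgs\circ\Anal$ become monoidal functors to $\Der{\Lambda}$ and the transformations become monoidal isomorphisms. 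The main obstacle is Step~2: checking that the cellular product map on $C$ is genuinely induced by the Eilenberg--Zilber map at the chain level, so that the triangle of monoidal transformations commutes on the nose rather than merely up to a not-obviously-coherent homotopy. Once that compatibility is pinned down, the rest is formal.
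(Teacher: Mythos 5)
Your proposal follows the same strategy as the paper. The paper simply asserts the commutativity of the diagram in your Steps~1--2 (the text just before the corollary notes that $P$ inherits the Eilenberg--Zilber lax monoidal structure from $\sgs$ and that this is compatible with the cellular product on $C$), then carries out Step~3 via flatness. Two points in your Step~3 should be sharpened, though neither invalidates the argument. First, $\ff C(X)$ is \emph{not} a bounded complex of finitely generated free $\Lambda$-modules; the footnote you cite says only that it is \emph{quasi-isomorphic} to one. What is true is that $\ff C(X)$ is a filtered colimit of such complexes (over very good filtrations, using Corollary~\ref{cor:cellular-decomposition}), hence a complex of \emph{flat} $\Lambda$-modules, which is all that is needed for the K\"unneth argument. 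Second, the ``standard fact'' you invoke --- that a pointwise quasi-isomorphism of lax monoidal functors yields \emph{strong} monoidal functors after deriving --- is not self-contained: one also needs to know that at least one of the three lax structures becomes invertible in $\Der{\Lambda}$. That input is the Eilenberg--Zilber/K\"unneth theorem for $\sgs\circ\Anal$, which makes the right vertical arrow in the comparison square a quasi-isomorphism; strong monoidality of $\ff C$ and $P$ is then inherited from it through the quasi-isomorphisms of Fact~\ref{lem:N-sing} and the flatness (over a PID) of all tensor factors, including $P(X)\subset\sgs(X^{\An})$. With those two corrections your argument coincides with the paper's.
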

\begin{proof}
  Given affine varieties $X$ and $Y$, we have a commutative diagram:
  \begin{equation*}
    \xymatrix{\ff C(X)\otimes \ff C(Y)\ar[d]&P(X)\otimes
      P(Y)\ar[l]\ar[d]\ar[r]&\sgs(X^{\An})\otimes \sgs(Y^{\An})\ar[d]\\
      \ff C(X\times Y)&P(X\times Y)\ar[r]\ar[l]&\sgs(X^{\An}\times
      Y^{\An})}
  \end{equation*}
  $\sgs$ takes values in (complexes of) free $\Lambda$-modules, and
  $\ff C(X)$ is a direct limit of (complexes of) finitely generated
  free $\Lambda$-modules (by
  Corollary~\ref{cor:cellular-decomposition}) hence is itself a
  complex of flat $\Lambda$-modules. 
  If $\Lambda$ is a principal ideal domain then $P$ necessarily takes
  values in (complexes of) free $\Lambda$-modules as well. 
  In conclusion, under the assumptions of the corollary, all
  tensor factors in the diagram above are flat.
  
  It follows that all horizontal arrows in the diagram above are
  quasi-isomorphisms, and that in the upper row, all tensor products
  are equal to their derived versions. The corollary now follows from
  the fact that the right-most vertical arrow is a quasi-isomorphism.
\end{proof}

\begin{rem}
  There are several ways to extend the functor $C$ to all varieties,
  as explained in~\cite[p.~9]{nori-lectures}. However, for our
  purposes this will not be necessary as in the end we are interested
  only in the induced functor $\dae\to \Der{\iehm}$, and here we can
  use the equivalence between $\dae$ and $\daeaff$ of
  Lemma~\ref{lem:affine-da}.
\end{rem}

\section{Motivic realization}
\label{sec:AN}
We would now like to explain how the lax monoidal functor $C$
constructed in the previous section induces a functor on categories of
motives. The case of effective motives is treated
in~§\ref{sec:AN.construction} and as an application we deduce a
morphism of bialgebras $\ane:\Ha\to\Hn$
in~§\ref{sec:AN.an}. In~§\ref{sec:v2n} and §\ref{sec:a2n-stable} we
treat the case of effective motives with transfers and non-effective
motives, respectively.

\subsection{Construction}
\label{sec:AN.construction}
First we define the functor
\begin{equation*}
  C^{*}:\U\smaff\to\ch(\iehm)
\end{equation*}
by the coend formula for a ``left dg Kan extension''
(cf.~\cite[Fact~2.1]{choudhury-gallauer:dg-hty})
\begin{equation*}
  K\longmapsto\int^{X\in\smaff}K(X)\otimes C(X),
\end{equation*}
where the comodule structure is induced from the one on the right
tensor factor. Recall that the coend appearing in the definition is
nothing but the coequalizer of the diagram
\begin{equation*}
  \bigoplus_{X\to Y}K(Y)\otimes C(X)\rightrightarrows\bigoplus_{X}K(X)\otimes C(X),
\end{equation*}
where the two arrows are induced by the functoriality of $K$ and $C$,
respectively. We will prove below that $C^{*}$ is left Quillen for the
projective model structure on the domain (even induced from the
injective model structure on $\ch(\Lambda)$) and the injective model
structure on the codomain
(cf. Fact~\ref{thm:comod-model-injective}).

\begin{pro}\label{pro:lc} Let $\Lambda$ be a principal ideal
  domain. $\dL C^{*}$ inherits a monoidal structure, and takes
  $(\A^{1},\tau)$-local equivalences to quasi-isomorphisms. Moreover,
  it makes the following square commutative up to monoidal
  triangulated isomorphism.
  \begin{equation*}
    \xymatrix{\daeaff\ar[d]_{\dL C^{*}}\ar[r]^-{\sim}&\dae\ar[d]^{\btie}\\
      \Der{\iehm}\ar[r]_-{\Der{\ff}}&\Der{\Lambda}}
  \end{equation*}
\end{pro}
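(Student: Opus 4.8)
The plan is to establish the three assertions in turn, following the general strategy already used for $\sgs^{*}$ in Proposition~\ref{pro:sgs-bti} and relying on the left dg Kan extension machinery of~\cite{choudhury-gallauer:dg-hty}. First I would verify that $C^{*}$ is a left Quillen functor $\U\smaff\to\ch(\iehm)$. By Corollary~\ref{cor:cellular-decomposition} and the discussion preceding it, for each smooth affine $X$ the complex $C(X)$ is a filtered colimit of bounded complexes of finitely generated free $\Lambda$-modules sitting in $\ch(\iehm)$, and in particular $C(X)$ is cofibrant for the injective model structure on $\ch(\iehm)$ (Fact~\ref{thm:comod-model-injective}); moreover $\bullet\otimes C(X)$ is left Quillen since tensoring with a complex of flat $\Lambda$-modules preserves (injective) cofibrations and trivial cofibrations. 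Then~\cite[Lemma~2.5]{choudhury-gallauer:dg-hty} applies verbatim to give that $C^{*}$, being the left dg Kan extension of $C$, is left Quillen for the stated model structures, and~\cite[Lemma~2.2]{choudhury-gallauer:dg-hty} endows it with a lax monoidal structure compatible with the one on $C$; the monoidal structure descends to $\dL C^{*}$ on homotopy categories because representables are cofibrant compact generators of $\Der{\U\smaff}$ (cf.~\cite[Lemma~3.20]{choudhury-gallauer:dg-hty}).

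Next I would prove the commutativity of the square. The key point is that by construction $\dL C^{*}$ sends the representable $\Lambda(X)$, for $X\in\smaff$, to $\dL C^{*}(\Lambda(X))\cong C(X)$ in $\Der{\iehm}$, and by Corollary~\ref{cor:N-sing-monoidal} we have monoidal quasi-isomorphisms $\ff C(X)\leftarrow P(X)\rightarrow \sgs(X^{\An})$, so $\Der{\ff}\circ\dL C^{*}$ agrees with $\sgs\circ\Anal$ on representables, monoidally and functorially in $X\in\smaff$. On the other hand, by Proposition~\ref{pro:sgs-bti} (applied after the equivalence $\daeaff\xrightarrow{\sim}\dae$ of~\cite[Cor.~5.16]{choudhury-gallauer:dg-hty}) the composite $\btie$ along the top and right of the square also agrees with $\sgs\circ\Anal$ on representables. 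Since representables are compact generators of $\daeaff$ and all the functors involved are monoidal triangulated and cocontinuous (being left derived functors of left Quillen functors), a natural monoidal triangulated isomorphism on the generators extends uniquely to the whole category; this is the standard argument that I would spell out using that $\Der{\ff}$, $\dL C^{*}$, $\btie$ and the equivalence are all cocontinuous monoidal triangulated.

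Finally, the claim that $\dL C^{*}$ takes $(\A^{1},\tau)$-local equivalences to quasi-isomorphisms. I would deduce this from the commutative square just established, exactly as the first statement of Proposition~\ref{pro:sgs-bti} was deduced from its second: the functor $\daeaff\xrightarrow{\sim}\dae\xrightarrow{\btie}\Der{\Lambda}$ by construction already inverts $(\A^{1},\tau)$-local equivalences (indeed $\btie$ is defined on $\dae$), and $\Der{\ff}$ is conservative — or more directly, $\ff$ being faithful exact, $\Der{\ff}$ detects quasi-isomorphisms after passing to ind-objects — so a morphism in $\Der{\U\smaff}$ which becomes an isomorphism after applying $\Der{\ff}\circ\dL C^{*}$, equivalently after $\btie$, must already be sent to a quasi-isomorphism by $\dL C^{*}$. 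I expect the main obstacle to be the coherence bookkeeping in the second step: checking that the isomorphism $\Der{\ff}\circ\dL C^{*}\cong \sgs\circ\Anal$ on representables is genuinely monoidal and natural, and that it is compatible with the monoidal isomorphism coming from $\btie$, so that the two monoidal triangulated extensions to $\daeaff$ can be identified — this requires carefully tracking the lax monoidal structures through the left dg Kan extension and through Corollary~\ref{cor:N-sing-monoidal}, rather than any genuinely new idea.
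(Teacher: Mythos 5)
Your first and last steps are sound and track the paper's own argument closely: the Quillen verification via~\cite[Lemma~2.5]{choudhury-gallauer:dg-hty} (with the observation that $C(X)$ is a filtered colimit of bounded complexes of finitely generated free $\Lambda$-modules, hence degreewise flat), and the deduction of the first claim from the commuting square via conservativity of $\Der{\ff}$, are both correct. The gap is in the middle step. You correctly note that Corollary~\ref{cor:N-sing-monoidal} provides monoidal quasi-isomorphisms $\ff C(X)\leftarrow P(X)\to\sgs(X^{\An})$ natural in $X\in\smaff$, so that $\Der{\ff}\circ\dL C^{*}$ and $\dL\sgs^{*}\circ\Anal^{*}$ agree on representables. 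But you then invoke the claim that ``a natural monoidal triangulated isomorphism on the generators extends uniquely to the whole category.'' That is not a theorem at the level of triangulated categories: two cocontinuous triangulated functors agreeing on a set of compact generators need not be naturally isomorphic, and a choice of isomorphism on generators does not propagate to one on cones, let alone a coherent monoidal one --- this is precisely the gluing obstruction that makes dg (or $\infty$-) enhancements necessary.

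What the paper does instead is introduce the left dg Kan extension $P^{*}\colon\U\smaff\to\ch(\Lambda)$ of $P$, so that the zigzag of Corollary~\ref{cor:N-sing-monoidal} upgrades to honest monoidal natural transformations $\ff\circ C^{*}\leftarrow P^{*}\to\sgs^{*}\circ\Anal^{*}$ between functors defined on all of $\U\smaff$, at the model level. These transformations then descend to monoidal triangulated transformations between the derived functors (here Lemma~\ref{lem:comod-monoidal-dedekind} is needed to handle the fact that the injective model structure on $\ch(\iehm)$ is not itself monoidal, a point you wave at but do not resolve). Only at this point does the compact-generators argument enter, and in the valid direction: one checks that an \emph{already constructed} natural transformation is invertible on the cofibrant generators $\Lambda(X)$, which suffices because the full subcategory on which a fixed triangulated natural transformation is invertible is localizing. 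In short, compact generation lets you verify that a given transformation is an isomorphism, not conjure the transformation; without the bridge functor $P^{*}$ your middle step has a genuine hole, even though you have correctly identified that the coherence bookkeeping through Kan extensions is where the real content lies.
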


\begin{rem}
  In~\cite[p.~9]{nori-lectures}, Nori remarks that for an arbitrary
  variety $X$ and an affine open cover
  $\mathcal{U}=(U_{1},\ldots,U_{q})$ of $X$, the complex
  \begin{equation}\label{eq:lc-N}
    \mathrm{Tot}(\cdots\to \oplus_{1\leq i_{1}<\cdots< i_{p}\leq
      q}C(U_{i_{1}}\cap\cdots\cap U_{i_{p}})\to\cdots)
  \end{equation}
  ``computes the homology of $X$''. This can also be explained using
  the proposition, at least if $X$ is smooth. Namely, in that case,
  the complex
  \begin{equation*}
    \cdots\to \oplus_{1\leq i_{1}<\cdots< i_{p}\leq
      q}\Lambda(U_{i_{1}}\cap\cdots\cap U_{i_{p}})\to\cdots
  \end{equation*}
  defines a cofibrant replacement of $\Lambda(X)$ (as we will see in
  Lemma~\ref{lem:asp-lc}), and $C^{*}$ applied to it is
  just~\eqref{eq:lc-N}, as follows from~\cite[Lemma~3.21]{choudhury-gallauer:dg-hty}. 
  Hence the proposition tells
  us that the underlying complex of $\Lambda$-modules
  in~\eqref{eq:lc-N} is nothing but $\btie\Lambda(X)\cong
  \sgs(X^{\An})$ (by Proposition~\ref{pro:sgs-bti}).

  We will come back to this explicit description of $\dL C^{*}$ in
  section~\ref{sec:asp.eff} where~\eqref{eq:lc-N} is denoted by
  $C^{\mathcal{U}}(X)$.
\end{rem}

\begin{proof}[Proof of Proposition~\ref{pro:lc}]
  Just as $C$ admits a left Kan extension, so do $P$, $\sgs$ and
  $\Anal$:
  \begin{equation*}
    \xymatrix{\smaff\ar[r]\ar[d]_{C}&\U\smaff\ar[ld]^{C^{*}}\ar[ldd]^{P^{*}}\ar[dd]^{\Anal^{*}}\\
      \ch(\iehm)\ar[d]_{\ff}\\
      \ch(\Lambda)&\U\ansm\ar[l]^-{\sgs^{*}}}
  \end{equation*}
  Endow $\ch(\Lambda)$ and $\ch(\iehm)$ with the injective model
  structures (cf.~Fact~\ref{thm:comod-model-injective}), and the
  presheaf categories with the projective model structures deduced
  from the injective model structure on $\ch(\Lambda)$. We then use~\cite[Lemma~2.5]{choudhury-gallauer:dg-hty} 
  to prove that all these Kan extensions are left Quillen
  functors. For $\sgs$, $P$, and $C$ this follows from the fact that
  they take values in complexes of flat objects (see the proof of
  Corollary~\ref{cor:N-sing-monoidal}) hence the tensor product with
  these complexes is a left Quillen functor for the injective model
  structure. For $\Anal$, this is because evaluation at a smooth
  affine scheme $X$ clearly preserves (trivial) fibrations.

  Also, $C^{*}$ as well as $P^{*}$ and $\sgs^{*}\Anal^{*}\cong
  (\sgs\circ\Anal)^{*}$ inherit canonically lax monoidal structures
  (\cite[Lemma~2.2]{choudhury-gallauer:dg-hty}). 
  From Corollary~\ref{cor:N-sing-monoidal} we deduce monoidal
  transformations
  \begin{equation}
    \ff{}\circ C^{*}\xleftarrow{} P^{*}\to \sgs^{*}\circ \Anal^{*}
  \end{equation}
  of lax monoidal functors defined on $\U\smaff$ taking values in
  $\ch(\Lambda)$. They give rise to monoidal
  triangulated 
  transformations between the corresponding left derived functors
  (this uses Lemma~\ref{lem:comod-monoidal-dedekind}), and to prove
  that these transformations are invertible, it suffices to check it
  on objects of the form $\Lambda(X)$, where $X\in \smaff$ (by~\cite[Lemma~3.20]{choudhury-gallauer:dg-hty} 
  these compactly generate the derived category). These objects are
  cofibrant, and we conclude since the maps
  \begin{equation*}
    \ff{}\circ C^{*}\Lambda(X)\leftarrow P^{*}\Lambda(X)\rightarrow
    \sgs^{*}\circ \Anal^{*}\Lambda(X)
  \end{equation*}
  are identified with the quasi-isomorphisms
  \begin{equation*}
    \ff{}\circ C(X)\leftarrow P(X)\rightarrow \sgs\circ\Anal(X)
  \end{equation*}
  (see Fact~\ref{lem:N-sing}).

  We have now constructed a diagram of lax monoidal triangulated
  functors
  \begin{equation*}
    \xymatrix@C=5em{\Der{\iehm}\ar[d]_{\Der{\ff}}&\Der{\U\smaff}\ar[l]_-{\dL
        C^{*}}\ar[ld]^-{\dL P^{*}}\ar[d]^{\Anal^{*}}\\
      \Der{\Lambda}&\Der{\U\ansm}\ar[l]^-{\dL\sgs^{*}}}
  \end{equation*}
  which commutes up to monoidal triangulated isomorphism. Using the
  identification $\daeaff\xrightarrow{\sim}\dae$
  (Lemma~\ref{lem:affine-da}) the result therefore follows from
  Proposition~\ref{pro:sgs-bti}.
\end{proof}

\subsection{Bialgebra morphism}
\label{sec:AN.an}
Since $\Hn$ is a flat $\Lambda$-module, $\ch(\iehm)$ is canonically
equivalent to the category of $\Hn$-comodules in $\ch(\Lambda)$
(Fact~\ref{thm:comod-basic}), and we can consider the following
composition of monoidal functors:
\begin{equation}\label{eq:rea-def}
  \rea:\dae\simeq\daeaff\xrightarrow{\dL C^{*}}\Der{\iehm}\to\coMod{\Hn}^{\Der{\Lambda}},
\end{equation}
where the last term denotes the category of $\Hn$-comodules in
$\Der{\Lambda}$. The upshot of the discussion so far is that we obtain a
diagram
\begin{equation*}
  \xymatrix@C=5em{\dae\ar[r]^-{\rea}\ar[dr]_{\btie}&\coMod{\Hn}^{\Der{\Lambda}}\ar[d]^{\ff{}}\\
    &\Der{\Lambda}}
\end{equation*}
of monoidal functors which commutes up to monoidal isomorphism. To
invoke the universal property of $\ealga$ (Fact~\ref{pro:A-universal}) we
still need to verify the following Lemma.
\begin{lem}
  Let $K\in\Der{\Lambda}$, $\Lambda$ a principal ideal domain. Then the
  coaction of $\Hn$ on $\rea(K_{\mathrm{cst}})$ is trivial.
\end{lem}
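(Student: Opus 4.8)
The plan is to trace through the construction of $\rea$ and observe that on a constant presheaf everything in sight factors through the base, so the comodule structure can only be the trivial one. Recall that $\rea$ is the composite
\begin{equation*}
  \dae\simeq\daeaff\xrightarrow{\dL C^{*}}\Der{\iehm}\to\coMod{\Hn}^{\Der{\Lambda}},
\end{equation*}
and that $\dL C^{*}$ is computed by the coend $K\mapsto\int^{X\in\smaff}K(X)\otimes C(X)$, where the $\Hn$-comodule structure on the output is induced entirely from the comodule structure on the tensor factors $C(X)\in\ch(\iehm)$. For a complex $A\in\Der{\Lambda}$, the constant presheaf $A_{\mathrm{cst}}$ has $A_{\mathrm{cst}}(X)=A$ for every $X$, functorially for the terminal structure morphism, so that $A_{\mathrm{cst}}\cong A\otimes\Lambda(\mathrm{pt})_{\mathrm{cst}}$ — more precisely, $A_{\mathrm{cst}}$ is the image of $A$ under $(\bullet)\otimes\Lambda(\spec k)$, using that $\spec k$ is the terminal object of $\smaff$ and that constant presheaves are exactly those pulled back along the projection to the point.

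First I would reduce to the case $A=\Lambda$ concentrated in degree $0$: since $\dL C^{*}$ and the forgetful functor are triangulated and monoidal, and since $A_{\mathrm{cst}}\cong A\otimes_{\Lambda}\Lambda_{\mathrm{cst}}$ (tensoring in $\Der{\Lambda}$, where it is also the tensoring of $\coMod{\Hn}^{\Der{\Lambda}}$ over $\Der{\Lambda}$), the comodule $\rea(A_{\mathrm{cst}})$ is obtained from $\rea(\Lambda_{\mathrm{cst}})$ by this tensoring, which preserves triviality of coactions. Next, $\Lambda_{\mathrm{cst}}$ is precisely the monoidal unit of $\U\smaff$ (the constant presheaf with value $\Lambda$ is $\Lambda(\spec k)$ since $\spec k$ is terminal). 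As $\dL C^{*}$ is monoidal, $\dL C^{*}(\Lambda_{\mathrm{cst}})$ is the monoidal unit of $\Der{\iehm}$, which is $C(\spec k)$; and $\spec k$ being zero-dimensional, its only filtration is the trivial one, so $C(\spec k)=\h_{\bullet}(\spec k,\emptyset)=\h_0(\mathrm{pt})=\Lambda$ placed in degree $0$, with the $\Hn$-comodule structure coming from the very good pair $(\spec k,\emptyset,0)\in\diagoodeff$ — which is the unit object of $\ehm$, hence carries the trivial (counit) coaction. Pushing forward along $\ehm\to\coMod{\Hn}$ and then to $\coMod{\Hn}^{\Der{\Lambda}}$ preserves this, so $\rea(\Lambda_{\mathrm{cst}})$ is the trivial comodule on $\Lambda$, and by the reduction $\rea(A_{\mathrm{cst}})$ is the trivial comodule on $A$ for all $A$.

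The only genuine point requiring care — and the step I expect to be the main obstacle — is the identification $A_{\mathrm{cst}}\cong A\otimes_{\Lambda}\Lambda_{\mathrm{cst}}$ \emph{as objects of the homotopy category carrying compatible structure}, i.e.\ checking that the constant-presheaf functor is (up to natural isomorphism) the tensoring of $\Der{\U\smaff}$ over $\Der{\Lambda}$ through the unit $\Lambda_{\mathrm{cst}}$, and that the equivalence $\daeaff\simeq\dae$ together with the localizations defining $\U\smaff$ do not disturb constancy. This is essentially formal: $(\bullet)_{\mathrm{cst}}=\pi^{*}$ for the projection $\pi\colon\smaff\to\mathrm{pt}$, $\pi^{*}$ is monoidal, and a monoidal functor sends $A\otimes\one$ to $\pi^{*}A\otimes\one$; one then observes that all three Bousfield localizations used to build $\daeaff$ are compatible with $\pi^{*}$ (constant presheaves are already $(\A^1,\tau)$-local, or at worst map to such under fibrant replacement without leaving the image of $\pi^*$ up to equivalence). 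Granting this, the coaction on $\rea(A_{\mathrm{cst}})$ is obtained by applying the monoidal functor $\rea$ to $A\otimes\one$, hence equals $A\otimes(\text{coaction on }\rea(\one))$, and the coaction on the unit of any category of comodules is trivial by definition. This is exactly the hypothesis needed to invoke Fact~\ref{pro:A-universal}.
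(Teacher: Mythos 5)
Your approach is genuinely different from the paper's, and there is a gap in the reduction step — one that you yourself flag as the main obstacle but do not quite close.

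The paper's proof is a direct computation: replace $K$ by a projective cofibrant complex of \emph{free} $\Lambda$-modules. Then $K_{\mathrm{cst}}$ is projective cofibrant and in each degree a direct sum of copies of $\Lambda(\spec k)$, so the coend formula gives immediately that $\dL C^*(K_{\mathrm{cst}}) = C^*(K_{\mathrm{cst}})$ is in each degree a direct sum of copies of $C(\spec k)$, hence carries the trivial coaction. No reduction to the monoidal unit is needed.

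Your proposal instead tries to reduce to $A = \Lambda$ by claiming that $\rea(A_{\mathrm{cst}})$ is obtained from $\rea(\Lambda_{\mathrm{cst}})$ by tensoring with $A$. The justification offered — monoidality of $\pi^*$ and of $\rea$ — does not establish this. Monoidality of $\rea$ is a statement about tensor products \emph{within} $\daeaff$, i.e.\ $\rea(M_1 \otimes M_2) \cong \rea(M_1) \otimes \rea(M_2)$; it says nothing directly about the $\Der{\Lambda}$-tensoring $A \cdot M = A_{\mathrm{cst}} \otimes M$, because you do not yet know what $\rea(A_{\mathrm{cst}})$ is — which is exactly what you are trying to compute. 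What you actually need is $\Der{\Lambda}$-linearity of $\dL C^*$ (compatibility with the action of $\Der{\Lambda}$ on both sides), and the argument given, which is about compatibility of $\pi^*$ with the Bousfield localizations, is tangential to this point. The $\Der{\Lambda}$-linearity is true and not hard, but to prove it one pulls a complex of free modules out of the coend — at which point one has reproduced the paper's direct computation, and the detour through the unit buys nothing. In short: right conclusion and a reasonable plan, but the pivotal step is supported by an appeal to the wrong property, and the honest fix collapses your argument back into the paper's.
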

\begin{proof}
  We may assume that $K$ is projective cofibrant consisting of free
  $\Lambda$-modules in each degree (for example
  by~\cite[Proposition~4.4]{choudhury-gallauer:dg-hty}).
  $K_{\mathrm{cst}}$ is projective cofibrant, and in each degree
  consists of a direct sum of $\Lambda(\spec(k))$, the presheaf
  represented by $\spec(k)$. Hence
  $\dL C^{*}(K_{\mathrm{cst}})=C^{*}(K_{\mathrm{cst}})$ is a complex
  which in each degree consists of a direct sum of $C(\spec(k))$ on
  which $\Hn$ coacts trivially. Hence $\Hn$ also coacts trivially on
  $\rea(K_{\mathrm{cst}})$.
\end{proof}

\begin{cor}\label{cor:AN-eff}
  Assume that $\Lambda$ is a principal ideal domain. There is a
  morphism of bialgebras $\ealga\to\Hn$ inducing $\ane:\Ha\to\Hn$ and
  rendering the following diagrams commutative up to monoidal
  isomorphism:
  \begin{align*}
  \xymatrix{\dae\ar[d]_{\tbtie}\ar[r]^-{\rea}&\coMod{\Hn}^{\Der{\Lambda}}\ar[d]^{\ff{}}\\
    \coMod{\ealga}\ar[r]_-{\ff}\ar[ru]&\Der{\Lambda}}&&
    \xymatrix{\dae\ar[d]_{\h_{0}\tbtie}\ar[r]^-{\h_{0}\rea}&\coMod{\Hn}\ar[d]^{\ff{}}\\
      \coMod{\Ha}\ar[r]_-{\ff}\ar[ru]_{\overline{\ane}}&\Mod{\Lambda}}
  \end{align*}
\end{cor}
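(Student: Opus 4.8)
The plan is to obtain the morphism $\ealga\to\Hn$ from Ayoub's universal property (Fact~\ref{pro:A-universal}) applied to the realization $\rea$, and then to pass to degree-$0$ homology in order to deduce $\an\colon\Ha\to\Hn$ and the second diagram.

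First I would invoke Fact~\ref{pro:A-universal} with $K=\Hn$, viewed as a commutative bialgebra in $\Der{\Lambda}$ concentrated in degree $0$ (meaningful since $\Hn$ is flat over $\Lambda$; cf.~Fact~\ref{thm:comod-basic}), and with $f=\rea$ from~\eqref{eq:rea-def}. The commuting triangle required by the Fact is precisely the one displayed just before the preceding Lemma, which commutes up to monoidal isomorphism by Proposition~\ref{pro:lc} and the construction of $\rea$. For the other hypothesis, observe that for any $A\in\Der{\Lambda}$ the underlying complex of $\rea(A_{\mathrm{cst}})$ is $\btie(A_{\mathrm{cst}})\cong A$, while the $\Hn$-coaction on it is trivial by the preceding Lemma; hence $\rea(A_{\mathrm{cst}})$ is the trivial $\Hn$-comodule associated to $A$. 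Fact~\ref{pro:A-universal} then supplies a unique morphism of bialgebras $\ealga\to\Hn$ in $\Der{\Lambda}$ making the first diagram of the corollary commute up to monoidal isomorphism.

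It remains to descend to degree $0$. Because $\Hn$ is flat over $\Lambda$ and concentrated in degree $0$, the lax monoidal functor $\h_{0}\colon\Der{\Lambda}\to\Mod{\Lambda}$ satisfies $\h_{0}(\Hn\otimes^{\dL}M)\cong\Hn\otimes\h_{0}(M)$, and therefore carries $\Hn$-comodules in $\Der{\Lambda}$ to $\Hn$-comodules in $\Mod{\Lambda}$; thus $\h_{0}\rea\colon\dae\to\coMod{\Hn}$ is defined and still monoidal. On the other side, the homology of $\ealga$ is concentrated in non-negative degrees (as recalled in §\ref{sec:A}), so its bialgebra structure descends to $\Ha=\h_{0}\ealga$, and $\h_{0}$ applied to $\ealga\to\Hn$ is a morphism of bialgebras $\an\colon\Ha\to\Hn$. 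Applying $\h_{0}$ to the first diagram, while recording that $\h_{0}\tbtie$ factors through $\coMod{\Ha}$ and that $\an$ induces a comodule functor $\overline{\an}\colon\coMod{\Ha}\to\coMod{\Hn}$, produces the second diagram; its commutativity up to monoidal isomorphism is inherited from that of the first.

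There is no genuinely hard step here: the substance was already in Proposition~\ref{pro:lc} and the preceding Lemma, and the corollary is a formal consequence of Fact~\ref{pro:A-universal}. The only point that needs a little care is the compatibility of degree-$0$ homology with the comodule and bialgebra structures in the passage from $\ealga$ to $\Ha$, which relies on the flatness of $\Hn$ and $\Ha$ (cf.~Lemma~\ref{lem:A-flat}) and on the non-negativity of the homology of $\ealga$.
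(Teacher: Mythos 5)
Your proposal is correct and follows exactly the route the paper intends: the paper sets up the commuting triangle $\ff\circ\rea\cong\btie$ and proves the preceding Lemma precisely so that Fact~\ref{pro:A-universal} (with $K=\Hn$) can be applied, and then passes to $\h_{0}$ using the non-negativity of the homology of $\ealga$ and the flatness of $\Hn$. The paper leaves the corollary without an explicit proof for this very reason, and your write-up supplies the same argument.
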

There are two ways to obtain similar statements in the stable
setting. The easier one is to check that $\ealga\to\Hn$ passes to the
localizations $\alga\to\Hns$ and consider the composition
\begin{equation*}
  \da\xrightarrow{\tbti}\coMod{\alga}\to\coMod{\Hns}^{\Der{\Lambda}}.
\end{equation*}
This will be sufficient for our main theorem, and we will pursue it
in~§\ref{sec:asp.eff}. However, it might seem more natural and lead to
stronger results to extend the construction of $C^{*}$ to the level of
spectra and derive the resulting functor. This will be done
in~§\ref{sec:a2n-stable}.

\subsection{Transfers}
\label{sec:v2n}
The remainder of~§\ref{sec:AN} will not be strictly necessary for our
main theorem but the results obtained here are of independent
interest. In~§\ref{sec:v2n}, our goal is Proposition~\ref{pro:lctr}
where we prove that $\dL C^{*}$ extends to effective motives with
transfers.

Recall (\cite[§3.1]{nori-lectures}) that to a finite correspondence
$X\to S^{d}Y$ of degree $d$ between affine schemes, Nori associates a
morphism $C(X)\to C(Y)$, defined as the composition
\begin{equation*}
  C(X)\to C(S^{d}Y)\xleftarrow{\sim} C(Y^{d})_{\Sigma_{d}}\xrightarrow{\sum_{i=1}^{d}
  C(p_{i})} C(Y),
\end{equation*}
where $(\bullet)_{\Sigma_{d}}$ denotes the $\Sigma_{d}$-coinvariants,
and where the $p_{i}:Y^{d}\to Y$ are the canonical projections. As
proved in~\cite[Thm.~4.8.1]{harrer:phd}, this induces a functor
$C_{\mathrm{tr}}:\smaffcor\to\ch(\iehm)$ on smooth affine
correspondences, and we thus obtain a commutative triangle
  \begin{equation}\label{eq:c-add-transfer}
    \xymatrix{\smaff\ar[r]^-{C}\ar[d]&\ch(\ehm)\\
      \smaffcor\ar[ru]_-{C_{\mathrm{tr}}}}
  \end{equation}
  where the vertical arrow is the canonical inclusion. The same
  procedure as above yields a left Quillen functor
  $C_{\mathrm{tr}}^{*}:\U\smaffcor\to\ch(\iehm)$ for the projective
  model structure on the domain and the injective model structure on
  the codomain.
\begin{pro}\label{pro:lctr}
  Let $\Lambda$ be a principal ideal domain. $\dL C^{*}_{\mathrm{tr}}$
  inherits a monoidal structure, and takes $(\A^{1},\tau)$-local
  equivalences to quasi-isomorphisms. Moreover,
  (\ref{eq:c-add-transfer}) induces the following diagram, commutative
  up to monoidal triangulated isomorphism.
  \begin{equation*}
    \xymatrix{\dae\ar[d]&\daeaff\ar[l]_-{\sim}\ar[r]^-{\dL C^{*}}\ar[d]&\Der{\iehm}\\
      \dme&\dmeaff\ar[l]^-{\sim}\ar[ur]_-{\dL C^{*}_{\mathrm{tr}}}}
  \end{equation*}
\end{pro}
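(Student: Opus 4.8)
The plan is to exploit the universal property of the effective motives without transfers together with the fact that the functor $C^{*}_{\mathrm{tr}}$ on presheaves with transfers is a left Quillen functor, extending $\dL C^{*}$ along the canonical (left Quillen) functor $a_{\mathrm{tr}}^{*}:\U\smaff\to\U\smaffcor$ sending a smooth affine scheme to the representable presheaf with transfers it generates. Concretely, one has a commutative (up to natural isomorphism) triangle of lax monoidal functors
\begin{equation*}
  \xymatrix{\U\smaff\ar[rr]^-{a_{\mathrm{tr}}^{*}}\ar[dr]_{C^{*}}&&\U\smaffcor\ar[dl]^{C^{*}_{\mathrm{tr}}}\\
  &\ch(\iehm)&}
\end{equation*}
which reduces to the corresponding triangle on the level of the original functors $C$ and $C_{\mathrm{tr}}$ defined on (correspondences between) smooth affine schemes — and there it holds by construction, since the transfer morphism $C(X)\to C(Y)$ attached to the graph of a morphism $f\colon X\to Y$ (a degree-one correspondence) is visibly $C(f)$. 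The first step is therefore to record this compatibility carefully and to observe, using \cite[Lemma~2.2]{choudhury-gallauer:dg-hty} (and \cite[Lemma~2.5]{choudhury-gallauer:dg-hty} for left-Quillenness, with flatness as in the proof of Corollary~\ref{cor:N-sing-monoidal}), that the induced natural isomorphism is monoidal and that all three functors are left Quillen for the injective model structure on $\ch(\iehm)$, so that the triangle passes to left derived functors as a monoidal triangulated isomorphism $\dL C^{*}\cong \dL C^{*}_{\mathrm{tr}}\circ \dL a_{\mathrm{tr}}^{*}$.

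The second step is to identify $\dL a_{\mathrm{tr}}^{*}$ with the canonical functor $\daeaff\to\dmeaff$. This is exactly the statement that the $(\mathbb{A}^{1},\tau)$-local model structure on presheaves with transfers represents $\dmeaff$ and that the free-transfers functor realizes the comparison functor; one cites Voevodsky's construction (and Ayoub's treatment in the setting without the étale/Nisnevich distinction) to see that $\dL a_{\mathrm{tr}}^{*}$ descends to a monoidal triangulated functor on the $(\mathbb{A}^{1},\tau)$-localizations, which is the usual functor $\daeaff\to\dmeaff$. Granting this, the diagram in the statement is obtained by restricting the identity $\dL C^{*}\cong\dL C^{*}_{\mathrm{tr}}\circ(\daeaff\to\dmeaff)$, since $\dL C^{*}_{\mathrm{tr}}$ inverts $(\mathbb{A}^{1},\tau)$-local equivalences by the same argument as for $\dL C^{*}$ (the Eilenberg–Zilber / Basic Lemma quasi-isomorphisms $\ff C_{\mathrm{tr}}^{*}\leftarrow P_{\mathrm{tr}}^{*}\to\sgs^{*}\circ\Anal^{*}$ of \cite{harrer:phd}, checked on representable generators which are cofibrant, reduce this to Proposition~\ref{pro:sgs-bti}). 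Finally, the monoidal structure on $\dL C^{*}_{\mathrm{tr}}$ is inherited from the lax monoidal structure of $C^{*}_{\mathrm{tr}}$ via \cite[Lemma~2.2]{choudhury-gallauer:dg-hty} together with Lemma~\ref{lem:comod-monoidal-dedekind}, exactly as in the proof of Proposition~\ref{pro:lc}.

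The main obstacle is the second step: making precise that $\dL a_{\mathrm{tr}}^{*}$ really is \emph{the} functor $\daeaff\to\dmeaff$ and, more to the point, that $\dL C^{*}_{\mathrm{tr}}$ inverts \emph{all} the extra relations imposed in $\dmeaff$ that are not already present in $\daeaff$ — i.e.\ that the transfers realization is well defined on the localized category and not merely on presheaves with transfers. One must check that $C_{\mathrm{tr}}^{*}$ sends the generating $\tau$-hypercovers and $\mathbb{A}^{1}$-homotopies of presheaves with transfers to quasi-isomorphisms; the $\mathbb{A}^{1}$-part and the hypercover part follow as before from the flat-resolution comparison with $\sgs\circ\Anal$, but one should verify that the functor $C_{\mathrm{tr}}$ on $\smaffcor$ is genuinely well defined (associativity and compatibility of the correspondence-induced maps), which is where one leans on \cite{harrer:phd}. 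Once that input is granted, everything else is a formal consequence of the universal properties already established and of the computations in §\ref{sec:basic-lemma}.
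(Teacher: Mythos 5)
Your overall decomposition of the proof is the right one: (i) identify $C^{*}_{\mathrm{tr}}\circ a_{\mathrm{tr}}\cong C^{*}$ on the underived level, using that $C_{\mathrm{tr}}$ restricted to $\smaff\subset\smaffcor$ is $C$ (this is the reduction the paper also uses, yielding the commuting triangle after deriving); (ii) verify monoidality by checking the lax structure map is a quasi-isomorphism on representables, which is identified with $C(X)\otimes C(Y)\to C(X\times Y)$; and (iii) show $\dL C^{*}_{\mathrm{tr}}$ kills $(\A^{1},\tau)$-local equivalences. Steps (i) and (ii) match the paper's proof closely.

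The gap is in step (iii). You propose a transfers version of the zig-zag
\begin{equation*}
  \ff C^{*}_{\mathrm{tr}}\leftarrow P^{*}_{\mathrm{tr}}\to\sgs^{*}\circ\Anal^{*},
\end{equation*}
and then to ``check on representable generators'' to reduce to Proposition~\ref{pro:sgs-bti}. Two problems arise. First, the target $\sgs^{*}\circ\Anal^{*}$ is a functor on $\U\smaff$, not on $\U\smaffcor$; to run your argument you would need to extend $\sgs\circ\Anal$ (and $P$) to finite correspondences, which is not done in the paper and is not immediate — a singular chain supported on a small-dimensional subvariety need not push forward along a finite correspondence to something with the same support property. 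Second, even granting that, comparing $\dL C^{*}_{\mathrm{tr}}$ with an extended $\dL\sgs^{*}\Anal^{*}$ on cofibrant generators shows they are isomorphic as triangulated functors on $\Der{\U\smaffcor}$, but you still have to know that one of them inverts the $(\A^{1},\tau)$-local equivalences of presheaves \emph{with transfers} — and Proposition~\ref{pro:sgs-bti} only concerns the topology without transfers. So the reduction as stated is circular or incomplete.

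The paper avoids this entirely by a slicker argument: to show the left Quillen functor $C^{*}_{\mathrm{tr}}$ descends to the $(\A^{1},\tau)$-localization, it suffices to show the right adjoint $C_{\mathrm{tr},*}$ takes fibrant objects to $(\A^{1},\tau)$-fibrant presheaves with transfers. Both the $\tau$-descent condition and $\A^{1}$-invariance are checked after restricting $C_{\mathrm{tr},*}K$ to the site $\smaff$ (forgetting transfers), since the local and $\A^{1}$-local generating classes are indexed by schemes, not correspondences; but under that restriction $C_{\mathrm{tr},*}K$ becomes $C_{*}K$, so the claim reduces to Proposition~\ref{pro:lc}. This sidesteps any need to make $P$ or $\sgs\circ\Anal$ functorial for correspondences. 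You would want to replace your step (iii) with this argument.

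Your remarks about $\dL a_{\mathrm{tr}}$ being ``the'' canonical comparison functor $\daeaff\to\dmeaff$ are correct but a minor point; the paper takes this as known and doesn't dwell on it.
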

\begin{proof}
  It is proved in~\cite[Thm.~4.9.6]{harrer:phd} that the lax monoidal
  structure on $C$ is natural with respect to finite correspondences
  so that~(\ref{eq:c-add-transfer}) becomes a commutative triangle of
  lax monoidal functors. It follows that $C^{*}_{\mathrm{tr}}$ and
  $\dL C^{*}_{\mathrm{tr}}$ inherit lax monoidal structures,
  compatible with those of $C^{*}$ and $\dL C^{*}$. Fix a smooth
  affine scheme $X$ and consider the natural transformation (in $F$)
  \begin{equation}\label{eq:lc-mon-nat}
    C_{\mathrm{tr}}(X)\otimes^{\dL} \dL C_{\mathrm{tr}}^{*}(F)\to \dL C_{\mathrm{tr}}^{*}(\Lambda(X)\otimes F)
  \end{equation}
  of functors $\Der{\U(\smaffcor)}\to\Der{\iehm}$. Since the
  representable presheaves compactly generate the triangulated
  category $\Der{\U(\smaffcor)}$
  (see~\cite[Lemma~3.20]{choudhury-gallauer:dg-hty}),
  (\ref{eq:lc-mon-nat}) will be an isomorphism for all $F$ if it is so
  for $F=\Lambda(Y)$ a smooth affine scheme. But in this case,
  (\ref{eq:lc-mon-nat}) can be identified with
  \begin{equation*}
    C_{\mathrm{tr}}(X)\otimes C_{\mathrm{tr}}(Y)\to
    C_{\mathrm{tr}}(X\times Y),
  \end{equation*}
  hence by~(\ref{eq:c-add-transfer}) with
  \begin{equation*}
    C(X)\otimes C(Y)\to C(X\times Y),
  \end{equation*}
  which we know to be a quasi-isomorphism. Similarly, fix an object
  $F\in\Der{\U(\smaffcor)}$ and consider now the natural
  transformation (in $G$)
  \begin{equation*}
    \dL C_{\mathrm{tr}}^{*}(G)\otimes^{\dL} \dL C_{\mathrm{tr}}^{*}(F)\to \dL C_{\mathrm{tr}}^{*}(G\otimes^{\dL} F)
  \end{equation*}
  of functors $\Der{\U(\smaffcor)}\to\Der{\iehm}$. Again, it will be
  an isomorphism for all $G$ if it is so on representables
  $G=\Lambda(X)$. But this we just proved. We conclude that
  $\dL C^{*}_{\mathrm{tr}}$ is monoidal.

  We now claim that $\dL C^{*}_{\mathrm{tr}}$ takes
  $(\A^{1},\tau)$-local equivalences to quasi-isomorphisms. Notice
  that by the theory of Bousfield localizations, this is equivalent to
  the claim that the right adjoint $C_{\mathrm{tr},*}$ takes fibrant
  objects $K$ to $(\A^{1},\tau)$-fibrant presheaves of complexes. In
  other words (\cite[Thm.~5.7]{choudhury-gallauer:dg-hty}), we need to
  check that
  \begin{itemize}
  \item $C_{\mathrm{tr},*}K$ satisfies descent with respect to
    $\tau$-hypercovers; and
  \item $C_{\mathrm{tr},*}K(X)\to C_{\mathrm{tr},*}K(\A^{1}_{X})$ is a
    quasi-isomorphism for every smooth affine scheme $X$.
  \end{itemize}
  Both these properties can be checked on the site $\smaff$ (instead
  of $\smaffcor$) but restricted to this site $C_{\mathrm{tr},*}K$
  coincides with $C_{*}K$. Thus we conclude with
  Proposition~\ref{pro:lc}.

  Commutativity of the left square in the statement is obvious. The
  fact that the top horizontal arrow is an equivalence is
  Lemma~\ref{lem:affine-da}. Similarly, the bottom horizontal arrow is
  an equivalence (\cite[Cor.~5.16]{choudhury-gallauer:dg-hty}).
\end{proof}

\subsection{Stabilization}
\label{sec:a2n-stable}
In this subsection we will develop the stable motivic realizations for
motives with and without transfers in parallel. Statements containing
the symbol $(\mathrm{tr})$ thus have two obvious interpretations.

For any flat complex of comodules $K$, there is an injective stable
model structure on the category of symmetric $K$-spectra, by
Proposition~\ref{pro:comod-stable-model-injective}. Denote by $T$, as
in section~\ref{sec:betti}, a cofibrant replacement of
$\Lambda(\A^{1})/\Lambda(\G_{m})$ and set
$T_{\mathrm{tr}}=a_{\mathrm{tr}}T$. Notice that, canonically,
$C^{*}_{\mathrm{tr}}T_{\mathrm{tr}}\cong C^{*}T$.
\begin{lem}\mbox{}
  \begin{enumerate}
  \item The canonical morphism of bialgebras $\iota:\Hn\to\Hns$
    induces a functor
    \begin{equation*}
      \spt_{C^*T}\ch(\iehm)\xrightarrow{\overline{\iota}}\spt_{\overline{\iota}C^*T}\ch(\ihm)
    \end{equation*}
    which preserves stable weak equivalences.
  \item There is a canonical Quillen equivalence
    \begin{equation*}
      (\sus_{\overline{\iota}C^{*}T},\ev):\ch(\ihm)\xrightarrow{}\spt_{\overline{\iota}
        C^*T}\ch({\ihm}).
     \end{equation*}
  \end{enumerate}
\end{lem}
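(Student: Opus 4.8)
Both claims are essentially formal consequences of the general theory of symmetric spectra (recalled in §\ref{sec:betti.st}) combined with the fact that, after inverting $\sn$, the element $\sa$ acts invertibly on the relevant object of $\ch(\ihm)$. Concretely, $\overline{\iota}C^*T$ is (a cofibrant model for) the image in $\ch(\ihm)$ of the object corepresenting the $\sn$-twist, i.e.\ it becomes $\otimes$-invertible in the derived category of $\ihm$-comodules. Once this is established, part (2) follows from \cite[Thm.~9.1]{hovey:spectra} (equivalently the analogue of \cite[Lem.~1.10]{ayoub:betti} used in §\ref{sec:betti.st}), and part (1) follows because a weak equivalence of (levelwise cofibrant) spectra can be detected levelwise and $\overline{\iota}$ is exact.

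\medskip

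\emph{Part (1).} First I would observe that the morphism of bialgebras $\iota\colon\Hn\to\Hns$ induces, by corestriction of comodules, an exact monoidal functor $\overline{\iota}\colon\ch(\iehm)\to\ch(\ihm)$ (using Fact~\ref{thm:comod-basic} to identify $\ch(\iehm)=\coMod{\Hn}^{\ch(\Lambda)}$, resp.\ $\ch(\ihm)=\coMod{\Hns}^{\ch(\Lambda)}$; here $\Hns$ is flat over $\Lambda$ by Lemma~\ref{lem:A-flat}'s analogue for $\Hns{\Lambda}$, which holds because $\Hns=\Hn[\sn^{-1}]$ and $\Hn$ is flat). On underlying complexes $\overline{\iota}$ is the identity, hence it preserves quasi-isomorphisms, and therefore so does the induced functor on symmetric spectra when checked levelwise. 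Thus it remains to recall that, for the injective stable model structures supplied by Proposition~\ref{pro:comod-stable-model-injective}, a morphism of symmetric spectra is a stable weak equivalence iff its levelwise fibrant replacement (in the injective stable model structure) is a levelwise weak equivalence; since $\overline{\iota}$ commutes with the relevant (co)limits and with the bonding structure, it carries stable weak equivalences to stable weak equivalences. The one point requiring care is that $\overline{\iota}$ need not preserve fibrant objects, so one argues via cofibrant rather than fibrant objects, or passes through the underlying complexes where stable equivalences are detected by Hovey's criterion \cite[§3]{hovey:spectra}.

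\medskip

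\emph{Part (2).} By \cite[Thm.~9.1]{hovey:spectra}, the adjunction $(\sus_{\overline{\iota}C^{*}T},\ev)$ is a Quillen equivalence as soon as the object $\overline{\iota}C^{*}T$ is ``invertible'' in the homotopy category of $\ch(\ihm)$ in the symmetric-spectrum sense, i.e.\ the cyclic permutation on $(\overline{\iota}C^{*}T)^{\otimes 3}$ is homotopic to the identity and $\otimes\,\overline{\iota}C^{*}T$ is a self-equivalence of $\Der{\ihm}$. The cyclic permutation condition is automatic since $T$ already satisfies it (its image under a monoidal functor does too). For the self-equivalence, I would use that $\bti(C^{*}T)$ is quasi-isomorphic to $\bti(T)\cong\Lambda[-2]$, that under $\dL C^{*}$ (Proposition~\ref{pro:lc}) the object $C^{*}T$ maps to the image of $T$, and — this is the crux — that the induced coaction of $\Hns$ on $\h_0\bti(C^{*}T)\cong\h_2(\mathbb{G}_m,\{1\};\Lambda)$ is, via $\an\na$ and the construction of §\ref{sec:NA} together with the compatibility $\na(\sn)=\sa$ of \eqref{eq:NA}, precisely the rank-one comodule whose class generates $\sn^{-1}$, which is invertible in $\Hns$. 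Hence tensoring with $\overline{\iota}C^{*}T$ is inverse (up to isomorphism) to tensoring with the rank-one $\Hns$-comodule attached to $\sn$, proving invertibility.

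\medskip

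\textbf{Main obstacle.} The genuinely delicate point is the identification of the $\Hns$-comodule $\overline{\iota}C^{*}T$ in $\Der{\ihm}$ with an invertible (rank one) comodule: one must track the coaction through the left dg Kan extension $C^{*}$ and the localization at $\sn$, and verify that it matches the comodule $\h_{1}(\mathbb{G}_m,\{1\})^{\otimes(-1)}$ on which $\sn$ acts invertibly in $\Hns$. Everything else is a formal invocation of \cite[§9]{hovey:spectra}, Proposition~\ref{pro:comod-stable-model-injective}, and the exactness of $\overline{\iota}$.
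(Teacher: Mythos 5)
Your proposal follows the same route as the paper's: part~(1) is essentially formal, and part~(2) reduces via \cite[Thm.~9.1]{hovey:spectra} to showing that $-\otimes\overline{\iota}C^{*}T$ is a Quillen equivalence, which in turn amounts to identifying $\overline{\iota}C^{*}T$ with an invertible object of $\Der{\ihm}$. The paper defers this identification to §\ref{sec:asp.eff}, and you correctly flag it as the only substantive input.

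There are, however, a few imprecisions worth repairing. In part~(1), the assertion that ``a weak equivalence of (levelwise cofibrant) spectra can be detected levelwise'' is wrong as stated: stable weak equivalences are precisely \emph{not} the levelwise ones. The clean version of the argument you gesture at is that $\overline{\iota}$ preserves monomorphisms, level quasi-isomorphisms, and the localizing maps $\zeta_{n}^{D}$ (sending them to $\zeta_{n}^{\overline{\iota}D}$), hence is left Quillen for the injective stable model structures of Proposition~\ref{pro:comod-stable-model-injective}; since every object is injective-cofibrant, Ken Brown's lemma then gives preservation of all stable weak equivalences. In part~(2) you have a degree slip: the identification is $\h_{0}\rea(T[2])\cong\h_{1}(\G_{m},\{1\})$, not $\h_{2}$, and it is $T[2]$ rather than $T$ whose Betti realization trivializes. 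The cyclic-permutation-on-$(\overline{\iota}C^{*}T)^{\otimes3}$ condition you add is not part of \cite[Thm.~9.1]{hovey:spectra} (it belongs to the comparison between symmetric and ordinary spectra in §10 of \textit{loc.\,cit.}) and is not needed here. Finally, the paper does not reach invertibility via $\an\na$ and $\na(\sn)=\sa$; rather, Lemma~\ref{lem:comodule-iso} identifies $\h_{0}\rea(T[2])$ directly as the rank-one $\Hn$-comodule $\h_{1}(\G_{m},\{1\})$, whose coaction is by the group-like element $\sn$, and $\sn$ becomes invertible after localizing to $\Hns$.
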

\begin{proof}
  The functor is obtained by applying $\overline{\iota}$ levelwise
  (cf.~the following proof for the details,
  or~\cite[Déf.~4.3.16]{ayoub07-thesis}). The first part is then
  obvious, and the second part follows
  from~\cite[Thm.~9.1]{hovey:spectra} since, as proved in the
  following section, tensoring with $C^*T[2]$ (and hence with $C^*T$)
  is a Quillen equivalence. 
\end{proof}

We will prove in the following proposition that
$C^{*}_{(\mathrm{tr})}$ induces a left Quillen functor
$C^{*}_{(\mathrm{tr}),s}$ on the level of spectra. Thus we may define
the compositions
\begin{align*}
  \reas&:\da\simeq\daaff\xrightarrow{\dL C^{*}_{s}}\hot(\spt_{C^{*}T}\ch(\iehm))\xrightarrow{\dR\ev\circ\overline{\iota}}\Der{\ihm},\\
  \reatrs&:\dm\simeq\dmaff\xrightarrow{\dL
    C^{*}_{\mathrm{tr},s}}\hot(\spt_{C^{*}T}\ch(\iehm))\xrightarrow{\dR\ev\circ\overline{\iota}}\Der{\ihm}.
\end{align*}
These are triangulated functors, and we will prove that they are in
addition monoidal, at least if $\Lambda$ is a field.
\begin{pro}\label{pro:lc-stable}\mbox{}
  \begin{enumerate}
  \item\label{pro:lc-stable.quillen} The functors $C^{*}$ and
    $C^{*}_{\mathrm{tr}}$ induce canonically lax monoidal left Quillen
    functors
    \begin{align*}
      C^{*}_{s}&:\spt_{T}\U(\smaff)/(\A^{1},\tau)\to \spt_{C^*T}\ch(\iehm),\\
      C^{*}_{\mathrm{tr},s}&:\spt_{T_{\mathrm{tr}}}\U(\smaffcor)/(\A^{1},\tau)\to \spt_{C^*T}\ch(\iehm).
  \end{align*}
\item\label{pro:lc-stable.comp} The following triangles commute up to
  triangulated isomorphisms
   \begin{align}\label{eq:AN-stable-commutation}
    \xymatrix{\da\ar[dr]_{\bti}\ar[r]^-{\reas}&\Der{\ihm}\ar[d]^{\Der{\ff}}\\
      &\Der{\Lambda}}&&    \xymatrix{\da\ar[d]_{\dL a_{\mathrm{tr}}}\ar[r]^-{\reas}&\Der{\ihm}\\
      \dm\ar[ru]_{\reatrs}}
  \end{align}
\item\label{pro:lc-stable.mon} If $\Lambda$ is a field then $\reaTRs$
  is monoidal, and the triangles in~\eqref{eq:AN-stable-commutation}
  commute up to monoidal isomorphisms.
\item The Nori realization functors restrict to functors
  \begin{align*}
    \reas:\da{\mathrm{ct}}\to\Derb{\hm},&&\reatrs:\dm{\mathrm{ct}}\to\Derb{\hm}
  \end{align*}
  on the categories of constructible motives.
  \end{enumerate}
\end{pro}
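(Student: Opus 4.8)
The plan is to deduce all four statements from the effective results already in hand --- Propositions~\ref{pro:lc} and~\ref{pro:lctr} and equation~\eqref{eq:C-transfer} --- by prolonging along symmetric spectra in the style of~\cite{hovey:spectra}. For part~\eqref{pro:lc-stable.quillen}, the functor $C^{*}_{s}$ (resp.\ $C^{*}_{\mathrm{tr},s}$) is defined levelwise: a symmetric $T$-spectrum $(E_{n})_{n}$ with bonding maps $T\otimes E_{n}\to E_{n+1}$ is sent to $(C^{*}E_{n})_{n}$ with bonding maps $C^{*}T\otimes C^{*}E_{n}\to C^{*}(T\otimes E_{n})\to C^{*}E_{n+1}$, the first arrow coming from the lax monoidal structure on $C^{*}$ established in Proposition~\ref{pro:lc}; since $T_{\mathrm{tr}}=a_{\mathrm{tr}}T$ and $C^{*}_{\mathrm{tr}}a_{\mathrm{tr}}\cong C^{*}$, the two prolongations agree on $\spt_{T_{\mathrm{tr}}}\U(\smaffcor)$. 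The target is equipped with the injective stable model structure of Proposition~\ref{pro:comod-stable-model-injective}, which is combinatorial and left proper (every object is cofibrant for the injective structure). By Proposition~\ref{pro:lc} (resp.~\ref{pro:lctr}) the functor $C^{*}$ (resp.\ $C^{*}_{\mathrm{tr}}$) is a lax monoidal left Quillen functor carrying the cofibrant object $T$ to the cofibrant object $C^{*}T$ with respect to which the target is stabilised, so its prolongation is a lax monoidal left Quillen functor for the stable structures by the general prolongation theory of~\cite{hovey:spectra}. This gives \eqref{pro:lc-stable.quillen}, and hence the left derived functors and the compositions $\reas$, $\reatrs$ are well defined.

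For part~\eqref{pro:lc-stable.comp}, both triangles are obtained by stabilising the corresponding effective identities. For the first one, forgetting the comodule structure levelwise gives $\ff_{s}\circ\overline{\iota}\cong\ff_{s}$ and $\Der{\ff}\circ\dR\ev\cong\dR\ev\circ\Der{\ff_{s}}$ (since $\ff$ is exact and commutes with evaluation), while Proposition~\ref{pro:lc} yields $\Der{\ff}\circ\dL C^{*}\cong\btie$ on the effective level; as all functors in sight are prolongations, this isomorphism stabilises, and because $\btie(T)$ is an invertible object of $\Der{\Lambda}$ (a shift of $\Lambda$) so that $\bti$ is the $T$-stabilisation of $\btie$ followed by $\dR\ev$, we obtain $\Der{\ff}\circ\reas\cong\bti$. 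The second triangle is the stabilisation of~\eqref{eq:C-transfer}: adding transfers is likewise prolonged levelwise and $T_{\mathrm{tr}}=a_{\mathrm{tr}}T$, so $\dL C^{*}_{\mathrm{tr},s}\circ\dL a_{\mathrm{tr},s}\cong\dL C^{*}_{s}$, and postcomposing with $\dR\ev\circ\overline{\iota}$ gives $\reatrs\circ\dL a_{\mathrm{tr}}\cong\reas$.

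For part~\eqref{pro:lc-stable.mon}, lax monoidality of $\reaTRs$ is contained in~\eqref{pro:lc-stable.quillen} together with the fact that $\overline{\iota}$ and $\dR\ev$ are lax monoidal. To upgrade to strong monoidality when $\Lambda$ is a field, it suffices to check that the structure maps are isomorphisms on a generating set, namely on the spectra $\sus{n}\Lambda(X)$ (resp.\ $\sus{n}\Lambda_{\mathrm{tr}}(X)$) with $X$ smooth affine and $n\geq 0$; under $\dR\ev\circ\overline{\iota}$ these are computed as $(\overline{\iota}C^{*}T)^{\otimes -n}\otimes C(X)$ in $\Der{\ihm}$, where $\overline{\iota}C^{*}T$ is invertible by the Lemma preceding this proposition, and cancelling the invertible factors reduces the claim to strong monoidality of $\dL C^{*}$ (Proposition~\ref{pro:lc}) together with $C(X)\otimes C(Y)\xrightarrow{\sim}C(X\times Y)$. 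The hypothesis that $\Lambda$ be a field guarantees that $\otimes$ is exact on $\coMod{\iehm}$ and $\coMod{\ihm}$ (all $\Lambda$-modules being flat), which is what makes the target a well-behaved symmetric monoidal category and legitimises the reduction to generators; the same observation shows the isomorphisms in~\eqref{eq:AN-stable-commutation} are monoidal. For part~(4), a constructible motive lies in the thick subcategory of $\da$ (resp.\ $\dm$) generated by the $\sus{n}\Lambda(X)$ (resp.\ $\sus{n}\Lambda_{\mathrm{tr}}(X)$), $X$ smooth affine; by the computation above the realization functors send such a generator to $(\overline{\iota}C^{*}T)^{\otimes -n}\otimes C(X)$. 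Now $C(X)$ is quasi-isomorphic, by Corollary~\ref{cor:cellular-decomposition} applied to a very good filtration of length $\dim X$, to a complex in $\ch(\ehm)$ concentrated in degrees $[0,\dim X]$ and levelwise finitely generated, hence represents an object of $\Derb{\hm}$ via $\overline{\iota}$; and $\overline{\iota}C^{*}T$ is represented by the cofibre of $C(\G_{m})\to C(\A^{1})$, again a bounded complex of finitely generated Nori motives, so its invertible tensor powers also lie in $\Derb{\hm}$. Since $\Derb{\hm}$ is a thick subcategory of $\Der{\ihm}$ ($\Der^{\mathrm b}$ of an abelian category being idempotent complete), the realization functors map all of $\da{\mathrm{ct}}$ (resp.\ $\dm{\mathrm{ct}}$) into $\Derb{\hm}$.

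The main obstacle I anticipate is part~\eqref{pro:lc-stable.quillen}: one must verify carefully that $C^{*}$ prolongs to a left Quillen functor between the \emph{injective} stable model structures on symmetric spectra of comodules, i.e.\ that Proposition~\ref{pro:comod-stable-model-injective} supplies a model structure satisfying the cellularity/left-properness hypotheses of the prolongation theorem and that cofibrancy is preserved where needed. Once this foundational point and the invertibility of $\overline{\iota}C^{*}T$ in $\Der{\ihm}$ are secured, parts~\eqref{pro:lc-stable.comp}--(4) are essentially formal consequences of the effective results, obtained by stabilising isomorphisms of prolonged functors and by testing on the compact generators.
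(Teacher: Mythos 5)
Your plan is sound and in large part parallels the paper, but the details diverge at several places and there is one genuine gap.

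For part~(1) you invoke ``the general prolongation theory of~\cite{hovey:spectra}''. The paper does not (and cannot simply) appeal to such a black box: Hovey's prolongation theorems require that the target spectrum category be cellular and left proper (you note this), but they also require verifying that the prolonged functor sends the stabilising maps $\zeta^{D}_{n}$ to stable equivalences, which is not automatic when the source and target are stabilised against \emph{different} objects ($T$ versus $C^{*}T$) and the functor is only \emph{lax} monoidal. The paper handles this directly: $C^{*}_{s}$ is cocontinuous hence a left adjoint, takes projective cofibrations to monomorphisms levelwise, and then the $\zeta^{D}_{n}$ condition is checked by factoring through $\sus_{C^{*}T}$ and using the \emph{strong} monoidality of $\dL C^{*}$ from Proposition~\ref{pro:lc}. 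You should either make this last verification explicit or point to the precise statement in Hovey that you are using and check its hypotheses.

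For part~(3) your route is genuinely different from the paper's. You reduce strong monoidality to a direct computation on the compact generators $\sus{n}_{T}\Lambda(X)$, using invertibility of $\overline{\iota}C^{*}T$ to cancel twist factors. The paper instead deduces strong monoidality of $\reas$ from strong monoidality of $\bti$ together with conservativity of the derived forgetful functor, then obtains $\reatrs$ in the étale case because $\dL a_{\mathrm{tr}}$ is a monoidal equivalence, and finally factors the Nisnevich realization through the étale one. Your approach is more computational but defensible (provided you justify that checking on compact generators suffices, which requires both arguments of the binatural transformation to be cocontinuous and triangulated); the paper's is quicker because the monoidality of $\bti$ is already known.

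For part~(4) there is a real gap. You assert that $\Derb{\hm}$ is a thick subcategory of $\Der{\ihm}$, with the parenthetical justification that ``$\Der^{\mathrm{b}}$ of an abelian category is idempotent complete''. Idempotent completeness only addresses closure under summands; the nontrivial point is that the natural functor $\Derb{\hm}\to\Der{\ihm}$ is \emph{fully faithful}, which does not follow formally from $\hm$ being a full abelian subcategory of $\ihm$. This is precisely what the paper cites~\cite[Pro.~8.6.11 and Thm.~15.3.1.(i)]{kashiwara-schapira:cat-sheaves} for (the small abelian subcategory $\hm$ is closed under subquotients in $\ihm$, and $\ihm$ has a family of small generators in $\hm$). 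Without this input, the claim that the realization lands in $\Derb{\hm}$ does not follow.
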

Recall that the category of constructible motives is the thick
subcategory generated by smooth schemes.

\begin{proof}
  We will prove the first part for $C^{*}$ but the case with transfers
  is literally the same. $C^{*}$ together with the natural
  transformation $\theta: C^{*}T\otimes C^{*}(\bullet)\to
  C^{*}(T\otimes\bullet)$ induces a functor
  \begin{equation*}
    C^{*}_{s}:\spt_{T}\U\smaff\to \spt_{C^*T}\ch(\iehm)
  \end{equation*}
  (cf.~\cite[Déf.~4.3.16]{ayoub07-thesis}). Explicitly, it takes a
  symmetric $T$-spectrum $\mathbf{E}$ to the symmetric $C^*T$-spectrum
  which in level $n$ is given by $C^{*}(\mathbf{E}_{n})$ and whose
  bonding maps are given by
  \begin{equation*}
    C^*T\otimes C^{*}(\mathbf{E}_{n})
    \xrightarrow{\theta}C^{*}(T\otimes \mathbf{E}_{n})\to C^{*}(\mathbf{E}_{n+1}),
  \end{equation*}
  the second arrow being induced by the bonding map of
  $\mathbf{E}$. The lax monoidal structure on $C^{*}$ induces
  canonically a lax monoidal structure on $C^{*}_{s}$.

  It is clear that $C^{*}_{s}$ is cocontinuous hence admits a right
  adjoint, by the adjoint functor theorem for locally presentable
  categories. Let $f$ be a projective cofibration in
  $\spt_{T}\U\smaff$. Then $f$ is in particular levelwise a
  cofibration (\cite[Cor.~4.3.23]{ayoub07-thesis}) and by the
  discussion in the previous section, $C^{*}$ takes these to
  monomorphisms. Thus $C^{*}_{s}(f)$ is a monomorphism. The same
  argument shows that $C^{*}_{s}$ takes projective cofibrations which
  are levelwise $(\A^{1},\tau)$-local equivalences to monomorphisms
  which are levelwise quasi-isomorphisms. In other words, $C^{*}_{s}$
  is a left Quillen functor for the \emph{unstable} model
  structures. To prove the first part of the proposition, it remains
  to prove that $C^{*}_{s}$ takes the morphism
  \begin{equation*}
    \zeta_{n}^{D}:\sus{n+1}_{T}(T\otimes D)\to \sus{n}_{T}D
  \end{equation*}
  to a stable equivalence for every cofibrant object $D$ and every
  $n\geq 0$ (cf.~\cite[Def.~8.7]{hovey:spectra}). But in the unstable
  homotopy category we can factor the image of $\zeta_{n}^{D}$ as
  follows:
  \begin{align*}
    C^{*}_{s}\sus{n+1}_{T}(T\otimes
    D)&\xleftarrow{}\sus{n+1}_{C^{*}T}C^{*}(T\otimes D)\\
    &\xleftarrow{}\sus{n+1}_{C^{*}T}(C^{*}T\otimes C^{*}D)\\
    &\xrightarrow{}\sus{n}_{C^*T}C^{*}D\\
    &\xrightarrow{}C^{*}_{s}\sus{n}_{T}D.
  \end{align*}
  The first, second and fourth arrows are all levelwise
  quasi-isomorphisms because $\dL C^{*}$ is monoidal on the level of
  derived categories. Moreover, the third arrow is a stable
  equivalence by definition.

  We now come to the second part of the proposition. Commutativity of
  the triangle on the right follows from (the proof of)
  Proposition~\ref{pro:lctr}. 
  For the triangle on the left, recall that
  $\sgs^{*}:\U(\ansm)/(\Done,\mathrm{usu})\to \ch(\Lambda)$ is a lax
  monoidal left Quillen functor. As for $C^{*}$ above this implies
  that there is an induced lax monoidal left Quillen functor
  $\sgs^{*}_{s}$ on the level of spectra (for the projective,
  respectively injective stable model structures). The Betti
  realization can then also be described as the following composition:
  \begin{equation*}
    \da\xrightarrow{\Anal^{*}}\anda\xrightarrow[\sim]{\dL\sgs^{*}_{s}}\hot(\spt_{\sgs^{*}\Anal^{*}T}\ch(\Lambda))\xrightarrow[\sim]{\dR\ev}\Der{\Lambda}.
  \end{equation*}
  Analogously, $\Der{\ff}\reas$ can be described as the composition
  \begin{equation*}
    \daaff\xrightarrow{\dL
      C^{*}_{s}}\hot(\spt_{C^{*}T}\ch(\coMod{\Hn}))\xrightarrow{\Der{\ff}}\hot(\spt_{\ff
    C^{*}T}\ch(\Lambda))\xrightarrow[\sim]{\dR\ev}\Der{\Lambda}.
  \end{equation*}
  One is then essentially reduced to compare $\Der{\ff}\dL C^{*}_{s}$
  and $\dL\sgs^{*}_{s}\Anal^{*}$ which is done, as in the
  effective case, by means of the intermediate functor $P$.

  We come to the third part, and assume now that $\Lambda$ is a
  field. Using Lemma~\ref{lem:comod-monoidal-field} together
  with~\cite[Thm.~8.11]{hovey:spectra} we see that the categories
  occurring in the definition of $\reaTRs$ all carry induced monoidal
  structures. By the previous lemma, $\dR\ev\circ\overline{\iota}$ is
  lax monoidal, as is $\dL C^{*}_{(\mathrm{tr}),s}$ by the first part
  of the
  proposition. 
  It follows that $\reaTRs$ is a lax monoidal functor, and the
  comparisons in part~\ref{pro:lc-stable.comp} are compatible with
  these lax monoidal structures.

  Monoidality of $\reas$ now follows from monoidality of $\bti$ and
  the fact that the derived forgetful functor is
  conservative. Monoidality of $\reatrs$ in the étale case follows
  from this since $\dL a_{\mathrm{tr}}$ is an equivalence of
  categories (cf.~\cite[Cor.~B.14]{ayoub:galois1}). Finally, the
  Nisnevich realization factors through the étale realization via a
  monoidal functor. 

  The last part of the proposition holds because $\reaTRs$ takes a
  smooth affine scheme into $\Derb{\hm}$. (For this we use that
  $\Derb{\hm}$ is a full subcategory of $\Derb{\ihm}$;
  see~\cite[Pro.~8.6.11 and
  Thm.~15.3.1.(i)]{kashiwara-schapira:cat-sheaves}.)
\end{proof}

\begin{rem}
  During the preparation of the present article, Ivorra
  in~\cite{ivorra-hodge-motivic-real} independently defined such a
  motivic realization for étale motives without transfers. While his
  construction is more general in that it applies also to a relative
  case (involving his generalization of Nori motives to ``perverse
  Nori motives'' over a base), he does not consider monoidality of the
  functor nor its behaviour with respect to transfers.
\end{rem}

Denote by $\mathbf{MHS}_{\Q}^{\mathrm{pol}}$ the category of
polarizable mixed $\Q$-Hodge structures, and by
$\ind\mathbf{MHS}_{\Q}^{\mathrm{pol}}$ its Ind-category. There is a
monoidal exact mixed Hodge realization for Nori motives whose
composition with the forgetful functor yields the forgetful functor on
Nori motives. Composing its derived counterpart with $\reaTRs$ from
the previous proposition yields the following immediate corollary.

\begin{cor}\label{cor:mixed-hodge}
  There are \emph{mixed Hodge realization} functors
    \begin{align*}
      \reah:\da{\Q}\longrightarrow \Der{\ind\mathbf{MHS}_{\Q}^{\mathrm{pol}}},&&\reahtr:\dm{\Q}\longrightarrow\Der{\ind\mathbf{MHS}_{\Q}^{\mathrm{pol}}}
    \end{align*}
    satisfying the following properties:
  \begin{enumerate}
  \item They are triangulated monoidal.
  \item They make the following triangles commute up to monoidal
    triangulated isomorphisms.
    \begin{align*}
      \xymatrix{\da{\Q}\ar[r]^-{\reah}\ar[rd]_{\bti}&\Der{\ind\mathbf{MHS}_{\Q}^{\mathrm{pol}}}\ar[d]^{\Der{\ff}}\\
        &\Der{\Q}}&&\xymatrix{\da{\Q}\ar[r]^-{\reah}\ar[d]_{\dL a_{\mathrm{tr}}}&\Der{\ind\mathbf{MHS}_{\Q}^{\mathrm{pol}}}\\
        \dm{\Q}\ar[ru]_{\reahtr}}
    \end{align*}
  \item They restrict to triangulated monoidal functors
    \begin{align*}
      \reah:\da{\Q,\mathrm{ct}}\longrightarrow\Derb{\mathbf{MHS}^{\mathrm{pol}}_{\Q}},&&\reahtr:\dm{\Q,\mathrm{ct}}\longrightarrow\Derb{\mathbf{MHS}^{\mathrm{pol}}_{\Q}}
    \end{align*}
    on the categories of constructible motives.
  \end{enumerate}
\end{cor}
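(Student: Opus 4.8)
The plan is to deduce the corollary formally by composing $\reaTRs$ from Proposition~\ref{pro:lc-stable} (taken with $\Lambda=\Q$) with the derived mixed Hodge realization of Nori motives. The only genuinely new input is the existence of a mixed Hodge realization on the abelian category $\hm_{\Q}$ of Nori motives itself, so I would begin by recalling its construction. Relative singular homology of a pair of varieties $(X,Z)$ carries a canonical polarizable mixed $\Q$-Hodge structure on $\h_{n}(X^{\An},Z^{\An};\Q)$, functorial in the pair, with the connecting maps of the homology sequence of a triple and the K\"{u}nneth morphisms being morphisms of mixed Hodge structures. Restricting to good pairs this gives a \ugm{} representation $\diagoodeff\to\mathbf{MHS}^{\mathrm{pol}}_{\Q}$ lifting $\h_{\bullet}$, and since $\mathbf{MHS}^{\mathrm{pol}}_{\Q}$ is a right exact monoidal abelian $\Q$-linear category with a monoidal faithful exact forgetful functor to $\fMod{\Q}$, Corollary~\ref{pro:N-universal} produces a monoidal faithful exact $\Q$-linear functor $\ehm_{\Q}\to\mathbf{MHS}^{\mathrm{pol}}_{\Q}$ whose composite with the forgetful functor is $\ff$. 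The element $\sn$ of~\S\ref{sec:N} is sent to the invertible Tate object $\Q(1)$, so this functor passes to the localizations, yielding a monoidal exact $\Q$-linear functor $\hm_{\Q}\to\mathbf{MHS}^{\mathrm{pol}}_{\Q}$ lying over $\fMod{\Q}$.

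Next I would propagate this through the formal constructions. Passing to Ind-categories gives an exact monoidal functor $\ihm_{\Q}\to\mathbf{MHS}^{\mathrm{pol}}_{\Q,\oplus}$; being exact it induces, by termwise application to complexes, a triangulated monoidal functor $H\colon\Der{\ihm_{\Q}}\to\Der{\mathbf{MHS}^{\mathrm{pol}}_{\Q,\oplus}}$, and likewise a triangulated monoidal functor $\Derb{\hm_{\Q}}\to\Derb{\mathbf{MHS}^{\mathrm{pol}}_{\Q}}$ compatible with $H$ under the fully faithful inclusions of bounded derived categories, using the Kashiwara--Schapira embedding results already invoked in the proof of Proposition~\ref{pro:lc-stable}. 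I then set $\reah:=H\circ\reas$ and $\reahtr:=H\circ\reatrs$. Since $\Q$ is a field, part~\ref{pro:lc-stable.mon} of Proposition~\ref{pro:lc-stable} gives that $\reas$ and $\reatrs$ are monoidal, hence so are $\reah$ and $\reahtr$, which is property~(1).

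The remaining properties follow by pasting. For the left-hand triangle of~(2): the composite of $H$ with $\Der{\ff}\colon\Der{\mathbf{MHS}^{\mathrm{pol}}_{\Q,\oplus}}\to\Der{\Q}$ equals $\Der{\ff}\colon\Der{\ihm_{\Q}}\to\Der{\Q}$, because the Hodge realization lies over $\fMod{\Q}$; combining with the left-hand triangle of~\eqref{eq:AN-stable-commutation} gives $\Der{\ff}\circ\reah\cong\bti$ monoidally. The right-hand triangle of~(2) is obtained by applying $H$ to the right-hand triangle of~\eqref{eq:AN-stable-commutation}, which reads $\reas\cong\reatrs\circ\dL a_{\mathrm{tr}}$. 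For property~(3), the last part of Proposition~\ref{pro:lc-stable} says $\reas$ and $\reatrs$ send constructible motives into $\Derb{\hm_{\Q}}$, and composing there with $\Derb{\hm_{\Q}}\to\Derb{\mathbf{MHS}^{\mathrm{pol}}_{\Q}}$ yields the desired restrictions, compatibly with $\reah$ and $\reahtr$ by the compatibility of the bounded and Ind-versions of the Hodge realization noted above.

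I expect the main obstacle to lie not in this final assembly but in the input recalled in the first paragraph: producing the polarizable mixed Hodge structures on relative homology of pairs of varieties with all the required naturality and multiplicativity, and checking the compatibilities needed to invoke the \emph{monoidal} universal property Corollary~\ref{pro:N-universal} rather than merely its non-monoidal counterpart. Everything downstream is a routine diagram chase in triangulated monoidal categories.
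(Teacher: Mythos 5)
Your proposal is correct and matches the paper's approach: the paper treats the corollary as an immediate consequence of Proposition~\ref{pro:lc-stable} once one has a monoidal exact mixed Hodge realization for Nori motives lying over the forgetful functor, and you have filled in the (standard) construction of that realization via Corollary~\ref{pro:N-universal} applied to the polarizable mixed Hodge structures on relative Betti homology. Your assessment that the main substance lies in verifying the \ugm{} compatibility hypotheses of the monoidal universal property, with the rest being routine pasting, is also accurate.
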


\section{Almost smooth pairs}
\label{sec:asp.gen}

In the sequel we will want to manipulate the Morel-Voevodsky motives
of pairs of varieties $(X,Z)$, and describe their images under certain
functors explicitly. This is easy if both $X$ and $Z$ are smooth but
turns out to be rather difficult in general. What we need is a class
of pairs which on the one hand are close enough to smooth ones so that
explicit computations are feasible, and on the other hand flexible
enough so that we are able to reduce our arguments from general pairs
to this smaller class. This is provided by the class of \emph{almost
  smooth} pairs, \ie{} pairs of varieties $(X,Z)$ where $X$ is smooth
and $Z$ a simple normal crossings divisor. By resolution of
singularities and excision, every good pair receives a morphism from
an almost smooth one which induces isomorphisms in Betti homology. In
this section, we will give rather explicit motivic models for almost
smooth pairs, both on the effective and the stable level, and compute
their images under various functors. One immediate consequence of our
discussion here is that the morphism of bialgebras $\ane$ passes to the
stable level.

\subsection{Effective level}
\label{sec:asp.eff}

$(X,Z)$ will now be our running notation for an almost smooth pair. We
always denote the irreducible components of $Z$ by
$Z_{1},\ldots,Z_{p}$ and endow them with the reduced structure. The
(smooth) intersection of $Z_{i}$ and $Z_{j}$ is denoted by $Z_{ij}$,
and similarly for intersections of more than two components. The
presheaf $\Lambda(X,Z)$ is defined to be the cokernel of the morphism
$\oplus_{i=1}^{p}\Lambda(Z_{i})\to\Lambda(X)$.

In addition, let $\mathcal{Y}=(Y_{1},\ldots,Y_{q})$ be an open affine
cover of $X$. For any functor $F:\smaff\to \ch(\mathcal{C})$
into the category of complexes on an abelian category $\mathcal{C}$,
we define $F^{\mathcal{Y}}(X,Z_{\bullet})\in\ch(\mathcal{C})$ to be
the (sum) total complex of the tricomplex
whose $(i,j,k)$-th term is
\begin{equation*}
  \bigoplus_{a_{0}<\cdots<a_{i},\
  b_{1}<\cdots <b_{j}}F_{k}\left( Y_{a_{0}\cdots a_{i}}\cap
Z_{b_{1}\cdots b_{j}}\right),
\end{equation*}
where by convention the empty intersection of the $Z_{i}$'s is
$X$. This can also be understood as the mapping cone of the morphism
\begin{equation*}
  F^{\mathcal{Y}\cap Z_{\bullet}}(Z_{\bullet})\to F^{\mathcal{Y}}(X)
\end{equation*}
with an obvious interpretation of the first term. If $F$ is defined on
all smooth schemes, we set $F(X,Z_{\bullet})$ to be
$F^{(X)}(X,Z_{\bullet})$, and if $F$ is defined on all affine varieties,
we can similarly define $F^{\mathcal{Y}}(X,Z)$.

For example, we can consider the presheaf of complexes
$\Lambda^{\mathcal{Y}}(X,Z_{\bullet})$ with the canonical map to
$\Lambda(X,Z)$. This defines a cofibrant replacement as we now prove.
\begin{lem}\label{lem:asp-lc}
  The canonical morphism
  $\Lambda^{\mathcal{Y}}(X,Z_{\bullet})\to\Lambda(X,Z)$ is a cofibrant
  replacement for the $\tau$-local model structure, and the complexes
  \begin{equation*}
    C^{*}\Lambda^{\mathcal{Y}}(X,Z_{\bullet})\xrightarrow{\sim}C^{\mathcal{Y}}(X,Z_{\bullet})\xrightarrow{\sim}C^{\mathcal{Y}}(X,Z)
  \end{equation*}
  all provide models for $\dL C^{*}\Lambda(X,Z)$.
\end{lem}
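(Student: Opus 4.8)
The plan is to proceed in three steps: show that $\Lambda^{\mathcal{Y}}(X,Z_{\bullet})\to\Lambda(X,Z)$ is a cofibrant replacement in the $\tau$-local model structure; deduce from this and from $C^{*}$ being left Quillen (Proposition~\ref{pro:lc}) that $C^{*}\Lambda^{\mathcal{Y}}(X,Z_{\bullet})$ represents $\dL C^{*}\Lambda(X,Z)$ and identify it with $C^{\mathcal{Y}}(X,Z_{\bullet})$; and finally show that the comparison $C^{\mathcal{Y}}(X,Z_{\bullet})\to C^{\mathcal{Y}}(X,Z)$ is a quasi-isomorphism, so that all three complexes are models for $\dL C^{*}\Lambda(X,Z)$. (Here and below I assume, as is implicit in this section, that $\Lambda$ is a principal ideal domain, so that $C^{*}$ and $\dL C^{*}$ are available.)

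For the first step, cofibrancy is formal: $\tau$-local cofibrations coincide with projective cofibrations, and $\Lambda^{\mathcal{Y}}(X,Z_{\bullet})$ is the total complex of a bounded bicomplex whose entries are finite coproducts of representables $\Lambda(Y_{a_{0}\cdots a_{i}}\cap Z_{b_{1}\cdots b_{j}})$ — all of these schemes being smooth and affine, since $X$ is smooth and the $Y_{a}$ are open affine in $X$. For the weak equivalence I would factor the canonical map through $\Lambda^{(X)}(X,Z_{\bullet})$, the analogue for the one-element cover $(X)$. The map $\Lambda^{\mathcal{Y}}(X,Z_{\bullet})\to\Lambda^{(X)}(X,Z_{\bullet})$ is a $\tau$-local weak equivalence: filtering both sides by the number $j$ of divisor components involved — a finite filtration — the induced map on each graded piece is a finite coproduct of the \v{C}ech-descent equivalences asserting that the \v{C}ech complex of the open affine cover $\mathcal{Y}\cap Z_{b_{1}\cdots b_{j}}$ is $\tau$-locally equivalent to $\Lambda(Z_{b_{1}\cdots b_{j}})$ (cf.~\cite{choudhury-gallauer:dg-hty}), and $\tau$-local weak equivalences are stable under finite coproducts and cones. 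The map $\Lambda^{(X)}(X,Z_{\bullet})\to\Lambda(X,Z)$ is even an objectwise quasi-isomorphism: evaluated at a test scheme $U$, the source splits as a direct sum over morphisms $f\colon U\to X$ of the augmented simplicial chain complex of the full simplex on the set $\{\,b : f\text{ factors through }Z_{b}\,\}$, which is acyclic when this set is nonempty and is $\Lambda$ in degree $0$ otherwise — reproducing exactly the degree-$0$ presheaf $\Lambda(X,Z)=\coker\bigl(\oplus_{b}\Lambda(Z_{b})\to\Lambda(X)\bigr)$.

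The second step is then immediate: $C^{*}\Lambda^{\mathcal{Y}}(X,Z_{\bullet})$ computes $\dL C^{*}\Lambda(X,Z)$ because $\Lambda^{\mathcal{Y}}(X,Z_{\bullet})$ is a cofibrant replacement and $C^{*}$ is left Quillen, and it is canonically identified with $C^{\mathcal{Y}}(X,Z_{\bullet})$ since the left dg Kan extension $C^{*}$ sends a representable $\Lambda(W)$ with $W$ smooth affine to $C(W)$ and commutes with the finite colimits forming the total complex — this is~\cite[Lemma~3.21]{choudhury-gallauer:dg-hty}.

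For the last step, the closed immersions $Y_{a_{0}\cdots a_{i}}\cap Z_{b_{1}\cdots b_{j}}\hookrightarrow Y_{a_{0}\cdots a_{i}}\cap Z$ give, by functoriality of $C$ on affine varieties, a morphism $C^{\mathcal{Y}}(X,Z_{\bullet})\to C^{\mathcal{Y}}(X,Z)$; it is a morphism of complexes because the composite $Z_{b_{1}\cdots b_{j}}\hookrightarrow Z_{b_{1}\cdots\widehat{b_{m}}\cdots b_{j}}\hookrightarrow Z$ is the inclusion independently of the omitted index $m$, so that the differential in the normal-crossings direction is killed. Since both sides are mapping cones over the common complex $C^{\mathcal{Y}}(X)$, it suffices to treat the induced map $C^{\mathcal{Y}\cap Z_{\bullet}}(Z_{\bullet})\to C^{\mathcal{Y}\cap Z}(Z)$ on the ``$Z$-parts''. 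As $\ff$ is faithful exact it detects quasi-isomorphisms, so I would apply it and, via the natural quasi-isomorphisms $\ff C\xleftarrow{\sim}P\xrightarrow{\sim}\sgs\circ\Anal$ on affine varieties (Corollary~\ref{cor:N-sing-monoidal}; these survive the iterated totalizations because over a principal ideal domain $P$, $\sgs$ and $\ff C$ are complexes of flat modules, cf.\ the proof there), replace everything by singular chains. Mayer--Vietoris in the open $\mathcal{Y}^{\An}$-direction then reduces the claim to the statement that the normal-crossings complex $\bigl[\cdots\to\oplus_{b_{1}<b_{2}}\sgs(Z_{b_{1}b_{2}}^{\An})\to\oplus_{b}\sgs(Z_{b}^{\An})\bigr]\to\sgs(Z^{\An})$ is a quasi-isomorphism. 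This is the genuine content, and the step I expect to require the most care: it is classical but needs triangulability of varieties. By Hironaka's triangulation (Fact~\ref{lem:varieties-topology}), $Z^{\An}$, the $Z_{b}^{\An}$ and all their intersections carry compatible CW structures, and one then concludes either by induction on the number of components using Mayer--Vietoris for CW subcomplexes, or directly by computing singular homology through cellular chains, where the statement decomposes cell by cell into exactly the augmented-simplex acyclicity of the first step. The only remaining subtlety is the flatness bookkeeping over $\Lambda$ that makes all these zig-zags of quasi-isomorphisms pass to the bi- and tri-complexes in play.
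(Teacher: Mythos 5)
Your proof is correct and follows the same overall skeleton as the paper's (\v{C}ech descent for the cofibrancy/equivalence, then the zig-zag $\ff C \leftarrow P \to \sgs\circ\Anal$ to identify $\dL C^{*}$), but with two genuine differences worth noting. First, for the $\tau$-local equivalence $\Lambda^{\mathcal{Y}}(X,Z_{\bullet})\to\Lambda(X,Z)$, the paper compares a pair of distinguished triangles and invokes Voevodsky's \emph{Homology of schemes} 2.1.4 (via Scholbach) for the statement that $\Lambda(Z_{\bullet})\to\Lambda(X)\to\Lambda(X,Z)\to$ is distinguished; you instead factor through $\Lambda^{(X)}(X,Z_{\bullet})$ and check directly that the map to $\Lambda(X,Z)$ is an \emph{objectwise} quasi-isomorphism, splitting sectionwise over $f\in\hom(U,X)$ into augmented simplicial complexes on the set $\{b : f \text{ factors through } Z_b\}$ — this makes the argument self-contained and, conveniently, reproves exactly the fact the paper cites (since the total complex of $\Lambda(X,Z_{\bullet})$ is the cone of $\Lambda(Z_{\bullet})\to\Lambda(X)$). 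Second, for $C^{\mathcal{Y}}(X,Z_{\bullet})\to C^{\mathcal{Y}}(X,Z)$, the paper simply asserts that the corresponding arrow on $(\sgs)^{\mathcal{Y}}$-complexes is a quasi-isomorphism, whereas you spell out the reduction: collapse the $\mathcal{Y}$-direction by Mayer--Vietoris for singular chains, then handle the remaining normal-crossings complex $\sgs(Z_{\bullet})\to\sgs(Z)$ using Hironaka's triangulation (Fact~\ref{lem:varieties-topology}). Both approaches are valid; yours trades the external citations for a longer but more explicit and elementary argument, while the paper's is terser and delegates the heavy lifting to the literature.
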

\begin{proof}
  To prove the first statement consider the following morphism of
  distinguished triangles in the derived category of presheaves on
  smooth schemes:
  \begin{equation*}
   \xymatrix{
     \Lambda^{\mathcal{Y}}(Z_{\bullet})\ar[r]\ar[d]&\Lambda^{\mathcal{Y}}(X)\ar[d]\ar[r]&\Lambda^{\mathcal{Y}}(X,Z_{\bullet})\ar[r]\ar[d]&\Lambda^{\mathcal{Y}}(Z_{\bullet})[-1]\ar[d]\\
     \Lambda(Z_{\bullet})\ar[r]&\Lambda(X)\ar[r]&\Lambda(X,Z)\ar[r]&\Lambda(Z_{\bullet})[-1]}
  \end{equation*}
  (It should be clear what the first term denotes although we haven't
  formally defined it above. That the second row arises from a short
  exact sequence of complexes of presheaves (and hence is indeed a
  distinguished triangle) is~\cite[2.1.4]{voevodsky-homology};
  see~\cite[Lem.~1.4]{scholbach-htopology} for a proof.) The second
  vertical arrow is a $\tau$-local equivalence as is the left vertical
  arrow by induction on the number of irreducible components of
  $Z$. 
  It follows that the third vertical arrow is a $\tau$-local
  equivalence as well. Since $\Lambda^{\mathcal{Y}}(X,Z_{\bullet})$ is
  a bounded below complex of representables, it is projective
  cofibrant (\cite[Fact~3.10]{choudhury-gallauer:dg-hty}).

  We now come to the second statement of the lemma. It is clear that
  the first arrow is invertible. For the second arrow consider the
  following diagram:
  \begin{equation*}
    \xymatrix{\ff C^{\mathcal{Y}}(X,Z_{\bullet})\ar[d]&P^{\mathcal{Y}}(X,Z_{\bullet})\ar[l]\ar[r]\ar[d]&(\sgs)^{\mathcal{Y}}(X,Z_{\bullet})\ar[d]\\
      \ff C^{\mathcal{Y}}(X,Z)&P^{\mathcal{Y}}(X,Z)\ar[l]\ar[r]&(\sgs)^{\mathcal{Y}}(X,Z)}
  \end{equation*}
  By the discussion in section~\ref{sec:basic-lemma}, we know that the
  horizontal arrows are all quasi-isomorphisms. Since the right-most
  vertical arrow is a quasi-isomorphism so is the left-most.
\end{proof}

Recall that we equipped the Betti realization of any effective
Morel-Voevodsky motive $M$ with a coaction of $\Hn$, and we denoted
the resulting comodule by $\rea(M)$ (see~(\ref{eq:rea-def})). Of
course, for this to be a sensible construction, the comodule structure
should better be compatible with the canonical one on
$\Lambda$-modules of the form $\h_{n}(X,Z)$. The following lemma
states that this is the case for $(X,Z)$ almost smooth.

Define the following zig-zag of morphisms of complexes of
$\Lambda$-modules:
\begin{equation}\label{eq:comodule-iso}
  \ff C^{\mathcal{Y}}(X,Z)\xleftarrow{}P^{\mathcal{Y}}(X,Z)\xrightarrow{}(\sgs)^{\mathcal{Y}}(X,Z)\to\sgs(X,Z).
\end{equation}

\begin{lem}\label{lem:comodule-iso}
  Assume that $(X,Z)$ is an almost smooth pair, and that $\Lambda$ is
  a principal ideal domain. Then~\eqref{eq:comodule-iso} induces an
  isomorphism of $\Hn$-comodules
  \begin{equation*}
    \h_{n}\rea\Lambda(X,Z)\xrightarrow{\sim}\h_{n}(X,Z)
  \end{equation*}
  for all $n\in\Z$.
\end{lem}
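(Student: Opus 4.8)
The plan is to make both sides of the asserted isomorphism completely explicit via Lemma~\ref{lem:asp-lc}, to reduce the comodule statement to its underlying $\Lambda$-module statement plus a faithfulness argument, and to locate the only genuine difficulty in matching the edge maps of Nori's diagram with the maps of a \v{C}ech--Mayer--Vietoris complex.

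Since $\Hn$ is a flat $\Lambda$-module, $\ch(\iehm)$ is by Fact~\ref{thm:comod-basic} the category of $\Hn$-comodules in $\ch(\Lambda)$, and by Lemma~\ref{lem:asp-lc} the object $C^{\mathcal{Y}}(X,Z)\in\ch(\iehm)$ represents $\dL C^{*}\Lambda(X,Z)$, hence represents $\rea\Lambda(X,Z)$ (cf.~\eqref{eq:rea-def}), with underlying complex $\ff C^{\mathcal{Y}}(X,Z)$. Thus $\h_{n}\rea\Lambda(X,Z)$ is nothing but the object $\h_{n}C^{\mathcal{Y}}(X,Z)$ of $\iehm$, its homology computed in that abelian category, and the task is to identify it, compatibly with the zig-zag~\eqref{eq:comodule-iso}, with the Nori motive $\h_{n}(X,Z)=\tilde{\h}_{\bullet}(X,Z,n)$ regarded via $\tilde{\h}_{\bullet}\colon\diaeff\to\ehm\subset\iehm$. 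For the underlying $\Lambda$-modules this is straightforward: the first two arrows of~\eqref{eq:comodule-iso} are obtained by applying the natural quasi-isomorphisms $\ff C\leftarrow P\to\sgs\circ\Anal$ of Section~\ref{sec:basic-lemma} (Fact~\ref{lem:N-sing}) sectionwise to the finitely many smooth affine pieces entering the decorated complexes and totalising, exactly as in the proof of Lemma~\ref{lem:asp-lc}; the last arrow $(\sgs)^{\mathcal{Y}}(X,Z)\to\sgs(X^{\An},Z^{\An})$ is a quasi-isomorphism because singular chains satisfy \v{C}ech descent for the open cover $\mathcal{Y}^{\An}$ of $X^{\An}$ (and of $Z^{\An}$), using Fact~\ref{lem:varieties-topology}. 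Inverting the first arrow gives the isomorphism of $\Lambda$-modules $\h_{n}\rea\Lambda(X,Z)\xrightarrow{\sim}\h_{n}(X,Z)$.

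It then remains to promote this to an isomorphism of $\Hn$-comodules, and here is where the work is. I would use that $C^{\mathcal{Y}}(X,Z)$ is, by Lemma~\ref{lem:asp-lc}, quasi-isomorphic in $\ch(\iehm)$ to $C^{\mathcal{Y}}(X,Z_{\bullet})$, which is the total complex of a tricomplex all of whose terms are finite direct sums of the complexes $C(Y_{a_{0}\cdots}\cap Z_{b_{1}\cdots})\in\ch(\iehm)$ and all of whose differentials are morphisms of $\ch(\iehm)$: the internal differentials of $C$, and the alternating sums of restriction maps along inclusions of affine pieces, which are edges of $\diaeff$. Filtering by the two nerve degrees produces a spectral sequence \emph{in the abelian category $\iehm$} converging to $\h_{\bullet}\rea\Lambda(X,Z)$ whose $E_{1}$-page is assembled from the Nori motives $\h_{k}C(V)\cong\tilde{\h}_{\bullet}(V,\emptyset,k)$ of the affine pieces $V$ --- this last identification being the comparison of cellular and singular homology underlying Fact~\ref{lem:N-sing}, which already takes place in $\ehm$ (cf.~\cite{nori-lectures,huber-mueller:nori}). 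Applying the exact, faithful, conservative functor $\ff$ turns this into the \v{C}ech--Mayer--Vietoris spectral sequence computing $\h_{\bullet}(X^{\An},Z^{\An})$, which, run inside $\ehm$, has abutment $\tilde{\h}_{\bullet}(X,Z,n)$; since $\ff$ is conservative the two spectral sequences share differentials and subquotients, so the abutments agree already in $\iehm$, and tracing~\eqref{eq:comodule-iso} through identifies this with the isomorphism we want. The main obstacle is precisely this last identification: one must verify that Nori's construction is compatible with \v{C}ech descent along an affine open cover and with the passage from $\bigoplus_{i}\Lambda(Z_{i})\to\Lambda(X)$ to the relative motive --- equivalently, that the edge maps appearing in the descent spectral sequence are, with the correct signs, the edges of $\diaeff$. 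This is a bookkeeping argument, but it is the heart of the matter; once it is settled, faithfulness and conservativity of $\ff$ conclude the proof.
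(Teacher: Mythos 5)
Your proposal correctly reduces to a comparison of the comodule structures and correctly locates the difficulty, but it does not carry out the step it identifies as essential, and that step is genuinely the whole content of the lemma, not ``bookkeeping''. You claim that the \v{C}ech--Mayer--Vietoris spectral sequence for $\h_{\bullet}(X^{\An},Z^{\An})$, ``run inside $\ehm$, has abutment $\tilde{\h}_{\bullet}(X,Z,n)$'', and that conservativity of $\ff$ then concludes. But conservativity only certifies an isomorphism \emph{after} one has produced a morphism in $\iehm$ between the two abutments; it cannot manufacture the assertion that the zig-zag~\eqref{eq:comodule-iso} is compatible with the $\Hn$-coactions, which is precisely what must be checked. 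Similarly, you invoke Fact~\ref{lem:N-sing} as if it ``already takes place in $\ehm$'': it does not --- $P$ and $\sgs$ take values in $\ch(\Lambda)$, not $\ch(\iehm)$, so the quasi-isomorphisms $\ff C\leftarrow P\to\sgs\circ\Anal$ are of $\Lambda$-module complexes, and identifying $\h_k C(X)$ with the Nori motive $\tilde{\h}_{\bullet}(X,\emptyset,k)$ \emph{as comodules} is itself part of what needs proving.

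The paper proceeds quite differently and sidesteps the spectral-sequence difficulty. By Jouanolou's trick one replaces $(X,Z)$ by an almost smooth pair $(X',Z')$ with $X'$ (hence $Z'$) affine, via a map inducing an isomorphism on singular homology; this makes the functor $C$ applicable to $(X',Z')$ directly, without the open cover. A short zig-zag of quasi-isomorphisms (involving the $\mathcal{Y}'$-decorated complexes as an intermediary, where the comodule structures manifestly match) reduces the claim to the compatibility of $\ff C(X',Z')\leftarrow P(X',Z')\to \sgs(X')/\sgs(Z')$ with the coactions in homology. This is then verified directly: an element of $\h_{n}P(X',Z')$ is represented by a chain supported on a dimension-$n$ closed subvariety $X_{n}\subset X'$ with boundary on a dimension-$(n-1)$ closed subvariety $Z_{n-1}\subset Z'$, so it factors through $\h_{n}(X_{n},Z_{n-1})$; the compatibility of the coactions on the resulting three objects is then immediate from the very definition of the coaction on $\h_{n}$ as a colimit over finite subdiagrams. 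Your approach could in principle be made to work, but you would have to supply the content of the step you deferred --- namely, a proof that Nori's diagram and its homology representation are compatible with descent along an affine open cover at the level of comodules --- and that is not easier than the direct affine computation the paper does.
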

\begin{proof}
  By Jouanolou's trick there exists a smooth affine variety $X'$ and a
  Zariski locally trivial morphism $p:X'\to X$ whose fibers are
  isomorphic to affine
  space. 
  Setting $Z'_{i}=Z_{i}\times_{X}X'$ we obtain an almost smooth pair
  $(X',Z')$ with $X'$ affine, and a morphism $p:(X',Z')\to (X,Z)$
  which induces an isomorphism in singular homology.

  Let $\mathcal{Y}'$ be the pullback of the affine cover to $X'$ and
  consider the following commutative diagram, where all the arrows are
  the canonical ones:
  \begin{equation*}
    \xymatrix{\ff C^{\mathcal{Y}}(X,Z)&P^{\mathcal{Y}}(X,Z)\ar[l]\ar[r]&(\sgs)^{\mathcal{Y}}(X,Z)\\
    \ff C^{\mathcal{Y}'}(X',Z')\ar[d]\ar[u]&\ar[d]P^{\mathcal{Y}'}(X',Z')\ar[u]\ar[l]\ar[r]&\ar[u](\sgs)^{\mathcal{Y}'}(X',Z')\ar[d]\\
    \ff C(X',Z')&P(X',Z')\ar[l]\ar[r]\ar[d]&\sgs(X',Z')\ar[d]\\
  &P(X')/P(Z')\ar[r]&\sgs(X')/\sgs(Z')}
  \end{equation*}
  By the discussion in section~\ref{sec:basic-lemma}, we know that the
  top horizontal arrows are both quasi-isomorphisms. All vertical
  arrows are quasi-isomorphisms. We thus reduce to prove that the
  zig-zag of morphisms $\ff C(X',Z')\leftarrow P(X',Z')\rightarrow
  \sgs(X')/\sgs(Z')$ induces an $\Hn$-comodule (iso)morphism in the
  $n$-th homology. Writing $(X,Z)$ for $(X',Z')$, this is expressed by
  commutativity of the following diagram, where the vertical arrows
  are the coaction of $\Hn$ on the objects in question:
  \begin{equation*}
    \xymatrix{\h_{n}\ff C(X,Z)\ar[d]_{\coa}&\ar[l]_-{\sim}\h_{n}P(X,Z)\ar[r]^-{\sim}&\h_{n}(X,Z)\ar[d]^{\coa}\\
      \h_{n}\ff C(X,Z)\otimes\Hn&\ar[l]^-{\sim}\h_{n}P(X,Z)\otimes\Hn\ar[r]_-{\sim}&\h_{n}(X,Z)\otimes\Hn}
  \end{equation*}
  Start with any $[(f,g)]\in\h_{n}P(X,Z)$. Thus there exist
  $X_{n}\subset X$, $Z_{n-1}\subset Z$ closed subvarieties of
  dimension at most $n$ and $n-1$, respectively, such that
  $f\in\sgs_{n}(X_{n})$, $g=\pm\partial f\in\sgs_{n-1}(Z_{n-1})$
  (depending on the sign conventions for the mapping
  cone). 
  It is then clear from the definition of the natural transformations
  $\ff C\xleftarrow{}P\xrightarrow{}\sgs$ that we reduce to prove
  commutativity of the following diagram
  \begin{equation*}
    \xymatrix{\h_{n}\ff C(X,Z)\ar[d]_{\coa}&\ar[l]\h_{n}(X_{n},Z_{n-1})\ar[d]_{\coa}\ar[r]&\h_{n}(X,Z)\ar[d]^{\coa}\\
      \h_{n}\ff C(X,Z)\otimes\Hn&\ar[l]\h_{n}(X_{n},Z_{n-1})\otimes\Hn\ar[r]&\h_{n}(X,Z)\otimes\Hn}
   \end{equation*}
   which is obvious.
\end{proof}

This lemma will be important later on as well but one immediate
application is that it allows us to extend the morphism of bialgebras
$\ane:\Ha\to\Hn$ constructed in section~\ref{sec:AN} to a morphism
$\an:\Has\to\Hns$. Indeed, we see that there is the following
isomorphism of $\Hn$-comodules:
\begin{align*}
  \h_{0}\overline{\an}\tbtie(T[2])&\xrightarrow{\sim}\h_{0}\rea(T[2])
  &&\text{by~Cor.~\ref{cor:AN-eff}}\\
  &\xrightarrow{\sim}\h_{0}\rea\Lambda(\G_{m},\{1\})[1]\\
  &\xrightarrow{\sim}\h_{1}(\G_{m},\{1\}),
\end{align*}
the last isomorphism by the previous lemma. One deduces easily that
$\ane(\sa)=\sn\in\Hn$ and hence the morphism $\ealga\to\Hn$ from
Corollary~\ref{cor:AN-eff} passes to the localization and induces the
following commutative squares:
\begin{align*}
  \xymatrix{\ealga\ar[r]\ar[d]_{\iota}&\Hn\ar[d]^{\iota}\\
    \alga\ar[r]&\Hns}&&  \xymatrix{\Ha\ar[r]^-{\ane}\ar[d]_{\iota}&\Hn\ar[d]^{\iota}\\
    \Has\ar[r]_-{\an}&\Hns}
\end{align*}

\begin{rem}\label{rem:AN}
  In particular, we can now define a stable version of the functor
  $\rea$ constructed in the effective case in section~\ref{sec:AN}
  (still assuming that $\Lambda$ is a principal ideal domain). Indeed,
  we set it to be the composition
  \begin{equation*}
    \rea:\da\xrightarrow{\tbti}\coMod{\alga}\xrightarrow{}\coMod{\Hns}^{\Der{\Lambda}}.
  \end{equation*}
  As the composition of two monoidal functors, $\rea$ is again
  monoidal. It follows also that the diagrams analogous to the ones in
  Corollary~\ref{cor:AN-eff} commute
  \begin{align*}
      \xymatrix{\da\ar[d]_{\tbti}\ar[r]^-{\rea}&\coMod{\Hns}^{\Der{\Lambda}}\ar[d]^{\ff{}}\\
    \coMod{\alga}\ar[r]_-{\ff}\ar[ru]&\Der{\Lambda}}&&
    \xymatrix{\da\ar[d]_{\h_{0}\tbti}\ar[r]^-{\h_{0}\rea}&\coMod{\Hns}\ar[d]^{\ff{}}\\
      \coMod{\Has}\ar[r]_-{\ff}\ar[ru]_{\overline{\an}}&\Mod{\Lambda}}
  \end{align*}
  as does the following square:
  \begin{equation*}
    \xymatrix@C=5em{\dae\ar[d]_{\dL\sus_{T}}\ar[r]^-{\rea}&\coMod{\Hn}^{\Der{\Lambda}}\ar[d]^{\overline{\iota}}\\
      \da\ar[r]_-{\rea}&\coMod{\Hns}^{\Der{\Lambda}}}
  \end{equation*}
\end{rem}

\subsection{Stable level}
\label{sec:asp.stb}
We continue our study of almost smooth pairs but now we work in the
context of non-effective motives. For such a pair, we will provide a
rather explicit model for both its homological as well as
cohomological Morel-Voevodsky motive in Theorem~\ref{thm:asp-motives},
and then similarly for its analytification in
Theorem~\ref{thm:asp-motives-analytic}. Subsequently we prove that the
Betti realization is in some sense compatible with these models
(Lemmas~\ref{lem:asp-motives-relation}
and~\ref{lem:compatibility-bti-as-models}).

Let $(X,Z)$ be an almost smooth pair. The inclusion of the complement
$U=X\backslash Z\to X$ is denoted by $j$. Recall
(from~\cite[§2.2.4]{ayoub:galois1}) the following constructions. Given
a presheaf $K$ of complexes on smooth schemes, one defines $K(X,Z)$ to
be the kernel of the map $K(X)\to\prod_{i=1}^{p}K(Z_{i})$.
The endofunctor $\uhom((X,Z),\bullet)$ is defined as the right adjoint
to tensoring with $\Lambda(X,Z)$. Explicitly,
\begin{equation*}
  \uhom((X,Z),K)(Y)=K(Y\times X,Y\times Z)
\end{equation*}
for any presheaf of complexes $K$ and for any smooth scheme
$Y$. $\uhom((X,Z),\bullet)$ canonically extends to an endofunctor on
symmetric $T$-spectra of presheaves of complexes.

In general, we denote the internal hom in symmetric $T$-spectra by
$\uhm$. We note that for a complex of presheaves $K$ and a symmetric
$T$-spectrum $\mathbf{E}$, the object $\uhm(\sus_{T} K,\mathbf{E})$
admits the following simple description. In level $n$, it is given by
$\uhm(K,\mathbf{E}_{n})$, the action of $\Sigma_{n}$ is on
$\mathbf{E}_{n}$, and the bonding maps are given by the composition
\begin{equation*}
  T\otimes\uhm(K,\mathbf{E}_{n})\to \uhm(K,T\otimes\mathbf{E}_{n})\to\uhm(K,\mathbf{E}_{n+1}),
\end{equation*}
where the first arises from the adjunction $(\otimes,\uhm)$, and the
second uses the bonding maps from $\mathbf{E}$. To emphasize
this description we write simply $\uhm(K,\mathbf{E})$ for this
symmetric $T$-spectrum.

Using the notation from~§\ref{sec:asp.eff}, $\Lambda(X,Z_{\bullet})$
denotes the augmented complex
\begin{equation*}
  \cdots\to \bigoplus_{i_{1}<\cdots < i_{l}}\Lambda(Z_{i_{1}\cdots
    i_{l}})\to\cdots \to \bigoplus_{i}\Lambda(Z_{i})\to \Lambda(X),
\end{equation*}
the last term being in homological degree 0.

For the next result, recall that on presheaves of complexes on smooth
schemes there is also an \emph{injective} $(\A^{1},\tau)$-local model
structure, obtained by $(\A^{1},\tau)$-localization from the
``injective model structure'' (\cite[Déf.~4.5.12]{ayoub07-thesis}).
The cofibrations and weak equivalences of the latter are defined
objectwise. One deduces then the existence of an ``injective stable
$(\A^{1},\tau)$-local model structure'' on symmetric $T$-spectra as
described in~§\ref{sec:betti}
(cf.~\cite[Déf.~4.5.21]{ayoub07-thesis}).
\begin{thm}\label{thm:asp-motives}
 Let $(X,Z)$ be almost smooth.
  \begin{enumerate}
  \item Let $\mathbf{E}$ be a projective stable
    $(\A^{1},\tau)$-fibrant symmetric $T$-spectrum of
    presheaves of complexes on $\sm$. Then
    $\uhm(\Lambda(X,Z_{\bullet}),\mathbf{E})$ provides a model for
    $\dR\pi_{*}j_{!}\mathbf{E}|_{U}$ in $\da$. Moreover, this
    identification is functorial in $\mathbf{E}$.
  \item If $\mathbf{E}$ is injective stable $(\A^{1},\tau)$-fibrant,
    then one can replace $\uhm(\Lambda(X,Z_{\bullet}),\mathbf{E})$ by
    $\uhom((X,Z),\mathbf{E})$ in the statement above.
  \item For $\mathbf{E}$ an injective stable $(\A^{1},\tau)$-fibrant
    replacement of the unit spectrum, $\uhom((X,Z),\mathbf{E})$
    provides a model for $\rep(X,Z,0)^{\vee}$ in $\da$.
  \item In $\da$, $\dL\sus_{T}\Lambda(X,Z)\cong \rep(X,Z,0)$
    canonically.
  \end{enumerate}
\end{thm}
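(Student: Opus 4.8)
The plan is to deduce part~(4) from part~(3) by a duality argument. The first step is to identify the endofunctor $\uhom((X,Z),\bullet)$ on symmetric $T$-spectra of presheaves of complexes with $\uhm(\sus_{T}\Lambda(X,Z),\bullet)$: writing $\Lambda(X,Z)$ as the cokernel of $\oplus_{i=1}^{p}\Lambda(Z_{i})\to\Lambda(X)$, one has $\uhom((X,Z),K)(Y)=\ker\bigl(K(Y\times X)\to\prod_{i}K(Y\times Z_{i})\bigr)$, so both functors are right adjoint to $\bullet\otimes\sus_{T}\Lambda(X,Z)$ and hence canonically isomorphic (this is also visible from the level-wise description of $\uhm(\sus_{T}K,\mathbf{E})$ recalled before the theorem). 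Now, by Lemma~\ref{lem:asp-lc} the bounded complex of representables $\Lambda(X,Z_{\bullet})$ is a projective cofibrant model for $\Lambda(X,Z)$, so $\dL\sus_{T}\Lambda(X,Z)$ is computed by $\sus_{T}\Lambda(X,Z_{\bullet})$; together with parts~(1)/(2) this shows that, for $\mathbf{E}$ an injective stable $(\A^{1},\tau)$-fibrant replacement of the unit spectrum $\one$, $\uhom((X,Z),\mathbf{E})$ computes the derived internal hom $\dR\uhm(\dL\sus_{T}\Lambda(X,Z),\one)=(\dL\sus_{T}\Lambda(X,Z))^{\vee}$ in $\da$. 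Comparing with part~(3) I would thus obtain a canonical isomorphism $(\dL\sus_{T}\Lambda(X,Z))^{\vee}\cong\rep(X,Z,0)^{\vee}$.

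The second step is to observe that $\dL\sus_{T}\Lambda(X,Z)$ is strongly dualisable (compact). Indeed, because $(X,Z)$ is almost smooth, every scheme $X$, $Z_{i_{1}\cdots i_{l}}$ occurring in $\Lambda(X,Z_{\bullet})$ is smooth, so each $\dL\sus_{T}\Lambda(Z_{i_{1}\cdots i_{l}})$ is a compact (geometric) motive; as compact objects form a thick subcategory of $\da$, so is $\dL\sus_{T}\Lambda(X,Z)$, and therefore $(\dL\sus_{T}\Lambda(X,Z))^{\vee}=\dR\uhm(\dL\sus_{T}\Lambda(X,Z),\one)$. It follows from the isomorphism of the first step that $\rep(X,Z,0)^{\vee}$ is isomorphic to a strongly dualisable object, hence is itself strongly dualisable, and so is $\rep(X,Z,0)$.

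The final step is to dualise: since $\dL\sus_{T}\Lambda(X,Z)$ and $\rep(X,Z,0)$ are strongly dualisable, the biduality maps $M\to M^{\vee\vee}$ are isomorphisms, and applying $(\bullet)^{\vee}$ to the isomorphism of the first step and composing with these biduality isomorphisms yields the sought-after isomorphism $\dL\sus_{T}\Lambda(X,Z)\cong\rep(X,Z,0)$ in $\da$; its canonicity is inherited from the canonicity of biduality and the functoriality in $\mathbf{E}$ recorded in parts~(1)/(2). I expect that nothing here is genuinely hard once parts~(1)--(3) are in place — the only point requiring care is the first step, where the concrete kernel/cokernel description of $\uhom((X,Z),\mathbf{E})$ has to be matched with a \emph{derived} internal hom; this is precisely where the cofibrant model $\Lambda(X,Z_{\bullet})$ of Lemma~\ref{lem:asp-lc} enters, and after that part~(4) is essentially a corollary of strong duality in the closed symmetric monoidal triangulated category $\da$.
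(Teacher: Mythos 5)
Your overall strategy matches what the paper intends by its terse ``Part four then follows by duality'': both of you establish an isomorphism at the level of duals and then invoke duality on compact objects to descend to the objects themselves. Step~1 of your argument is fine and is in fact precisely what the paper does in the displayed chain \eqref{eq:asp-motives-model} (note that the paper's $\uhm(\Lambda(X,Z),\mathbf{E})$ already denotes $\uhm(\sus_{T}\Lambda(X,Z),\mathbf{E})$ by the notational convention introduced just before the theorem, so the identification with $\uhom((X,Z),\mathbf{E})$ is recorded there as an equality).

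There is, however, a genuine gap in your Step~2. You argue that because $\rep(X,Z,0)^{\vee}$ is isomorphic to a strongly dualizable object, it is strongly dualizable, ``and so is $\rep(X,Z,0)$.'' That last inference is not valid: in a closed symmetric monoidal (triangulated) category, strong dualizability of $M^{\vee}=\dR\uhm(M,\one)$ does \emph{not} imply strong dualizability of $M$, precisely because you cannot yet know the biduality map $M\to M^{\vee\vee}$ is an isomorphism — that is equivalent to what you are trying to prove, so the argument as written is circular. What you actually need is a direct proof that $\rep(X,Z,0)=\dL\pi_{!}\dR j_{*}j^{*}\pi^{!}\Lambda$ is compact. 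This is available and is used silently throughout the paper (already in the paper's proof of part~(3), where the exchange $\rep(X,Z,0)^{\vee}\cong\dR\pi_{*}j_{!}\one$ is applied): over quasi-projective base schemes the six operations preserve compact (constructible) motives, by Ayoub's constructibility theorem \cite[Sch.~2.2.34]{ayoub07-thesis} or its variants, so $\dL\pi_{!}$, $\dR j_{*}$, $j^{*}$, $\pi^{!}$ all preserve compactness and hence $\rep(X,Z,0)$ is compact. Alternatively one can run an induction on the number of components of $Z$ using the localization triangle for $(Z\subset X)$ together with relative purity, exactly as you did for $\dL\sus_{T}\Lambda(X,Z)$. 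Once compactness of $\rep(X,Z,0)$ is secured by one of these means, Step~3 goes through and the proof is complete; I'd simply replace the flawed inference in Step~2 by one of these two arguments.
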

\begin{proof}
  \begin{enumerate}
  \item Let $K_{\bullet}(\mathbf{E})$ be the object
    \begin{equation*}
      \mathbf{E}|_{X}\to\oplus_{i}i_{i*}\mathbf{E}|_{Z_{i}}\to\oplus_{i<j}i_{ij*}\mathbf{E}|_{Z_{ij}}\to\cdots
    \end{equation*}
    with differentials given by the canonical ``restriction'' morphism
    (the unit of an adjunction) with a suitable sign:
    \begin{equation*}
      i_{j_{1}\cdots \hat{j_{m}}\cdots j_{l}*}\mathbf{E}|_{Z_{j_{1}\cdots \hat{j_{m}}\cdots j_{l}}}\xrightarrow{(-1)^{m}} i_{j_{1}\cdots j_{l}*}\mathbf{E}|_{Z_{j_{1}\cdots j_{l}}}.
    \end{equation*}
    There is a canonical morphism
    \begin{equation}\label{eq:asp-resolution}
      j_{!}\mathbf{E}|_{U}\to \mathrm{Tot}(K_{\bullet}(\mathbf{E}))=:K(\mathbf{E})
    \end{equation}
    (the totalization functor is applied levelwise; up to canonical
    isomorphism it doesn't matter whether $\tots$ or $\totp$ is used),
    and we claim that this is a projective stable
    $(\A^{1},\tau)$-fibrant resolution. In other words, we claim that
    \begin{enumerate}[label=(\alph*)]
    \item \label{cl:asp-motives-equivalence}\eqref{eq:asp-resolution}
      is a stable $(\A^{1},\tau)$-local equivalence,
    \item \label{cl:asp-motives-levelwise}$K(\mathbf{E})$ is levelwise
      projective $(\A^{1},\tau)$-fibrant, and
    \item \label{cl:asp-motives-omega}$K(\mathbf{E})$ is an
      $\Omega$-spectrum.
    \end{enumerate}
    \begin{proof}[Proof of~\ref{cl:asp-motives-equivalence}]
      One can use conservativity of the couple $(j^{*},\dL i^{*})$,
      $i:Z\inj X$ being the closed immersion. It is obvious that
      $j^{*}$ applied to~\eqref{eq:asp-resolution} is an equivalence
      while in the case of $\dL i^{*}$ it is an easy induction
      argument on the number of irreducible components of $Z$.
    \end{proof}

    \begin{proof}[Proof of~\ref{cl:asp-motives-levelwise}]
      Fix a level $n$ and set $E=\mathbf{E}_{n}$. We know that for
      each $l$, $K_{l}(E)$ is $\tau$-fibrant hence so is $K(E)$
      by~\cite[Lemma~5.20]{choudhury-gallauer:dg-hty}.  The same
      argument shows that $K(E)$ is $\A^{1}$-local.
    \end{proof}
    \begin{proof}[Proof of~\ref{cl:asp-motives-omega}]
      Since $K_{l}(\mathbf{E}_{n})\to\uhom(T,K_{l}(\mathbf{E}_{n+1}))$
      is an $(\A^{1},\tau)$-local equivalence for each $l$ so is the
      totalization
      \begin{equation*}
        \mathrm{Tot}(K_{\bullet}(\mathbf{E}_{n}))\to\mathrm{Tot}(\uhom(T,K_{\bullet}(\mathbf{E}_{n+1})))=\uhom(T,\mathrm{Tot}(K_{\bullet}(\mathbf{E}_{n+1}))).
      \end{equation*}
    \end{proof}

    It follows that in $\da$,
    \begin{align}\label{eq:asp-motives-model}
      \begin{split}
        \uhm(\Lambda(X,Z_{\bullet}),\mathbf{E})&=\mathrm{Tot}(\uhm(\Lambda(X),\mathbf{E})\to\oplus_{i}\uhm(\Lambda(Z_{i}),\mathbf{E})\to\cdots)\\
        &=\mathrm{Tot}(\pi_{*}\pi^{*}\mathbf{E}\to\oplus_{i}\pi_{Z_{i}*}\pi_{Z_{i}}^{*}\mathbf{E}\to\cdots)\\
        &=\pi_{*}\mathrm{Tot}(K_{\bullet}(\mathbf{E}))\\
        &\xleftarrow{\sim}\dR\pi_{*}j_{!}\mathbf{E}|_{U}.
      \end{split}
    \end{align}
    Finally, functoriality in $\mathbf{E}$ is clear.
  \item We find an $(\A^{1},\tau)$-local equivalence
    \begin{equation}\label{eq:asp-cof}
      \Lambda(X,Z_{\bullet})\to \Lambda(X,Z)
    \end{equation}
    between injective cofibrant objects hence
    $\uhm(\bullet, \mathbf{E})$ will transform~\eqref{eq:asp-cof} into
    an (un)stable $(\A^{1},\tau)$-local equivalence. Since
    $\mathbf{E}$ is in particular projective stable
    $(\A^{1},\tau)$-fibrant it follows from the first part of the
    theorem just proved that
    \begin{align*}
      \dR\pi_{*}j_{!}\mathbf{E}|_{U}&\xrightarrow{\sim}\uhm(\Lambda(X,Z_{\bullet}),\mathbf{E})\\
      &\xrightarrow[\eqref{eq:asp-cof}]{\sim}\uhm(\Lambda(X,Z),\mathbf{E})=\uhom((X,Z),\mathbf{E}).
    \end{align*}
  \item This is an immediate consequence of the second part and the
    identification $\rep(X,Z,0)^{\vee}=\dR\pi_{*}j_{!}\one$.
  \item This follows from the third part by duality.
  \end{enumerate}
\end{proof}

We will need a similar result in the analytic setting. Thus let $X$ be
a complex manifold, and $Z$ a closed subset which is the union of
finitely many complex submanifolds. We call this an \emph{almost
  smooth analytic pair}, and as before, we denote by
$Z_{1},\ldots,Z_{p}$ the ``components'' of $Z$, namely the connected
components of the normalization of $Z$. We can then define,
analogously, $\Lambda(X,Z_{\bullet})$, $\Lambda(X,Z)$ and
$\uhom((X,Z),\bullet)$ (cf.~\cite[§2.2.1]{ayoub:galois1}).

\begin{thm}\label{thm:asp-motives-analytic}
  Let $\mathbf{E}$ be a projective stable
  $(\Done,\mathrm{usu})$-fibrant presheaf of complexes on
  $\ansm$. Then $\uhm(\Lambda(X,Z_{\bullet}),\mathbf{E})$ provides a
  model for $\dR\pi_{*}j_{!}\mathbf{E}|_{U}$ in $\anda$. Moreover,
  this identification is functorial in $\mathbf{E}$.
\end{thm}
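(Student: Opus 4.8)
The plan is to transcribe the proof of parts~(1)--(2) of Theorem~\ref{thm:asp-motives} into the analytic context, substituting Ayoub's six-functor formalism for analytic motives (\cite{ayoub:betti}) for the algebraic one of~\cite{ayoub07-thesis}. Write $\pi:X\to\pt$, $Z=Z_{1}\cup\cdots\cup Z_{p}$, $j:U=X\backslash Z\inj X$, and let $i_{i_{1}\cdots i_{l}}:Z_{i_{1}\cdots i_{l}}\to X$ be the immersions of the (smooth) multiple intersections. First I would form the complex of presheaves of complexes on $\ansm{X}$
\[
  K_{\bullet}(\mathbf{E}):\quad \mathbf{E}|_{X}\to\bigoplus_{i}i_{i*}\mathbf{E}|_{Z_{i}}\to\bigoplus_{i<j}i_{ij*}\mathbf{E}|_{Z_{ij}}\to\cdots,
\]
with the same signed restriction differentials as in~\eqref{eq:asp-resolution}, set $K(\mathbf{E})=\mathrm{Tot}(K_{\bullet}(\mathbf{E}))$ (formed levelwise), and take the canonical comparison morphism $j_{!}\mathbf{E}|_{U}\to K(\mathbf{E})$ of symmetric $T^{\An}$-spectra of presheaves on $\ansm$. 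All of this is manifestly functorial in $\mathbf{E}$, so the ``moreover'' clause is automatic once the identification is in place.

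Next I would establish the two claims that drive the computation. First, that $j_{!}\mathbf{E}|_{U}\to K(\mathbf{E})$ is a stable $(\Done,\mathrm{usu})$-local equivalence: by conservativity of the couple $(j^{*},\dL i^{*})$, with $i:Z\inj X$ the closed immersion carrying the reduced structure, it suffices to check this after $j^{*}$ — where it is immediate — and after $\dL i^{*}$ — where it follows by an induction on the number of components of $Z$ using the analytic localization triangles attached to the $Z_{i}$, verbatim as in the proof of Theorem~\ref{thm:asp-motives}. Second, that $K(\mathbf{E})$ is projective stable $(\Done,\mathrm{usu})$-fibrant: at each level $n$ every $K_{l}(\mathbf{E}_{n})$ is $\mathrm{usu}$-fibrant and $\Done$-local, because $\mathbf{E}$ is and pushforward along the immersions $i_{i_{1}\cdots i_{l}}$ preserves these properties, hence so is the finite totalization $K(\mathbf{E}_{n})$ by~\cite[Lemma~5.20]{choudhury-gallauer:dg-hty}; and $K(\mathbf{E})$ is an $\Omega$-spectrum because for each $l$ the map $K_{l}(\mathbf{E}_{n})\to\uhom(T^{\An},K_{l}(\mathbf{E}_{n+1}))$ is a $(\Done,\mathrm{usu})$-local equivalence and $\uhom(T^{\An},-)$ commutes with the finite totalization.

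Granting both claims, the identification follows as in~\eqref{eq:asp-motives-model}: unwinding the definitions, $\uhm(\Lambda(X,Z_{\bullet}),\mathbf{E})=\mathrm{Tot}\bigl(\uhm(\Lambda(X),\mathbf{E})\to\bigoplus_{i}\uhm(\Lambda(Z_{i}),\mathbf{E})\to\cdots\bigr)=\pi_{*}K(\mathbf{E})$, and since $K(\mathbf{E})$ is projective stable $(\Done,\mathrm{usu})$-fibrant and $\pi_{*}$ is right Quillen, $\pi_{*}K(\mathbf{E})\cong\dR\pi_{*}K(\mathbf{E})\cong\dR\pi_{*}j_{!}\mathbf{E}|_{U}$ in $\anda$.

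The main obstacle is not the bookkeeping, which is word-for-word the algebraic one, but making sure that the ingredients it rests on are genuinely available on the analytic side: that $\anda(X)$ for a complex manifold $X$ — and, for the conservativity step, $\anda(Z)$ for the possibly singular closed analytic subset $Z$ — carries the localization triangle and the conservative family $(j^{*},\dL i^{*})$, and that $j_{!}$, the $i_{i_{1}\cdots i_{l}*}$ and $\pi_{*}$ are left/right Quillen with the expected exactness. These are precisely the parts of the analytic six-functor formalism of~\cite{ayoub:betti} that I would invoke, re-deriving in the required generality anything not stated there directly — exactly as Theorem~\ref{thm:asp-motives} relied on~\cite{ayoub07-thesis}.
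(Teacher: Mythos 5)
The paper's own proof of Theorem~\ref{thm:asp-motives-analytic} simply states that it is ``very similar to the one of the last theorem'' and omits all details; your proposal supplies exactly the expected transcription of the proof of Theorem~\ref{thm:asp-motives} into the analytic setting. You correctly identify the ingredients that must be imported from the analytic six-functor formalism of~\cite{ayoub:betti} — localization triangles, conservativity of $(j^{*},\dL i^{*})$ over the possibly singular closed subset $Z$, and the Quillen properties of $j_{!}$, $i_{*}$ and $\pi_{*}$ — so the argument matches the paper's intended (omitted) proof.
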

\begin{proof}
  The proof is very similar to the one of the last theorem and we omit
  the details. (Also, the other parts of the previous theorem are
  equally true in the analytic setting, with almost identical proofs.)
\end{proof}

Later on, we will use the following relation between the two
descriptions of motives we just gave. Choose a projective stable
$(\A^{1},\tau)$-fibrant replacement $\mathbf{E}$ of the unit spectrum
$\one$. Also choose a projective stable $(\Done,\mathrm{usu})$-fibrant
replacement $\mathbf{E}'$ of $\one$. $\Anal^{*}\one\cong \one\to
\mathbf{E}'$ induces, by adjunction, $\one\to \Anal_{*}\mathbf{E}'$
and the latter is projective stable $(\A^{1},\tau)$-fibrant. It
follows that there is a morphism $\mathbf{E}\to\Anal_{*}\mathbf{E}'$
which induces $\Anal^{*}\mathbf{E}\to \mathbf{E}'$ rendering the
triangle
\begin{equation*}
  \xymatrix{\one\ar[r]\ar[rd]&\Anal^{*}\mathbf{E}\ar[d]\\
    &\mathbf{E}'}
\end{equation*}
commutative. Notice that the morphism $\Anal^{*}\mathbf{E}\to
\mathbf{E}'$ is necessarily a stable $(\Done,\mathrm{usu})$-local
equivalence.
\begin{lem}\label{lem:asp-motives-relation}
  Under the identifications of the two previous theorems, the
  following square commutes ($(X,Z)$ is an almost smooth pair):
  \begin{equation*}
    \xymatrix{\Anal^{*}\dR\pi_{*}j_{!}\one\ar@{=}[r]\ar[d]&\Anal^{*}\uhm(\Lambda(X,Z_{\bullet}),\mathbf{E})\ar[d]\\
      \dR\pi_{*}^{\An}j_{!}^{\An}\one\ar@{=}[r]&\uhm(\Lambda(X^{\An},Z_{\bullet}^{\An}),\mathbf{E}')}
  \end{equation*}
\end{lem}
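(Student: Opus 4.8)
The plan is to reduce the claimed commutativity to a statement purely about the (analytic) functors occurring in the vertical maps, and then to check it by unwinding the construction of those maps in the proof of Theorem~\ref{thm:asp-motives-analytic}. First I would make precise what the two vertical arrows are: the right-hand one is induced by applying $\Anal^{*}$ to the $T$-spectrum $\uhm(\Lambda(X,Z_{\bullet}),\mathbf{E})$ and then using the morphism $\Anal^{*}\mathbf{E}\to\mathbf{E}'$ constructed just before the lemma, together with the natural comparison $\Anal^{*}\uhm(K,\mathbf{F})\to\uhm(\Anal^{*}K,\Anal^{*}\mathbf{F})$; the left-hand one is the base-change/comparison morphism $\Anal^{*}\dR\pi_{*}j_{!}\one\to\dR\pi^{\An}_{*}j^{\An}_{!}\one$ coming from the compatibility of the Betti realization with the six functors, applied to $\mathbf{E}\to\Anal_*\mathbf{E}'$. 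With these descriptions in hand, the horizontal identifications are exactly the chains of isomorphisms~\eqref{eq:asp-motives-model} (for $(X,Z)$) and its analytic analogue (for $(X^{\An},Z^{\An})$), so the square unravels into a stack of small squares, one for each step in~\eqref{eq:asp-motives-model}.

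The key steps, in order, would be: (i) observe that $\Anal^{*}$ is monoidal and cocontinuous, hence commutes with $\sus_T$, with the totalization functor $\mathrm{Tot}$ applied levelwise, and sends the presheaf $\Lambda(X,Z_\bullet)$ on smooth schemes to the presheaf $\Lambda(X^{\An},Z_\bullet^{\An})$ on $\ansm$ (this last point uses that $\Anal^{*}$ sends the representable $\Lambda(Y)$ to $\Lambda(Y^{\An})$ and is exact, so it respects the cokernel/kernel descriptions and the augmented complexes); (ii) check that under these compatibilities the comparison $\Anal^{*}\uhm(\Lambda(X,Z_\bullet),\mathbf{E})\to\uhm(\Lambda(X^{\An},Z_\bullet^{\An}),\mathbf{E}')$ is compatible, step by step, with the identifications $\uhm(\Lambda(X),\mathbf{E})=\pi_*\pi^*\mathbf{E}$, $\uhm(\Lambda(Z_i),\mathbf{E})=\pi_{Z_i*}\pi_{Z_i}^*\mathbf{E}$, etc.\ — i.e.\ that $\Anal^{*}$ intertwines the internal-hom description and the $\pi_*\pi^*$ description on both sides; (iii) verify that $\Anal^{*}$ applied to the localization/resolution morphism $j_!\mathbf{E}|_U\to\mathrm{Tot}(K_\bullet(\mathbf{E}))=K(\mathbf{E})$ of~\eqref{eq:asp-resolution} agrees, after composing with $\Anal^{*}\mathbf{E}\to\mathbf{E}'$, with the analytic resolution $j^{\An}_!\mathbf{E}'|_{U^{\An}}\to K(\mathbf{E}')$, which again comes down to $\Anal^{*}$ commuting with $j_!$, with $i_{i*}$, and with $i_i^*$ on the relevant objects together with the naturality of the adjunction units defining the differentials of $K_\bullet$; and (iv) combine with the main results of~\cite{ayoub:betti} (compatibility of $\bti$ with the six functors and with triangulations, already invoked throughout §\ref{sec:NA}) to identify the left vertical arrow, and conclude by pasting all the intermediate squares.

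The main obstacle I expect is step~(iii): one has to be careful that the \emph{equalities} written in~\eqref{eq:asp-motives-model} and in the proof of Theorem~\ref{thm:asp-motives-analytic} are honest equalities of functors (or at least canonical isomorphisms making no hidden choices), so that ``$\Anal^{*}$ commutes with them'' is a meaningful and checkable assertion rather than an equivalence only up to a non-canonical coherence. Concretely, the subtlety is in matching the sign-laden differentials of $K_\bullet(\mathbf{E})$ and $K_\bullet(\mathbf{E}')$ under $\Anal^{*}$, and in tracking the fibrant replacements: $\mathbf{E}$, $\mathbf{E}'$ and the comparison $\Anal^{*}\mathbf{E}\to\mathbf{E}'$ all enter, and one must use that $\Anal^{*}$ preserves stable $(\Done,\mathrm{usu})$-local equivalences (as recalled in §\ref{sec:betti.st}, following \cite[Rem.~2.57]{ayoub:galois1}) so that replacing a projective stable fibrant object by another does not disturb the square in $\anda$. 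Everything else is formal diagram-chasing using cocontinuity, monoidality and exactness of $\Anal^{*}$, so the proof can legitimately be left at the level of ``the proof is very similar to, and compatible with, the proofs of the two previous theorems; we omit the details,'' which is the style already adopted for Theorem~\ref{thm:asp-motives-analytic}.
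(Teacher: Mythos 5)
Your plan works directly with $\Anal^{*}$ and tries to push it through the chain of identifications~\eqref{eq:asp-motives-model}, asserting in step~(iii) that $\Anal^{*}$ ``commutes with $j_{!}$, with $i_{i*}$, and with $i_{i}^{*}$'' and in step~(ii) that it ``intertwines'' the $\uhm(\Lambda(X,Z_{\bullet}),\bullet)$ description with the $\pi_{*}K_{\bullet}(\bullet)$ description. This is where the argument has a genuine gap: $\Anal^{*}$ is a \emph{left} adjoint (a left Kan extension), whereas $\pi_{*}$, $i_{i*}$, $\uhm$ and the levelwise totalization are all built from \emph{right} adjoints and limits, so $\Anal^{*}$ does not commute with them on the nose. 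There are only exchange (base change) transformations, which are isomorphisms by the main results of~\cite{ayoub:betti}, but to conclude you would have to check that these exchange isomorphisms, applied at each term $i_{i_{1}\cdots i_{l}*}\mathbf{E}|_{Z_{i_{1}\cdots i_{l}}}$ and to the $\pi_{*}$ in front, assemble coherently across the whole chain of equalities --- and also that they are compatible with passing from $\mathbf{E}$ to $\mathbf{E}'$ via $\Anal^{*}\mathbf{E}\to\mathbf{E}'$. You correctly flag this as ``the main obstacle'', but the proposal does not resolve it, and dismissing it as ``formal diagram-chasing'' is not justified, since the commutation statements you rely on are simply false as equalities of functors.

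The paper dissolves this obstacle with a different idea: transpose the square by adjunction, so the comparison is phrased in terms of the right adjoint $\Anal_{*}$ rather than $\Anal^{*}$. The square becomes a stack of three squares, where the morphism $\mathbf{E}\to\Anal_{*}\Anal^{*}\mathbf{E}\to\Anal_{*}\mathbf{E}'$ replaces the fibrant replacement. The top part then commutes simply because the identification $\dR\pi_{*}j_{!}\mathbf{E}|_{U}\cong\uhm(\Lambda(X,Z_{\bullet}),\mathbf{E})$ from Theorem~\ref{thm:asp-motives} is functorial in $\mathbf{E}$; and for the bottom square one uses that $\Anal_{*}$ is restriction along analytification, hence genuinely commutes (honest equality, no exchange morphisms to track) with $\pi_{*}$, $i_{i*}$, products, $\uhm(\Lambda(X,Z_{\bullet}),\bullet)$ and $\mathrm{Tot}$, so that $\Anal_{*}K(\mathbf{E}')=K(\Anal_{*}\mathbf{E}')$. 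This transposition --- working on the side where all the equalities in~\eqref{eq:asp-motives-model} are literal --- is the missing idea; without it your approach requires substantially more coherence verification than you have indicated.
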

\begin{proof}
  By adjunction, this is equivalent to the commutativity of the
  following outer diagram:
  \begin{equation*}
    \xymatrix{\dR\pi_{*}j_{!}\mathbf{E}|_{U}\ar@{=}[r]\ar[d]_{\mathrm{adj}}&\uhm(\Lambda(X,Z_{\bullet}),\mathbf{E})\ar[d]^{\mathrm{adj}}\\
      \dR\pi_{*}j_{!}(\Anal_{*}\Anal^{*}\mathbf{E})|_{U}\ar[d]&\uhm(\Lambda(X,Z_{\bullet}),\Anal_{*}\Anal^{*}\mathbf{E})\ar[d]\\
      \dR\pi_{*}j_{!}(\Anal_{*}\mathbf{E}')|_{U}\ar@{=}[r]\ar@{-}[d]_{\sim}&\uhm(\Lambda(X,Z_{\bullet}),\Anal_{*}\mathbf{E}')\ar@{-}[d]^{\sim}\\
      \dR\Anal_{*}\dR\pi_{*}^{\An}j_{!}^{\An}\mathbf{E}'|_{U^{\An}}\ar@{=}[r]&\Anal_{*}\uhm(\Lambda(X^{\An},Z_{\bullet}^{\An}),\mathbf{E}')}
  \end{equation*}
  Commutativity of the upper part follows from the functoriality
  statement in
  Theorem~\ref{thm:asp-motives}. For the
  lower square, one checks that $\Anal_{*}$ commutes with the
  relevant equalities in~\eqref{eq:asp-motives-model}. The main point is
  that $\Anal_{*}K(\mathbf{E}')=K(\Anal_{*}\mathbf{E}')$.
\end{proof}
This lemma states that the analytification functor is compatible with
our choices of models for the relative motives. We now want to prove
the analogous statement for the Betti realization functor. Factoring
the latter as $\Gamma\ev\Anal^{*}$ (where $\Gamma$ is the global
sections functor, cf.~section~\ref{sec:betti}), we reduce to showing
this compatibility for the composed functor $\Gamma\ev$. Thus let
$(X,Z)$ be an almost smooth analytic pair. By what we saw in
section~\ref{sec:NA} (or rather appendix~\ref{sec:coh-pair},
specifically Fact~\ref{lem:coh-pair-model}), the object
$\Gamma\ev\dR\pi_{*}j_{!}\one$ is modeled by the complex of
relative singular cochains on $(X,Z)$. Now suppose that in the
situation of the previous lemma, we choose $\mathbf{E}'$ to be
$\sgsdbb$ of Remark~\ref{rem:bounded-above-usu-fibrant}. Then we find
a canonical quasi-isomorphism
\begin{align}\label{eq:as-models}
  \begin{split}
    \sgs(X,Z)^{\vee}&\xrightarrow{\sim}
    \mathrm{Tot}(\sgs(X)^{\vee}\to\oplus_{i}\sgs(Z_{i})^{\vee}\to\cdots)\\
    &=\Gamma\uhm(\Lambda(X,Z_{\bullet}),\sgsd)\\
    &=\Gamma\ev\uhm(\Lambda(X,Z_{\bullet}),\sgsdbb).
  \end{split}
\end{align}
\begin{lem}\label{lem:compatibility-bti-as-models}
  The following square commutes:
  \begin{equation*}
    \xymatrix{\Gamma\ev\dR\pi_{*}j_{!}\one\ar@{=}[r]\ar@{-}[d]_{\sim}&\Gamma\ev\uhm(\Lambda(X,Z_{\bullet}),\sgsdbb)\\
      \dR\tilde{\pi}_{*}\tilde{j}_{!}\Lambda_{\mathrm{cst}}\ar@{=}[r]&\sgs(X,Z)^{\vee}\ar[u]^{\eqref{eq:as-models}}_{\sim}}
  \end{equation*}
\end{lem}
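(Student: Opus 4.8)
The plan is to trace the identification~\eqref{eq:as-models} of $\Gamma\ev\dR\pi_{*}j_{!}\one$ with $\sgs(X,Z)^{\vee}$ back to the more basic identification between $\Gamma\ev\dR\pi_{*}j_{!}\one$ and relative singular cochains coming from Fact~\ref{lem:coh-pair-model} (i.e.\ the content of appendix~\ref{sec:coh-pair}), and to check that the diagram degenerates into a chain of squares each of which commutes for an elementary reason. First I would unwind both composite equalities in the square. The vertical iso on the left is the one provided by Fact~\ref{lem:coh-pair-model}: it identifies $\dR\tilde\pi_{*}\tilde j_{!}\Lambda_{\mathrm{cst}}$ with the relative singular cochain complex $\sgs(X,Z)^{\vee}$, and it is built (as in the appendix) from an acyclic resolution computing sheaf cohomology of $X$ relative to $Z$. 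The top horizontal equality is the instance of Theorem~\ref{thm:asp-motives-analytic} (applied to the analytic pair $(X,Z)$ and the fibrant model $\mathbf{E}'=\sgsdbb$) that identifies $\dR\pi_{*}j_{!}\one$ with $\uhm(\Lambda(X,Z_{\bullet}),\sgsdbb)$ via the chain of isomorphisms in~\eqref{eq:asp-motives-model}. The bottom horizontal equality is the quasi-isomorphism~\eqref{eq:as-models}, which after applying $\Gamma\ev$ is literally the same totalization-of-the-resolution map, read through $\sgsdbb$. So the claim is that these three descriptions of one and the same object fit together.

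The key step is therefore to observe that the totalization $\mathrm{Tot}(K_{\bullet}(\sgsdbb))$ from the proof of Theorem~\ref{thm:asp-motives-analytic}, evaluated by $\Gamma\ev$, \emph{is} the \v{C}ech-type resolution
\begin{equation*}
  \sgs(X)^{\vee}\to\bigoplus_{i}\sgs(Z_{i})^{\vee}\to\bigoplus_{i<j}\sgs(Z_{ij})^{\vee}\to\cdots
\end{equation*}
whose totalization models relative singular cochains — this uses that $\sgsdbb$ evaluates to $\sgs(\bullet)^{\vee}$ on each $Z_{i_{1}\cdots i_{l}}$ (and on $X$), by construction of $\sgsdbb$ in Remark~\ref{rem:bounded-above-usu-fibrant}, together with $\Gamma\ev\uhm(\Lambda(Y),\sgsdbb)=\sgs(Y)^{\vee}$ for each submanifold $Y$. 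Once this is in place, I would split the square into two: the top triangle/square expressing that the identification of Theorem~\ref{thm:asp-motives-analytic} restricts, under $\Gamma\ev$, to the map~\eqref{eq:as-models} (this is essentially tautological — both sides \emph{are} the same zig-zag, and one only has to check that $\Gamma\ev$ commutes with $\mathrm{Tot}$ and with the equalities in~\eqref{eq:asp-motives-model}, which was already used in the proof of Lemma~\ref{lem:asp-motives-relation}, "the main point is that $\Anal_{*}K(\mathbf{E}')=K(\Anal_{*}\mathbf{E}')$"); and the remaining square, which now asks that the relative-cochain model coming from the \v{C}ech resolution over the components $Z_{i}$ agrees with the relative-cochain model of Fact~\ref{lem:coh-pair-model} coming from sheaf cohomology of the pair $(X,Z)$. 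The latter is a compatibility statement purely in $\Der{X}$ (or in $\Der{\U(\open{X})}$): two resolutions computing $\dR\tilde\pi_{*}\tilde j_{!}\Lambda$ are compared, and one invokes the appendix (Facts~\ref{lem:coh-pair-model}, \ref{lem:coh-pair-triangles}, and the excision/Mayer--Vietoris bookkeeping there).

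I expect the main obstacle to be bookkeeping rather than conceptual: keeping the signs and the ordering of the totalization of the $(i,j,k)$-indexed multicomplex consistent between the $K_{\bullet}$-resolution of Theorem~\ref{thm:asp-motives-analytic} and the singular-cochain \v{C}ech complex, and making sure the chosen quasi-isomorphisms (the ones in~\eqref{eq:asp-motives-model}, in~\eqref{eq:as-models}, and in Fact~\ref{lem:coh-pair-model}) are compatible on the nose and not merely up to homotopy. Concretely, the one genuine point to verify is that under $\Gamma\ev$ the bonding-map/totalization structure of $\uhm(\Lambda(X,Z_{\bullet}),\sgsdbb)$ is carried to the differentials of $\sgs(X,Z)^{\vee}$ with the correct signs — but this is exactly the sign convention already fixed in the construction of $\sgsdbb$ and in~\eqref{eq:asp-resolution}, so it reduces to a direct inspection. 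Modulo that, the square commutes because every piece of it is, after unwinding, the same map computed in three notations; I would phrase the write-up as "unwind all three equalities, observe $\Gamma\ev$ commutes with $\mathrm{Tot}$, and reduce to Fact~\ref{lem:coh-pair-model}'', and leave the sign verification to the reader as was done for the analogous point in the proof of Proposition~\ref{pro:NA-eff}.
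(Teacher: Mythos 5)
Your overall plan matches the paper's proof: reduce to the effective level (since the identification of Theorem~\ref{thm:asp-motives-analytic} is levelwise), decompose the square into smaller pieces, and observe that all but one of them commute for elementary reasons. The one piece requiring genuine work is the comparison, at the level of sheaves on the topological space $X$, of two resolutions of $\tilde{j}_{!}\Lambda_{\mathrm{cst}}$: the closed-cover ``\v{C}ech'' resolution $\mathrm{Tot}(\mathcal{S}_X\to\bigoplus_i i_{i*}\mathcal{S}_{Z_i}\to\cdots)$ coming from $\iota_{X*}K(\sgsd)$, and the resolution $\mathcal{S}_X\otimes\Lambda_U$ of Fact~\ref{lem:coh-pair-model}. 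You correctly recognise this as the remaining verification, but your plan to finish by ``invok[ing] the appendix (Facts~\ref{lem:coh-pair-model}, \ref{lem:coh-pair-triangles}, and the excision/Mayer--Vietoris bookkeeping there)'' does not work as stated: appendix~\ref{sec:coh-pair} contains no comparison between a closed-cover resolution and $\mathcal{S}_X\otimes\Lambda_U$, and in fact no excision or Mayer--Vietoris lemmas at all; Fact~\ref{lem:coh-pair-model} only sets up $\alpha$ and $\beta$, and Lemma~\ref{lem:coh-pair-triangles} concerns the boundary map of a localization triangle. The paper closes precisely this gap with a specific device: replace the constant presheaf $\Lambda$ by $\sgsd$ and the constant sheaf $\Lambda_{\mathrm{cst}}$ by $\mathcal{S}_U$ (so that $\tilde{j}_{!}\mathcal{S}_U=\mathcal{S}_X\otimes\Lambda_U$), after which the square in question becomes a diagram of complexes of sheaves on $X$ whose commutativity is visible directly, before applying $\dR\tilde{\pi}_*$. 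This replacement is a genuine idea rather than sign bookkeeping, and your write-up would need to supply it (or an equivalent argument) to complete the final step.
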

Here, we temporarily decorate the functors operating on sheaves on
locally compact topological spaces with a tilde to distinguish them
from their counterparts in the complex analytic world.
\begin{proof}
  Since the identification of
  Theorem~\ref{thm:asp-motives-analytic} inducing the top
  horizontal arrow is levelwise, we may prove the lemma staying
  completely on the effective level, thus decomposing the square as
  follows:
  \begin{equation*}
    \xymatrix@C-1em{\dR\Gamma\dR\pi_{*}j_{!}\Lambda\ar[r]^-{\sim}\ar@{-}[d]_{\sim}&\Gamma\pi_{*}K(\sgsd)\ar@{=}[rr]\ar@{=}[d]&&\sgsd(X,Z_{\bullet})\ar[dl]_-{\sim}\\
      \dR\tilde{\pi}_{*}\dR\iota_{X*}j_{!}\Lambda\ar[r]^-{\sim}&\tilde{\pi}_{*}\iota_{X*}K(\sgsd)\ar[r]^-{\sim}&\tilde{\pi}_{*}\mathrm{Tot}(\mathcal{S}_{X}\to\oplus_{i}i_{i*}\mathcal{S}_{Z_{i}}\to\cdots)\\
      \dR\tilde{\pi}_{*}\tilde{j}_{!}\Lambda_{\mathrm{cst}}\ar[u]^{\sim}\ar[r]^-{\sim}&\tilde{\pi}_{*}(\mathcal{S}_{X}\otimes\Lambda_{U})\ar[r]_-{\alpha}^-{\sim}&\tilde{\pi}_{*}\mathcal{K}_{(X,Z)}\ar[u]_{\sim}&\sgs(X,Z)^{\vee}\ar[l]^-{\beta}_-{\sim}\ar[uu]_{\sim}}
  \end{equation*}
  Recall (from appendix~\ref{sec:coh-pair}) that $\mathcal{S}_{X}$ is
  the sheaf of singular cochains on the topological space $X$,
  $U=X\backslash Z$, and $\mathcal{K}_{(X,Z)}$ is the kernel of the
  canonical morphism $\mathcal{S}_{X}\to
  i_{*}\mathcal{S}_{Z}$. $K(\sgsd)$ is defined as in the proof of
  Theorem~\ref{thm:asp-motives}, the maps $\alpha$ and $\beta$ are
  also defined in appendix~\ref{sec:coh-pair}.

  Everything except possibly the lower left inner diagram clearly
  commutes. Commutativity of this remaining diagram can be proved
  before applying $\dR\tilde{\pi}_{*}$. We replace the constant
  presheaf $\Lambda$ by $\sgsd$, and the constant sheaf
  $\Lambda_{\mathrm{cst}}$ by $\mathcal{S}_{U}$. Then the lemma
  follows from commutativity of the following diagram, which is
  obvious.
  \begin{equation*}
    \xymatrix@C-1em{\dR\iota_{X*}j_{!}\sgsd|_{U}\ar[r]&\iota_{X*}\mathrm{Tot}(\sgsd|_{X}\to\oplus_{i}i_{i*}\sgsd|_{Z_{i}}\to\cdot)\ar[r]&\mathrm{Tot}(\mathcal{S}_{X}\to\oplus_{i}i_{i*}\mathcal{S}_{Z_{i}}\to\cdot)\\
      \tilde{j}_{!}\iota_{U*}\sgsd|_{U}\ar[u]\ar[ru]\ar[r]&\tilde{j}_{!}\mathcal{S}_{U}\ar[ru]&\mathcal{S}_{X}\otimes\Lambda_{U}\ar[u]\ar@{=}[l]}
  \end{equation*}
\end{proof}

We end this section with the following result expressing a duality
between relative Morel-Voevodsky motives associated to complements of
two different divisors in a smooth projective scheme. We will make
essential use of it in the following section.
\begin{lem}[{cf.~\cite[p.~13]{nori-lectures},
    \cite[Lem.~1.13]{huber-mueller:nori}, \cite[Lem.~I.IV.2.3.5]{levine:mixed-motives}}]\label{motives-duality}
  Let $W$ be a smooth projective scheme of dimension $d$, $W_{0}\cup
  W_{\infty}$ a simple normal crossings divisor. Then there is a
  canonical isomorphism
  \begin{equation*}
    \rep(W-W_{\infty},W_{0}-W_{\infty},n)^{\vee}\cong \rep(W-W_{0},W_{\infty}-W_{0},2d-n)(-d)
  \end{equation*}
  in $\da$.
\end{lem}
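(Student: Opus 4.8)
The plan is to dualize $\rep(X,Z,n)$ using the exchange isomorphisms of the six operations, reducing the statement to a Poincar\'e--Lefschetz duality that I then prove by working on the smooth projective scheme $W$. Write $X=W-W_{\infty}$, $Z=W_0-W_{\infty}=W_0\cap X$, $X'=W-W_0$, $Z'=W_{\infty}-W_0=W_{\infty}\cap X'$, so that $(X,Z)$ and $(X',Z')$ are almost smooth (being the restrictions of the two halves of the simple normal crossings divisor $W_0\cup W_{\infty}$) and share the common open complement $U=W-(W_0\cup W_{\infty})$; put $j\colon U\inj X$, $j'\colon U\inj X'$, $\pi\colon X\to\spec(k)$, $\pi'\colon X'\to\spec(k)$. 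Since $X,X'$ are smooth of dimension $d$, relative purity gives $\pi^{!}\Lambda\cong\pi^{*}\Lambda(d)[-2d]$ and $\pi'^{!}\Lambda\cong\pi'^{*}\Lambda(d)[-2d]$. Feeding this into the exchange isomorphisms for the duality involution $(\bullet)^{\vee}$ — namely $(\dL\pi_{!}(-))^{\vee}\cong\dR\pi_{*}((-)^{\vee})$, $(\dR j_{*}(-))^{\vee}\cong\dL j_{!}((-)^{\vee})$, and $(j^{*}\pi^{!}\Lambda)^{\vee}\cong j^{*}\pi^{*}\Lambda$ (the dualizing object of $U$ being dual to its constant motive) — a short bookkeeping computation, in which all twists and shifts cancel, yields
\[
  \rep(X,Z,n)^{\vee}\cong\dR\pi_{*}(j_{!}\Lambda)[-n],\qquad
  \rep(X',Z',2d-n)(-d)\cong\dL\pi'_{!}(\dR j'_{*}\Lambda)[-n].
\]
So the lemma is equivalent to the ``Poincar\'e--Lefschetz'' isomorphism $\dR\pi_{*}(j_{!}\Lambda)\cong\dL\pi'_{!}(\dR j'_{*}\Lambda)$ in $\da$.

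To prove this, let $\rho\colon W\to\spec(k)$ be the structure morphism and $b\colon X\inj W$, $b'\colon X'\inj W$ the open immersions. Since $\rho$ is proper, $\dR\rho_{*}\cong\dL\rho_{!}$, hence $\dR\pi_{*}=\dR\rho_{*}\dR b_{*}$ and $\dL\pi'_{!}=\dR\rho_{*}\dL b'_{!}$, and it suffices to produce an isomorphism $\dR b_{*}(j_{!}\Lambda)\cong\dL b'_{!}(\dR j'_{*}\Lambda)$ in $\da(W)$. I claim both sides are canonically isomorphic to $\dL b'_{!}(\dR j'_{*}\Lambda)$ via the localization triangle $\dL b'_{!}b'^{*}\to\id\to\dR i_{0*}i_0^{*}$ (with $i_0\colon W_0\inj W$), once two things are checked. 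First, $b'^{*}$ of each side equals $\dR j'_{*}\Lambda$: for $\dL b'_{!}(\dR j'_{*}\Lambda)$ this is $b'^{*}\dL b'_{!}\cong\id$; for $\dR b_{*}(j_{!}\Lambda)$, base change for $\dR(\bullet)_{*}$ along the open immersion $b'$ on the cartesian square with $X\times_{W}X'=U$ gives $b'^{*}\dR b_{*}(j_{!}\Lambda)\cong\dR j'_{*}(j^{*}j_{!}\Lambda)\cong\dR j'_{*}\Lambda$ (and in particular $j_{!}\Lambda\cong b^{*}\dL b'_{!}\Lambda$). Second, $i_0^{*}$ of each side vanishes: for $\dL b'_{!}(\dR j'_{*}\Lambda)$ because $W_0\cap X'=\emptyset$; for $\dR b_{*}(j_{!}\Lambda)$ this is the key point below.

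It remains to prove $i_0^{*}\dR b_{*}(j_{!}\Lambda)=0$. Using $\dR b_{*}(j_{!}\Lambda)\cong\dR b_{*}b^{*}(\dL b'_{!}\Lambda)$ and applying $i_0^{*}$ to the localization triangle $\dR i_{\infty*}i_{\infty}^{!}(\dL b'_{!}\Lambda)\to\dL b'_{!}\Lambda\to\dR b_{*}b^{*}(\dL b'_{!}\Lambda)\to{}$ (with $i_{\infty}\colon W_{\infty}\inj W$), together with $i_0^{*}\dL b'_{!}\Lambda=0$, gives $i_0^{*}\dR b_{*}(j_{!}\Lambda)\cong i_0^{*}\dR i_{\infty*}\,i_{\infty}^{!}(\dL b'_{!}\Lambda)[-1]$. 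Now split $\dL b'_{!}\Lambda$ by the triangle $\dL c_{!}\Lambda\to\dL b'_{!}\Lambda\to\dL a'_{!}\Lambda\to{}$ obtained by applying $\dL b'_{!}$ to the co--localization triangle $j'_{!}\Lambda\to\Lambda\to\dR(i_{Z'})_{*}\Lambda$ on $X'$ (with $i_{Z'}\colon Z'\inj X'$ the closed immersion), where $c\colon U\inj W$ and $a'\colon Z'\inj W$ are the canonical immersions. For the term $\dL a'_{!}\Lambda$: factoring $a'=i_{\infty}\circ e$ with $e\colon Z'\inj W_{\infty}$ an open immersion, $i_{\infty}^{!}\dL a'_{!}\Lambda\cong i_{\infty}^{!}\dR i_{\infty*}(\dL e_{!}\Lambda)\cong\dL e_{!}\Lambda$, whose restriction to $W_0\cap W_{\infty}$ vanishes (disjoint from $Z'$ inside $W_{\infty}$), so $i_0^{*}\dR i_{\infty*}$ of it vanishes by proper base change. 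The remaining term is $\dL c_{!}\Lambda$, i.e.\ the complement of the \emph{simple normal crossings} divisor $W_0\cup W_{\infty}$, and proving $i_0^{*}\dR i_{\infty*}\,i_{\infty}^{!}(\dL c_{!}\Lambda)=0$ is the technical heart. I expect to carry this out by induction on the numbers of irreducible components of $W_0$ and of $W_{\infty}$: a Mayer--Vietoris (\v{C}ech) resolution of the constant motive along the components of the divisor, together with the purity isomorphisms for the smooth closed immersions $W_{\infty}\inj W$ and $W_0\cap W_{\infty}\inj W_0$, should reduce it to the case where $W_0$ and $W_{\infty}$ are single smooth divisors, which is then a direct purity computation. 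This last reduction — making the simple-normal-crossings hypothesis do its work at the level of motives, reflecting the topological fact that relative cohomology is insensitive to the boundary divisor — is the step I anticipate being the main obstacle; everything before it is formal manipulation with the six operations and duality.
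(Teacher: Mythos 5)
Your reduction of the lemma to the single isomorphism $\dR\pi_*(j_!\Lambda)\cong\dL\pi'_!(\dR j'_*\Lambda)$ is correct and, once you push $\dR\pi_*$ and $\dL\pi'_!$ through $\dR\rho_*=\dL\rho_!$, it is literally the isomorphism $j_{0!}\dR j'_{\infty*}\one\cong\dR j_{\infty*}j'_{0!}\one$ that the paper reduces to. Likewise your localization step — identifying $b'^*$ of both sides and reducing everything to the vanishing $i_0^*\dR b_*(j_!\Lambda)=0$ — is exactly the reduction in the paper (which also reduces to $i_0^*\dR j_{\infty*}j'_{0!}\one=0$). Up to that point the proposal is sound.

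There is, however, a genuine gap which you yourself flag: the last vanishing, $i_0^*\dR i_{\infty*}i_\infty^!(\dL c_!\Lambda)=0$, is left unproved, with an announced but unexecuted plan to do an induction on the number of components of $W_0$ and $W_\infty$. That plan is not needed, and the split of $\dL b'_!\Lambda$ into $\dL c_!\Lambda$ and $\dL a'_!\Lambda$ is in fact a detour. Once you have $i_0^*\dR b_*(j_!\Lambda)\cong i_0^*\dR i_{\infty*}i_\infty^!(\dL b'_!\Lambda)[-1]$, you can conclude directly: by proper base change along the closed immersion $i_\infty$ in the square with vertices $W_0\cap W_\infty$, $W_\infty$, $W_0$, $W$, one has $i_0^*\dR i_{\infty*}\cong \dR i_{\infty 0*}\,i_{0\infty}^*$; by the purity base-change isomorphism $i_{0\infty}^*i_\infty^!\cong i_{\infty 0}^!i_0^*$ (the same ingredient the paper invokes for its morphism $\alpha$), the object becomes $\dR i_{\infty 0*}\,i_{\infty 0}^!\,i_0^*(\dL b'_!\Lambda)[-1]$; and $i_0^*\dL b'_!\Lambda=0$ because $W_0\cap(W-W_0)=\varnothing$. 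That finishes the vanishing in one step, for arbitrary numbers of components, with no induction. So the approach is essentially the paper's — both hinge on the purity base change for the pair of closed immersions $i_0$, $i_\infty$ — but the paper reaches it by constructing an explicit candidate morphism and then a morphism of localization triangles, whereas you characterize the isomorphism implicitly via $b'^*$ and the vanishing of $i_0^*$. Either route works; yours just needs the last base-change observation spelled out rather than the anticipated induction.
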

\begin{proof}
  Fix the notation as in the following diagram:
  \begin{equation*}
    \xymatrix@C=5em{W-W_{\infty}\ar[r]^-{j_{\infty}}&W\\
      W-(W_{\infty}\cup W_{0})\ar[u]^{j_{0}'}\ar[r]_-{j_{\infty}'}&W-W_{0}\ar[u]_{j_{0}}}
  \end{equation*}
  The left hand side of the equality to establish is
  \begin{equation*}
    \pi_{W*}j_{\infty*}j'_{0!}j_{0}^{\prime!}j_{\infty}^{*}\pi_{W}^{*}\one[-n]\cong \pi_{W!}j_{\infty*}j'_{0!}j_{\infty}^{\prime*}j_{0}^{!}\pi_{W}^{!}\one(-d)[2d-n]
  \end{equation*}
  by relative purity, hence to prove the lemma it suffices to provide
  a canonical isomorphism $j_{0!}j_{\infty*}'\one\cong
  j_{\infty*}j'_{0!}\one$. The candidate morphism is obtained by
  adjunction from the composition
  \begin{equation*}
    j_{\infty*}'\xrightarrow[\sim]{\mathrm{adj}}j_{\infty*}'j_{0}^{\prime!}j_{0!}'\cong j_{0}^{!}j_{\infty*}'j_{0!}'.
  \end{equation*}
  It is clear that the candidate morphism is invertible on
  $W-W_{0}$, hence by localization, it remains to prove the same
  on $W_{0}$. Denote by $i_{\bullet}^{\bullet}$ the closed
  immersion complementary to $j_{\bullet}^{\bullet}$. We add a second
  subscript $0$ (resp.\ $\infty$) to denote the pullback of a morphism
  along $i_{0}$ (resp.\ $i_{\infty}$).

  Note that $i_{0}^{*}j_{0!}=0$ hence it suffices to prove
  $i_{0}^{*}j_{\infty*}j_{0!}'\one=0$. By one of the localization
  triangles for the couple $(W-W_{\infty},W-(W_{\infty}\cup W_{0}))$
  we can equivalently prove that the morphism
  \begin{equation}\label{eq:motives-duality-adj}
    \mathrm{adj}:i_{0}^{*}j_{\infty*}\one\to i_{0}^{*}j_{\infty*}i_{0*}'\one
  \end{equation}
  is invertible. The codomain of this morphism is isomorphic to
  \begin{equation*}
    i_{0}^{*}j_{\infty*}i_{0*}'\one\cong i_{0}^{*}i_{0*}j_{\infty0*}\one\cong j_{\infty0*}\one,
  \end{equation*}
  and under this identification, \eqref{eq:motives-duality-adj} corresponds
  to the morphism
  \begin{equation}\label{eq:motives-duality-bc}
    i_{0}^{*}j_{\infty*}\one\xrightarrow{\mathrm{bc}}
    j_{\infty0*}i_{0}^{\prime*}\one\cong j_{\infty0*}\one.
  \end{equation}
  Here, as in the rest of the proof, $\mathrm{bc}$ denotes the
  canonical base change morphism of the functors involved. Consider
  now the following diagram:
  \begin{equation*}
    \xymatrix{i_{0}^{*}i_{\infty!}i_{\infty}^{!}\one\ar[d]_{\alpha}\ar[r]&i_{0}^{*}\one\ar[d]_{\sim}\ar[r]&i_{0}^{*}j_{\infty*}\one\ar[r]\ar[d]_{\eqref{eq:motives-duality-bc}}&i_{0}^{*}i_{\infty!}i_{\infty}^{!}\one[-1]\ar[d]^{\alpha}\\
      i_{\infty0!}i_{\infty0}^{!}\one\ar[r]&\one\ar[r]&j_{\infty0*}\one\ar[r]&i_{\infty0!}i_{\infty0}^{!}\one[-1]}
  \end{equation*}
  The bottom row is a localization triangle, the top row arises from
  such by application of $i_{0}^{*}$. It is clear that the
  middle square commutes. $\alpha$ is defined as the
  composition
  \begin{equation*}
    i_{0}^{*}i_{\infty!}i_{\infty}^{!}\one\xrightarrow[\sim]{\mathrm{bc}}i_{\infty0!}i_{0
  \infty}^{*}i_{\infty}^{!}\one\xrightarrow{\mathrm{bc}}i_{\infty0!}i_{\infty0}^{!}i_{0}^{*}\one\cong
i_{\infty0!}i_{\infty0}^{!}\one,
  \end{equation*}
  and it is again easy to see that the left square commutes. Since
  there is only the zero morphism from
  $i_{0}^{*}i_{\infty!}i_{\infty}^{!}\one[-1]$ to $j_{\infty0*}\one$,
  this implies commutativity of the whole diagram. Now we have a
  morphism of distinguished triangles, and to prove invertibility
  of~\eqref{eq:motives-duality-bc} (and
  therefore~\eqref{eq:motives-duality-adj}) it suffices to prove
  invertibility of $\alpha$. Only the middle arrow in its definition
  needs to be considered, and for this we note that $\mathrm{bc}:i_{0
    \infty}^{*}i_{\infty}^{!}\one\to i_{\infty0}^{!}i_{0}^{*}\one$ is
  invertible by purity.
\end{proof}

\section{Main result}
\label{sec:main}

The goal of this section is to prove the following theorem. The two
main inputs are Proposition~\ref{pro:NA-AN} and
Theorem~\ref{thm:N-gen-A} which we prove subsequently.

\begin{thm}\label{thm:main}
  Assume that $\Lambda$ is a principal ideal domain. Then $\na$ and
  $\an$ are isomorphisms of Hopf algebras $\Has\cong\Hns$, inverse to
  each other. In particular, there is an isomorphism of affine
  pro-group schemes over $\spec(\Lambda)$:
  \begin{equation*}
    \grpa\cong\grpn.
  \end{equation*}
\end{thm}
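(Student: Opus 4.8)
The plan is to combine the two partial results mentioned: the surjectivity of $\na$ coming from Theorem~\ref{thm:N-gen-A} (that $\mathcal{O}(\grpa)=\Has$ as a comodule over itself is a filtered colimit of Nori motives $\h^i(X(\C),Z(\C);\Q(j))$ for almost smooth pairs) and the identity $\an\na=\id$ established by checking on motives of almost smooth pairs via Proposition~\ref{pro:NA-AN}. First I would recall that, by the constructions of §\ref{sec:NA} and §\ref{sec:AN} together with the extension to the stable/Hopf-algebra level in §\ref{sec:asp.eff}, we have morphisms of Hopf algebras $\na:\Hns\to\Has$ and $\an:\Has\to\Hns$. The goal is to show they are mutually inverse, which gives the isomorphism of affine pro-algebraic group schemes $\grpa\cong\grpn$ by taking spectra.

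The first step is to prove $\an\circ\na=\id_{\Hns}$. Since $\Hns=\mathcal{A}(\h_\bullet|_{\diagoodeff})[\sn^{-1}]$ arises from the universal \ugm{} representation, and $\an\na$ and $\id$ are both endomorphisms of this bialgebra, it suffices (by the rigidity in Corollary~\ref{pro:N-universal} / \cite[II,3.3.1]{saavedra-cat.tann.1972}) to check that the induced endofunctors of $\coMod{\Hns}$ agree on the \ugm{} representation $\h_0\tbti\rep$ restricted to $\diagoodeff$, or even just on a cofinal class of good pairs. By resolution of singularities and excision, every good pair receives a morphism from an almost smooth pair inducing isomorphisms on Betti homology, so it is enough to verify the identity on $\Lambda(X,Z)$ for $(X,Z)$ almost smooth. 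Here Proposition~\ref{pro:NA-AN} (the compatibility of the two realizations on almost smooth pairs, using the explicit models $C^{\mathcal Y}(X,Z)$ of Lemma~\ref{lem:asp-lc} and the comodule identification of Lemma~\ref{lem:comodule-iso}) gives exactly that the two comodule structures coincide, hence $\an\na=\id$.

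The second step is to deduce that $\na$ is surjective, i.e.\ that $\na^\vee=\grpa\to\grpn$ is a closed immersion. This is where Theorem~\ref{thm:N-gen-A} enters: it identifies $\Has$, as a comodule over itself, with a filtered colimit of the Nori motives $\h^i(X(\C),Z(\C);\Lambda(j))$ for $(X,Z)$ almost smooth, and each such comodule lies in the essential image of $\na$ by construction of $\rep$ and Proposition~\ref{pro:NA-eff-ugm}. Concretely this means the coordinate ring $\Has$ is generated as a $\Lambda$-algebra (or rather as a $\grpa$-representation) by subcomodules pulled back along $\na$, which forces $\na:\Hns\to\Has$ to be surjective; equivalently $\grpa\hookrightarrow\grpn$ is a closed subgroup.

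Finally, combining the two: $\na$ is surjective and has $\an$ as a section on the left ($\an\na=\id$ implies $\na$ is injective), so $\na$ is an isomorphism with inverse $\an$; in particular $\an\na=\id$ and $\na\an=\id$. Taking $\spec$ yields $\grpa\cong\grpn$ as affine pro-algebraic groups over $\spec(\Lambda)$. The main obstacle I expect is the reduction in the first step from arbitrary good pairs to almost smooth ones while keeping track of the \emph{comodule} (not merely $\Lambda$-module) structures through all the six-functor manipulations — this is precisely the content packaged into Lemmas~\ref{lem:comodule-iso}, \ref{lem:asp-motives-relation}, \ref{lem:compatibility-bti-as-models} and Proposition~\ref{pro:NA-AN}, and verifying that Ayoub's singular model for the Betti-cohomology-representing object is compatible with Nori's cellular model (the crux of Theorem~\ref{thm:N-gen-A}) is the genuinely hard computation.
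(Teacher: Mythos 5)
Your proposal follows essentially the same three-step strategy as the paper: (1) use Proposition~\ref{pro:NA-AN} together with the universal property of Nori's category (Corollary~\ref{pro:N-universal}) and rigidity to conclude $\an\na=\iota$, hence $\na$ is injective; (2) use Theorem~\ref{thm:N-gen-A} to conclude $\na$ is surjective; (3) combine. Step~(1) and step~(3) match the paper precisely.

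The one place where you are imprecise is the surjectivity step. You argue that ``$\Has$ is generated as a $\Lambda$-algebra (or as a $\grpa$-representation) by subcomodules pulled back along $\na$, which forces $\na$ to be surjective; equivalently $\grpa\hookrightarrow\grpn$ is a closed subgroup.'' This is a Tannakian-style generation argument, and while it can be made to work over a field, it is not obvious that the ``generated by pullbacks $\Rightarrow$ closed immersion $\Rightarrow$ surjective on coordinate rings'' reasoning goes through over a general principal ideal domain without further work (the standard references state the closed-immersion criterion over a field). Moreover, Theorem~\ref{thm:N-gen-A} asserts something slightly different from what you quote: it says $\Has$ \emph{as a comodule over itself} lies in the essential image of $\overline{\na}$, not merely that each $\h^i(X,Z;\Lambda(j))$ does. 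The paper extracts from that the factorization $\cm=(\na\otimes\id)\circ\coa:\Has\to\Hns\otimes\Has\to\Has\otimes\Has$ of the comultiplication, and then applies the counit trick: $\id=(\id\otimes\cu)\circ\cm=\na\circ\bigl((\id\otimes\cu)\circ\coa\bigr)$, producing an explicit $\Lambda$-linear right inverse of $\na$. This is cleaner and works directly over any ring; I'd recommend replacing the ``generated $\Rightarrow$ closed immersion'' appeal with this counit computation.
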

\begin{proof}
We are going to prove
\begin{itemize}
\item $\an\na:\Hns\to\Hns$ is the identity;
\item $\na$ is surjective.
\end{itemize}
It will then follow that $\na$ and $\an$ are bialgebra
isomorphisms, inverse to each other. And since an antipode of a Hopf
algebra is unique, they are automatically isomorphisms of Hopf algebras.

\begin{proof}[Proof of~$\an\na=\id$]
  Consider the following triangle:
  \begin{equation*}
    \xymatrix@C=5em{\diagoodeff\ar[d]_{\tilde{\h}_{\bullet}}\ar[r]^-{\overline{\an}\h_{0}\tbti
        \rep}&\fcoMod{\Hns}\\
      \fcoMod{\Hn}\ar[ur]_{\varepsilon}}
  \end{equation*}
  By construction, the triangle commutes (up to \ugm isomorphism) for
  $\varepsilon=\overline{\an\nae}$. By Proposition~\ref{pro:NA-AN}
  below it also commutes (up to \ugm isomorphism) for
  $\varepsilon=\overline{\iota}$, where $\iota:\Hn\to\Hns$ is the
  canonical localization map. By universality of Nori's category
  (Theorem~\ref{pro:N-universal}), we must therefore have a monoidal
  isomorphism of functors $\overline{\an\nae}\cong
  \overline{\iota}$.
  By~\cite[II, 3.3.1]{saavedra-cat.tann.1972}, we must then have
  $\an\nae=\iota$. Hence $\an\na:\Hns\to\Hns$ is the identity.
\end{proof}

\begin{proof}[Proof of surjectivity of $\na$]
  We use Theorem~\ref{thm:N-gen-A} below to deduce that the
  comultiplication $\cm:\Has\to\Has\otimes\Has$ is equal to
  \begin{equation*}
    \Has\xrightarrow{\coa}\Hns\otimes\Has\xrightarrow{\na\otimes\id}\Has\otimes\Has
  \end{equation*}
  for some coaction $\coa$ of $\Hns$ on $\Has$. Composing $\cm$ with
  the counit
  $\id\otimes \cu:\Has\otimes\Has\to\Has\otimes\Lambda\cong\Has$
  yields the identity, therefore also
  \begin{equation*}
    \id=(\id\otimes \cu)\circ (\na\otimes
    \id)\circ \coa=\na\circ (\id\otimes \cu)\circ \coa.
  \end{equation*}
  It follows that $\na$ is surjective.
\end{proof}
\end{proof}

\begin{pro}\label{pro:NA-AN}
  Assume that $\Lambda$ is a principal ideal domain. The following
  square commutes up to an isomorphism of \ugm representations:
  \begin{equation*}
    \xymatrix@C=5em{\diagoodeff\ar[d]_{\tilde{\h}_{\bullet}}\ar[r]^-{\rep}&\da\ar[d]^{\h_{0}\circ\tbti}\\
      \coMod{\Hns}&\coMod{\Has}\ar[l]^-{\overline{\an}}}
  \end{equation*}
\end{pro}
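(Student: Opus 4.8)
The plan is to verify the claimed commutativity after applying the faithful exact functor $\ff\colon\coMod{\Hns}\to\fMod{\Lambda}$, because an isomorphism of \ugm representations into $\coMod{\Hns}$ can be checked on underlying $\Lambda$-modules together with the comodule structure, and the representations here all have underlying module $\h_{\bullet}$. So I would first assemble what the two composites do on underlying modules. Going around via $\rep$ and then $\h_0\tbti$, Propositions~\ref{pro:NA-eff} and~\ref{pro:NA-eff-ugm} already give an isomorphism of \ugm representations $\h_0\tbti\rep\cong\h_{\bullet}$ with values in $\fcoMod{\Has}$; composing with $\overline{\an}\colon\coMod{\Has}\to\coMod{\Hns}$ thus sends $(X,Z,n)$ to $\h_n(X^{\An},Z^{\An};\Lambda)$ with the $\Hns$-comodule structure obtained by corestricting the $\Has$-coaction along $\an\colon\Has\to\Hns$. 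Going down via $\tilde{\h}_{\bullet}$ lands in $\fcoMod{\Hn}$, and then one uses the canonical localization $\iota\colon\Hn\to\Hns$ to view it in $\fcoMod{\Hns}$ — wait, the diagram as drawn has the bottom arrow pointing $\coMod{\Has}\xrightarrow{\overline{\an}}\coMod{\Hns}$, so the statement is that $\overline{\an}\circ\h_0\tbti\rep\cong\tilde{\h}_{\bullet}$ as representations $\diagoodeff\to\coMod{\Hns}$, where $\tilde{\h}_{\bullet}$ denotes the tautological one factoring through $\coMod{\Hn}\to\coMod{\Hns}$ via $\iota$. Thus what must be shown is: under the module isomorphism of Proposition~\ref{pro:NA-eff}, the $\an$-corestricted $\Has$-comodule structure on $\h_0\tbti\rep(X,Z,n)$ agrees with the $\iota$-corestricted canonical $\Hn$-comodule structure on $\h_n(X^{\An},Z^{\An})$.

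The key step is therefore to identify these two comodule structures, and here Lemma~\ref{lem:comodule-iso} is the crucial input — but only for \emph{almost smooth} pairs. For such a pair $(X,Z)$, Lemma~\ref{lem:comodule-iso} produces an isomorphism of $\Hn$-comodules $\h_n\rea\Lambda(X,Z)\xrightarrow{\sim}\h_n(X,Z)$, and by the commutative square at the end of Remark~\ref{rem:AN} (relating $\rea$ on $\dae$ with $\rea$ on $\da$ via $\dL\sus_T$ and $\overline{\iota}$), together with the identification $\dL\sus_T\Lambda(X,Z)\cong\rep(X,Z,0)$ from Theorem~\ref{thm:asp-motives}(4), this matches the $\Hns$-comodule $\h_0\rea(\rep(X,Z,0))=\overline{\an}\h_0\tbti\rep(X,Z,0)$ with $\h_n(X,Z)$ carrying the $\iota$-corestricted canonical structure. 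One then has to bootstrap from the shift-$0$, effective statement to general $(X,Z,n)$ and to arbitrary good pairs. The shift is handled by naturality of everything under the boundary-of-a-triple edges (exactly as in the proof of Proposition~\ref{pro:NA-eff}); the passage from almost smooth to arbitrary good pairs is handled because, by resolution of singularities and excision (the observation recorded at the start of §\ref{sec:asp.gen}), every good pair receives a morphism from an almost smooth one inducing isomorphisms in Betti homology, and the diagram $\diagoodeff$ has enough morphisms to propagate the identification — i.e.\ both $\overline{\an}\h_0\tbti\rep$ and $\tilde{\h}_{\bullet}$ are representations of $\diagoodeff$, they agree on almost smooth pairs, and any good pair is connected to an almost smooth one by an edge inducing an isomorphism on underlying modules. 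Concretely I would invoke the universal property (Corollary~\ref{pro:N-universal}) restricted to the subdiagram of almost smooth pairs, using that their associated objects already generate, or argue directly that the natural transformation defined on almost smooth pairs extends uniquely.

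The main obstacle I anticipate is the \emph{coherence/compatibility bookkeeping}: one must check that the module isomorphism of Proposition~\ref{pro:NA-eff} (built from double-duality of singular chains and Ayoub's compatibility of Betti with the six functors) is carried to the comodule isomorphism of Lemma~\ref{lem:comodule-iso} (built from the intermediate complex $P$ and the cellular complex $C$) — in other words, that the two different models for ``$\btie\Lambda(X)$ computes singular homology of $X^{\An}$'' used in §\ref{sec:NA} and in §\ref{sec:basic-lemma}–§\ref{sec:asp.eff} are identified compatibly. This is exactly the content one sets up via the zig-zag $\ff C^{\mathcal{Y}}(X,Z)\leftarrow P^{\mathcal{Y}}(X,Z)\to(\sgs)^{\mathcal{Y}}(X,Z)\to\sgs(X,Z)$ of~\eqref{eq:comodule-iso} and the model-comparison Lemma~\ref{lem:asp-lc}; making it line up with the model $\sgs(X^{\An},Z^{\An})^{\vee}$ for $\dR\pi_*j_!\Lambda$ used in Proposition~\ref{pro:NA-eff}, and with the compatibilities recorded in Lemmas~\ref{lem:asp-motives-relation} and~\ref{lem:compatibility-bti-as-models}, is where the real work lies. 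Everything else — naturality under the two types of edges, monoidality of the comparison (to get a \ugm isomorphism rather than just a pointwise one), and the reduction to almost smooth pairs — is then routine given the groundwork already laid.
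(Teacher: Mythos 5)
Your outline matches the paper's proof essentially step for step: reduce the claimed isomorphism of \ugm{} representations to a comparison of comodule structures on the underlying modules (via Proposition~\ref{pro:NA-eff-ugm}), reduce from general good pairs to almost smooth ones by choosing an edge $p:v'\to v$ from an almost smooth $v'$ inducing an isomorphism on homology, and for almost smooth pairs compare the two comodule structures using Lemma~\ref{lem:comodule-iso}. You also correctly identify the core remaining task — verifying that the module isomorphism of Proposition~\ref{pro:NA-eff} (built from $\sgs(X^{\An},Z^{\An})^{\vee\vee}$ and duality) is carried to the comodule isomorphism~\eqref{eq:comodule-iso} (built from $C$, $P$, $\sgs$), via the model comparisons in Theorem~\ref{thm:asp-motives}, Lemmas~\ref{lem:asp-motives-relation} and~\ref{lem:compatibility-bti-as-models}, and Proposition~\ref{pro:sgs-bti}.

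What you leave undone, however, is precisely that verification, and it is where nearly the entire content of the paper's proof of this proposition lives: assembling the large commutative diagram relating $\bti\dL\sus_T\Lambda(X,Z_\bullet)$, $\bti\uhm(\Lambda(X,Z_\bullet),\mathbf{E})$, $\dL\sgs^{*}\Anal^{*}\Lambda(X,Z_\bullet)$ and $\sgs(X^{\An},Z^{\An})^{\vee\vee}$, and checking each inner square. Calling this ``routine given the groundwork'' undersells it; the groundwork makes it possible, not automatic. One small misstep: there is no need (and no obvious way) to ``bootstrap from the shift-$0$ statement to general $(X,Z,n)$ via boundary-of-a-triple edges'' — Lemma~\ref{lem:comodule-iso} is stated for all $n$, and since $\rep(X,Z,n)=\rep(X,Z,0)[n]$ one simply applies it with $n$ equal to the degree of interest, as the paper does. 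Also, your first suggested route for the reduction to almost smooth pairs (``invoke the universal property restricted to the subdiagram of almost smooth pairs'') would be awkward to make precise; the paper's direct argument — both representations are already defined on all of $\diagoodeff$ as module isomorphisms, and the comodule-morphism property transfers along the edge $p$ — is the cleaner one, which you also mention as an alternative.
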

\begin{proof}
  This is a rather long and tedious verification. We will proceed in several steps.

  \begin{enumerate}[wide,itemsep=1em,label=\textbf{Step \arabic*}.]
  \item By Proposition~\ref{pro:NA-eff-ugm}, we already know that
    after composition with the forgetful functor
    $\coMod{\Hns}\to\Mod{\Lambda}$, the two \ugm representations are
    naturally isomorphic. Call the isomorphism $\eta$. It therefore
    suffices to prove that the components of $\eta$ are compatible
    with the $\Hns$-comodule structure.
  \item 
  
    Let $v=(X,Z,n)$ be an arbitrary vertex in $\diagoodeff$. We find
    by resolution of singularities a vertex $v'=(X',Z',n)$ and an edge
    $p:v'\to v$ such that $(X',Z')$ is almost smooth and
    $\h_{\bullet}(p)$ is an isomorphism. Consider the following
    commutative square in $\Mod{\Lambda}$:
    \begin{equation*}
      \xymatrix@C=5em{\h_{\bullet}(v')\ar[r]^-{\eta_{v'}}\ar[d]_{\h_{\bullet}(p)}&\overline{\an}\h_{0}\tbti
        \rep(v')\ar[d]^{\overline{\an}\h_{0}\tbti \rep(p)}\\
        \h_{\bullet}(v)\ar[r]_-{\eta_{v}}&\overline{\an}\h_{0}\tbti \rep(v)}
    \end{equation*}
    All arrows are invertible and both vertical arrows are
    $\coMod{\Hns}$-morphisms. We may therefore assume that $(X,Z)$ is
    almost smooth.
  \item Now consider the following diagram in $\Mod{\Lambda}$:
    \begin{equation*}
      \xymatrix@C=5em{\h_{0}\bti
        \rep(X,Z,n)\ar[d]^{\eta}_{\sim}\ar[r]^{\sim}_{\text{Thm.}~\ref{thm:asp-motives}}&\h_{0}\btie\Lambda(X,Z)[n]\ar[d]^{\sim}_{\mathrm{Cor.~\ref{cor:AN-eff}}}\\
        \h_{n}(X,Z)&\h_{0}\rea\Lambda(X,Z)\ar[l]_{\sim}^{\eqref{eq:comodule-iso}}[n]}
    \end{equation*}
    By Lemma~\ref{lem:comodule-iso} , the bottom horizontal arrow is
    compatible with the $\Hn$-coaction; the same is clearly true for
    the top horizontal and the right vertical one. We are thus reduced
    to show commutativity of this square, and it suffices to do so
    before applying $\h_{n}$.
  \item  Modulo
    the identification of $\btie$ with $\dL\sgs^{*}\circ\Anal^{*}$ of
    Proposition~\ref{pro:sgs-bti}, the composition of the right
    vertical and the bottom horizontal arrow can be equivalently
    described as the composition
    \begin{equation*}
      \dL\sgs^{*}\Anal^{*}\Lambda(X,Z)\to\dL\sgs^{*}\Lambda(X^{\An},Z^{\An})\to\sgs(X^{\An},Z^{\An}).
    \end{equation*}
    Thus the square above will commute if the following diagram does:
    \begin{equation*}
      \xymatrix{\bti\dL\pi_{!}\dR
        j_{*}j^{*}\pi^{!}\one\ar@{-}[r]^{\sim}\ar@{-}[d]_{\sim}&\bti
        \dL\sus_{T}\Lambda(X,Z_{\bullet})\ar@{-}[d]_{\sim}\ar@{-}[r]^{\sim}&\dL\sgs^{*}\Anal^{*}\Lambda(X,Z_{\bullet})\ar[d]^{\sim}\\
        (\bti\dR\pi_{*}j_{!}\one)^{\vee}\ar@{=}[r]\ar@{-}[d]_{\sim}&(\bti\uhm(\Lambda(X,Z_{\bullet}),\mathbf{E}))^{\vee}\ar@{-}[d]_{\sim}&\dL\sgs^{*}\Lambda(X^{\An},Z_{\bullet}^{\An})\ar[d]^{\sim}\\
        (\dR\tilde{\pi}^{\An}_{*}\tilde{j}_{!}^{\An}\Lambda_{\mathrm{cst}})^{\vee}\ar@{=}[r]&\sgs(X^{\An},Z^{\An})^{\vee\vee}&\sgs(X^{\An},Z^{\An})\ar[l]^{\sim}}
    \end{equation*}
    The arrows in the top left square are induced by the
    identifications in Theorem~\ref{thm:asp-motives} ($\mathbf{E}$ is
    a projective stable $(\A^{1},\tau)$-fibrant replacement of the
    unit spectrum) and duality which makes the square clearly
    commutative. Commutativity of the lower left square is
    Lemma~\ref{lem:asp-motives-relation}
    and~\ref{lem:compatibility-bti-as-models}. We are reduced to prove commutativity of the right half.
  \item Consider the following diagram (all ``arrows'' are
    isomorphisms, either canonical or introduced before):
    \begin{equation*}
      \scalebox{0.671}{\xymatrix@C=.8em{(\bti\dL\sus\Lambda(X,Z_{\bullet}))^{\vee}\ar@{-}[r]\ar@{-}[d]&(\btie\Lambda(X,Z_{\bullet}))^{\vee}\ar@{-}[r]&(\dL\sgs^{*}\Anal^{*}\Lambda(X,Z_{\bullet}))^{\vee}\ar@{-}[r]&(\dL\sgs^{*}\Lambda(X^{\An},Z^{\An}_{\bullet}))^{\vee}\ar@{.}[d]\\
          \dR\Gamma\dR\ev(\Anal^{*}\dL\sus\Lambda(X,Z_{\bullet}))^{\vee}\ar@{-}[d]\ar@{-}[r]&\dR\Gamma\dR\ev(\dL\sus\Lambda(X^{\An},Z^{\An}_{\bullet}))^{\vee}\ar@{-}[d]\ar@{-}[r]&\dR\Gamma\dR\ev\dL\sus\Lambda(X^{\An},Z^{\An}_{\bullet})^{\vee}\ar@{-}[d]\ar@{-}[r]&\dL\sgs^{*}\Lambda(X^{\An},Z^{\An}_{\bullet})^{\vee}\ar@{.}[d]\\
          \bti\uhm(\Lambda(X,Z_{\bullet}),\mathbf{E})\ar@{-}[r]&\dR\Gamma\dR\ev\uhm(\Lambda(X^{\An},Z^{\An}_{\bullet}),\sgsdbb)\ar@{-}[r]&\dR\Gamma\uhm(\Lambda(X^{\An},Z^{\An}_{\bullet}),\sgsd)\ar@{.}[r]&\dL\sgs^{*}\uhm(\Lambda(X^{\An},Z^{\An}_{\bullet}),\sgsd)
        }}
    \end{equation*}
    The upper part clearly commutes as does the lower right
    square. For the lower left square we need to prove commutative
    \begin{equation*}
      \xymatrix{\Anal^{*}\uhm(\Lambda(X,Z_{\bullet}),\mathbf{E})\ar[d]\ar@{=}[r]&\Anal^{*}\dR\uhm(\dL\sus\Lambda(X,Z_{\bullet}),\one)\ar[d]\\
        \uhm(\Lambda(X^{\An},Z^{\An}_{\bullet}),\sgsdbb)\ar@{=}[r]&\dR\uhm(\dL\sus\Lambda(X^{\An},Z^{\An}_{\bullet}),\one)}
    \end{equation*}
    and this is done as in Lemma~\ref{lem:asp-motives-relation}.  The
    lower middle square is easily seen to commute hence, using
    duality, it only remains to prove that the composition of the
    dotted arrows is equal to
    \begin{equation*}
      \dR\Gamma\uhm(\Lambda(X^{\An},Z^{\An}_{\bullet}),\sgsd)\leftarrow\sgsd(X^{\An},Z^{\An})\rightarrow (\dL\sgs^{*}\Lambda(X^{\An},Z^{\An}_{\bullet}))^{\vee}.
    \end{equation*}
  \item Notice that $\sgsd=\sgs_{*}\Lambda$. Then, writing $B$
    for $\Lambda(X^{\An},Z^{\An}_{\bullet})$, we reduce to prove
    commutative:
    \begin{equation*}
      \xymatrix{\Gamma\uhm(B,\sgs_{*}\Lambda)\ar[d]\ar@{-}[r]&\dL\sgs^{*}\uhm(B,\sgs_{*}\Lambda)\ar[d]\ar@{=}[r]&\dL\sgs^{*}\dR\uhm(B,\Lambda)\ar[d]\\
        \Gamma\sgs_{*}\uhm(\sgs^{*}B,\Lambda)\ar@{-}[r]\ar@{-}[dr]&\dL\sgs^{*}\sgs_{*}\uhm(\sgs^{*}B,\Lambda)\ar[d]&\dR\uhm(\dL\sgs^{*}B,\Lambda)\\
        &\uhm(\sgs^{*}B,\Lambda)\ar[ru]
      }
    \end{equation*}
    Here we used that $\sgs\circ\iota_{\pt}:\pt\to \ch(\Lambda)$ takes
    the value $\sgs(\pt)\simeq\Lambda$ hence $\Gamma\circ\sgs_{*}$ is
    canonically quasi-isomorphic to the identity. Since the
    undecorated functors $\sgs^{*}$ and $\sgs_{*}$ appearing are only
    applied to cofibrant, respectively fibrant, objects they can be
    identified with their derived counterparts and the diagram is
    easily seen to commute.
  \end{enumerate}
\end{proof}

\begin{thm}\label{thm:N-gen-A}
  Assume that $\Lambda$ is a principal ideal domain. The bialgebra
  $\Has$ considered as a comodule over itself lies in the essential
  image of
  \begin{equation*}
    \coMod{\Hns}\xrightarrow{\overline{\na}}\coMod{\Has}.
  \end{equation*}
\end{thm}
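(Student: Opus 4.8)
The strategy is to feed Ayoub's explicit ``singular'' model for $\rbti\Lambda$, the object of $\da$ representing Betti cohomology (see~\cite[§2.2--2.4]{ayoub:galois1}), into the descriptions of the motives $\rep(X,Z,n)$ and of the functor $\rea$ obtained above. Concretely, a close inspection of Ayoub's model — and this is the step I expect to be the main obstacle — should yield a presentation of $\rbti\Lambda$ as a \emph{filtered homotopy colimit}
\[
  \rbti\Lambda \;\simeq\; \operatorname*{hocolim}_{\alpha\in\mathcal{I}}\; \rep(X_\alpha,Z_\alpha,n_\alpha)^{\vee}(j_\alpha)
\]
in $\da$, with $\mathcal{I}$ filtered, $(X_\alpha,Z_\alpha)$ pairs of varieties and $j_\alpha\in\Z$, \emph{compatibly with the bialgebra structure}, \ie{} so that under $\tbti$ this becomes a filtered homotopy colimit in $\coMod{\alga}^{\Der{\Lambda}}$ realising $\alga=\bti\rbti\Lambda$ as the regular comodule. (The Tate twists with arbitrary $j_\alpha\in\Z$ reflect the passage from the effective bialgebra to the Hopf algebra, \cite[Thm.~2.14]{ayoub:galois1}.) Invoking resolution of singularities and excision as in §\ref{sec:asp.gen}, where every good pair receives a Betti-isomorphism from an almost smooth one, I would then replace $\mathcal{I}$ by a cofinal subcategory along which all the pairs $(X_\alpha,Z_\alpha)$ are almost smooth.

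Since $\bti$ is a left adjoint it preserves homotopy colimits, and filtered homotopy colimits in $\Der{\Lambda}$ (respectively in $\coMod{\Has}^{\Der{\Lambda}}$) are computed termwise and commute with $\h_0$; moreover the homology of $\alga$ is concentrated in non-negative degrees by~\cite[Cor.~2.105]{ayoub:galois1}. Applying $\h_0$ to the displayed presentation therefore exhibits $\Has=\h_0\alga$, as a comodule over itself, as the filtered colimit in $\coMod{\Has}$ of the objects $\h_0\tbti\bigl(\rep(X_\alpha,Z_\alpha,n_\alpha)^{\vee}(j_\alpha)\bigr)$.

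It then remains to recognise each of these terms as the image under $\overline{\na}$ of a Nori motive. As $\rep(X_\alpha,Z_\alpha,n_\alpha)$ is a compact motive and $\tbti$ is monoidal, it commutes with duals and Tate twists; combining this with the very construction of $\overline{\na}$ in §\ref{sec:NA} — the commutativity of~\eqref{eq:NA-eff}, which identifies $\h_0\tbti\rep(X_\alpha,Z_\alpha,n_\alpha)$ with $\overline{\na}$ applied to the Nori motive $\tilde{\h}_{\bullet}(X_\alpha,Z_\alpha,n_\alpha)$ — and with the fact that duals and Tate twists of Nori motives are again Nori motives, one obtains $\h_0\tbti(\rep(X_\alpha,Z_\alpha,n_\alpha)^{\vee}(j_\alpha))\cong\overline{\na}\bigl(\h^{i_\alpha}(X_\alpha,Z_\alpha;\Lambda(j_\alpha))\bigr)$ for a suitable $i_\alpha\in\Z$, the pair $(X_\alpha,Z_\alpha)$ being almost smooth. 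Finally $\overline{\na}:\coMod{\Hns}\to\coMod{\Has}$ is corestriction of scalars along the bialgebra morphism $\na$, hence it preserves filtered colimits (both sides being computed over $\Lambda$-modules), so
\[
  \Has\;\cong\;\colim_{\alpha}\,\overline{\na}\bigl(\h^{i_\alpha}(X_\alpha,Z_\alpha;\Lambda(j_\alpha))\bigr)\;\cong\;\overline{\na}\Bigl(\colim_{\alpha}\,\h^{i_\alpha}(X_\alpha,Z_\alpha;\Lambda(j_\alpha))\Bigr),
\]
which visibly lies in the essential image of $\overline{\na}$, as desired. The one genuinely hard ingredient is the first one: turning Ayoub's combinatorial singular model into the bialgebra-compatible filtered presentation of $\rbti\Lambda$ by duals and Tate twists of the $\rep(X,Z,n)$.
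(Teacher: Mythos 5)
Your overall strategy matches the paper's: use Ayoub's singular model $\mathbf{Sing}_{\et}^{\mathbb{D},\infty}(\mathbf{E})$ for $\rbti\one$ to write $\Has$ as a filtered colimit of $\h_{0}\tbti$ applied to (Tate-twisted duals of) motives $\rep(X,Z,n)$ with $(X,Z)$ almost smooth, then recognize each term as $\overline{\na}$ of a Nori motive. You also correctly identify the hard step (turning Ayoub's combinatorial model into a usable filtered presentation) and correctly invoke resolution of singularities and Lemma~\ref{motives-duality}-type duality to pass to almost smooth pairs.

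However, your last displayed equation hides a genuine gap. You claim that $\colim_{\alpha}\overline{\na}\bigl(\h^{i_\alpha}(X_\alpha,Z_\alpha;\Lambda(j_\alpha))\bigr)\cong\overline{\na}\bigl(\colim_{\alpha}\h^{i_\alpha}(X_\alpha,Z_\alpha;\Lambda(j_\alpha))\bigr)$, but this requires that the \emph{transition morphisms} of the filtered system of $\Has$-comodules lie in the image of $\overline{\na}$, so that they assemble into a filtered diagram in $\coMod{\Hns}$. That is not automatic: $\overline{\na}$ being faithful and colimit-preserving does not by itself make its essential image closed under colimits; one needs the essential image to be a \emph{full} subcategory of $\coMod{\Has}$. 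The paper obtains this fullness from the earlier result that $\an\na=\iota$, equivalently $\overline{\an\na}\cong\id$ (Proposition~\ref{pro:NA-AN}), so that $\overline{\na}$ is fully faithful onto its image and that image is therefore closed under small colimits. Your proposal never invokes $\overline{\an\na}\cong\id$, and without it the final passage to a colimit in $\coMod{\Hns}$ does not follow. This is precisely the structural reason why Theorem~\ref{thm:N-gen-A} is stated \emph{after} Proposition~\ref{pro:NA-AN} and why both feed into Theorem~\ref{thm:main} in tandem rather than one subsuming the other.

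A second, smaller issue: you reduce $\h_0\tbti(\rep(X,Z,n)^{\vee}(j))$ to a Nori motive by appealing to ``$\tbti$ is monoidal, hence commutes with duals and Tate twists''. While $\tbti$ does commute with duals at the level of $\coMod{\alga}^{\Der\Lambda}$, the passage to $\h_0$ commutes with duals only when $\bti\rep(X,Z,n)$ is concentrated in a single degree and free over $\Lambda$ (otherwise $\h_0$ of the dual is not the dual of $\h_0$). This happens to hold for the pairs $(\overline{\Disk}^n_{\et},\partial\overline{\Disk}^n_{\et})$ arising from Ayoub's model, but it must be checked. The paper sidesteps this entirely via the explicit isomorphism of Lemma~\ref{motives-duality}, which rewrites $\rep(W-W_\infty,W_0-W_\infty,n)^{\vee}$ as $\rep(W-W_0,W_\infty-W_0,2d-n)(-d)$, avoiding duals at the level of comodules altogether and feeding directly into the commutativity of~\eqref{eq:NA-eff}. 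Your abstract dualizing argument can likely be made to work, but is more delicate than your wording suggests.
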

\begin{proof}
  We may prove this statement for the Nisnevich topology. Ayoub gives
  in~\cite[Thm.~2.67]{ayoub:galois1} an explicit model for the
  symmetric $T$-spectrum $\rbtinis\Lambda$ which we are now going to
  describe at a level of detail appropriate for our proof.

  Recall the category $\mathcal{V}_{\et}(\Dbar^{n}/\A^{n})$ ($n\geq
  0$) whose objects are étale neighborhoods of the closed polydisk
  $\Dbar^{n}$ inside affine space $\A^{n}$ (for the precise definition
  see~\cite[§2.2.4]{ayoub:galois1}). It is a cofiltered
  category. Forgetting the presentation as a scheme over $\A^{n}$
  defines a canonical functor
  $\Dbar_{\et}^{n}:\mathcal{V}_{\et}(\Dbar^{n}/\A^{n})\to\sm$. In
  other words we obtain a pro-smooth scheme. We write $\Dbar_{\et}$
  for the associated cocubical object in pro-smooth schemes where the
  faces $d_{i,\varepsilon}$ are induced from the faces in $\A^{n}$
  (the ``coordinate hyperplanes'' through 0 and 1). For $n\in\N$ and
  $\varepsilon=0,1$ write $\partial_{\varepsilon}\Dbar^{n}_{\et}$ for
  the union of the faces $d_{i,\varepsilon}(\Dbar^{n-1}_{\et})$, where
  $i$ runs through $1,\ldots,n$. Also write $\partial\Dbar^{n}_{\et}$
  for the union
  $\partial_{0}\Dbar^{n}_{\et}\cup\partial_{1}\Dbar^{n}_{\et}$, and
  $\partial_{1,1}\Dbar^{n}_{\et}$ for the union of all
  $d_{i,\varepsilon}(\Dbar^{n-1}_{\et})$ except
  $(i,\varepsilon)=(1,1)$.

  We obtain the bicomplex $N(\uhom(\Dbar_{\et},K))$ which in degree
  $n$ (in the direction of the cocubical dimension) is given by
  $\uhom((\Dbar^{n}_{\et},\partial_{1,1}\Dbar^{n}_{\et}),K)$, and
  whose differential in degree $n>0$ is $d_{1,1}$.\footnote{Given a
    pro-object $(X_{i},Z_{i})_{i\in I}$ of almost smooth pairs,
    $\uhom((X_{i},Z_{i})_{i},K)$ takes a smooth scheme $Y$ to
    \begin{equation*}
      \varinjlim_{i\in I}K(Y\times X_{i},Y\times Z_{i}).      
    \end{equation*}} In particular, the cycles
  in degree $n>0$ are given by
  $\uhom((\Dbar^{n}_{\et},\partial\Dbar^{n}_{\et}),K)$. Thus we obtain
  a canonical morphism of bicomplexes
  \begin{equation*}
    \uhom((\Dbar^{n}_{\et},\partial\Dbar^{n}_{\et}),K)[-n]\to N^{\leq n}(\uhom(\Dbar_{\et},K))
  \end{equation*}
  where the right hand side denotes the bicomplex truncated at degree
  $n$ from above. One can check that this induces a quasi-isomorphism
  on the associated total complexes whenever $K$ is injective fibrant.

  Taking the total complex of the bicomplex $N(\uhom(\Dbar_{\et},K))$
  (resp.\ $N^{\leq n}(\uhom(\Dbar_{\et},K))$) we obtain an endofunctor
  $\nsget$ (resp.\ $\nsgetn$) of presheaves of complexes on smooth
  schemes. It extends canonically to an endofunctor on symmetric
  $T$-spectra. Let $\mathbf{E}$ be an injective stable
  $(\A^{1},\mathrm{Nis})$-fibrant replacement of the unit spectrum
  $\one$. \cite[Thm.~2.67]{ayoub:galois1} states that $\rbtinis\one$
  is given explicitly by the symmetric $T$-spectrum
  \begin{equation*}
    \mathbf{Sing}_{\et}^{\mathbb{D},\infty}(\mathbf{E}):=\varinjlim_{r}s_{-}^{r}\nsget(\mathbf{E})[2r],
  \end{equation*}
  where $s_{-}$ denotes the ``shift down'' functor (so that
  $s_{-}(\mathbf{E})_{m}=\mathbf{E}_{m+1}$;
  see~\cite[Déf.~4.3.13]{ayoub07-thesis}). As we will not need a
  description of the transition morphisms in the sequential colimit
  above, we content ourselves with referring
  to~\cite[Déf.~2.65]{ayoub:galois1}. Let $Q:\spt_{T}\U\sm\to\danis$
  denote the canonical localization functor, and consider the
  following canonical morphisms:
  \begin{align}\label{eq:N-gen-A.1}\notag
    \varinjlim_{r,n} \h_{0}\tbtinis
    Q(s_{-}^{r}\uhom((\Dbar^{n}_{\et},\partial\Dbar^{n}_{\et}),\mathbf{E})[2r-n])&\to\varinjlim_{r,n} \h_{0}\tbtinis
    Q(s_{-}^{r}\nsgetn(\mathbf{E})[2r])\\
    &\to \h_{0}\tbtinis Q(\mathbf{Sing}_{\et}^{\mathbb{D},\infty}(\mathbf{E}))
  \end{align}
  in $\coMod{\Has}$. The last term is the bialgebra $\Has$ considered
  as a comodule over itself. We are going to show
  \begin{enumerate}[label=(\arabic*)]
  \item \label{cl:N-gen-A.invertible}that the composition
    in~\eqref{eq:N-gen-A.1} is invertible, and
  \item \label{cl:N-gen-A.essim}that the comodules in the filtered
    system on the left hand side are in the essential image of
    $\overline{\na}$.
  \end{enumerate}
  This is enough since, as seen in the proof of
  Theorem~\ref{thm:main}, the previous proposition implies that
  $\overline{\an\na}\cong\id$ hence the essential image of
  $\overline{\na}$ is a full subcategory of $\coMod{\Has}$ (since both
  $\overline{\na}$ and $\overline{\an}$ are faithful) closed under
  small colimits (by Fact~\ref{thm:comod-basic}).

  \begin{proof}[Proof of~\ref{cl:N-gen-A.invertible}]
    It suffices to prove this after forgetting the comodule
    structure. Just as in the case of the étale singular complex there
    is an endofunctor $\mathbf{Sing}^{\mathbb{D},\infty}$ on symmetric
    $\Anal^{*}(T)$-spectra defined using $\sgd$ instead of $\nsget$
    (cf.~\cite[Déf.~2.45]{ayoub:galois1}).  Denote by
    $F:\spt_{T}\U\sm\to\Mod{\Lambda}$ the composition of functors
    $\h_{0}\Gamma \ev \mathbf{Sing}^{\mathbb{D},\infty}\Anal^{*}$ and
    notice that
    \begin{enumerate}[label=(\alph*)]
    \item $F$ commutes with filtered colimits, by
      construction;\label{btimodel-colim}
    \item $F$ takes levelwise quasi-isomorphisms of symmetric
      $T$-spectra to isomorphisms of modules, as follows essentially
      from~\cite[Lem.~2.55]{ayoub:galois1};\label{btimodel-qis}
    \item $F$ applied to a projective stable
      $(\A^{1},\mathrm{Nis})$-fibrant spectrum $\mathbf{K}$ is a model
      for $\h_{0}\btinis \mathbf{K}$, by~\cite[Lem.~2.72 and
      Thm.~2.48]{ayoub:galois1}.\label{btimodel-model}
    \end{enumerate}
    We claim that the morphism of $\Lambda$-modules
    underlying~\eqref{eq:N-gen-A.1} can be identified with the
    composition
    \begin{align}\label{eq:N-gen-A.2}\notag
      \varinjlim_{r,n} F(s_{-}^{r}\uhom((\Dbar^{n}_{\et},\partial\Dbar^{n}_{\et}),\mathbf{E})[2r-n])&\to\varinjlim_{r,n} F(s_{-}^{r}\nsgetn(\mathbf{E})[2r])\\
                                                                                                    &\to F(\mathbf{Sing}_{\et}^{\mathbb{D},\infty}(\mathbf{E})).
    \end{align}
    This follows from~\ref{btimodel-model} because both
    $\mathbf{Sing}_{\mathrm{et}}^{\mathbb{D},\infty}(\mathbf{E})$ and
    $\uhom((\Dbar^{n}_{\et},\partial\Dbar^{n}_{\et}),\mathbf{E}))$ are
    projective stable $(\A^{1},\mathrm{Nis})$-fibrant, as follows
    from~\cite[Thm.~2.67]{ayoub:galois1} for the first, and from our
    proof of Theorem~\ref{thm:asp-motives} together
    with~\cite[Lem.~2.69]{ayoub:galois1} for the second.  But the
    first arrow in~\eqref{eq:N-gen-A.2} is invertible
    by~\ref{btimodel-qis}, and the second one by~\ref{btimodel-colim}
    so we conclude that~\eqref{eq:N-gen-A.1} is invertible.
  \end{proof}
  \begin{proof}[Proof of~\ref{cl:N-gen-A.essim}]
    Next we fix $(r,n)\in\N^{2}$ and consider the canonical morphism
    \begin{multline*}
      \varinjlim_{(X,x)\in\mathcal{V}_{\et}(\Dbar^{n}/\A^{n})}\h_{0}\tbtinis
      Q(s_{-}^{r}\uhom((X,\partial X),\mathbf{E})[2r-n])\to\\
      \h_{0}\tbtinis
      Q(s_{-}^{r}\uhom((\Dbar^{n}_{\et},\partial\Dbar^{n}_{\et}),\mathbf{E})[2r-n]).
    \end{multline*}
    The same argument as above establishes invertibility of this arrow
    and reduces us to show that the comodules in the filtered system
    on the left hand side lie in the essential image of
    $\overline{\na}$. Hence fix
    $(X,x)\in\mathcal{V}_{\et}(\Dbar^{n}/\A^{n})$. By resolution of
    singularities there is a smooth projective scheme $W$ and a simple
    normal crossings divisor $W_{0}\cup W_{\infty}$ on $W$ together
    with a projective surjective morphism $p:W - W_{\infty}\to X$ such
    that $p^{-1}(\partial X)=W_{0}-W_{\infty}$ and
    $p|_{W-p^{-1}(\partial X)}:W-p^{-1}(\partial X)\to X-\partial X$
    is an isomorphism.  Therefore, canonically,
    $\rep(X,\partial X,0)\cong \rep(W-W_{\infty},W_{0}-W_{\infty},0)$,
    and we obtain in $\danis$:
    \begin{align*}
      Q(s_{-}^{r}\uhom((X,\partial X),\mathbf{E})[2r-n])
      &\cong
        \uhom((X,\partial X),\mathbf{E})[-n](r)\\
      &\cong \rep(X,\partial X,0)^{\vee}[-n](r)\\
      &\cong \rep(W-W_{\infty},W_{0}-W_{\infty},0)^{\vee}[-n](r)\\
      &\cong \rep(W-W_{0},W_{\infty}-W_{0},n)(r-n)\\
      &\cong \rep(W-W_{0},W_{\infty}-W_{0},n)\otimes^{\dL}
        \rep(\mathbb{G}_{m},\{1\},1)^{\otimes^{\dL}(r-n)},
    \end{align*}
    where we used~\cite[Thm.~4.3.38]{ayoub07-thesis} for the first,
    Theorem~\ref{thm:asp-motives} for the second, and
    Lemma~\ref{motives-duality} for the penultimate
    isomorphism. Applying $\h_{0}\tbti$ to these isomorphisms, and
    using~\eqref{eq:NA-eff} as well as~\eqref{eq:NA} we obtain the
    following sequence of isomorphisms
    \begin{align*}
      &\h_{0}\tbtinis Q(s_{-}^{r}\uhom((X,\partial X),\mathbf{E})[2r-n])\\
      &\cong\h_{0}\tbtinis \left(
        \rep(W-W_{0},W_{\infty}-W_{0},n)\otimes^{\dL}
        \rep(\mathbb{G}_{m},\{1\},1)^{\otimes^{\dL}(r-n)}
        \right)\\
      &\cong\overline{\na} \left(
        \tilde{\h}_{\bullet}(W-W_{0},W_{\infty}-W_{0},n)\otimes
        \tilde{\h}_{\bullet}(\mathbb{G}_{m},\{1\},1)^{\otimes(r-n)}
        \right),
    \end{align*}
    which concludes the proof.
  \end{proof}

\end{proof}

\appendix{}
  \section{Nori's Tannakian formalism in the monoidal setting}
  \label{sec:N-ugm}
  In this section we indicate briefly which modifications
  to~\cite[App.~B]{huber-mueller:nori} have to be made in order to
  justify our arguments in the main body of the text regarding Nori's
  Tannakian formalism. Most importantly we seek to obtain a
  universality statement for Nori's construction in the monoidal
  setting. Something similar was undertaken by Bruguières
  in~\cite{bruguieres-tannaka-nori}, and for the main proof below we
  follow his ideas. However the results there on monoidal
  representations do not seem to apply directly to Nori's construction
  since there is no obvious monoidal structure (in the sense
  of~\cite{bruguieres-tannaka-nori}) on Nori's diagrams.\footnote{This
    is related to the problem discussed
    in~\cite[Rem.~B.13]{huber-mueller:nori}.}

  A \emph{graded diagram} and a \emph{commutative product structure}
  on such a graded diagram are defined as
  in~\cite[Def.~B.14]{huber-mueller:nori}. From now on, fix such a
  graded diagram $\mathscr{D}$ with a commutative product
  structure. Let $(\mathcal{C},\otimes)$ be an additive (symmetric,
  unitary) monoidal category. A \emph{graded multiplicative
    representation} $T:\mathscr{D}\to \mathcal{C}$ is a representation
  of $\mathscr{D}$ in $\mathcal{C}$ together with a choice of
  isomorphisms
  \begin{equation*}
    \tau_{(f,g)}:T(f\times
    g)\to T(f)\otimes T(g)
  \end{equation*}
  for any vertices $f$ and $g$ of $\mathscr{D}$, satisfying (1)-(5)
  of~\cite[Def.~B.14]{huber-mueller:nori}. \emph{Unital graded
    multiplicative (\ugm) representations} are then defined as
  in~\cite[Def.~B.14]{huber-mueller:nori}. A \emph{\ugm
    transformation} $\eta:T\to U$ between two unital graded
  multiplicative representations $T,U:\mathscr{D}\to \mathcal{C}$ is a
  family of morphisms in $\mathcal{C}$:
  \begin{equation*}
    \eta_{f}:T(f)\to U(f),
  \end{equation*}
  compatible with edges in $\mathscr{D}$ and the choices of
  isomorphisms $\tau$, and such that
  $\eta_{\mathrm{id}}=\mathrm{id}$. $\eta$ is a \emph{\ugm
    isomorphism} if all its components are invertible.

  From now on, fix also a \ugm representation
  $T:\mathscr{D}\to\fMod{\Lambda}$ taking values in projective modules
  (we assume $\Lambda$ to be of global dimension at most 2;
  see~\cite[§5.3]{bruguieres-tannaka-nori}). By Nori's theorem
  (\cite[Thm.~1.6]{nori-lectures},
  \cite[Pro.~B.8]{huber-mueller:nori}), there is a universal abelian
  $\Lambda$-linear category $\mathcal{C}(T)$ with a representation
  $\tilde{T}:\mathscr{D}\to \mathcal{C}(T)$, through which $T$ factors
  via a faithful exact $\Lambda$-linear functor
  $\ff_{T}:\mathcal{C}(T)\to\fMod{\Lambda}$. Nori also showed
  (see~\cite[Pro.~B.16]{huber-mueller:nori}) that in this case
  $\mathcal{C}(T)$ carries naturally a (right exact) monoidal
  structure such that $\ff_{T}$ is a monoidal functor. It is obvious
  from the construction of this monoidal structure that $\tilde{T}$ is
  a \ugm representation. The following theorem states that these data
  are \emph{universal}.
  \begin{thm}\label{thm:N-ugm-universal}
    Given a right exact monoidal abelian $\Lambda$-linear category
    $\mathcal{C}$ and a factorization of $T$ into
    \begin{equation*}
      \mathscr{D}\xrightarrow{S}\mathcal{C}\xrightarrow{\ff_{S}}\fMod{\Lambda}
    \end{equation*}
    with $S$ \ugm and $\ff_{S}$ a faithful exact $\Lambda$-linear
    monoidal functor, there exists a monoidal functor (unique up to
    unique monoidal isomorphism) $F:\mathcal{C}(T)\to \mathcal{C}$
    making the following diagram commutative (up to monoidal
    isomorphism).
    \begin{equation*}
      \xymatrix@C=5em{\mathscr{D}\ar[r]^-{S}\ar[d]_{\tilde{T}}&\mathcal{C}\ar[d]^{\ff_{S}}\\
        \mathcal{C}(T)\ar[r]_-{\ff_{T}}\ar@{.>}[ru]_{F}&\fMod{\Lambda}}
    \end{equation*}
    Moreover, $F$ is faithful exact $\Lambda$-linear.
  \end{thm}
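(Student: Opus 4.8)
The plan is to view Theorem~\ref{thm:N-ugm-universal} as the monoidal refinement of Nori's non-monoidal universal property, so that the underlying functor comes for free and only its tensor structure must be constructed. First I would apply Nori's theorem (\cite[Thm.~1.6]{nori-lectures}, \cite[Pro.~B.8]{huber-mueller:nori}) to the factorization $\mathscr{D}\xrightarrow{S}\mathcal{C}\xrightarrow{\ff_{S}}\fMod{\Lambda}$: it yields a faithful exact $\Lambda$-linear functor $F:\mathcal{C}(T)\to\mathcal{C}$, unique up to unique (plain) natural isomorphism, together with a natural isomorphism $F\tilde{T}\cong S$ of representations and an isomorphism $\ff_{S}F\cong\ff_{T}$ of $\Lambda$-linear functors. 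Consequently $F$ is already faithful exact $\Lambda$-linear, and the entire remaining content is (a) to endow $F$ with a monoidal structure compatible with the product structures on $S$, $\tilde{T}$ and with $\ff_{S}$, and (b) to show this enhancement is unique. For (a) I follow Bruguières (\cite[§5.3]{bruguieres-tannaka-nori}): the objects $\tilde{T}(v)$ for vertices $v$ of $\mathscr{D}$ generate $\mathcal{C}(T)$ under finite direct sums and subquotients, and both $\otimes$ on $\mathcal{C}(T)$ (\cite[Pro.~B.16]{huber-mueller:nori}) and $\otimes$ on $\mathcal{C}$ are right exact in each variable; this is the situation in which a right exact monoidal functor is determined by, and can be built from, its restriction to a generating family closed under the monoidal product up to the given comparison isomorphisms.

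Concretely, on the generators the lax structure morphism is forced: under $F\tilde{T}(v)\cong S(v)$, the isomorphism $\tilde{T}(v\times w)\cong\tilde{T}(v)\otimes\tilde{T}(w)$ carried by the \ugm representation $\tilde{T}$ and the isomorphism $S(v\times w)\cong S(v)\otimes S(w)$ carried by $S$ compose to an isomorphism $F\tilde{T}(v)\otimes F\tilde{T}(w)\xrightarrow{\sim}F(\tilde{T}(v)\otimes\tilde{T}(w))$. To extend this to a morphism $\mu_{A,B}:F(A)\otimes F(B)\to F(A\otimes B)$ for all objects, I would first treat objects that are cokernels of maps between finite sums of the $\tilde{T}(v)$: using right exactness of $\otimes$ in $\mathcal{C}(T)$ one produces such a presentation of $A\otimes B$, applies the exact additive functor $F$, and compares with $F(A)\otimes F(B)$ computed from the corresponding presentation in $\mathcal{C}$ (again by right exactness there); the generator-level comparison then induces $\mu_{A,B}$ via the universal property of cokernels. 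The passage through subobjects --- where $\otimes$ is only right exact --- requires the separate argument of \cite[§5.3]{bruguieres-tannaka-nori}. Independence of the chosen presentations, naturality in $A$ and $B$, and, most importantly, invertibility of $\mu_{A,B}$ are all checked by post-composing with $\ff_{S}$: the morphism $\ff_{S}(\mu_{A,B})$ is identified with the structure morphism of the composite monoidal functor $\ff_{S}F\cong\ff_{T}$ at $(A,B)$, which is canonical and an isomorphism, and faithfulness together with exactness of $\ff_{S}$ transports well-definedness and invertibility back to $\mathcal{C}$. I expect this step --- obtaining a well-defined \emph{invertible} $\mu$ on all of $\mathcal{C}(T)$, and in particular propagating it through subobjects against the mere right exactness of $\otimes$ --- to be the main obstacle; the rest is diagram chasing.

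It remains to verify coherence and uniqueness. The associativity and unit axioms for $(F,\mu)$, as well as the compatibility of $F$ with the $\tau$-data (i.e.\ that $F\tilde{T}\cong S$ is an isomorphism of \ugm representations), are identities of morphisms which, by naturality of $\mu$ and the generation statement, need only be tested on the objects $\tilde{T}(v)$; there they follow from axioms (1)--(5) of \cite[Def.~B.14]{huber-mueller:nori} for the \ugm representations $S$ and $\tilde{T}$ together with the coherence already present in the monoidal categories $\mathcal{C}(T)$ and $\mathcal{C}$. For uniqueness, given two solutions $(F,\mu)$ and $(F',\mu')$, Nori's non-monoidal universal property furnishes a unique natural isomorphism $\theta:F\xrightarrow{\sim}F'$ compatible with $S$ and with $\ff_{S}$; that $\theta$ is automatically monoidal is again checked on the generators $\tilde{T}(v)$, where it is forced by compatibility of $\theta$ with the two families of comparison isomorphisms. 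This yields existence and uniqueness up to unique monoidal isomorphism, and faithful exactness and $\Lambda$-linearity of $F$ were recorded at the outset.
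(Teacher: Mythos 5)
Your overall strategy matches the paper's: start from Nori's non-monoidal universal property to get the underlying functor $F$, then follow Brugui\`eres to construct the lax structure morphisms by exploiting that the target morphism is already visible after $\ff_{S}$, which is faithful. The paper indeed introduces the natural isomorphism $\Psi_{X,Y}:\ff_{S}(FX\otimes FY)\cong\ff_{S}F(X\otimes Y)$ in $\fMod{\Lambda}$ and asks for lifts $\Phi_{X,Y}$ with $\ff_{S}\Phi_{X,Y}=\Psi_{X,Y}$, which is precisely the comparison you describe. Setting up the class $L$ of pairs $(X,Y)$ admitting such a lift, and noting it contains the pairs $(\tilde T f,\tilde T g)$, is also what you do.

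The gap is exactly where you flag it --- the passage from generators to arbitrary objects under a tensor product that is only right exact --- and your proposal does not in fact close it. Neither the cokernel-presentation step nor the deferral to \cite[\S5.3]{bruguieres-tannaka-nori} supplies the missing mechanism, and the cokernel-presentation step is itself problematic: objects of $\mathcal{C}(T)$ are finitely generated modules over the algebras $\End(T|_{\mathscr F})$, and these need not be cokernels of maps between finite sums of the $\tilde T(v)$ (the $\tilde T(v)$ are not free). What actually makes the argument work is the hypothesis, built into Nori's setup, that $T$ takes values in \emph{projective} (hence flat) $\Lambda$-modules. Because of this, for a fixed vertex $f$ the endofunctors $F\tilde T f\otimes(\bullet)$ and $F(\tilde T f\otimes\bullet)$ are not merely right exact but \emph{exact} (the tensor products are computed on underlying $\Lambda$-modules, and $\ff_{T}\tilde T f=T(f)$ and $\ff_{S}F\tilde T f\cong T(f)$ are flat). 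Exactness lets one propagate $\Phi_{\tilde T f,\bullet}$ from the generators through kernels as well as cokernels and direct sums, and the full subcategory so reached is all of $\mathcal{C}(T)$. By symmetry $L$ then contains all $(X,\tilde T g)$; the same exactness argument applies with $\tilde T f$ replaced by any object whose underlying $\Lambda$-module is projective; and finally every object of $\mathcal{C}(T)$ is a quotient of such an object, so mere right exactness handles the last reduction. This projectivity/flatness observation is the missing ingredient in your sketch; once inserted, your coherence and uniqueness arguments go through as you describe.
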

  Explicitly, there exists a monoidal functor $F:\mathcal{C}(T)\to
  \mathcal{C}$, a \ugm isomorphism
  $\alpha:S\tilde{T}\xrightarrow{\sim}F$, and a monoidal isomorphism
  $\beta:\ff_{T}\xrightarrow{\sim}\ff_{S}F$ such that
  $\ff_{S}\alpha=\beta\tilde{T}$. Moreover given another triple
  $(F',\alpha',\beta')$ satisfying these conditions, there exists a
  unique monoidal isomorphism $\gamma:F'\xrightarrow{\sim}F$
  transforming $\alpha$ to $\alpha'$ and $\beta$ to $\beta'$.
  \begin{proof}
    Given \cite[Pro.~B.8 and B.16]{huber-mueller:nori}, the only thing
    left to prove is that the functor and transformations whose
    existence is asserted are monoidal. This could be proven by going
    through the construction of these and checking monoidality
    directly. Alternatively, one can deduce the monoidal structure
    from the existence of the functor and transformations alone
    without referring to their construction. We sketch the latter
    proof which is due to Bruguières. For the details we refer the
    reader to~\cite{bruguieres-tannaka-nori}.

    Let $\Psi_{X,Y}$ be the composition
    \begin{equation*}
      \ff_{S}(FX\otimes FY)= \ff_{S}FX\otimes
      \ff_{S}FY\cong\ff_{S}F(X\otimes Y),
    \end{equation*}
    for any $X,Y\in\mathcal{C}(T)$. This defines a natural isomorphism
    of functors. Since $\ff_{S}$ is faithful, it suffices to construct
    morphisms $\Phi_{X,Y}:FX\otimes FY\to F(X\otimes Y)$ which realize
    $\Psi_{X,Y}$. Thus consider the class
    \begin{equation*}
      L=\{(X,Y)\in \mathcal{C}(T)\times\mathcal{C}(T)\mid \exists
      \Phi_{X,Y}: \ff_{S}\Phi_{X,Y}=\Psi_{X,Y}\}.
    \end{equation*}
    Notice that $L$ contains all pairs in the image of $\tilde{T}$:
    \begin{equation*}
      F\tilde{T}f\otimes F\tilde{T}g\cong Sf\otimes Sg\to S(f\times
      g)\cong F\tilde{T}(f\times g)\to F(\tilde{T}f\otimes \tilde{T}g)
    \end{equation*}
    can (and has to) be taken as $\Phi_{\tilde{T}f,\tilde{T}g}$. Now
    for fixed $f$ the functors $F\tilde{T}f\otimes F(\bullet)$ and
    $F(\tilde{T}f\otimes\bullet)$ are
    exact 
    hence one can define $\Phi_{\tilde{T}f,\bullet}$ on the
    subcategory of $\mathcal{C}(T)$ containing the image of
    $\tilde{T}$ and closed under kernels, cokernels and direct
    sums. But this is all of $\mathcal{C}(T)$. By symmetry one sees
    that $L$ contains all pairs $(X,Y)$ where one of $X$ or $Y$ is
    contained in the image of $\tilde{T}$. Now a similar argument
    shows that $L$ also contains all pairs $(X,Y)$ where one of
    $\ff_{T}X$ or $\ff_{T}Y$ is projective (since then the functors
    considered above are still exact). Finally, one uses that every
    object in $\mathcal{C}(T)$ is a quotient of an object with
    underlying projective $\Lambda$-module to conclude that $L$
    consists of all pairs of objects in $\mathcal{C}(T)$.

    It is obvious from the definition of
    $\Phi_{\tilde{T}f,\tilde{T}g}$ that $\alpha$ is monoidal, and from
    the definition of $\Psi_{X,Y}$ that $\beta$ is as well. It is an
    easy exercise to prove that $\gamma$ is monoidal as well.
  \end{proof}

  \section{Relative cohomology}
  \label{sec:coh-pair}

  It is well-known that singular and sheaf cohomology agree on locally
  contractible topological spaces. The same is true for pairs of such
  spaces. However, we have not been able to find in the literature the
  statements in the form we need them in the main body of the chapter
  (in particular in section~\ref{sec:NA}) although the book of Bredon
  \cite{bredon-sheaftheory} comes close. We will freely use the
  results of~\cite[§III.1]{bredon-sheaftheory}. $\Lambda$ is a fixed
  principal ideal domain. All topological spaces are assumed locally
  contractible and paracompact.

  \subsection{Model}

  For a topological space $X$, denote by $\mathcal{S}_{X}$ the complex
  of sheaves of singular cochains on $X$ with values in
  $\Lambda$. This is a flabby resolution of the constant sheaf
  $\Lambda$. Moreover, the canonical map
  $\sgs(X)^{\vee}\to\mathcal{S}_{X}(X)$ is a quasi-isomorphism.

  Now let $i:Z\inj X$ a closed subset with open complement $j:U\inj
  X$. We denote by $\Lambda_{U}$ (respectively $\Lambda_{Z}$) the
  constant sheaf $\Lambda$ supported at $U$ (respectively $Z$), \ie{}
  $\Lambda_{U}=j_{!}j^{*}\Lambda_{X}$ (resp.\
  $\Lambda_{Z}=i_{*}i^{*}\Lambda_{X}$). The canonical morphism
  $\mathcal{S}_{X}\otimes\Lambda_{Z}\to i_{*}\mathcal{S}_{Z}$ induces
  the diagram of solid arrows in the category of complexes of sheaves
  on $X$ with exact rows:
  \begin{equation}\label{eq:coh-pair-alpha}
    \xymatrix{0\ar[r]&\mathcal{K}_{(X,Z)}\ar[r]&\mathcal{S}_{X}\ar[r]&i_{*}\mathcal{S}_{Z}\\
      0\ar[r]&\mathcal{S}_{X}\otimes\Lambda_{U}\ar[r]\ar@{.>}[u]_{\alpha}&\mathcal{S}_{X}\ar@{=}[u]\ar[r]&\mathcal{S}_{X}\otimes\Lambda_{Z}\ar[u]}
  \end{equation}
  We obtain a unique morphism $\alpha$ rendering the diagram
  commutative. It induces a quasi-isomorphism after taking global
  sections.

  Similarly, $\beta$ is the unique morphism of complexes making the
  following diagram commute:
  \begin{equation}\label{eq:coh-pair-beta}
    \xymatrix{0\ar[r]&\mathcal{K}_{(X,Z)}(X)\ar[r]&\mathcal{S}_{X}(X)\ar[r]&i_{*}\mathcal{S}_{Z}(X)\\
      0\ar[r]&\sgs(X,Z)^{\vee}\ar[r]\ar@{.>}[u]_{\beta}&\sgs(X)^{\vee}\ar[u]\ar[r]&\sgs(Z)^{\vee}\ar[u]}
  \end{equation}
  Again, it is a quasi-isomorphism.

  Now, $\mathcal{S}_{X}\otimes\Lambda_{U}$ is a resolution of
  $\Lambda_{U}$ which computes derived global sections hence we deduce
  the following result.
  \begin{fac}\label{lem:coh-pair-model}
    The zigzag of $\alpha$ and $\beta$ exhibits $\sgs(X,Z)^{\vee}$ as
    a model for $\dR\Gamma(X,\Lambda_{U})=\dR\pi_{*}j_{!}\Lambda$ in
    $\Der{\Lambda}$, where $\pi:X\to\pt$.
  \end{fac}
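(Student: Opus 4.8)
The plan is to show that the zigzag $\alpha,\beta$ assembles into an isomorphism $\sgs(X,Z)^{\vee}\cong\dR\pi_*j_!\Lambda$ in $\Der{\Lambda}$, by identifying each intermediate object. First I would recall that for $X$ locally contractible and paracompact the sheaf complex $\mathcal{S}_X$ of singular cochains is a flabby (hence $\Gamma$-acyclic) resolution of the constant sheaf $\Lambda_X$; this is~\cite[§III.1]{bredon-sheaftheory}. Tensoring the exact sequence $0\to\Lambda_U\to\Lambda_X\to\Lambda_Z\to 0$ with the complex $\mathcal{S}_X$ of flat (indeed free-in-each-degree, as $\Lambda$ is a PID) sheaves keeps it exact, and since $\mathcal{S}_X$ is a complex of flabby sheaves and flabbiness is preserved under tensoring with $\Lambda_U=j_!j^*\Lambda_X$ and $\Lambda_Z=i_*i^*\Lambda_X$, the complex $\mathcal{S}_X\otimes\Lambda_U$ is a complex of flabby (hence $\Gamma$-acyclic) sheaves quasi-isomorphic to $\Lambda_U=j_!\Lambda$. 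Therefore $\Gamma(X,\mathcal{S}_X\otimes\Lambda_U)$ computes $\dR\Gamma(X,\Lambda_U)=\dR\pi_*j_!\Lambda$.

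Next I would check that $\alpha:\mathcal{S}_X\otimes\Lambda_U\to\mathcal{K}_{(X,Z)}$ is a quasi-isomorphism. Both are subcomplexes of $\mathcal{S}_X$ sitting in exact sequences as in~\eqref{eq:coh-pair-alpha}, with identical middle term $\mathcal{S}_X$; by the five lemma (applied to the long exact cohomology sequences of the two rows), it suffices that the induced map $\mathcal{S}_X\otimes\Lambda_Z\to i_*\mathcal{S}_Z$ on cokernels is a quasi-isomorphism. Stalkwise at a point of $U$ both sides vanish, and at a point $z\in Z$ the left side has stalk $(\mathcal{S}_X)_z$ while the right has stalk $(\mathcal{S}_Z)_z$; local contractibility of $X$ and of $Z$ at $z$ makes both stalks quasi-isomorphic to $\Lambda$ compatibly, giving the claim. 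Then I would verify that $\beta:\sgs(X,Z)^{\vee}\to\mathcal{K}_{(X,Z)}(X)$ is a quasi-isomorphism: take global sections in~\eqref{eq:coh-pair-beta}, note the middle map $\sgs(X)^{\vee}\to\mathcal{S}_X(X)$ and the right map $\sgs(Z)^{\vee}\to(i_*\mathcal{S}_Z)(X)=\mathcal{S}_Z(Z)$ are both the standard quasi-isomorphisms between singular cochains and global sections of the singular cochain sheaf (again~\cite[§III.1]{bredon-sheaftheory}), and these are surjective on the level of complexes so the rows of~\eqref{eq:coh-pair-beta} remain short exact; the five lemma finishes it. The short exact sequence of relative singular cochains $0\to\sgs(X,Z)^\vee\to\sgs(X)^\vee\to\sgs(Z)^\vee\to 0$ is split in each degree since $\Lambda$ is a PID and these are free modules, which is what makes the bottom row of~\eqref{eq:coh-pair-beta} exact.

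Finally I would assemble: $\sgs(X,Z)^{\vee}\xrightarrow{\beta}\mathcal{K}_{(X,Z)}(X)\xleftarrow{\alpha}(\mathcal{S}_X\otimes\Lambda_U)(X)$ is a zigzag of quasi-isomorphisms (here I use that $\Gamma$ is exact on the short exact sequence $0\to\mathcal{S}_X\otimes\Lambda_U\to\mathcal{S}_X\to\mathcal{S}_X\otimes\Lambda_Z\to 0$ of flabby complexes, so $\Gamma(\alpha)$ is still a quasi-isomorphism), and the rightmost term computes $\dR\Gamma(X,\Lambda_U)=\dR\pi_*j_!\Lambda$ by the acyclicity discussed above. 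The identification $\dR\Gamma(X,\Lambda_U)=\dR\pi_*j_!\Lambda$ is by definition of $\Lambda_U=j_!j^*\Lambda_X=j_!\Lambda$ and $\pi_*=\Gamma(X,-)$. I expect the main obstacle to be the bookkeeping around flabbiness: one must be slightly careful that $\mathcal{S}_X\otimes\Lambda_U$ really is a complex of $\Gamma$-acyclic sheaves (flabbiness is not obviously stable under $\otimes\Lambda_U$ in general, but here $\Lambda_U=j_!\Lambda$ and $j_!$ of a flabby sheaf on the open $U$ need not be flabby on $X$ — so the correct argument is rather that $\mathcal{S}_X\otimes\Lambda_U$ is a complex of \emph{soft} or of $c$-soft / $\Gamma$-acyclic sheaves on the paracompact space $X$, or alternatively one observes $j_!$ of a flabby sheaf is at least $\Gamma$-acyclic on a paracompact space using~\cite[§II]{bredon-sheaftheory}), and that the stalkwise quasi-isomorphism argument for $\mathcal{S}_X\otimes\Lambda_Z\to i_*\mathcal{S}_Z$ genuinely uses the standing local contractibility hypothesis on both $X$ and $Z$. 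Everything else is diagram chasing and the five lemma.
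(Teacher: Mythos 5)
Your proposal is correct and follows the paper's approach: the paper likewise uses $\mathcal{S}_X\otimes\Lambda_U$ as a $\Gamma$-acyclic resolution of $\Lambda_U=j_!\Lambda$ and simply asserts that $\alpha$ becomes a quasi-isomorphism after global sections and that $\beta$ is a quasi-isomorphism, without supplying the details you give. You have also correctly isolated the one genuine subtlety the paper glosses over --- that $\Gamma$-acyclicity of $j_!j^*\mathcal{S}_X$ is not automatic from flabbiness of $\mathcal{S}_X$ and must be extracted from softness on the paracompact space $X$ (e.g.\ via the long exact sequence of $0\to\mathcal{S}_X\otimes\Lambda_U\to\mathcal{S}_X\to i_*i^*\mathcal{S}_X\to 0$).
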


\subsection{Functoriality}

We now turn to functoriality of these constructions. Suppose we are
given a morphism of pairs of topological spaces $f:(X,Z)\to
(X',Z')$. We keep the notation from above, decorating the symbols with
a prime when the objects are associated to the second pair.

\begin{lem}\label{lem:coh-pair-functoriality}
  The following diagram commutes in $\Der{\Lambda}$:
  \begin{equation*}
    \xymatrix@C=5em{\dR\pi'_{*}j'_{!}\Lambda\ar[r]\ar[d]_{\beta^{-1}\alpha}^{\sim}&\dR\pi_{*}j_{!}\Lambda\ar[d]_{\beta^{-1}\alpha}^{\sim}\\
      \sgs(X',Z')^{\vee}\ar[r]_{\sgs(f)^{\vee}}&\sgs(X,Z)^{\vee}}
  \end{equation*}
  Here the top horizontal arrow is defined as $\dR\pi'_{*}$ applied to
  \begin{equation*}
    j'_{!}\Lambda\xrightarrow{\mathrm{adj}}\dR
    f_{*}f^{*}j'_{!}\Lambda\xrightarrow{}\dR
    f_{*}j_{!}f^{*}\Lambda\xrightarrow{\sim}\dR f_{*}j_{!}\Lambda.
  \end{equation*}
\end{lem}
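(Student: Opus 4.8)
The plan is to reduce the statement to a compatibility of quasi-isomorphisms at the level of complexes of sheaves, exploiting that both vertical arrows $\beta^{-1}\alpha$ are, by construction, compositions of morphisms of complexes of sheaves (after taking global sections). First I would unwind the definition of the top horizontal arrow. By naturality of the unit of the adjunction $(f^{*},\dR f_{*})$ and of the base-change isomorphism $f^{*}j'_{!}\Lambda\xrightarrow{\sim}j_{!}f^{*}\Lambda$, the morphism $j'_{!}\Lambda\to \dR f_{*}j_{!}\Lambda$ is the derived version of the evident morphism of sheaves $\mathcal{S}_{X'}\otimes\Lambda_{U'}\to f_{*}(\mathcal{S}_{X}\otimes\Lambda_{U})$ obtained by pulling back singular cochains along $f$ (here I use that $\mathcal{S}_{X}\otimes\Lambda_{U}$ is a flabby, hence $\dR f_{*}$-acyclic, resolution of $\Lambda_{U}$, and likewise $\mathcal{S}_{X'}\otimes\Lambda_{U'}$ of $\Lambda_{U'}$, so that $\dR f_{*}j_{!}\Lambda$ is computed by $f_{*}(\mathcal{S}_{X}\otimes\Lambda_{U})$). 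This identifies the top arrow, after applying $\dR\pi'_{*}=\dR\Gamma(X',-)$, with the map on global sections induced by $f^{\#}:\mathcal{S}_{X'}\otimes\Lambda_{U'}\to f_{*}(\mathcal{S}_{X}\otimes\Lambda_{U})$.

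Next I would check that $\alpha$ (resp.\ $\beta$) is natural in the pair $(X,Z)$. For $\alpha$ this is immediate from its defining property in~\eqref{eq:coh-pair-alpha}: pulling back the whole diagram along $f$ and using uniqueness of the morphism filling in the dotted arrow, one sees that the square
\begin{equation*}
  \xymatrix@C=4em{\mathcal{S}_{X'}\otimes\Lambda_{U'}\ar[r]^{\alpha'}\ar[d]&f_{*}(\mathcal{S}_{X}\otimes\Lambda_{U})\ar[d]\\
    \mathcal{K}_{(X',Z')}\ar[r]_{\alpha}&f_{*}\mathcal{K}_{(X,Z)}}
\end{equation*}
commutes, because both composites become, after the canonical inclusion into $\mathcal{S}_{X'}$ resp.\ $f_{*}\mathcal{S}_{X}$, the map induced by $f^{\#}$ on singular cochains, and $\mathcal{K}_{(X,Z)}\inj\mathcal{S}_{X}$ is a monomorphism. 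The argument for $\beta$ in~\eqref{eq:coh-pair-beta} is entirely analogous, using that $\sgs(f)^{\vee}:\sgs(X',Z')^{\vee}\to\sgs(X,Z)^{\vee}$ is compatible with the short exact sequences of relative cochain complexes and that $\sgs(X',Z')^{\vee}\inj\sgs(X')^{\vee}$ is injective. Assembling these three naturality squares — one for the top arrow via $f^{\#}$, one for $\alpha$, one for $\beta$ — and taking global sections and passing to the derived category yields the claimed commutative square.

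The main obstacle, such as it is, is bookkeeping rather than mathematics: one must be careful that all the ``canonical'' identifications ($\dR f_{*}$ computed via flabby resolutions, the base-change isomorphism $f^{*}j'_{!}\cong j_{!}f^{*}$ realized on the explicit resolutions $\mathcal{S}_{(-)}\otimes\Lambda_{U}$, and the comparison $\sgs(X)^{\vee}\xrightarrow{\sim}\mathcal{S}_{X}(X)$) are chosen coherently, so that the three squares glue along shared edges without an unexpected sign or twist. Since every map in sight is literally induced by pulling back singular cochains along $f$, the coherence is forced; I would simply remark that one checks it ``on the nose'' at the level of presheaves of complexes before sheafifying and deriving. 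No step requires a genuinely new idea beyond the functoriality of the singular cochain construction and the uniqueness clauses in~\eqref{eq:coh-pair-alpha} and~\eqref{eq:coh-pair-beta}.
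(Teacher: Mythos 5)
Your proposal is correct and follows essentially the same route as the paper: identify the top arrow with the map induced by $f^{\#}$ on the singular-cochain resolutions, then propagate this compatibility through $\alpha$ and $\beta$ by exploiting that $\mathcal{K}_{(X,Z)}\hookrightarrow\mathcal{S}_{X}$ and $\sgs(X,Z)^{\vee}\hookrightarrow\sgs(X)^{\vee}$ are monomorphisms and that both $\alpha$ and $\beta$ are characterized by commuting with the inclusions into $\mathcal{S}_{X}$. The paper organizes this as an explicit four-row ladder with a constructed comparison $\mathcal{K}_{f}:\mathcal{K}_{(X',Z')}\to f_{*}\mathcal{K}_{(X,Z)}$, but the content is the same.

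One technical slip worth flagging: $\mathcal{S}_{X}\otimes\Lambda_{U}=j_{!}\mathcal{S}_{U}$ is \emph{not} flabby in general (extension by zero from an open subset destroys flabbiness — a section over $U$ whose support is all of $U$ need not extend to a section over $X$ when $U$ is not closed). What holds on the paracompact, locally compact spaces in play is that $\mathcal{S}_{X}\otimes\Lambda_{U}$ is soft, hence $\Gamma(V,-)$-acyclic for every open $V$, and this stalkwise acyclicity does give the $\dR f_{*}$-acyclicity you invoke. Alternatively one can sidestep $\dR f_{*}$-acyclicity entirely — as the paper implicitly does — by observing that $\Gamma(X',f_{*}\mathcal{I})=\Gamma(X,\mathcal{I})$ on the nose and appealing only to the $\Gamma$-acyclicity asserted in Fact~\ref{lem:coh-pair-model}. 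Also note that $f^{*}j'_{!}\Lambda\to j_{!}f^{*}\Lambda$ is not an isomorphism (it is the inclusion $\Lambda_{f^{-1}(U')}\hookrightarrow\Lambda_{U}$, and $f^{-1}(U')\subsetneq U$ is allowed), so calling it a base-change isomorphism is a misnomer — though this does not affect the argument.
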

\begin{proof}
  We will construct the two middle horizontal arrows below, and then
  prove that they make each square in the following diagram commute:
  \begin{equation*}
    \xymatrix{\dR\pi'_{*}j'_{!}\Lambda\ar[r]&\dR\pi_{*}j_{!}\Lambda\\
      \mathcal{S}_{X'}\otimes\Lambda_{U'}(X')\ar[r]^{\eqref{eq:coh-pair-edge-resolution}}\ar@{=}[u]\ar[d]_{\alpha}^{\sim}&\mathcal{S}_{X}\otimes\Lambda_{U}(X)\ar@{=}[u]\ar[d]_{\alpha}^{\sim}\\
      \mathcal{K}_{(X',Z')}(X')\ar[r]^{\mathcal{K}_{f}}&\mathcal{K}_{(X,Z)}(X)\\
      \sgs(X',Z')^{\vee}\ar[u]^{\beta}_{\sim}\ar[r]^{\sgs(f)^{\vee}}&\sgs(X,Z)^{\vee}\ar[u]^{\beta}_{\sim}}
  \end{equation*}

  From the inclusion $f^{-1}(U')\subset U$ we obtain a canonical
  morphism of sheaves on $X$:
  \begin{equation*}
    f^{*}\Lambda_{U'}\xrightarrow{\sim} \Lambda_{f^{-1}(U')}\xrightarrow{}\Lambda_{U}.
  \end{equation*}
  Composition with $f$ induces a morphism
  $\mathcal{S}_{f}:\mathcal{S}_{X'}\to f_{*}\mathcal{S}_{X}$ and thus
  by adjunction also $f^{*}\mathcal{S}_{X'}\to\mathcal{S}_{X}$.
  Together we obtain a morphism
  \begin{equation}
    \label{eq:coh-pair-edge-resolution}
    f^{*}(\mathcal{S}_{X'}\otimes\Lambda_{U'})\xrightarrow{\sim}f^{*}\mathcal{S}_{X'}\otimes
    f^{*}\Lambda_{U'}\xrightarrow{}\mathcal{S}_{X}\otimes \Lambda_{U}.
  \end{equation}
  Similarly, we define morphisms
  \begin{align*}
    f^{*}(\mathcal{S}_{X'}\otimes
    \Lambda_{Z'})\xrightarrow{\sim}f^{*}\mathcal{S}_{X'}\otimes
    f^{*}\Lambda_{Z'}\xrightarrow{}\mathcal{S}_{X}\otimes\Lambda_{Z}\intertext{and}
    f^{*}i'_{*}\mathcal{S}_{Z'}\xrightarrow{}i_{*}f^{*}\mathcal{S}_{Z'}\xrightarrow{}i_{*}\mathcal{S}_{Z}.
  \end{align*}
  It is then clear that the following diagram commutes
  \begin{equation*}
    \xymatrix{f^{*}(\mathcal{S}_{X'}\otimes\Lambda_{U'})\ar[r]\ar[d]&\mathcal{S}_{X}\otimes\Lambda_{U}\ar[d]\\
      f^{*}\mathcal{S}_{X'}\ar[r]\ar[d]&\mathcal{S}_{X}\ar[d]\\
      f^{*}(\mathcal{S}_{X'}\otimes\Lambda_{Z'})\ar[r]\ar[d]&\mathcal{S}_{X}\otimes\Lambda_{Z}\ar[d]\\
      f^{*}i'_{*}\mathcal{S}_{Z'}\ar[r]&i_{*}\mathcal{S}_{Z}}
  \end{equation*}
  so that, in particular, we deduce the existence of a morphism
  $f^{*}\mathcal{K}_{(X',Z')}\to\mathcal{K}_{(X,Z)}$ rendering the
  following two squares commutative:
  \begin{align*}
    \xymatrix{f^{*}\mathcal{K}_{(X',Z')}\ar[r]\ar[d]&\mathcal{K}_{(X,Z)}\ar[d]\\
      f^{*}\mathcal{S}_{X'}\ar[r]&\mathcal{S}_{X}}&&  \xymatrix{f^{*}\mathcal{K}_{(X',Z')}\ar[r]&\mathcal{K}_{(X,Z)}\\
      f^{*}(\mathcal{S}_{X'}\otimes\Lambda_{U'})\ar[r]\ar[u]^{\alpha}&\mathcal{S}_{X}\otimes\Lambda_{U}\ar[u]_{\alpha}}
  \end{align*}
  Denote by $\mathcal{K}_{f}:\mathcal{K}_{(X',Z')}\to
  f_{*}\mathcal{K}_{(X,Z)}$ the morphism obtained by adjunction. We
  now claim that also the following square of complexes commutes:
  \begin{equation*}
    \xymatrix{\mathcal{K}_{(X',Z')}(X')\ar[r]^{\mathcal{K}_{f}}&\mathcal{K}_{(X,Z)}(X)\\
      \sgs(X',Z')^{\vee}\ar[u]^{\beta}\ar[r]_{\sgs(f)^{\vee}}&\sgs(X,Z)^{\vee}\ar[u]_{\beta}}
  \end{equation*}
  Indeed, using the injection $\mathcal{K}_{(X,Z)}(X)\inj
  \mathcal{S}_{X}(X)$ one reduces to prove commutativity of
  \begin{equation*}
    \xymatrix{\mathcal{S}_{X'}(X')\ar[r]^{\mathcal{S}_{f}}&\mathcal{S}_{X}(X)\\
      \sgs(X')^{\vee}\ar[r]_{\sgs(f)^{\vee}}\ar[u]&\sgs(X)^{\vee}\ar[u]}
  \end{equation*}
  which is clear.

  Finally, notice that~\eqref{eq:coh-pair-edge-resolution} is
  compatible with the coaugmentations $\Lambda\to\mathcal{S}_{X'}$ and
  $\Lambda\to\mathcal{S}_{X}$, thus the lemma.
\end{proof}

\begin{lem}\label{lem:coh-pair-triangles}
  The following defines a morphism of distinguished triangles in
  $\Der{\Lambda}$:
  \begin{equation}\label{eq:coh-pair-triangles}
    \xymatrix{\dR
      \pi_{*}j_{!}\Lambda\ar[r]&\dR\pi_{*}\Lambda\ar[r]&\dR\pi_{*}i_{*}\Lambda\ar[r]&\dR
      \pi_{*}j_{!}\Lambda[-1]\\
      \sgs(X,Z)^{\vee}\ar[r]\ar[u]_{\alpha^{-1}\beta}^{\sim}&\sgs(X)^{\vee}\ar[r]\ar[u]_{\sim}&\sgs(Z)^{\vee}\ar[u]^{\sim}\ar[r]&\sgs(X,Z)^{\vee}[-1]\ar[u]^{\sim}_{\alpha^{-1}\beta}}
  \end{equation}
\end{lem}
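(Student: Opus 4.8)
The plan is to present both rows of~\eqref{eq:coh-pair-triangles} as the distinguished triangles attached to short exact sequences of complexes, and to deduce the asserted morphism of triangles from a morphism of those short exact sequences; in this way the compatibility of the connecting homomorphisms (the rightmost square) is automatic.

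First I would treat the top row. It is $\dR\pi_{*}$ applied to the localization triangle $j_{!}\Lambda\to\Lambda\to i_{*}\Lambda\to j_{!}\Lambda[-1]$, which is the image in $\Der{\sh(X)}$ of the short exact sequence of sheaves $0\to j_{!}\Lambda\to\Lambda\to i_{*}\Lambda\to 0$. Resolving it termwise by the lower row of~\eqref{eq:coh-pair-alpha},
\[
  0\to\mathcal{S}_{X}\otimes\Lambda_{U}\to\mathcal{S}_{X}\to\mathcal{S}_{X}\otimes\Lambda_{Z}\to 0,
\]
one has an honest short exact sequence of complexes of sheaves, each term of which is a $\Gamma$-acyclic resolution of the corresponding term ($\mathcal{S}_{X}$ is a flabby resolution of $\Lambda$, $\mathcal{S}_{X}\otimes\Lambda_{Z}=i_{*}(\mathcal{S}_{X}|_{Z})$ resolves $i_{*}\Lambda$, and $\mathcal{S}_{X}\otimes\Lambda_{U}$ resolves $j_{!}\Lambda$ and computes derived global sections, by the discussion preceding Fact~\ref{lem:coh-pair-model}); hence $\pi_{*}$ of it represents the top row. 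Now~\eqref{eq:coh-pair-alpha} is a morphism from this sequence to $0\to\mathcal{K}_{(X,Z)}\to\mathcal{S}_{X}\to i_{*}\mathcal{S}_{Z}\to 0$, via $\alpha$, the identity, and the canonical map $\mathcal{S}_{X}\otimes\Lambda_{Z}\to i_{*}\mathcal{S}_{Z}$, all three being termwise quasi-isomorphisms; since $\mathcal{S}_{X}\to i_{*}\mathcal{S}_{Z}$ is an epimorphism of sheaves (a singular cochain on $Z\cap V$ extends by zero over $V$), applying $\pi_{*}$ produces an honest distinguished triangle isomorphic to the top row of~\eqref{eq:coh-pair-triangles}, namely the one attached to $0\to\mathcal{K}_{(X,Z)}(X)\to\mathcal{S}_{X}(X)\to(i_{*}\mathcal{S}_{Z})(X)\to 0$, with its first term carried to $\dR\pi_{*}j_{!}\Lambda$ by $\alpha^{-1}$.

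Then I would compare this last triangle with the bottom row. Writing $\sgs(X,Z)=\sgs(X)/\sgs(Z)$, dualizing the degreewise split sequence $0\to\sgs(Z)\to\sgs(X)\to\sgs(X,Z)\to 0$ gives $0\to\sgs(X,Z)^{\vee}\to\sgs(X)^{\vee}\to\sgs(Z)^{\vee}\to 0$, whose associated triangle is the bottom row of~\eqref{eq:coh-pair-triangles}; and~\eqref{eq:coh-pair-beta} together with the canonical quasi-isomorphisms $\sgs(X)^{\vee}\xrightarrow{\sim}\mathcal{S}_{X}(X)$ and $\sgs(Z)^{\vee}\xrightarrow{\sim}(i_{*}\mathcal{S}_{Z})(X)$ assemble into a morphism from this sequence to the one produced above. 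The left square commutes by the very definition of $\beta$ in~\eqref{eq:coh-pair-beta}, the right square commutes since both horizontal maps are ``restriction of cochains to $Z$'' and the canonical maps are natural in the inclusion $Z\hookrightarrow X$, and all three verticals are quasi-isomorphisms. A termwise quasi-isomorphism of short exact sequences of complexes induces an isomorphism of the associated distinguished triangles, so composing the two comparisons yields exactly~\eqref{eq:coh-pair-triangles} with the stated vertical arrows $\alpha^{-1}\beta$, $\mathrm{can}$, $\mathrm{can}$, $\alpha^{-1}\beta[-1]$, which is therefore a morphism of distinguished triangles. I do not expect a genuine obstacle here: the only points requiring care are the ones flagged above — verifying that the relevant sheaf maps are epimorphisms and that the chosen resolutions compute $\dR\pi_{*}$, so that one is manipulating honest short exact sequences throughout — together with the routine bookkeeping of shifts and signs in the connecting maps to match the homological conventions in force.
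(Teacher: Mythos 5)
Your proof is correct and follows essentially the same route as the paper's: both reduce the nontrivial (rightmost) square to the compatibility of the morphisms in \eqref{eq:coh-pair-alpha} and \eqref{eq:coh-pair-beta} with the boundary maps, after which the connecting maps agree automatically. The only difference is one of packaging — you realize the triangles as those attached to short exact sequences of complexes (using surjectivity of the relevant restriction maps), whereas the paper works with explicit mapping cones and invokes uniqueness of the connecting morphism $\delta$ in the localization triangle to make the identification $\mathrm{cone}(a)(X)\xrightarrow{\sim}\mathcal{S}_{Z}(Z)$.
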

Here, the first row is induced by the localization triangle while the
second row is the distinguished triangle associated to the short exact
sequence consisting of the first three terms.
\begin{proof}
  It is clear that the first two squares commute. We only need to
  prove this for the third one.

  Extend the first square in~\eqref{eq:coh-pair-alpha} to a morphism
  of triangles in $\ch(\sh(X))$
  \begin{equation*}
    \xymatrix{\mathcal{K}_{(X,Z)}\ar[r]^-{a}&\mathcal{S}_{X}\ar[r]&\mathrm{cone}(a)\ar[r]&\mathcal{K}_{(X,Z)}[-1]\\
      \mathcal{S}_{X}\otimes\Lambda_{U}\ar[r]_-{j}\ar[u]_{\alpha}&\mathcal{S}_{X}\ar@{=}[u]\ar[r]&\mathrm{cone}(j)\ar[r]\ar[u]&\mathcal{S}_{X}\otimes\Lambda_{U}[-1]\ar[u]_{\alpha}}
  \end{equation*}
  using the mapping cones. Since
  $\Gamma(X,\mathrm{cone}(a))=\mathrm{cone}(a_{X})$ the first square
  in~\eqref{eq:coh-pair-beta} extends to a morphism of triangles in
  $\ch(\Lambda)$:
  \begin{equation*}
    \xymatrix{\mathcal{K}_{(X,Z)}(X)\ar[r]^{a_{X}}&\mathcal{S}_{X}(X)\ar[r]&\mathrm{cone}(a)(X)\ar[r]&\mathcal{K}_{(X,Z)}(X)[-1]\\
      \sgs(X,Z)^{\vee}\ar[r]_{b}\ar[u]_{\beta}&\sgs(X)^{\vee}\ar[u]\ar[r]&\mathrm{cone}(b)\ar[u]\ar[r]&\sgs(X,Z)^{\vee}[-1]\ar[u]_{\beta}}
  \end{equation*}
  Notice that under the canonical identification
  $\mathrm{cone}(b)\xrightarrow{\sim} \sgs(Z)^{\vee}$, the bottom row
  is precisely the bottom row of~\eqref{eq:coh-pair-triangles}, while
  modulo the canonical identification
  $\mathrm{cone}(a)(X)\xrightarrow{\sim}
  \mathcal{S}_{Z}(Z)$, the top row of the first diagram induces the top row
  of~\eqref{eq:coh-pair-triangles} (taking global sections). Indeed,
  the latter contention follows from the fact that in $\Der{\sh(X)}$
  there is a \emph{unique} morphism $\delta$ making the following
  candidate triangle distinguished:
  \begin{equation*}
    \xymatrix{j_{!}\Lambda\ar[r]
      &\Lambda\ar[r]&i_{*}\Lambda\ar[r]^-{\delta}&j_{!}\Lambda[-1].}
  \end{equation*}

  The lemma now follows from the commutativity of the following
  diagram
  \begin{equation*}
    \xymatrix{\mathcal{S}_{X}\otimes\Lambda_{Z}(X)\ar[d]&\mathrm{cone}(j)(X)\ar[d]\ar[l]\\
      i_{*}\mathcal{S}_{Z}(X)&\mathrm{cone}(a)(X)\ar[l]\\
      \sgs(Z)^{\vee}\ar[u]&\mathrm{cone}(b)\ar[l]\ar[u]}
  \end{equation*}
  The first square commutes since the second square
  in~\eqref{eq:coh-pair-beta} does, while the second square does since
  the second square in~\eqref{eq:coh-pair-alpha} does.
\end{proof}

\subsection{Monoidality}

We come to the last compatibility of the model, namely with the cup
product. For this we fix a topological space $X$ and two closed
subspaces $Z_{1}$ and $Z_{2}$ of $X$. We write $Z=Z_{1}\cup Z_{2}$,
and we assume that there exist open neighborhoods $V_{i}$ of $Z_{i}$
in $X$ such that $V_{i}$ deformation retracts onto $Z_{i}$ and
$V_{1}\cap V_{2}$ deformation retracts onto $Z_{1}\cap Z_{2}$. This is
satisfied \eg{} if $X$ is a CW-complex and the $Z_{i}$ are
subcomplexes. 
The cup product in cohomology is denoted by $\cupproduct$, and
$\sgs(X,Z_{1}+Z_{2})$ is the free $\Lambda$-module on simplices in $X$
which are neither contained in $Z_{1}$ nor in $Z_{2}$.
\begin{lem}\label{lem:coh-pair-monoidal}
  The following diagram commutes in $\Der{\Lambda}$:
  \begin{equation*}
    \xymatrix{\dR\pi_{*}j_{1!}\Lambda\otimes^{\dL}\dR\pi_{*}j_{2!}\Lambda\ar[rr]^-{\cupproduct}\ar[d]_{\beta^{-1}\alpha}^{\sim}&&\dR\pi_{*}j_{!}\Lambda\ar[d]_{\beta^{-1}\alpha}^{\sim}\\
      \sgs(X,Z_{1})^{\vee}\otimes\sgs(X,Z_{2})^{\vee}\ar[r]_-{\cupproduct}&\sgs(X,Z_{1}+Z_{2})^{\vee}&\sgs(X,Z)^{\vee}\ar[l]^-{\sim}}
  \end{equation*}
  Here the top horizontal arrow is defined as the composition
  \begin{equation}\label{eq:coh-pair-cupproduct}
    \dR\pi_{*}j_{1!}\Lambda\otimes^{\dL}\dR\pi_{*}j_{2!}\Lambda\xrightarrow{}\dR\pi_{*}(j_{1!}\Lambda\otimes^{\dL}
    j_{2!}\Lambda)\xrightarrow{\sim}\dR\pi_{*}j_{!}\Lambda.
  \end{equation}
\end{lem}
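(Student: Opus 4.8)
The plan is to realise every arrow of the square at the level of the sheaf of singular cochains $\mathcal{S}_{X}$, using that the (Alexander--Whitney) cup product makes $\mathcal{S}_{X}$ into a sheaf of associative unital differential graded $\Lambda_{X}$-algebras, functorially in $X$. The structural facts needed are: the restriction morphisms $\mathcal{S}_{X}\to i_{k*}\mathcal{S}_{Z_{k}}$ ($k=1,2$) are maps of sheaves of dg-algebras, so their kernels $\mathcal{K}_{(X,Z_{k})}$ are dg-ideals; and under the identification of~\eqref{eq:coh-pair-alpha} the sheaf $\mathcal{S}_{X}\otimes\Lambda_{U_{k}}$ is the dg-ideal $j_{k!}j_{k}^{*}\mathcal{S}_{X}\subseteq\mathcal{S}_{X}$ of sections supported away from $Z_{k}$, with $\alpha$ the inclusion. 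Since $\Lambda_{U_{1}}\otimes_{\Lambda_{X}}\Lambda_{U_{2}}\cong\Lambda_{U}$ and $\mathcal{K}_{(X,Z_{1})}\cap\mathcal{K}_{(X,Z_{2})}=:\mathcal{K}_{12}$ is the kernel of $\mathcal{S}_{X}\to i_{1*}\mathcal{S}_{Z_{1}}\oplus i_{2*}\mathcal{S}_{Z_{2}}$, the multiplication of $\mathcal{S}_{X}$ produces a square of complexes of sheaves
\begin{equation*}
  \xymatrix@C=1.5em{(\mathcal{S}_{X}\otimes\Lambda_{U_{1}})\otimes_{\Lambda_{X}}(\mathcal{S}_{X}\otimes\Lambda_{U_{2}})\ar[r]^-{m}\ar[d]_-{\alpha_{1}\otimes\alpha_{2}}&\mathcal{S}_{X}\otimes\Lambda_{U}\ar[d]^-{\alpha}\\
  \mathcal{K}_{(X,Z_{1})}\otimes_{\Lambda_{X}}\mathcal{K}_{(X,Z_{2})}\ar[r]_-{m}&\mathcal{K}_{12}}
\end{equation*}
which commutes (here the right vertical is $\alpha$ followed by the inclusion $\mathcal{K}_{(X,Z)}\hookrightarrow\mathcal{K}_{12}$); the commutativity is a routine check since all maps are restrictions of the multiplication and restriction morphisms of $\mathcal{S}_{X}$.

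Next I would pass to derived global sections. By~\cite[\S III.1]{bredon-sheaftheory} (and the recollections at the start of appendix~\ref{sec:coh-pair}) each of the sheaves above — and $\mathcal{S}_{X}\otimes_{\Lambda_{X}}\mathcal{S}_{X}$ — is a complex of flat, $\Gamma(X,-)$-acyclic sheaves; hence $\otimes_{\Lambda_{X}}$ between such complexes computes $\otimes^{\dL}$, $\Gamma(X,-)$ computes $\dR\pi_{*}$, and the canonical map $\Gamma(F)\otimes_{\Lambda}\Gamma(G)\to\Gamma(F\otimes_{\Lambda_{X}}G)$ realises the lax-monoidal map of $\dR\pi_{*}$. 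Applying $\Gamma$ to the square above then yields, along the top row, precisely $\alpha$ composed with~\eqref{eq:coh-pair-cupproduct}; along the bottom row it yields the relative cup product, which on complexes is simply the restriction of the cup product on $\sgs(X)^{\vee}$, carrying $\sgs(X,Z_{1})^{\vee}\otimes\sgs(X,Z_{2})^{\vee}$ into $\sgs(X,Z_{1}+Z_{2})^{\vee}$ by the front-face/back-face formula; and $\beta$ of~\eqref{eq:coh-pair-beta}, together with its evident analogue $\beta'$ for $\mathcal{K}_{12}$, intertwines this with the sheaf-level multiplication into $\Gamma(\mathcal{K}_{12})$, both being inclusions of subcomplexes of $\mathcal{S}_{X}(X)$ respecting the multiplication.

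It remains to insert the identification $\sgs(X,Z)^{\vee}\xrightarrow{\sim}\sgs(X,Z_{1}+Z_{2})^{\vee}$, which at the level of complexes is $\mathrm{Hom}(-,\Lambda)$ of the surjection $\sgs(X)/(\sgs(Z_{1})+\sgs(Z_{2}))\twoheadrightarrow\sgs(X)/\sgs(Z)$. First note that $\alpha$ and the inclusion $\mathcal{K}_{(X,Z)}\hookrightarrow\mathcal{K}_{12}$ are \emph{stalkwise} quasi-isomorphisms (both $\mathcal{K}_{(X,Z)}$ and $\mathcal{K}_{12}$ have stalk $\simeq\Lambda$ on $U$ and acyclic stalks on $Z$, compatibly), so $\Gamma$ applied to them is a quasi-isomorphism; combined with $\beta$ and $\beta'$ this shows $\sgs(X,Z)^{\vee}\to\sgs(X,Z_{1}+Z_{2})^{\vee}$ is a quasi-isomorphism \emph{iff} $\{Z_{1},Z_{2}\}$ is an excisive couple in $Z$, i.e.\ $\sgs(Z_{1})+\sgs(Z_{2})\hookrightarrow\sgs(Z)$ is a quasi-isomorphism — which is exactly what the hypothesis on the deformation-retracting neighbourhoods $V_{1}$, $V_{2}$ (with $V_{1}\cap V_{2}$ retracting onto $Z_{1}\cap Z_{2}$) guarantees, by the classical excision theorem. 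Splicing $\beta^{-1}\alpha$, $\beta'$ and this excision identification into the image under $\Gamma$ of the sheaf-level square reproduces exactly the square of the lemma, which therefore commutes in $\Der{\Lambda}$.

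The step I expect to require the most care is verifying that on these flat acyclic resolutions the map $\Gamma(F)\otimes_{\Lambda}\Gamma(G)\to\Gamma(F\otimes_{\Lambda_{X}}G)$ genuinely realises~\eqref{eq:coh-pair-cupproduct}, together with the book-keeping that the sheafified simplicial cup product matches the sheaf-theoretic multiplication throughout and that all the auxiliary identifications ($\alpha$, $\beta$, and the mapping-cone identifications of appendix~\ref{sec:coh-pair}) are respected; none of this is deep, but it is a lengthy diagram chase. Conceptually the only non-formal ingredient is the excision statement made available by the neighbourhood hypothesis.
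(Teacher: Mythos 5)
Your argument is essentially correct and closely parallels the spirit of the paper's proof (multiplicativity of the resolution $\Lambda\xrightarrow{\sim}\mathcal{S}_X$ is the real content in both), but you route the verification differently. The paper makes the zig-zag $\beta^{-1}\alpha$ into an \emph{honest} zig-zag of complexes by inserting the intermediate term $\sgs(X,V_i)^\vee$: a cochain vanishing on the open set $V_i$ has sheafification supported in $U_i$, so $\sgs(X,V_i)^\vee$ maps both to $\sgs(X,Z_i)^\vee$ and (through $\alpha$) to $(\mathcal{S}_X\otimes\Lambda_{U_i})(X)$. The paper then checks commutativity of a concrete $3\times 3$ diagram of complexes, with $\sgs(X,V_1+V_2)^\vee\to\mathcal{K}_{(X,Z)}(X)$ the key new arrow (a cochain vanishing on $V_1$ and $V_2$ sheafifies to zero on $Z$ since $V_1,V_2$ cover $Z$). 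You instead work one level up, at the sheaf-of-dg-algebras level, and introduce the auxiliary kernel $\mathcal{K}_{12}$ to absorb the product; the link between $\alpha$ and $\beta$ is then handled by noting that both are restrictions of the multiplicative sheafification map $\sgs(X)^\vee\to\mathcal{S}_X(X)$. That is a legitimate and slightly cleaner packaging, though it trades the paper's concrete intermediate complexes for some sheaf-theoretic acyclicity bookkeeping.

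Two imprecisions you should repair. First, you describe $\beta$ and $\beta'$ as ``inclusions of subcomplexes of $\mathcal{S}_X(X)$''; they are not inclusions, since $\sgs(X)^\vee\to\mathcal{S}_X(X)$ is the sheafification map and is not injective in general. The relevant (and correct) fact, which is what you actually use, is that this map is a morphism of differential graded algebras and sends $\sgs(X,W)^\vee$ into the corresponding kernel for any closed $W\subseteq X$; that is what makes your $\beta$-square commute. Second, the assertion that $\Gamma$ applied to the stalkwise quasi-isomorphisms $\alpha$ and $\mathcal{K}_{(X,Z)}\hookrightarrow\mathcal{K}_{12}$ is again a quasi-isomorphism needs a word: you must know that $\mathcal{K}_{(X,Z)}$ and $\mathcal{K}_{12}$ are $\Gamma$-acyclic (they are flabby, being kernels of epimorphisms between flabby complexes), since stalkwise quasi-isomorphism alone does not pass to global sections. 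Also, the ``iff $\{Z_1,Z_2\}$ is excisive'' claim is not needed: you only need the one direction (the deformation-retract hypothesis gives the quasi-isomorphism), and phrasing it as a biconditional via the sheaf-theoretic route rather than directly invoking the classical excision statement just adds overhead. None of these affect the structure of your argument, which goes through once patched; the paper's $V_i$-intermediates are arguably the tidier device because they avoid any appeal to $\Gamma$-acyclicity of the kernel complexes.
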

\begin{proof}
  Notice that the composition
  \begin{equation*}
    \sgs(X,V_{i})^{\vee}\to\sgs(X,Z_{i})^{\vee}\xrightarrow{\beta}\mathcal{K}_{(X,Z_{i})}(X)
  \end{equation*}
  factors through
  $\alpha:\mathcal{S}_{X}\otimes\Lambda_{U_{i}}(X)\to\mathcal{K}_{(X,Z_{i})}(X)$
  because $V_{i}$ is open in $X$ and
  $\mathcal{S}_{X}\otimes\Lambda_{U_{i}}(X)$ consists of sections of
  $\mathcal{S}_{X}$ whose support is contained in $U_{i}$. It follows
  that the left vertical arrow in the lemma is equal to the
  composition of the left vertical arrows in the following diagram.
  \begin{equation}\label{eq:coh-pair-monoidal}
    \xymatrix{(\mathcal{S}_{X}\otimes\Lambda_{U_{1}})(X)\otimes^{\dL}
      (\mathcal{S}_{X}\otimes\Lambda_{U_{2}})(X)\ar[rr]^-{\cupproduct}&&(\mathcal{S}_{X}\otimes\Lambda_{U})(X)\ar[d]^{\alpha}_{\sim}\\
      \sgs(X,V_{1})^{\vee}\otimes\sgs(X,V_{2})^{\vee}\ar[r]^-{\cupproduct}\ar[d]_{\sim}\ar[u]^{\sim}&\sgs(X,V_{1}+V_{2})^{\vee}\ar[d]_{\sim}\ar[r]^-{\sim}&\mathcal{K}_{(X,Z)}(X)\\
      \sgs(X,Z_{1})^{\vee}\otimes\sgs(X,Z_{2})^{\vee}\ar[r]_-{\cupproduct}&\sgs(X,Z_{1}+Z_{2})^{\vee}&\ar[l]^-{\sim}\sgs(X,Z)^{\vee}\ar[u]_{\beta}^{\sim}}
  \end{equation}
  Recall that the sheaf $\mathcal{S}_{Z}$ is the quotient of the
  presheaf $V\mapsto \sgs(V)^{\vee}$ where a section $f\in
  \sgs(V)^{\vee}$ becomes 0 in $\mathcal{S}_{Z}(V)$ if there exists an
  open cover $(W_{i})_{i}$ of $V$ such that $f|_{W_{i}}=0$ for all
  $i$. Now, start with $f\in\sgs(X)^{\vee}$ vanishing on both $V_{1}$
  and $V_{2}$, \ie{} an element of $\sgs(X,V_{1}+V_{2})^{\vee}$. These
  two open subsets of $X$ cover $Z$, and by the description of
  $\mathcal{S}_{Z}$ just given, we see that $f$ defines the zero class
  in $i_{*}\mathcal{S}_{Z}(X)$ hence lands in
  $\mathcal{K}_{(X,Z)}(X)$. This yields the right horizontal arrow in
  the middle row. It follows that the upper half of the diagram
  commutes. Evidently the lower left square does as well. For the
  lower right square denote by $V$ the union of $V_{1}$ and
  $V_{2}$. Then we may decompose this square as follows:
  \begin{equation*}
    \xymatrix{\sgs(X,V_{1}+V_{2})^{\vee}\ar[d]_{\sim}\ar@/^2em/[rr]^{\sim}&\sgs(X,V)^{\vee}\ar[l]_-{\sim}\ar[d]&\mathcal{K}_{(X,Z)}(X)\\
      \sgs(X,Z_{1}+Z_{2})^{\vee}&\sgs(X,Z)^{\vee}\ar[l]^-{\sim}\ar[ru]_{\beta}^{\sim}}
  \end{equation*}
  Commutativity is now clear.

  It remains to prove that the top horizontal arrow
  in~\eqref{eq:coh-pair-monoidal} is a model
  for~\eqref{eq:coh-pair-cupproduct}. This follows from the fact that
  the resolution $\Lambda\xrightarrow{\sim}\mathcal{S}_{X}$ of the
  constant sheaf on $X$ is \emph{multiplicative}. Namely, this makes
  the right square of the following diagram commutative; the left one
  clearly commutes.
  \begin{equation*}
    \xymatrix@C-1em{\dR\pi_{*}\Lambda_{U_{1}}\otimes^{\dL}\dR\pi_{*}\Lambda_{U_{2}}\ar[r]\ar@{=}[d]&\dR\pi_{*}(\Lambda_{U_{1}}\otimes\Lambda_{U_{2}})\ar[r]^-{\sim}\ar@{=}[d]&\dR\pi_{*}\Lambda_{U}\ar@{=}[d]\\
      \pi_{*}(\mathcal{S}_{X}\otimes\Lambda_{U_{1}})\otimes^{\dL}\pi_{*}(\mathcal{S}_{X}\otimes\Lambda_{U_{2}})\ar[r]&\pi_{*}(\mathcal{S}_{X}\otimes\Lambda_{U_{1}}\otimes\mathcal{S}_{X}\otimes\Lambda_{U_{2}})\ar[r]_-{\sim}&\pi_{*}(\mathcal{S}_{X}\otimes\Lambda_{U})}
  \end{equation*}

\end{proof}

\section{Categories of comodules}
\label{sec:comod}
In this section we recall some facts about categories of (complexes
of) comodules used in the main body of the text. Throughout we fix a
ring $\Lambda$ and a \emph{flat} $\Lambda$-coalgebra $C$.  By a
$C$-comodule we mean a counitary left $C$-comodule. $\coMod{C}$
(respectively, $\fcoMod{C}$) denotes the category of $C$-comodules
(respectively, $C$-comodules finitely generated as $\Lambda$-modules).

The starting point is really the following result.
\begin{fac}\label{thm:comod-basic}\mbox{}
  \begin{enumerate}
  \item $\fcoMod{C}$ and $\coMod{C}$ are abelian $\Lambda$-linear
    categories, and there is a canonical equivalence of abelian
    $\Lambda$-linear categories $\ind\fcoMod{C}\simeq \coMod{C}$.
  \item The forgetful functor $\ff:\coMod{C}\to\Mod{\Lambda}$ is exact
    $\Lambda$-linear and creates colimits and finite limits.
  \item $\coMod{C}$ is a Grothendieck category, copowered over
    $\Mod{\Lambda}$. In particular, it is bicomplete.
  \end{enumerate}
\end{fac}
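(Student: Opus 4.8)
The plan is to reduce the whole statement to two inputs — the flatness of $C$ over $\Lambda$, which guarantees that kernels of comodule maps are computed underneath in $\Mod{\Lambda}$, and the \emph{fundamental theorem of comodules}, to the effect that every $C$-comodule is the directed union of its subcomodules that are finitely generated as $\Lambda$-modules — and otherwise to argue formally; this is well known in the theory of corings and comodules, so in practice one may also simply cite the literature. I would begin with part (2), which simultaneously produces the abelian structure. The category $\coMod{C}$ is the category of coalgebras for the comonad $C\otimes_{\Lambda}(-)$ on $\Mod{\Lambda}$; since this comonad commutes with all colimits, one equips the underlying colimit of a diagram of comodules with the unique coaction compatible with the structure maps and verifies coassociativity and counitality, so that $\ff$ creates colimits. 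For finite limits, a finite product of comodules carries the obvious product coaction, and for a morphism $f\colon M\to N$ of comodules the flatness of $C$ makes $C\otimes_{\Lambda}\ker(f)\to C\otimes_{\Lambda}M$ a monomorphism, so the coaction of $M$ restricts to $\ker(f)$ and exhibits it as a kernel of $f$ in $\coMod{C}$; hence $\ff$ creates finite limits. Consequently $\ff$ is exact, faithful and conservative and $\coMod{C}$ is abelian $\Lambda$-linear; the same argument shows $\fcoMod{C}$ is closed in $\coMod{C}$ under kernels, cokernels and finite direct sums — here using that $\Lambda$ is noetherian, so submodules of finitely generated modules are finitely generated — whence $\fcoMod{C}$ is an abelian $\Lambda$-linear subcategory.

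For part (1) I would invoke (or reprove, by the usual argument exploiting coassociativity and the flatness of $C$) the fundamental theorem: given $m\in M$ with $\rho(m)=\sum_{i=1}^{n}c_{i}\otimes m_{i}$, one shows that a finite enlargement of the $\Lambda$-span of the $m_{i}$ is a subcomodule, finitely generated over $\Lambda$ and containing $m$; taking directed unions then presents every comodule as a filtered colimit of objects of $\fcoMod{C}$. I would combine this with the observation that each $N\in\fcoMod{C}$ is a compact object of $\coMod{C}$: since $\ff$ creates filtered colimits, $\operatorname{Hom}_{\Lambda}(N,-)$ commutes with filtered colimits ($\Lambda$ noetherian, so $N$ is finitely presented), and equalizers commute with filtered colimits, so the hom-set out of $N$ is computed as the expected filtered colimit. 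Together these identify $\coMod{C}$ with $\ind(\fcoMod{C})=\fcoMod{C}_{\oplus}$ as $\Lambda$-linear abelian categories, by the standard theory of locally finitely presentable categories.

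It remains to address part (3). Since $\fcoMod{C}$ is essentially small, $\ind(\fcoMod{C})$ is automatically a Grothendieck category, the objects of $\fcoMod{C}$ serving as a set of generators; equivalently, cocompleteness follows because $\ff$ creates colimits, and exactness of filtered colimits because both filtered colimits and exactness are detected by the exact conservative functor $\ff$. Copoweredness over $\Mod{\Lambda}$ I would witness by the comodule $X\otimes_{\Lambda}M$, with coaction $X\otimes M\xrightarrow{\id\otimes\rho}X\otimes C\otimes M\xrightarrow{\sim}C\otimes X\otimes M$, which satisfies $\operatorname{Hom}_{\coMod{C}}(X\otimes_{\Lambda}M,N)\cong\operatorname{Hom}_{\Lambda}(X,\operatorname{Hom}_{\coMod{C}}(M,N))$; bicompleteness is then part of being Grothendieck. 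The only step that is not purely formal is the fundamental theorem of comodules, which uses the flatness of $C$ in an essential way, and I expect that (or the invocation of the appropriate reference for it) to be the crux.
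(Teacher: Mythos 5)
Your proposal is correct in substance, but it takes a quite different path from the paper: the paper proves this fact entirely by citation, invoking Saavedra Rivano (II, 2.0.6 and 2.2.3) for part (1) and Wischnewsky for parts (2)–(3), and only adds the explicit description of the copower. You, in contrast, reconstruct the argument from first principles via the comonad $C\otimes_{\Lambda}(-)$. Your treatment of part (2) is clean and complete: the forgetful functor from the Eilenberg--Moore category of a comonad creates colimits by the dual of Beck's crude monadicity facts, and flatness of $C$ is exactly what makes the coaction restrict to kernels, giving finite limits and exactness. Your reduction of parts (1) and (3) to the fundamental theorem of comodules plus compactness of finitely generated comodules (equalizer of $\operatorname{Hom}_{\Lambda}(N,M)\rightrightarrows\operatorname{Hom}_{\Lambda}(N,C\otimes M)$, filtered colimits commuting with equalizers of sets, and $N$ finitely presented over noetherian $\Lambda$) is also the right way to see this. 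The one place where your sketch is lighter than the underlying difficulty is exactly the step you flag as the crux: the ``usual argument'' for the fundamental theorem --- take $\rho(m)=\sum c_i\otimes m_i$ and enlarge the span of the $m_i$ --- relies over a field on choosing the $c_i$ linearly independent and hitting them with dual functionals, which is unavailable over a general noetherian base; one instead needs a device such as Lazard's theorem (flat $=$ filtered colimit of finitely generated free modules) to detect when an element of $C\otimes M$ has finitely generated ``content'' in $M$, and this is precisely why the paper cedes the point to Saavedra. Apart from that (acknowledged) appeal, your argument is a faithful and more illuminating unpacking of what the cited references establish; both approaches are valid, yours buying self-containedness at the cost of one nontrivial lemma that the paper outsources.
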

\begin{proof}
  The first statement follows from~\cite[II, 2.0.6 and
  2.2.3]{saavedra-cat.tann.1972}. The rest is proved
  in~\cite{wischnewsky-comod-1975}, see~\cite[Cor.~3 and~9, Pro.~38,
  Cor.~26]{wischnewsky-comod-1975}. Explicitly, the copower of a
  $\Lambda$-module $m$ and a $C$-comodule $c$ is given by the tensor
  product (as $\Lambda$-modules) $m\otimes c$ with the comodule
  coaction on $c$.
\end{proof}

Next, we are interested in different models for the derived category
of $\coMod{C}$. The following result is true more generally for any
Grothendieck category.
\begin{fac}[{\cite[Thm.~1.2]{cisinski-deglise:homalg-grothendieck-cats}}]\label{thm:comod-model-injective}
  $\ch(\coMod{C})$ is a proper cellular model category with
  quasi-isomorphisms as weak equivalences and monomorphisms as
  cofibrations.
\end{fac}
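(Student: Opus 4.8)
The plan is to obtain this statement as an instance of the general existence theorem for the \emph{injective model structure} on unbounded complexes in a Grothendieck abelian category. The only input particular to our situation is that $\coMod{C}$ is such a category, and this is exactly Fact~\ref{thm:comod-basic}: $\coMod{C}$ is a bicomplete abelian $\Lambda$-linear category with a generator and exact filtered colimits. Everything else is formal; I will merely indicate the shape of the argument, since in the end one may simply invoke \cite{cisinski-deglise:homalg-grothendieck-cats}, where it is carried out for an arbitrary Grothendieck category.

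First I would fix a generator $G$ of $\coMod{C}$ and a regular cardinal $\kappa$ larger than $|\operatorname{Hom}_{\coMod{C}}(G,G)|$ and $|\Lambda|$, so that $\coMod{C}$, and hence $\ch(\coMod{C})$, is locally $\kappa$-presentable. One declares the cofibrations to be the degreewise split... no: the \emph{monomorphisms} of complexes, and the weak equivalences to be the quasi-isomorphisms. A transfinite filtration argument — writing an arbitrary monomorphism as a $\kappa$-filtered colimit of pushouts of monomorphisms between $\kappa$-presentable complexes, using that subobjects of a $\kappa$-presentable object form a set — shows that the monomorphisms are cofibrantly generated by a \emph{set} $I$ of such ``small'' maps; this is Beke's theorem. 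The technical heart is the analogous statement for the trivial cofibrations: one must produce a set $J$ of monomorphic quasi-isomorphisms whose injectives are precisely the epimorphisms with $I$-injective kernel. This is the familiar bounded-acyclicity/solution-set argument (every trivial cofibration is a transfinite composite of pushouts of $\kappa$-small trivial cofibrations), and it is the step I expect to be the main obstacle to write out carefully.

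With $I$ and $J$ in hand, Quillen's small object argument yields the two weak factorization systems, and the remaining axioms (two-out-of-three, retracts, the compatibility of the two systems) are checked by the standard Joyal--Tierney-style retract argument, using that an $I$-injective map which is a quasi-isomorphism is automatically $J$-injective. Cellularity is then automatic from the locally presentable packaging: the (co)domains of $I$ and $J$ are small, and cofibrations are effective monomorphisms. For left properness, note that a pushout along a monomorphism in an abelian category produces a morphism of short exact sequences of complexes, so the five lemma applied to the associated long exact homology sequences shows that the pushout of a quasi-isomorphism along a cofibration is again a quasi-isomorphism. Right properness follows by the dual argument: every fibration is in particular a degreewise epimorphism (it has the right lifting property against the generating trivial cofibrations $0\to D^n(G)$), so a pullback along a fibration is a morphism of short exact sequences of complexes with unchanged kernel, and the same five-lemma argument applies. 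This completes the proof sketch.
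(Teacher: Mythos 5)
Your proposal is correct and takes the same route as the paper: identify $\coMod{C}$ as a Grothendieck abelian category (Fact~\ref{thm:comod-basic}) and then invoke the general theorem of Cisinski--D\'eglise on the injective model structure for unbounded complexes in an arbitrary Grothendieck category. The paper records this as a Fact with no argument beyond the citation, so your sketch of Beke's small-object argument, the cellularity check, and the five-lemma verifications of left and right properness are accurate supplementary detail rather than a divergence in approach.
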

The model structure in the statement is called the \emph{injective
  model structure}.

From now on assume that $C$ is a (commutative) bialgebra. $\coMod{C}$
then becomes a monoidal $\Lambda$-linear category with $C$ coacting on
the tensor product (as $\Lambda$-modules) $c\otimes d$ by
\begin{equation*}
  c\otimes d\xrightarrow{\coa\otimes \coa}(c\otimes C)\otimes (d\otimes
  C)\xrightarrow{\sim}(c\otimes d)\otimes (C\otimes C)\xrightarrow{}
  (c\otimes d)\otimes C,
\end{equation*}
the last arrow being induced by the multiplication of $C$. In
particular, the forgetful functor $\ff:\coMod{C}\to\Mod{\Lambda}$ is
monoidal. The category $\ch(\coMod{C})$ inherits a monoidal structure
in the usual way.

\begin{pro}\label{pro:comod-stable-model-injective}
  Let $T$ be a flat object in $\ch(\coMod{C})$. Then there is a proper
  cellular model structure on $\spt_{T}\ch(\coMod{C})$ with stable
  equivalences as weak equivalences and monomorphisms as cofibrations.
\end{pro}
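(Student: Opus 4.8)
The plan is to obtain the desired model structure on $\spt_{T}\ch(\coMod{C})$ by transporting the injective stable model structure through the Bousfield localization machinery, exactly as in the abstract setup of §\ref{sec:betti.st} but now taking as input the injective (rather than projective) model structure on $\ch(\coMod{C})$ furnished by Fact~\ref{thm:comod-model-injective}. First I would recall that, by Fact~\ref{thm:comod-model-injective}, $\ch(\coMod{C})$ is a proper cellular model category whose cofibrations are the monomorphisms and whose weak equivalences are the quasi-isomorphisms; since $C$ is a bialgebra it is moreover monoidal (the tensor product of two monomorphisms of complexes of $\Lambda$-modules is again a monomorphism, and this is detected by the exact faithful forgetful functor $o$, so the pushout-product axiom holds), and it is left proper because \emph{all} objects are cofibrant. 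The object $T$ is cofibrant precisely because it is flat — more directly, every object is cofibrant — so $\spt_{T}\ch(\coMod{C})$ exists with its \emph{unstable} (levelwise) injective model structure, which is again cellular and left proper (cellularity and left-properness of spectra over a cellular left-proper monoidal model category are part of~\cite{hovey:spectra}).

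Next I would apply Hovey's stabilization theorem~\cite[Thm.~8.11]{hovey:spectra}: Bousfield-localizing the levelwise injective model structure at the maps $\sus{n+1}_{T}(T\otimes K)\to\sus{n}_{T}(K)$ for cofibrant $K$ produces the \emph{stable} injective model structure, in which tensoring with $T$ becomes a Quillen equivalence. Since the unstable injective model structure on $\spt_{T}\ch(\coMod{C})$ is cellular and left proper, the Bousfield localization exists and is again cellular and left proper (left-properness is preserved under left Bousfield localization of a left proper cellular model category — see~\cite[§3]{hovey:spectra} or the standard references cited there). The cofibrations of the localized structure coincide with those of the unstable structure, which in turn are the levelwise monomorphisms in $\ch(\coMod{C})$; and by definition of the category of symmetric $T$-spectra in a category where monomorphisms are detected levelwise, these are exactly the monomorphisms in $\spt_{T}\ch(\coMod{C})$. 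The weak equivalences are the stable equivalences by construction of the localization. This gives all three assertions of the proposition.

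The one point requiring a little care — and the step I expect to be the main obstacle — is \emph{right-properness}: Bousfield localization preserves left-properness automatically, but not right-properness, so one must argue separately that the stable injective model structure is still right proper. Here I would use that $\ch(\coMod{C})$ is the derived category of a Grothendieck category in the strong sense that its injective model structure is right proper (pullbacks of quasi-isomorphisms along fibrations are quasi-isomorphisms, because fibrations are in particular levelwise epimorphisms with appropriate kernels; this is part of~\cite[Thm.~1.2]{cisinski-deglise:homalg-grothendieck-cats}), and then invoke the general principle that for a \emph{stable} model category the stable equivalences satisfy the stronger property needed — concretely, in a stable model structure the homotopy pullback and homotopy pushout squares coincide, so right-properness follows from left-properness once one knows the localized structure is stable (which it is, by Hovey's theorem). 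Thus I would phrase the proof as: invoke Fact~\ref{thm:comod-model-injective}; note monoidality and left-properness; form the unstable injective spectrum model structure; localize via~\cite[Thm.~8.11]{hovey:spectra}; identify the cofibrations as monomorphisms; and conclude right-properness from stability of the resulting structure. Alternatively, if one prefers to avoid the stability argument, one can cite~\cite[§9]{hovey:spectra} where right-properness of the stable model structure is established directly under the cellularity and properness hypotheses we have verified.
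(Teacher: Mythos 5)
Your proof has a genuine gap, and it comes from two related misuses of the sources.

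First, your claim that the injective model structure on $\ch(\coMod{C})$ is monoidal "because the tensor product of two monomorphisms is again a monomorphism, detected by the forgetful functor" is false for general (noetherian) $\Lambda$. Already for $\Lambda$-modules the tensor product of two monomorphisms need not be a monomorphism (e.g.\ $2\Z \hookrightarrow \Z$ tensored with $\Z/2 \to \Z/2$ over $\Z$), and the pushout-product of two monomorphisms is even more fragile. This is exactly why the paper proves monoidality of the injective model structure on $\ch(\coMod{C})$ only when $\Lambda$ is a field (Lemma~\ref{lem:comod-monoidal-field}), and works around the failure for Dedekind domains by a separate flat-resolution argument (Lemma~\ref{lem:comod-monoidal-dedekind}). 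So the hypothesis you need to invoke Hovey's machinery is simply not available.

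Second, even setting monoidality aside, the references to~\cite{hovey:spectra} do not produce the model structure you want: Hovey constructs the \emph{projective} level and stable model structures on symmetric spectra (Thm.~8.2 and Thm.~8.11), not an injective one with monomorphisms as cofibrations. There is no statement in that paper from which the "unstable levelwise injective model structure" on $\spt_T\ch(\coMod{C})$, or its cellularity, can be read off. What the proposition is actually asking for is the stable \emph{injective} structure, and the route the paper takes is to use the Grothendieck-abelian-category framework: $\ch(\coMod{C})$ is complexes in a Grothendieck category, $T$ is flat, and then~\cite[Pro.~6.31]{cisinski-deglise:homalg-grothendieck-cats} gives a proper cellular injective stable model structure on symmetric $T$-spectra directly by a Bousfield localization performed entirely in that framework, with no monoidal model axiom needed on the base. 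Your closing argument for right-properness from stability and left-properness is also not a complete argument as stated, but that is minor compared to the fact that the intermediate model structures you are localizing do not obviously exist by the results you cite. The fix is to replace the appeal to Hovey's stabilization theorem with the appeal to Cisinski--D\'eglise, as the paper does, keeping Hovey only as the reference for the \emph{definition} of stable equivalence (\cite[Def.~8.7]{hovey:spectra}).
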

\begin{proof}
  The stable equivalences are described
  in~\cite[Def.~8.7]{hovey:spectra}, and the proof
  in~\cite[Pro.~6.31]{cisinski-deglise:homalg-grothendieck-cats}
  applies.\qedhere{}

\end{proof}
The model structure of the proposition is called the \emph{injective
  stable model structure}.

Unfortunately, the monoidal structure does not, in general, interact
well with the injective model structures. In the cases of interest in
the main body of the text (namely, when $\Lambda$ is a principal ideal
domain) we have the following result, essentially due to Serre.

\begin{lem}\label{lem:comod-monoidal-dedekind}
  Let $\Lambda$ be a Dedekind domain. In $\ch(\coMod{C})$ there exist
  functorial flat resolutions. In particular, $\Der{\coMod{C}}$ admits
  naturally a monoidal structure.
\end{lem}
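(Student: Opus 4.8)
The plan is to prove that functorial flat resolutions exist in $\ch(\coMod{C})$ when $\Lambda$ is a Dedekind domain, and then deduce the existence of a derived tensor product from this. The key point (Serre's observation) is that over a Dedekind domain, a submodule of a flat (equivalently, torsion-free) module is again flat; moreover every $\Lambda$-module embeds into a flat one in a functorial way.

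First I would recall the basic construction: for a single $\Lambda$-module $M$, there is a functorial embedding into a flat module. The cleanest choice is $M \hookrightarrow M \otimes_{\Lambda} K \oplus (\text{something})$—actually the standard trick is $M \hookrightarrow \prod_{m \in M} K$ where $K$ is the fraction field, but to stay within comodules we instead use the cofree comodule functor. The right adjoint to $\ff : \coMod{C} \to \Mod{\Lambda}$ sends a $\Lambda$-module $N$ to $C \otimes_\Lambda N$ (with coaction via the comultiplication on the first factor); since $C$ is flat over $\Lambda$, this cofree comodule $C\otimes_\Lambda N$ is flat as a $\Lambda$-module whenever $N$ is. Composing the unit $c \to C \otimes_\Lambda \ff(c)$ of this adjunction with a functorial embedding of $\ff(c)$ into a flat $\Lambda$-module produces a functorial monomorphism of $c$ into a flat $C$-comodule. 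One must check the unit is a monomorphism: this follows from counitarity, as the composite $c \to C \otimes \ff(c) \to \Lambda \otimes \ff(c) \cong \ff(c)$ is the identity on underlying modules, and $\ff$ is exact and conservative by Fact~\ref{thm:comod-basic}.

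Next I would upgrade this to complexes. Given $X \in \ch(\coMod{C})$, apply the functorial embedding degreewise and iterate to build a Cartan–Eilenberg–style flat resolution; more efficiently, since over a Dedekind domain flatness is inherited by submodules, one can take a single functorial monomorphism $X \hookrightarrow F^0$ with $F^0$ a complex of flat comodules, let $X^1 = \coker$, and continue—but here the crucial simplification is that for a complex $X$ of \emph{arbitrary} comodules, embedding degreewise into cofree comodules on flat modules and taking the total complex of the resulting resolution already gives a complex of flat comodules quasi-isomorphic to $X$ (the resolution is exact in each degree, hence a quasi-isomorphism after totalization, by a standard spectral sequence / filtration argument). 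Functoriality is automatic since every step was functorial. The role of "Dedekind" enters only to guarantee that kernels/cokernels/submodules of flats remain flat, so that the bounded-length resolution works and the flat objects one produces are genuinely flat; this is exactly \cite[II, §5]{saavedra-cat.tann.1972} or the classical fact that $\Lambda$ has global dimension $\le 1$.

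Finally, the passage to $\Der{\coMod{C}}$: having functorial flat resolutions $P \to \id$, one defines $X \otimes^{\dL} Y := P(X) \otimes P(Y)$ (or $P(X)\otimes Y$, which agrees since flat objects are $\otimes$-acyclic), and checks independence of choices and the monoidal coherences in the usual way—this is routine once the resolutions are in hand. The main obstacle I anticipate is purely bookkeeping: verifying that the degreewise cofree embedding, totalized, really yields a complex of flat comodules that is quasi-isomorphic to the original (one must be slightly careful with unbounded complexes and the convergence of the relevant filtration, but since each resolution step is a bounded two-term exact complex, no convergence issue actually arises). Everything else reduces to the two inputs already available: exactness and conservativity of $\ff$ (Fact~\ref{thm:comod-basic}) and the hereditary property of flatness over a Dedekind domain.
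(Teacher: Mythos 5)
Your core construction is broken at the first step. You want a functorial monomorphism of an arbitrary comodule $c$ into a flat comodule, and you propose to get one by embedding the underlying $\Lambda$-module $\ff(c)$ into a flat $\Lambda$-module. But over a Dedekind domain that is not a field this is impossible whenever $\ff(c)$ has torsion: a flat $\Lambda$-module is torsion-free, and a torsion element cannot map injectively into a torsion-free module. For example, with $\Lambda=\Z$ and $c$ the trivial comodule on $\Z/2$, there is no monomorphism $\Z/2 \hookrightarrow F$ into any flat $\Z$-module, so neither the "standard trick $M\hookrightarrow\prod K$" nor anything else produces the embedding you need. Consequently the object $C\otimes N$ you want to map into is not available, and the whole chain of coresolutions collapses. (Even setting aside this obstruction, what the lemma asserts and what is needed for $\otimes^{\dL}$ is a quasi-isomorphism \emph{from} a flat complex onto the given one, i.e.\ a left resolution built from epimorphisms with flat source — not a coresolution built from monomorphisms into flat objects.)

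The paper's proof goes in the opposite direction and thereby sidesteps the problem. For a comodule $E$ one forms the diagram
\begin{equation*}
  E \xrightarrow{\ \coa_E\ } C\otimes E \xleftarrow{\ 1\otimes\eta_E\ } C\otimes F(E),
\end{equation*}
where $\coa_E$ is the (injective) unit of the $(\ff,\;C\otimes(-))$ adjunction into the extended comodule, $F(E)$ is the free $\Lambda$-module $\oplus_{m\in E}\Lambda$ with its canonical surjection $\eta_E:F(E)\surj \ff(E)$, and $C\otimes F(E)$ carries the extended comodule structure. The pullback $E'$ of this cospan in $\coMod{C}$ surjects onto $E$ (because $1\otimes\eta_E$ is an epimorphism and $\ff$ creates finite limits) and embeds into $C\otimes F(E)$, which is flat (tensor product of two flat $\Lambda$-modules). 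Over a Dedekind domain a submodule of a flat module is flat, so $E'$ is a flat comodule and $E'\surj E$ is the desired functorial flat epimorphism. Iterating as usual in the Grothendieck category $\coMod{C}$ yields functorial flat resolutions. Note that both the injection into the cofree comodule and the hereditary property of flatness over Dedekind domains — which you correctly identified — are used, but crucially in service of building a flat \emph{cover}, not a flat \emph{envelope}; the latter does not exist.
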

\begin{proof}
  We follow~\cite[Pro.~3]{serre-groupe-grothendieck}. Let $E$ be a
  comodule and consider the morphism of comodules $\coa_{E}:E\to
  C\otimes E$ given by the coaction of $C$ on $E$, where the target
  has a comodule structure induced by the comultiplication on $C$
  (sometimes called the ``extended comodule associated to $E$''). In
  fact, the coaction $\coa_{\bullet}$ defines a natural transformation
  from the identity functor on $\coMod{C}$ to the ``extended
  comodule''-functor (this is the unit of an adjunction whose left
  adjoint is the forgetful functor $\ff:\coMod{C}\to\Mod{\Lambda}$;
  cf.~\cite[Lem.~1.53]{ayoub:galois1}). Since $E$ is counitary, this
  natural transformation is objectwise injective. Let
  $F:\Mod{\Lambda}\to\Mod{\Lambda}$ be the functor which associates to
  a $\Lambda$-module $M$ the free $\Lambda$-module $\oplus_{m\in
    M}\Lambda$. It comes with a natural transformation
  $\eta:F\to\mathrm{Id}$ which is objectwise an epimorphism. We obtain
  a diagram
  \begin{equation*}\label{eq:flat-resolutions-construction}
    \xymatrix{E\ar[r]^-{\coa_{E}}&C\otimes E\\
      &C\otimes F(E)\ar[u]_{1\otimes\eta_{E}}}
  \end{equation*}
  in the category of $C$-comodules (the module in the bottom row is
  again an extended comodule). Since the forgetful functor from
  $\coMod{C}$ to $\Mod{\Lambda}$ commutes with finite limits, we see
  that the pullback of this diagram is a comodule $E'$ which both maps
  surjectively onto $E$, and embeds into $C\otimes F(E)$. By
  assumption, $C\otimes F(E)$ is torsion-free thus so is $E'$.  It is
  clear from the construction that the association $E\mapsto E'$
  defines a functor together with a natural transformation $\eta'$
  from it to the identity functor.

  Using that $\coMod{C}$ is a Grothendieck category, the usual
  procedure leads to functorial flat resolutions. 
\end{proof}

If $\Lambda$ is a field then we can do better.
\begin{lem}\label{lem:comod-monoidal-field}
  Let $\Lambda$ be a field. The injective model structure on
  $\ch(\coMod{C})$ is a monoidal model structure.
\end{lem}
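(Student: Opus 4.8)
The plan is to verify the pushout-product axiom for the injective model structure on $\ch(\coMod{C})$ when $\Lambda$ is a field, since the unit axiom is automatic (the monoidal unit $\Lambda$, viewed as a trivial $C$-comodule concentrated in degree $0$, is cofibrant because over a field every object is flat and every monomorphism is split levelwise, so in particular $0\to\Lambda$ is a cofibration). Recall that cofibrations in this model structure are exactly the monomorphisms and weak equivalences are quasi-isomorphisms, by Fact~\ref{thm:comod-model-injective}.

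First I would reduce the pushout-product axiom to the following two assertions: (i) if $f$ and $g$ are monomorphisms in $\ch(\coMod{C})$ then the pushout-product $f\square g$ is again a monomorphism; and (ii) if in addition one of $f,g$ is a quasi-isomorphism then so is $f\square g$. For (i), the key point is that over a field $\Lambda$, the forgetful functor $\ff\colon\coMod{C}\to\Mod{\Lambda}$ is exact, faithful, creates colimits and finite limits (Fact~\ref{thm:comod-basic}), and is monoidal; hence $\ff$ applied to $f\square g$ is the pushout-product of $\ff f$ and $\ff g$ in $\ch(\Lambda)$. Since $\Lambda$ is a field, $\ch(\Lambda)$ carries the injective (equivalently projective) model structure in which all objects are both cofibrant and fibrant and cofibrations are monomorphisms, and there the pushout-product axiom is classical; so $\ff(f\square g)$ is a monomorphism, hence so is $f\square g$ by faithfulness and exactness of $\ff$. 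For (ii), the same argument works: $\ff$ detects quasi-isomorphisms (being exact and faithful, or simply because it is the identity on underlying complexes up to forgetting the coaction), and the pushout-product of a monomorphism with a trivial cofibration of complexes of vector spaces is a trivial cofibration.

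The main obstacle — really the only subtlety — is making sure the tensor product used to define the monoidal structure on $\ch(\coMod{C})$ is computed on underlying complexes as the ordinary tensor product of complexes of $\Lambda$-modules, with the diagonal-type $C$-coaction described just before Proposition~\ref{pro:comod-stable-model-injective}; once this is granted, everything is transported along $\ff$. In particular one must check that $\ff$ is strong monoidal, which is exactly the statement recorded in the paragraph preceding the lemma (``the forgetful functor $\ff\colon\coMod{C}\to\Mod{\Lambda}$ is monoidal''), and that colimits and finite limits — hence the pushouts appearing in $f\square g$ — are created by $\ff$, which is part of Fact~\ref{thm:comod-basic}. I would conclude by noting that no flat-resolution issues arise here, in contrast to Lemma~\ref{lem:comod-monoidal-dedekind}, precisely because over a field every comodule is flat, so the injective model structure is already a monoidal model category on the nose.
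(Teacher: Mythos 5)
Your proposal is correct and follows exactly the paper's argument: transport the verification of the pushout-product and unit axioms along the forgetful functor $\ff\colon\ch(\coMod{C})\to\ch(\Lambda)$, using that $\ff$ is strong monoidal, exact, faithful, and creates the relevant (co)limits. One small inaccuracy worth fixing: over a field the injective and projective model structures on $\ch(\Lambda)$ for unbounded complexes are \emph{not} the same (they share weak equivalences but have different cofibrations), though this aside is not used anywhere in your argument and does not affect its validity.
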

\begin{proof}
  Indeed, since the forgetful functor is monoidal, exact and creates
  colimits, the conditions for the injective model structure to be
  monoidal can be checked in
  $\ch(\Lambda)$. 
\end{proof}

\end{document}